\numberwithin{equation}{section}
\numberwithin{figure}{section}
\theoremstyle{plain}
\newtheorem{thm}{\protect\theoremname}[section]
\theoremstyle{plain}
\newtheorem{question}[thm]{\protect\questionname}
\theoremstyle{remark}
\newtheorem{rem}[thm]{\protect\remarkname}
\theoremstyle{definition}
\newtheorem{defn}[thm]{\protect\definitionname}
\theoremstyle{plain}
\newtheorem{prop}[thm]{\protect\propositionname}
\theoremstyle{definition}
\newtheorem{example}[thm]{\protect\examplename}
\theoremstyle{plain}
\newtheorem{cor}[thm]{\protect\corollaryname}
\theoremstyle{definition}
\newtheorem{problem}[thm]{\protect\problemname}
\theoremstyle{remark}
\newtheorem*{acknowledgement*}{\protect\acknowledgementname}
\theoremstyle{plain}
\newtheorem{lem}[thm]{\protect\lemmaname}
\newtheorem{theorem}{Theorem}
\providecommand{\theoremname}{Theorem}
\providecommand{\acknowledgementname}{Acknowledgement}
\providecommand{\corollaryname}{Corollary}
\providecommand{\definitionname}{Definition}
\providecommand{\examplename}{Example}
\providecommand{\lemmaname}{Lemma}
\providecommand{\problemname}{Problem}
\providecommand{\propositionname}{Proposition}
\providecommand{\questionname}{Question}
\providecommand{\remarkname}{Remark}
\providecommand{\theoremname}{Theorem}
\begin{document}
\title{Integrability and singularities of Harish-Chandra characters}
\author{Itay Glazer, Julia Gordon and Yotam I. Hendel}
\begin{abstract}
Let $G$ be a reductive group over a local field $F$ of characteristic
$0$. By Harish-Chandra's regularity theorem, the character $\Theta_{\pi}$
of an irreducible, admissible representation $\pi$ of $G$ is given
by a locally integrable function $\theta_{\pi}$ on $G$. It is a
natural question whether $\theta_{\pi}$ has better integrability
properties, namely, whether it is locally $L^{1+\epsilon}$-integrable
for some $\epsilon>0$. It turns out that the answer is positive,
and this gives rise to a new singularity invariant of representations
$\epsilon_{\star}(\pi):=\sup\left\{ \epsilon:\theta_{\pi}\in L_{\mathrm{Loc}}^{1+\epsilon}(G)\right\} $,
which we explore in this paper.

We provide a lower bound on $\epsilon_{\star}(\pi)$ which depends
only on the absolute root system of $G$, and explicitly determine
$\epsilon_{\star}(\pi)$ in the case of a $p$-adic $\mathrm{GL}_{n}$.
This is done by studying integrability properties of the Fourier transforms
$\widehat{\xi}_{\mathcal{O}}$ of stable Richardson nilpotent orbital
integrals $\xi_{\mathcal{O}}$. We express $\epsilon_{\star}(\widehat{\xi}_{\mathcal{O}})$
as the log-canonical threshold of a suitable relative Weyl discriminant,
and use a resolution of singularities algorithm coming from the theory
of hyperplane arrangements, to compute it in terms of the partition
associated with the orbit.

We obtain several applications; firstly, we provide bounds on the
multiplicities of $K$-types in irreducible representations of $G$
in the $p$-adic case, where $K$ is an open compact subgroup. We
further obtain bounds on the multiplicities of the irreducible representations
appearing in the space $L^{2}(K/L)$, where $K$ is a compact simple
Lie group, and $L\leq K$ is a Levi subgroup. Finally, we discover
surprising applications in random matrix theory, namely to the study
of the eigenvalue distribution of powers of random unitary matrices. 
\end{abstract}

\maketitle
\global\long\def\N{\mathbb{\mathbb{N}}}%
\global\long\def\R{\mathbb{\mathbb{R}}}%
\global\long\def\Z{\mathbb{\mathbb{Z}}}%
\global\long\def\val{\mathbb{\mathrm{val}}}%
\global\long\def\Qp{\mathbb{Q}_{p}}%
\global\long\def\Zp{\mathbb{Z}_{p}}%
\global\long\def\ac{\mathrm{ac}}%
\global\long\def\C{\mathbb{C}}%
\global\long\def\Q{\mathbb{Q}}%
\global\long\def\supp{\mathrm{supp}}%
\global\long\def\VF{\mathrm{VF}}%
\global\long\def\RF{\mathrm{RF}}%
\global\long\def\VG{\mathrm{VG}}%
\global\long\def\spec{\mathrm{Spec}}%
\global\long\def\orb{\mathrm{orb}}%
\global\long\def\rss{\mathrm{rss}}%
\global\long\def\mexp{\mathbf{e}}%
\global\long\def\GL{\mathrm{GL}}%
\global\long\def\U{\mathrm{U}}%
\global\long\def\SU{\mathrm{SU}}%
\global\long\def\Wg{\mathrm{Wg}}%
\global\long\def\Ldp{\mathcal{L}_{\mathrm{DP}}}%
\global\long\def\Irr{\mathrm{Irr}}%
\global\long\def\g{\mathfrak{g}}%
\global\long\def\l{\mathfrak{l}}%
\global\long\def\p{\mathfrak{p}}%
\global\long\def\n{\mathfrak{n}}%
\global\long\def\m{\mathfrak{m}}%
\global\long\def\t{\mathfrak{t}}%
\global\long\def\q{\mathfrak{q}}%
\global\long\def\gl{\mathfrak{gl}}%
\global\long\def\sl{\mathfrak{sl}}%
\global\long\def\ggm{\underline{\mathfrak{g}}}%
\global\long\def\llm{\underline{\mathfrak{l}}}%
\global\long\def\ppm{\underline{\mathfrak{p}}}%
\global\long\def\nnm{\underline{\mathfrak{n}}}%
\global\long\def\ttm{\underline{\mathfrak{t}}}%
\global\long\def\qqm{\underline{\mathfrak{q}}}%
\global\long\def\Lie{\mathrm{Lie}}%
\global\long\def\ad{\operatorname{ad}}%
\global\long\def\lct{\operatorname{lct}}%
\global\long\def\Ad{\operatorname{Ad}}%
\global\long\def\Int{\operatorname{Int}}%
\global\long\def\GG{\underline{G}}%
\global\long\def\PP{\underline{P}}%
\global\long\def\QQ{\underline{Q}}%
\global\long\def\LL{\underline{L}}%
\global\long\def\NN{\underline{N}}%
\global\long\def\sX{\mathsf{X}}%
\global\long\def\sY{\mathsf{Y}}%
\global\long\def\tr{\operatorname{tr}}%

\raggedbottom

\section{Introduction}

Let $F$ be a local field of characteristic zero, let $\GG$ be a
connected reductive algebraic group defined over $F$, where $G:=\GG(F)$
is the corresponding $F$-analytic group. Let $\mathrm{Rep}(G)$ be
the category of smooth, admissible representations of $G$ of finite
length, when $F$ is non-Archimedean, and the category of smooth,
admissible, finite length, Fr\'echet representations of $G$ of moderate
growth, when $F$ is Archimedean. Let $\Irr(G)$ be the set of equivalence
classes of irreducible representations in $\mathrm{Rep}(G)$.

Harish-Chandra's regularity theorem says that the character $\Theta_{\pi}$
of an irreducible representation $\pi\in\Irr(G)$, which is a priori
a distribution on $G$, is given by a locally integrable function
$\theta_{\pi}\in L_{\mathrm{Loc}}^{1}(G)$. It follows from the proof
of the theorem that $\theta_{\pi}$ is in fact in $L_{\mathrm{Loc}}^{1+\epsilon}(G)$
for some $\epsilon>0$ (this follows e.g.~from \cite[Theorem 16.3]{HC99}
and \cite[Theorem 15]{HC70}). It is natural to study a quantitative
version of the regularity theorem, namely:
\begin{question}
~\label{que:Main question}Let $\pi$ be an admissible representation
of $G$. 
\begin{enumerate}
\item What is the largest value $\epsilon>0$ such that $\theta_{\pi}\in L_{\mathrm{Loc}}^{1+\epsilon}(G)$?
\item Set $\epsilon_{\star}(\pi):=\sup\left\{ \epsilon:\theta_{\pi}\in L_{\mathrm{Loc}}^{1+\epsilon}(G)\right\} $.
What does the invariant $\epsilon_{\star}(\pi)$ tell us about the
representation $\pi$?
\end{enumerate}
\end{question}

In this paper we explore $\epsilon_{\star}(\pi)$. More generally,
for every distribution $\xi$ on $G$ represented by a locally $L^{1}$-function
$f_{\xi}$, and every $x\in G$, we define the \emph{integrability
exponent} 
\begin{equation}
\epsilon_{\star}(\xi;x):=\sup\left\{ s\geq0:\exists\text{ a ball }x\in B\subseteq G\text{ such that }f_{\xi}|_{B}\in L^{1+s}(G)\right\} ,\label{eq:invariant of representations}
\end{equation}
and set $\epsilon_{\star}(\xi):=\underset{x\in G}{\inf}\epsilon_{\star}(\xi;x)$.
Similarly, for $\pi\in\mathrm{Rep}(G)$, we define $\epsilon_{\star}(\pi,x):=\epsilon_{\star}(\theta_{\pi},x)$.

One of the motivations for defining $\epsilon_{\star}(\pi)$ comes
from motivic integration and tame geometry. In \cite[Theorems E, F]{GH21},
the first and third authors proved that any absolutely integrable
constructible function $f:F^{n}\rightarrow\R$, in the sense of Cluckers\textendash Loeser
\cite{CL08,CL10}, is in fact in $L^{1+\epsilon}(F)$ for some $\epsilon>0$.
A similar statement can be deduced for real constructible functions
from \cite{CM13}. This phenomenon is expected to hold for ``tame
enough'' functions in various other settings. Based on the above,
we defined in \cite{GHS}, an invariant similar to (\ref{eq:invariant of representations})
which measures integrability of pushforwards of smooth measures by
analytic maps. Since Harish-Chandra characters $\Theta_{\pi}$ are
of ``motivic nature'' (see \cite[Theorem 5.8]{CGH14a}), we expect
$\epsilon_{\star}(\pi)$ to be a meaningful invariant.

In contrast to the Gelfand\textendash Kirillov dimension $\mathrm{GK}(\pi)$
(Definition \ref{def:Gelfand-Kirillov dimension}), which measures
how \textbf{large} a given representation $\pi$ is, the quantity
$\epsilon_{\star}(\pi)$ measures how \textbf{singular }$\pi$ is,
where large values of $\epsilon_{\star}(\pi)$ correspond to representations
which are far from being generic. The invariant $\epsilon_{\star}(\pi)$
is controlled by the geometry and singularities of the nilpotent orbits
appearing in the local character expansion of $\Theta_{\pi}$, while
$\mathrm{GK}(\pi)$ only takes into account their dimensions. In particular,
there are examples of representations of the same GK dimension, but
very different values of $\epsilon_{\star}(\pi)$ and vice versa (see
Remark \ref{rem:on GK dimension and epsilon}(2)).

In order to state the main results of this paper, we first introduce
some notation, as well as a local description of $\Theta_{\pi}$.

\subsection{\label{subsec:Fourier-transform-of}Fourier transforms of orbital
integrals }

Let $\g$ be the Lie algebra of $G$. Denote by $\mathcal{S}(\g)$
(resp.~$\mathcal{S}(G)$) the space of Schwartz functions on $\g$
(resp.~$G$). Similarly, let $\mathcal{S}^{*}(\g)$ (resp.~$\mathcal{S}^{*}(G)$)
be the space of tempered distributions on $\g$ (resp.~$G$). Let
$\langle\,,\,\rangle$ be a non-degenerate $G$-invariant symmetric
bilinear form on $\g$. Then $\langle\,,\,\rangle$ gives an identification
$\g\simeq\g^{*}$, by $Y\longmapsto\langle\,,Y\rangle$. Fixing a
non-trivial additive character $\chi$ of $F$, we denote the Fourier
transform $\mathcal{F}:\mathcal{S}(\g)\rightarrow\mathcal{S}(\g)$
by $f\mapsto\widehat{f}$, where:
\[
\widehat{f}(Y):=\int_{\g}f(X)\chi\left(\left\langle X,Y\right\rangle \right)dX.
\]
Similarly, for a tempered distribution $\xi\in\mathcal{S}^{*}(G)$,
we set $\widehat{\xi}(f):=\xi(\widehat{f})$.

Let $\mathcal{N}_{\g}\subseteq\g$ be the set of nilpotent elements
in $\g$. Let $\mathcal{O}(\mathcal{N}_{\g})$ be the set of $\Ad(G)$-orbits
in $\mathcal{N}_{\g}$ (it is a finite set, see e.g.~\cite{Mor88}).
For any $X\in\mathcal{N}_{\g}$, the stabilizer $G_{X}$ is unimodular
\cite[Remark 3.27]{SS70}, and hence the corresponding $G$-orbit
$\mathcal{O}$ carries a $G$-invariant measure $\mu_{\mathcal{O}}$.
Since a non-zero nilpotent orbit $\mathcal{O}\subseteq\g$ is not
closed, the restriction $f|_{\mathcal{O}}$ of $f\in\mathcal{S}(\g)$
is not necessarily a Schwartz function. Nonetheless, Deligne and Ranga
Rao \cite{Rao72} showed that the integral $\int_{\mathcal{O}}f\mu_{\mathcal{O}}$
converges, and hence $\mu_{\mathcal{O}}$ gives rise to a tempered
distribution $\xi_{\mathcal{O}}\in\mathcal{S}^{*}(\g)$. We write
$\widehat{\xi}_{\mathcal{O}}\in\mathcal{S}^{*}(\g)$ for its Fourier
transform. It was shown by Harish-Chandra (\cite{HC65a,HC78}, see
also \cite[Theorem 27.8]{Kot05}) that the distribution $\widehat{\xi}_{\mathcal{O}}$
is given by a locally integrable function on $\g$, which is analytic
on $\g_{\rss}$, the set of regular semisimple elements of $\g$.
The local behavior of $\theta_{\pi}$ can be understood using the
Harish-Chandra\textendash Howe local character expansion in the non-Archimedean
case, and the Barbasch\textendash Vogan asymptotic expansion in the
Archimedean case: 
\begin{thm}[{\cite{How74}, \cite[Theorem 16.2]{HC99} and \cite[Theorem 4.1]{BV80}}]
\label{thm:local character expansion}Let $\pi\in\Irr(G)$. 
\begin{enumerate}
\item If $F$ is non-Archimedean (of characteristic $0$), then there exist
constants $\left\{ c_{\mathcal{O}}(\pi)\right\} _{\mathcal{O}\in\mathcal{O}(\mathcal{N}_{\g})}$
and an $F$-analytic neighborhood $U$ of zero in $\g$, such that
for all $X\in\g_{\rss}\cap U$: 
\begin{equation}
\theta_{\pi}(\exp(X))=\sum_{\mathcal{O}\in\mathcal{O}(\mathcal{N}_{\g})}c_{\mathcal{O}}(\pi)\cdot\widehat{\xi}_{\mathcal{O}}(X).\label{eq:local character expansion non-Archimedean}
\end{equation}
\item If $F$ is Archimedean, there exist constants $\left\{ a_{\mathcal{O}}(\pi)\right\} _{\mathcal{O}\in\mathcal{O}(\mathcal{N}_{\g})}$,
such that $t^{-\dim\g}\theta_{\pi}(t^{-1}X)$ admits an asymptotic
expansion\footnote{See \cite[Section 2]{BV80} for the precise meaning of asymptotic
expansion.}, as $t\rightarrow0$, whose leading term is 
\[
\sum_{\mathcal{O}\in\mathcal{O}(\mathcal{N}_{\g}):\dim\mathcal{O}=\mathrm{GK}(\pi)}a_{\mathcal{O}}(\pi)\cdot\widehat{\xi}_{\mathcal{O}}.
\]
\end{enumerate}
\end{thm}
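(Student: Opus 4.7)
The plan is to handle the two parts in parallel, transferring the problem from the group $G$ to the Lie algebra $\g$ via the exponential map, and then invoking a structure theorem for $\Ad(G)$-invariant distributions supported on the nilpotent cone $\mathcal{N}_{\g}$.

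For part (1), I would first use $\exp$ (defined on some small $\Ad(G)$-invariant neighborhood $U$ of $0\in\g$) to pull $\theta_{\pi}$ back to an $\Ad(G)$-invariant distribution $T_{\pi}$ on $U$, after dividing by the Jacobian of $\exp$. The central geometric input is Howe's finiteness conjecture (proved by Clozel and reproved by others): the space of $\Ad(G)$-invariant distributions on $\g$ supported on $\mathcal{N}_{\g}$ is finite-dimensional and spanned by $\{\xi_{\mathcal{O}}\}_{\mathcal{O}\in\mathcal{O}(\mathcal{N}_{\g})}$. One then needs to show that, on a possibly smaller neighborhood, $\widehat{T_{\pi}}$ is supported in $\mathcal{N}_{\g}$. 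This is the homogeneity step: using the admissibility of $\pi$ (so that the Hecke algebra module generated by a $K$-fixed vector is finitely generated), and Harish-Chandra's semi-continuity/descent, the restriction of $T_{\pi}$ to successively smaller dilations $\lambda\cdot U$ is controlled by a finite-dimensional family, which forces the Fourier support to lie in the cone fixed by the $F^{\times}$-scaling, namely $\mathcal{N}_{\g}$. Applying Howe's finiteness yields $\widehat{T_{\pi}}|_{U'}=\sum_{\mathcal{O}}c_{\mathcal{O}}(\pi)\,\xi_{\mathcal{O}}$, and Fourier inversion gives (\ref{eq:local character expansion non-Archimedean}).

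For part (2), in the Archimedean case $\theta_{\pi}$ is an invariant eigendistribution for $Z(U(\g_{\C}))$. I would introduce the dilated family $\theta_{\pi}^{(t)}(X):=t^{-\dim\g}\theta_{\pi}(\exp(t^{-1}X))$ and analyze its behavior as $t\to0^{+}$, writing a formal asymptotic expansion in homogeneity degrees with respect to the $\R_{>0}$-action on $\g$. Each term is an $\Ad(G)$-invariant tempered distribution, homogeneous of a definite degree, and annihilated (in the limit) by the symbols of the operators annihilating $\theta_{\pi}$; this forces each homogeneous component to be supported on the nilpotent cone. An Archimedean analogue of Howe's theorem for invariant distributions supported on $\mathcal{N}_{\g}$, together with the fact that $\widehat{\xi}_{\mathcal{O}}$ is homogeneous of degree $-\dim\mathcal{O}/2$, identifies the leading term of the expansion with $\sum a_{\mathcal{O}}(\pi)\widehat{\xi}_{\mathcal{O}}$ over orbits of some fixed maximal dimension $d$. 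That $d=\mathrm{GK}(\pi)$ follows from the Howe--Barbasch--Vogan identification of the associated variety/wavefront set with the Gelfand--Kirillov dimension.

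The main obstacle in both parts is the homogeneity/Fourier-support argument that pins the nilpotent structure down. In the non-Archimedean case, the subtlety is producing a neighborhood on which $\widehat{T_{\pi}}$ is \emph{uniformly} supported on $\mathcal{N}_{\g}$; this is where Howe's conjecture and admissibility interact most delicately. In the Archimedean case, the corresponding difficulty is making the scaling expansion rigorous: one must control remainders in the asymptotic expansion of an invariant eigendistribution, and then ensure that the (potentially many) homogeneity contributions of smaller orbits do not interfere with the leading term. Everything else---locally integrable representability of $\widehat{\xi}_{\mathcal{O}}$, existence of a suitable neighborhood $U$ on which $\exp$ is analytic, and the passage between $G$ and $\g$---follows from standard Harish-Chandra regularity and descent arguments already invoked in the excerpt.
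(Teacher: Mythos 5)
The paper recalls this theorem as a classical result, credited to Howe \cite{How74}, Harish-Chandra \cite[Theorem 16.2]{HC99}, and Barbasch--Vogan \cite[Theorem 4.1]{BV80}, and supplies no proof of its own, so there is no internal argument to compare against. Your sketch correctly identifies the main ingredients used in those references: transfer via $\exp$ to an $\Ad(G)$-invariant distribution near $0\in\g$, finite-dimensionality of the space of invariant distributions supported on $\mathcal{N}_{\g}$ (spanned by the orbital integrals $\xi_{\mathcal{O}}$), admissibility of $\pi$ to control $\widehat{T_{\pi}}$, and a homogeneity/dilation argument to pin down the germ; and, in the Archimedean case, the Barbasch--Vogan asymptotic expansion together with the identification of the leading degree with $\mathrm{GK}(\pi)$. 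As a roadmap to the literature it is serviceable.

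One formulation in the non-Archimedean part, however, is imprecise in a way that matters: the step ``show that, on a possibly smaller neighborhood, $\widehat{T_{\pi}}$ is supported in $\mathcal{N}_{\g}$,'' followed by ``applying Howe's finiteness yields $\widehat{T_{\pi}}|_{U'}=\sum_{\mathcal{O}}c_{\mathcal{O}}(\pi)\,\xi_{\mathcal{O}}$.'' Fourier transform does not commute with restriction, so a support statement for $\widehat{T_{\pi}}$ on a small neighborhood is not the operative assertion, and the $\xi_{\mathcal{O}}$ are globally defined distributions supported on orbit closures, not restrictions of $\widehat{T_{\pi}}$. What Harish-Chandra actually proves is a statement about the germ of $T_{\pi}$ itself: for an admissible invariant distribution (meaning $\widehat{T_{\pi}}$ has support bounded modulo conjugation), the restriction of $T_{\pi}$ to a small enough neighborhood of $0$ coincides with a fixed finite linear combination $\sum_{\mathcal{O}}c_{\mathcal{O}}\widehat{\xi}_{\mathcal{O}}$. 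The proof uses the precise homogeneity $\widehat{\xi}_{\mathcal{O}}(t^{2}X)=|t|_{F}^{-\dim\mathcal{O}}\widehat{\xi}_{\mathcal{O}}(X)$ and a descent/induction on semisimple rank; the ``Fourier support on $\mathcal{N}_{\g}$'' framing is at best an informal paraphrase of the wave-front picture and conflates distinct objects. If you want to turn the sketch into an actual argument, that step should be replaced by the germ formulation. A similar caveat applies to the Archimedean sketch, where the identification of the leading homogeneous component also goes through germ-type estimates rather than a literal support condition.
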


\begin{rem}
A similar expansion of $\theta_{\pi}$ can be obtained near any semisimple
element of $\g$ (for a precise formulation, see e.g.~\cite[Theorem 16.2]{HC99}). 
\end{rem}

\subsection{\label{subsec:Main-results}Main results}

In view of Theorem \ref{thm:local character expansion}, the study
of $\epsilon_{\star}(\pi)$ goes through the integrability of $\widehat{\xi}_{\mathcal{O}}$.
The description of $\epsilon_{\star}(\widehat{\xi}_{\mathcal{O}})$
will be in terms of a singularity invariant called the \emph{log-canonical
threshold }(see e.g.~\cite{Kol,Mus12} for more details about this
invariant). 
\begin{defn}
\label{def:log canonical threshold }The\emph{ $F$-log-canonical
threshold} of an analytic function $f:F^{n}\to F$,\emph{ }at $x\in F^{n}$
is:\footnote{$\left|\,\cdot\,\right|_{F}$ is the absolute value on $F$, normalized
so that $\mu_{F}(aS)=\left|a\right|_{F}\cdot\mu_{F}(S)$, for all
$a\in F^{\times}$, $S\subseteq F$, where $\mu_{F}$ is a Haar measure
on $F$.} 
\[
\lct_{F}(f;x):=\sup\left\{ s>0:\exists\text{ a ball }x\in B\subseteq F^{n}\text{ s.t.}\int_{B}\left|f(x)\right|_{F}^{-s}dx<\infty\right\} .
\]
\end{defn}

Let $T\subseteq G$ be a maximal $F$-torus, with $\t=\Lie(T)$. Let
$\Sigma(\g,\t)$ be the corresponding root system, with $\Sigma^{+}(\g,\t)\subseteq\Sigma(\g,\t)$
the set of positive roots. Let $\p\subseteq\g$ be an $F$-parabolic
subalgebra, with Levi $\l\supseteq\t$. We denote by $D_{\g}:\g\rightarrow F$
and $D_{\l}:\l\rightarrow F$ the Weyl discriminants of $\g$ and
$\l$, respectively (see $\mathsection$\ref{subsec:Weyl-discriminants}).
Define the relative Weyl discriminant $D_{\g,\l}:\l\rightarrow F$
on $\l$ by: 
\begin{equation}
D_{\g,\l}(X):=D_{\g}(X)/D_{\l}(X),\,\,\,X\in\l.\label{eq:relative Weyl discriminant}
\end{equation}
If $\g$ is $F$-split, and $\t$ is an $F$-split Cartan, we have
\begin{equation}
D_{\g}|_{\t}=(-1)^{\left|\Sigma^{+}(\g,\t)\right|}(\triangle_{\g}^{\t})^{2}\text{\,\,\,\, and \,\,\,\,}D_{\l}|_{\t}=(-1)^{\left|\Sigma^{+}(\l,\t)\right|}(\triangle_{\l}^{\t})^{2},\label{eq:reduction to hyperplane arrangement}
\end{equation}
where $\triangle_{\g}^{\t},\triangle_{\l}^{\t}:\t\rightarrow F$ are
given by: 
\begin{equation}
\triangle_{\g}^{\t}(X):=\prod_{\alpha\in\Sigma^{+}(\g,\t)}\alpha(X)\text{ and }\triangle_{\l}^{\t}(X):=\prod_{\alpha\in\Sigma^{+}(\l,\t)}\alpha(X).\label{eq:Weyl discriminant on Cartan}
\end{equation}
Similarly, we set $\triangle_{\g,\l}^{\t}(X):=\triangle_{\g}^{\t}(X)/\triangle_{\l}^{\t}(X)$.
We can now state our first main theorem, which focuses on stable Richardson
nilpotent orbits (see Definition \ref{def:Richardson orbit}).

\begin{theorem}\label{thm A: epsilon of Fourier of orbital integral is lct(D(G/M))}Let
$\underline{G}$ be a connected reductive algebraic $F$-group. Let
$\mathcal{O}_{\mathrm{st}}$ be a stable Richardson orbit in $\g$,
with a polarization $\p=\l\oplus\n$. Then 
\begin{equation}
\epsilon_{\star}(\widehat{\xi}_{\mathcal{\mathcal{O}_{\mathrm{st}}}})=\epsilon_{\star}(\widehat{\xi}_{\mathcal{\mathcal{O}_{\mathrm{st}}}};0)=2\lct_{F}(D_{\g,\l}(X);0).\label{eq:formula for epsilon of Richardson}
\end{equation}
\end{theorem}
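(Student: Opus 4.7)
The plan is to obtain an explicit formula for $\widehat{\xi}_{\mathcal{O}_{\mathrm{st}}}$ as a function on the regular semisimple locus, then translate local $L^{1+s}$ integrability into a log-canonical-threshold computation via a change of variables. First I would write $\xi_{\mathcal{O}_{\mathrm{st}}}$ (up to a positive constant) as the pushforward of Haar on $\n$ via the proper map $G\times_{P}\n\to\mathcal{O}_{\mathrm{st}}$, $(g,Y)\mapsto\Ad(g)Y$. Taking Fourier transforms and using the identification $\n^{\perp}=\p$ under the form $\langle\,,\,\rangle$, this gives
\[
\widehat{\xi}_{\mathcal{O}_{\mathrm{st}}}(f)=c\int_{G/P}\int_{\p}f(\Ad(g)X)\,dX\,dg.
\]

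Next, consider the parametrization $\psi:(G/P)\times\p\to\g$, $(g,X_{\l}+N)\mapsto\Ad(g)(X_{\l}+N)$, whose image is open dense in $\g$. For $X_{\l}\in\l\cap\g_{\rss}$ and any $N\in\n$, a direct computation in the decomposition $\g=\n^{-}\oplus\l\oplus\n$ shows that $d\psi$ is block lower-triangular with top-left block $-\ad(X_{\l})|_{\n^{-}}+P^{--}$, where $P^{--}(Z):=[Z,N]_{\n^{-}}$ is nilpotent with respect to the root filtration on $\n^{-}$. Hence
\[
|\det d\psi|=\Bigl|\prod_{\alpha\in\Sigma(\n)}\alpha(X_{\l})\Bigr|=|D_{\g,\l}(X_{\l})|^{1/2},
\]
independently of $N$ and $g$. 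Combined with the first step, this yields $\widehat{\xi}_{\mathcal{O}_{\mathrm{st}}}(\Ad(g)(X_{\l}+N))=c'|D_{\g,\l}(X_{\l})|^{-1/2}$ on the open dense locus. Pushing through $\psi$ over a small ball $B\subseteq\g$ around $0$, and using that $G/P$ is compact and the $\n$-integration is over a bounded set, one computes
\[
\int_{B}|\widehat{\xi}_{\mathcal{O}_{\mathrm{st}}}|^{1+s}\,dY\asymp\int_{B''\cap\l}|D_{\g,\l}(X_{\l})|^{-(1+s)/2}\cdot|D_{\g,\l}(X_{\l})|^{1/2}\,dX_{\l}=\int_{B''\cap\l}|D_{\g,\l}(X_{\l})|^{-s/2}\,dX_{\l}.
\]
By the definition of the log-canonical threshold, this is finite iff $s/2<\lct_{F}(D_{\g,\l};0)$, yielding $\epsilon_{\star}(\widehat{\xi}_{\mathcal{O}_{\mathrm{st}}};0)=2\lct_{F}(D_{\g,\l};0)$.

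Finally, to conclude $\epsilon_{\star}(\widehat{\xi}_{\mathcal{O}_{\mathrm{st}}})=\epsilon_{\star}(\widehat{\xi}_{\mathcal{O}_{\mathrm{st}}};0)$, it remains to verify that $0$ realizes the infimum in $x\in\g$. For this I would use Harish-Chandra's semisimple descent: near $x=x_{s}+x_{n}\in\g$, the distribution $\widehat{\xi}_{\mathcal{O}_{\mathrm{st}}}$ is, up to a smooth nonvanishing factor, a linear combination of Fourier transforms of nilpotent orbital integrals on the centralizer $\g_{x_{s}}$, whose root subsystem is contained in $\Sigma(\g,\t)$; the corresponding relative Weyl discriminant has an equal or larger log-canonical threshold, giving $\epsilon_{\star}(\widehat{\xi}_{\mathcal{O}_{\mathrm{st}}};x)\geq\epsilon_{\star}(\widehat{\xi}_{\mathcal{O}_{\mathrm{st}}};0)$. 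The main technical obstacle is the Jacobian cancellation: one must verify that $P^{--}$ is root-nilpotent, which follows because whenever $\alpha,\beta\in\Sigma(\n)$ with $\beta-\alpha\in\Sigma(\n^{-})$ the root $\alpha$ has strictly greater height than $\beta$, so $P^{--}$ strictly raises the height filtration on $\n^{-}$. A secondary challenge is ensuring the first step carries over to the \emph{stable} (rather than rational) orbital integral, which requires summing the pushforward over the finitely many rational Richardson orbits in $\mathcal{O}_{\mathrm{st}}$ with the correct normalizations.
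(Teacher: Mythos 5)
Your overall route mirrors the paper's: realize $\widehat{\xi}_{\mathcal{O}_{\mathrm{st}}}$ as the pushforward of a smooth measure along a proper map of equal-dimensional spaces (the conormal bundle $G\times_{P}\p\to\g$), compute the Jacobian and identify it with $|D_{\g,\l}|_F^{1/2}$, and then translate $L^{1+s}$-integrability of the pushforward into a log-canonical threshold. Your Jacobian computation is correct and essentially reproduces the paper's Proposition on the explicit formula for $\widehat{\xi}_{\mathcal{O}_{\mathrm{st}}}$: writing $d\psi$ in block form with respect to $\g=\n^{-}\oplus\l\oplus\n$ and observing that $[\cdot,N]_{\n^{-}}$ strictly raises the parabolic grading (your ``height filtration'' argument is equivalent) is the same mechanism as the paper's Baker--Campbell--Hausdorff reduction to the derivative at $X=0$.

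Two points diverge from the paper, and the second one is where your argument is genuinely incomplete. First, in place of an explicit change of variables, the paper invokes its earlier result \cite[Theorem 1.1 or Proposition 4.1]{GHS}, which packages the equality $\epsilon_{\star}(\pi_{*}\mu)=\lct_F(\operatorname{Jac}\pi)$ for a proper dominant analytic map between equal-dimensional spaces; your by-hand version is morally the same, but you silently drop the finite fiber sum $f_{\mathcal{O}_{\mathrm{st}}}(Y)=|D_{\g}(Y)|_F^{-1/2}\sum_{i=1}^{M_Y}|D_{\l}(Y_i)|_F^{1/2}$ from the generalized Gelfand--Leray formula, replacing it with a single term $c'|D_{\g,\l}(X_\l)|^{-1/2}$. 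Since all terms are positive and $M_Y$ is uniformly bounded by $|W(G,T)/W(L,T)|$, your ``$\asymp$'' can be justified, but it should be stated. Second, and more seriously, for the equality $\epsilon_{\star}(\widehat{\xi}_{\mathcal{O}_{\mathrm{st}}})=\epsilon_{\star}(\widehat{\xi}_{\mathcal{O}_{\mathrm{st}}};0)$ you propose Harish-Chandra descent at a general semisimple point $x_s$, asserting that the relative Weyl discriminant of the centralizer $\g_{x_s}$ has an equal or larger log-canonical threshold. That assertion is not justified: it requires comparing $\lct_F(D_{\g_{x_s},\l'})$ for the Levi subalgebras $\l'$ arising in the descent to $\lct_F(D_{\g,\l};0)$, and this is a nontrivial combinatorial inequality you neither prove nor reduce to something standard. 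The paper avoids this entirely: $\widehat{\xi}_{\mathcal{O}_{\mathrm{st}}}$ is homogeneous (\cite[Lemma 17.2]{Kot05}) and $D_{\g,\l}$ is a homogeneous polynomial, so $\lct_F(D_{\g,\l})=\lct_F(D_{\g,\l};0)$ and $\epsilon_{\star}(\widehat{\xi}_{\mathcal{O}_{\mathrm{st}}};x)$ is scale-invariant toward $0$, giving the equality immediately. You should replace the descent sketch with this homogeneity observation.
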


When analyzing (\ref{eq:formula for epsilon of Richardson}), we first
consider the case that $\g$ is $F$-split, and $\l=\t$ is an $F$-split
Cartan, and compute the $F$-log-canonical threshold of the Weyl discriminant
$D_{\g,\t}=D_{\g}|_{\t}$. We set $h_{\g}:=\frac{\left|\Sigma(\g,\t)\right|}{\mathrm{rk}(\g)}$
to be the Coxeter number of $\Sigma(\g,\t)$.
\begin{prop}[Proposition \ref{prop:lct for simple}]
\label{prop:Lct of Weyl discriminant}Suppose that $\underline{G}$
is an $F$-split, simple algebraic group. Then: 
\[
\lct_{F}(\triangle_{\g}^{\t};0)=2\lct_{F}(D_{\g,\t};0)=\frac{2}{h_{\g}}=\frac{\mathrm{rk}(\g)}{\left|\Sigma^{+}(\g,\t)\right|}.
\]
\end{prop}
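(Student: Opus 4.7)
The plan is to reduce the proposition to an explicit computation of the log-canonical threshold at the origin of the Coxeter hyperplane arrangement $\triangle_\g^\t$ on $\t$. The last equality $2/h_\g = \mathrm{rk}(\g)/|\Sigma^+(\g,\t)|$ is just the definition of the Coxeter number, since $|\Sigma(\g,\t)| = 2|\Sigma^+(\g,\t)|$. For the first equality, the torus $\t$ carries no roots, so $\triangle_\t^\t \equiv 1$, and (\ref{eq:reduction to hyperplane arrangement})--(\ref{eq:relative Weyl discriminant}) give $D_{\g,\t}|_\t = D_\g|_\t = (\triangle_\g^\t)^2$; combined with the elementary identity $\lct_F(g^2;x) = \tfrac{1}{2}\lct_F(g;x)$ this yields $2\lct_F(D_{\g,\t};0) = \lct_F(\triangle_\g^\t;0)$. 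It therefore suffices to prove
\[
\lct_F(\triangle_\g^\t;0) \;=\; \frac{\mathrm{rk}(\g)}{|\Sigma^+(\g,\t)|}.
\]

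For the upper bound, I would use the standard inequality $\lct_F(f;0) \leq \dim V / \mathrm{ord}_0(f)$ for any nonzero polynomial $f$ on $V$: since $\triangle_\g^\t$ is homogeneous of degree $|\Sigma^+|$, its order of vanishing at the origin equals $|\Sigma^+|$, and $\dim\t = \mathrm{rk}(\g)$, giving $\lct_F(\triangle_\g^\t;0) \leq \mathrm{rk}(\g)/|\Sigma^+|$. For the lower bound, I would appeal to the formula
\[
\lct_F\!\left(\prod_{\alpha\in\Sigma^+}\alpha;\,0\right) \;=\; \min_{L}\,\frac{\mathrm{codim}(L)}{|\{\alpha\in\Sigma^+:\alpha|_L\equiv 0\}|},
\]
where $L$ ranges over the flats of the Coxeter arrangement (nonempty intersections of the hyperplanes $\ker\alpha$). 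Over $\C$ this is Mustata's formula for log-canonical thresholds of central hyperplane arrangements; for a general local field $F$ of characteristic zero I would derive it by constructing an explicit log-resolution via iterated blow-ups of $\t$ along a building set of flats (a De~Concini--Procesi wonderful model), after which the resolved variety carries a simple normal crossings divisor and the $\lct$-integral is computed by direct change of variables in local charts, uniformly over $F$.

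To finish, it remains to verify that the minimum on the right-hand side is attained at $L = \{0\}$, which yields the desired value $\mathrm{rk}(\g)/|\Sigma^+|$. Every flat of the Coxeter arrangement is $W$-conjugate to some $\mathfrak{z}(\l_J):=\bigcap_{\alpha\in J}\ker\alpha$ for $J\subseteq\Delta$, with $\mathrm{codim}(\mathfrak{z}(\l_J)) = |J|$ and the roots vanishing on $\mathfrak{z}(\l_J)$ being exactly the positive roots $\Sigma^+_{\l_J}$ of the standard Levi $\l_J$. So the claim reduces to the monotonicity inequality
\[
\frac{|\Sigma^+_{\l_J}|}{|J|} \leq \frac{|\Sigma^+(\g,\t)|}{\mathrm{rk}(\g)} \qquad \text{for every } J \subsetneq \Delta,
\]
i.e.\ that $|\Sigma^+|/\mathrm{rk}$ is maximized among standard parabolic subsystems of $\Sigma(\g,\t)$ by the full simple root system. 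Decomposing $\l_J$ into connected simple factors, this reduces to the strict inequality $h_{\l_{J_i}} < h_\g$ for each connected component $J_i\subsetneq\Delta$, which follows from the identity $h = \mathrm{ht}(\tilde\alpha) + 1$ for highest roots, since the highest root of a simple root system strictly dominates (in height) the highest root of any proper parabolic subsystem. The main obstacle in this plan is establishing the hyperplane-arrangement formula for $\lct_F$ uniformly across archimedean and non-archimedean local fields; once that is in place, the combinatorial verification above is routine.
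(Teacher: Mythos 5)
Your proposal is correct and essentially reproduces the paper's argument: Theorem~\ref{Thm:resolution of hyperplane arrangements} and Corollary~\ref{cor:lct of hyperplane arrangement} give the hyperplane-arrangement formula for $\lct_F$ uniformly over local fields of characteristic zero (via exactly the wonderful-model blow-up sequence you describe), Lemma~\ref{lem:bijection betwweb dense elements and standard Levis} identifies the (dense) flats with Levi subalgebras, and Lemma~\ref{lem:inequalities on coxeter numbers} supplies the Coxeter-number monotonicity $h_{\l}\leq h_{\g}$ needed to locate the minimum at the origin. The only cosmetic differences are that you minimize over all flats rather than restricting to the dense ones as the paper does, and your separate upper-bound argument via the order of vanishing is subsumed by the hyperplane-arrangement formula itself.
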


The notion of $\epsilon_{\star}(\widehat{\xi};X)$ can be extended
to a larger family of invariant distributions $\xi$ as follows. Denote
by $\mathcal{S}^{*}(\g)^{G}$ the space of $G$-invariant distributions
on $\g$, and by $\mathcal{S}_{\mathrm{HC}}^{*}(\g)^{G}\subseteq\mathcal{S}^{*}(\g)^{G}$
the subspace consisting of: 
\begin{itemize}
\item Distributions whose Fourier transform has bounded support modulo conjugation
by $G$, if $F$ is non-Archimedean. 
\item $\mathcal{Z}(\g_{\C})$-eigendistributions, where $\mathcal{Z}(\g_{\C})$
is the center of the universal enveloping algebra $\mathcal{U}(\g_{\C})$
of $\g_{\C}$, if $F$ is Archimedean. 
\end{itemize}
By results of Harish-Chandra \cite[Theorem 4.4]{HC99}, \cite{HC65a},
it follows that any $\xi\in\mathcal{S}_{\mathrm{HC}}^{*}(\g)^{G}$
is given by a locally integrable function $f_{\xi}\in L_{\mathrm{Loc}}^{1}(\g)$
and $\left|D_{\g}\right|_{F}^{\frac{1}{2}}\cdot f_{\xi}$ is locally
bounded. Furthermore, by \cite[Theorem 16.3]{HC99} and \cite[Theorem 2]{HC65b}
the function $\left|D_{G}\right|_{F}^{\frac{1}{2}}\cdot\theta_{\pi}$
is locally bounded for every $\pi\in\Irr(G)$, where $D_{G}$ is the
Weyl discriminant of $G$. Using Proposition \ref{prop:Lct of Weyl discriminant}
and the Weyl integration formula, we prove (in $\mathsection$\ref{sec:Integrability-of-global characters}):

\begin{theorem}\label{thmB:lower bound on epsilon of reps}Let $\underline{G}$
be a connected reductive algebraic $F$-group, with $G:=\GG(F)$ and
$\g:=\Lie(G)$. Let $\g_{1},\dots,\g_{M}$ be the simple constituents
of $[\g_{\overline{F}},\g_{\overline{F}}]$. 
\begin{enumerate}
\item Let $\xi$ be a distribution in $\mathcal{S}_{\mathrm{HC}}^{*}(\g)^{G}$.
Then 
\[
\epsilon_{\star}(\xi)\geq\underset{1\leq i\leq M}{\min}\frac{2}{h_{\mathfrak{g_{i}}}}.
\]
\item For every $\pi\in\mathrm{Rep}(G)$, we have: 
\[
\epsilon_{\star}(\pi)\geq\underset{1\leq i\leq M}{\min}\frac{2}{h_{\mathfrak{g_{i}}}}.
\]
\end{enumerate}
\end{theorem}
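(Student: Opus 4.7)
The plan is to combine Harish-Chandra's local bounds---namely that $|D_{\g}|_F^{1/2}f_{\xi}$ and $|D_{G}|_F^{1/2}\theta_{\pi}$ are locally bounded (as quoted in the excerpt)---with the Weyl integration formula, thereby reducing integrability of $|f_{\xi}|^{1+\epsilon}$ (resp.\ $|\theta_{\pi}|^{1+\epsilon}$) to integrability of $|\triangle_{\g}^{\t}|_F^{-\epsilon}$ on Cartan subalgebras. Proposition~\ref{prop:Lct of Weyl discriminant} then delivers the claimed threshold.

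For part~(1), fix a small ball $B\subseteq\g$ around $0$ (other base points reduce to this case via translation combined with the Jordan decomposition). The HC bound gives $\int_{B}|f_{\xi}(X)|^{1+\epsilon}\,dX\leq C\int_{B}|D_{\g}(X)|_F^{-(1+\epsilon)/2}\,dX$, and since $|D_{\g}|_F$ is $G$-invariant, the Weyl integration formula rewrites the right-hand side as
\[
\sum_{T}\frac{1}{|W(G,T)(F)|}\int_{\t_{\rss}}|D_{\g}(X)|_F^{(1-\epsilon)/2}\,V_{B}^{T}(X)\,dX,
\]
where $V_{B}^{T}(X):=\mathrm{vol}\{gT\in G/T:\Ad(g)X\in B\}$ is the orbital integral at $X$ of the characteristic function $\chi_{B}$. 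A classical HC estimate on orbital integrals of Schwartz functions shows that $|D_{\g}|_F^{1/2}\cdot V_{B}^{T}$ is locally bounded. Using $|D_{\g}|_F|_{\t}=|\triangle_{\g}^{\t}|_F^{2}$ on a split Cartan $\t$, we reduce to finiteness of $\sum_{T}\int_{\t\cap B'}|\triangle_{\g}^{\t}(X)|_F^{-\epsilon}\,dX$, which holds whenever $\epsilon<\lct_{F}(\triangle_{\g}^{\t};0)$ for every Cartan $T$ appearing in the sum.

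The log-canonical threshold is then computed as follows. For $\underline{G}$ $F$-split and simple with $\t$ a split Cartan, Proposition~\ref{prop:Lct of Weyl discriminant} gives $\lct_{F}(\triangle_{\g}^{\t};0)=2/h_{\g}$. In the general reductive case, decompose $[\g_{\overline{F}},\g_{\overline{F}}]=\bigoplus_{i}\g_{i}$ into its $\overline{F}$-simple constituents; then $\t_{\overline{F}}=\mathfrak{z}(\g)_{\overline{F}}\oplus\bigoplus_{i}\t_{i}$ and $\triangle_{\g}^{\t}=\prod_{i}\triangle_{\g_{i}}^{\t_{i}}$ is a product of polynomials in disjoint variables. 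Since $\lct_{F}$ is invariant under finite field extension (agreeing with the geometric log-canonical threshold) and the lct of such a product equals the minimum of the factors, one obtains $\lct_{F}(\triangle_{\g}^{\t};0)=\min_{i}2/h_{\g_{i}}$; non-$F$-split Cartans yield only larger thresholds and so do not degrade the bound. This proves part~(1).

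For part~(2), reduce to irreducible $\pi$ via the triangle inequality along the composition series. Near any semisimple $x\in G$, the local character expansion around $x$ (the extension of Theorem~\ref{thm:local character expansion} noted in the subsequent remark) expresses $\theta_{\pi}(x\exp Y)$, for $Y\in\g_{x}:=\Lie(G_{x})$ in a small neighborhood of $0$, as a finite linear combination $\sum_{\mathcal{O}}c_{\mathcal{O}}\cdot\widehat{\xi}_{\mathcal{O}}^{G_{x}}(Y)$; each $\widehat{\xi}_{\mathcal{O}}^{G_{x}}$ belongs to $\mathcal{S}_{\mathrm{HC}}^{*}(\g_{x})^{G_{x}}$, so part~(1) applied to the reductive centralizer $G_{x}$ gives $\epsilon_{\star}(\pi,x)\geq\min_{j}2/h_{(\g_{x})_{j}}$. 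Since the root system of $G_{x}$ is a closed (pseudo-Levi) subsystem of that of $\underline{G}$, each simple component $(\g_{x})_{j}$ sits inside some $\g_{i}$, and a case-by-case verification of Coxeter numbers for such subsystems yields $h_{(\g_{x})_{j}}\leq h_{\g_{i}}$; hence $\epsilon_{\star}(\pi,x)\geq\min_{i}2/h_{\g_{i}}$. Non-semisimple $x$ are reduced to their semisimple parts by Jordan decomposition, and taking the infimum over $x$ finishes the proof. The principal technical inputs requiring care are the HC orbital-integral estimate $|D_{\g}|_F^{1/2}V_{B}^{T}\lesssim 1$ locally (especially in the Archimedean setting), the invariance of $\lct_{F}$ under finite field extension needed to treat non-$F$-split $\underline{G}$, and the Coxeter-number inequality for pseudo-Levi sub-root-systems in the descent step.
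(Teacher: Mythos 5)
Your part~(1) follows essentially the same route as the paper's: the Harish-Chandra local bound $|D_\g|_F^{1/2}|f_\xi|\le C$, the Weyl integration formula, the Harish-Chandra descent bound on $|D_\g|_F^{1/2}\cdot V_B^T$, and Proposition~\ref{prop:Lct of Weyl discriminant}. The paper packages the Weyl-integration reduction into Proposition~\ref{prop:relating lct to relative lct in split groups}, and handles non-$F$-split groups (and non-$F$-split Cartans) cleanly by transferring to a splitting extension $F'$ via $\lct_F\ge\lct_{F'}$ (Remark~\ref{rem:Lct of polynomials on smooth varieties and behavior with field extensions}). Your remark that ``non-$F$-split Cartans yield only larger thresholds'' is true but unargued; the transfer argument is the simplest way to justify it, and in general equality of $\lct_F$ with $\lct_\C$ is not automatic -- it holds here because the Coxeter-arrangement resolution has the same numerical data over every $F$ (Lemma~\ref{lem:hyperplane arrangments in the F-split case}).

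For part~(2) your route is genuinely different, and there are real gaps. The paper works directly with the local bound $|D_G|_F^{1/2}|\theta_\pi|\le C$ on $G$, reducing the assertion to finiteness of $\int_B|D_G|_F^{-(1+s)/2}\mu_G$ for \emph{arbitrary} balls $B\subseteq G$, transfers to the Chevalley model over $\C$, applies the Weyl integration formula on the group, and at each $t'\in T(\C)$ uses the divisibility $\triangle_{\mathfrak{s}_\C}^{\t_\C}\mid\triangle_{\mathfrak{h}_\C}^{\t_\C}$ (with $\mathfrak{s}_\C=\mathrm{Lie}\,\mathrm{Cent}_{H(\C)}(t')$) to get $\lct_\C(\triangle_{\mathfrak{s}_\C}^{\t_\C})\ge\lct_\C(\triangle_{\mathfrak{h}_\C}^{\t_\C})$ with no casework. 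Your version instead invokes the local character expansion around each semisimple $x$ and applies part~(1) to $G_x$. Three steps are missing. First, the expansion controls $Y\mapsto\theta_\pi(x\exp Y)$ as a function on $\g_x$ near $0$, but $\epsilon_\star(\pi;x)$ measures integrability of $\theta_\pi$ on a ball in $G$ around $x$; passing between the two requires Harish-Chandra descent with its Jacobian $|D_{G/G_x}|$, and you never address this bookkeeping. Second, the inequality $h_{(\g_x)_j}\le h_{\g_i}$ for pseudo-Levi subsystems is asserted with a promise of ``case-by-case verification'' but not carried out; it is a genuine combinatorial claim (true, but nontrivial -- and the paper's divisibility argument supplies the needed lct inequality directly without it). Third, ``non-semisimple $x$ are reduced to their semisimple parts by Jordan decomposition'' is not an argument; the paper's formulation for arbitrary balls $B$ makes the issue disappear, whereas your pointwise use of the character expansion is stated only at semisimple $x$.
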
 
\begin{rem}
Note that the lower bounds in Theorem \ref{thmB:lower bound on epsilon of reps}
depend \textbf{only on the absolute root system of} $\underline{G}$. 
\end{rem}

The function $\triangle_{\g}^{\t}:\t\rightarrow F$ is the defining
polynomial of a \textbf{hyperplane arrangement of finite Coxeter type}
(see $\mathsection$\ref{subsec:Hyperplanes-arrangements--basic}).
The fact that $\lct_{F}(\triangle_{\g}^{\t};0)=\frac{2}{h_{\g}}$
for $F$ Archimedean follows from conjectures by Macdonald \cite[Conjectures 2.1'' and 6.1]{Mac82}
and Sekiguchi\textendash Yano \cite[Conjecture 5.II]{SY79}, which
were proved by Opdam \cite{Opd89,Opd93} (see \cite{DL95} for a more
general setting). A similar result for non-Archimedean $F$ is probably
known by the experts. In Proposition \ref{prop:Lct of Weyl discriminant}
we give an alternative, elegant proof of this result (see Proposition
\ref{prop:lct for simple} and the preceding discussion in $\mathsection$\ref{def:The-Coxeter-number}
and $\mathsection$\ref{subsec:Evaluation-of-some}). We refer to
a survey by Budur \cite[2.5-2.6]{Bud12} for further discussion about
singularities of hyperplane arrangements, and relation to other conjectures,
such as the strong monodromy conjecture (see e.g.~\cite{BMT11}).\medskip{}

We now turn our attention to $\GG=\GL_{n}$. In this case we have:
\begin{enumerate}
\item Every nilpotent $\GL_{n}(F)$-orbit in $\gl_{n}(F)$ is Richardson
(see e.g.~\cite[Theorem 7.2.3]{CM93}) and stable (since both $\mathcal{O}(\mathcal{N}_{\gl_{n}(\overline{F})})$
and $\mathcal{O}(\mathcal{N}_{\gl_{n}(F)})$ are classified by the
Jordan normal form, and hence in bijection with the set of partitions
of $n$). 
\item The combinatorial description of the nilpotent orbits in $\gl_{n}(F)$
is simpler than in other classical Lie algebras. 
\end{enumerate}
Items (1) and (2) allow us to give a full description of $\epsilon_{\star}(\pi)$
and $\epsilon_{\star}(\widehat{\xi}_{\mathcal{O}})$ for $\GL_{n}$
(see $\mathsection$\ref{subsec:Proof-of-Theorem A} and Remark \ref{rem:Rango Rao}
for a discussion on the difficulties in the non-Richardson and non-stable
cases). Since the geometry of the nilpotent orbits in $\GL_{n}$ is
already very rich, it is an excellent model case for studying Question
\ref{que:Main question}

By analyzing $\lct_{F}(D_{\gl_{n},\l}(X);0)$ (see Propositions \ref{Prop:lct of relative Weyl discriminant}
and \ref{prop:relating lct to relative lct in split groups}), and
using Theorem \ref{thm A: epsilon of Fourier of orbital integral is lct(D(G/M))},
we provide an explicit formula for $\epsilon_{\star}(\widehat{\xi}_{\mathcal{O}})$,
as well as the behavior of $\epsilon_{\star}(\widehat{\xi}_{\mathcal{O}})$
with respect to orbit closures.

\begin{theorem}\label{thmC: epsilon of Fourier of orbital integral for GL_n}Let
$\mathcal{O}=\mathcal{O}_{\nu}$ be a nilpotent orbit in $\gl_{n}(F)$
whose Jordan normal form has blocks of size $\nu_{1}\geq\dots\geq\nu_{M}$
and set $n_{k}:=\sum_{i=1}^{k}\nu_{i}$. Let $\p=\l\oplus\n$ be a
polarization of $\mathcal{O}$. 
\begin{enumerate}
\item If $\mathcal{O}$ is the zero orbit, then $\epsilon_{\star}(\widehat{\xi}_{\mathcal{O}})=\infty$.
Otherwise, we have 
\begin{equation}
\epsilon_{\star}(\widehat{\xi}_{\mathcal{O}_{\nu}})=2\lct_{F}(D_{\g,\l};0)=\min_{1\le k\le M}\frac{\left(\sum_{j=1}^{k}j\nu_{j}\right)-1}{\binom{n_{k}}{2}-\sum_{j=1}^{k}(j-1)\nu_{j}}.\label{eq:formula for epsilon of orbit}
\end{equation}
\item If $\mathcal{O}'\subseteq\overline{\mathcal{O}}$ then $\epsilon_{\star}(\widehat{\xi_{\mathcal{O}'}})\geq\epsilon_{\star}(\widehat{\xi}_{\mathcal{O}})$. 
\end{enumerate}
\end{theorem}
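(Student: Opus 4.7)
The plan is to apply Theorem~A to reduce part~(1) to an explicit log-canonical threshold computation, and then to perform this computation by leveraging the hyperplane-arrangement machinery suggested by Propositions~\ref{Prop:lct of relative Weyl discriminant} and~\ref{prop:relating lct to relative lct in split groups}. Part~(2) will then follow from the combinatorics of the dominance order on partitions.

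For the setup of part~(1), every nilpotent orbit in $\gl_n(F)$ is stable and Richardson (items~(1)--(2) preceding the theorem), so Theorem~A directly yields $\epsilon_\star(\widehat{\xi}_{\mathcal{O}_\nu})=2\lct_F(D_{\g,\l};0)$ whenever $\mathcal{O}_\nu\neq 0$; the zero-orbit case is immediate, since $\widehat{\xi}_0$ is a nonzero constant. The polarizing Levi is determined by the classical Richardson correspondence in type~$A$: $\mathcal{O}_\nu$ is the Richardson orbit of the parabolic whose Levi has block sizes $\mu_1,\dots,\mu_M$, where $\mu=\nu^t$ is the transpose partition. One therefore takes $\p=\l\oplus\n$ with $\l\cong\gl_{\mu_1}\times\cdots\times\gl_{\mu_M}$.

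To compute $\lct_F(D_{\g,\l};0)$ I would use that $\gl_n$ is $F$-split, and invoke Proposition~\ref{prop:relating lct to relative lct in split groups} together with the Weyl integration formula on $\l$ to reduce the problem to the convergence exponent near $0\in\t\cong F^n$ of an integral of the shape $\int_\t|\triangle_\l^\t|^{2}\,|\triangle_{\g,\l}^\t|^{-2s}\,dt$. Labelling the coordinates $x_1,\dots,x_n$ of $\t$ so that consecutive intervals of lengths $\mu_1,\mu_2,\dots$ index the blocks of~$\l$, both $\triangle_\l^\t$ and $\triangle_{\g,\l}^\t$ become products of root-linear forms $x_i-x_j$, so the problem becomes one of log-canonical thresholds for a weighted braid-type subarrangement. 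The central obstacle is the \emph{combinatorial identification} of this threshold with the explicit right-hand side of~(\ref{eq:formula for epsilon of orbit}). I would approach this by constructing a log-resolution of the arrangement guided by the Young-diagram structure of~$\nu$: the natural candidates are given by ``row-by-row'' centres, parametrised by $k=1,\dots,M$, on which the coordinates within each of the top $k$ rows are identified. A careful count of codimensions, $\l$-root multiplicities, and cross-root multiplicities at each such centre should match the $k$-th candidate $((\sum_{j\le k}j\nu_j)-1)/(\binom{n_k}{2}-\sum_{j\le k}(j-1)\nu_j)$. The hard part is to verify that no other centre produces a smaller ratio, which should follow from a careful combinatorial estimate on the Young diagram.

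For part~(2), the inclusion $\mathcal{O}'\subseteq\overline{\mathcal{O}}$ translates into the dominance relation $\nu'\preceq\nu$ on partitions. By induction it suffices to treat an elementary dominance step (moving one box of the Young diagram one row downward) and compare the ratios in~(\ref{eq:formula for epsilon of orbit}) before and after the step. A direct calculation should show that the minimum over $k$ weakly increases, yielding the desired monotonicity; the only subtlety is that the index $k$ attaining the minimum may shift, so a short case analysis is unavoidable.
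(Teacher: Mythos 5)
Your overall strategy is the same as the paper's: reduce to $2\lct_F(D_{\g,\l};0)$ via Theorem~\ref{thm A: epsilon of Fourier of orbital integral is lct(D(G/M))}, pass to the split Cartan by the Weyl integration formula and Proposition~\ref{prop:relating lct to relative lct in split groups}, resolve the resulting Coxeter arrangement (Theorem~\ref{Thm:resolution of hyperplane arrangements}, Proposition~\ref{Prop:lct of relative Weyl discriminant}), and then deduce part~(2) from the behavior under the dominance order. So the route is not genuinely different.

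There is, however, a concrete slip in the reduction to the arrangement. After Weyl integration on $\l$, the quantity to be estimated is
\[
\int_{\t}\left|D_{\g,\l}(X)\right|_{F}^{-s/2}\left|D_{\l}(X)\right|_{F}\left(\int_{L/T}1_{B}(\Ad(l).X)\,d\overline{l}\right)dX,
\]
and by Kottwitz's descent formula the orbital integral over $L/T_{0}$ contributes a factor $\left|D_{\l}\right|_{F}^{-1/2}$ (times a bounded positive function). The surviving weight is therefore $\left|D_{\l}\right|_{F}^{1/2}=\left|\triangle_{\l}^{\t}\right|_{F}$, not $\left|D_{\l}\right|_{F}=\left|\triangle_{\l}^{\t}\right|_{F}^{2}$. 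Your displayed integrand $\left|\triangle_{\l}^{\t}\right|^{2}\left|\triangle_{\g,\l}^{\t}\right|^{-2s}$ corresponds to $m=2$ in the relative log-canonical threshold $\lct_{\C}(\triangle_{\g,\l}^{\t};(\triangle_{\l}^{\t})^{m},0)$, and Proposition~\ref{Prop:lct of relative Weyl discriminant} shows this collapses to the constant $2/\lambda_{1}$; the correct formula requires $m=1$. If you carry your exponents through literally you will not recover the $\min$ on the right-hand side of~\eqref{eq:formula for epsilon of orbit}.

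Two smaller points. First, the candidate centers in the resolution are the dense edges of the arrangement, which (via Lemma~\ref{lem:bijection betwweb dense elements and standard Levis}) are in bijection with Levi subalgebras $\l'$ with $[\l',\l']$ simple, i.e.\ with $\sl_{\kappa}$-embeddings for all $\kappa$; your ``row-by-row'' centers indexed by $k=1,\dots,M$ are the ones that ultimately realize the minimum, but one must first minimize over all $\sl_{\kappa}$-centers and only then argue (via the piecewise monotonicity in Proposition~\ref{Prop:lct of relative Weyl discriminant}) that the minimum is attained at $\kappa=n_{k}$. Second, for part~(2), directly comparing the two minima after an elementary dominance step does force a case analysis on where the minimum is attained, as you note; the cleaner route, used in the paper, is to show the termwise inequality $\psi_{\nu,k}\geq\psi_{\nu',k}$ for all $k$ (Lemma~\ref{lem:volume of partitions}), from which the inequality of minima is immediate and no index-shifting needs to be tracked.
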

\begin{rem}
\label{rem:Geometric formula}The formula in Equation \ref{eq:formula for epsilon of orbit}
above can be given a geometric interpretation as follows. Keep the
notation of Theorem \ref{thmC: epsilon of Fourier of orbital integral for GL_n},
let $[\nu]_{k}:=(\nu_{1},\ldots,\nu_{k})$ denote the partition obtained
by considering the first $k$ parts of $\nu$, let $\mathcal{O}_{[\nu]_{k}}$
denote the nilpotent orbit in $\gl_{n_{k}}$ whose Jordan normal form
is $[\nu]_{k}$, let $\p_{k}=\l_{k}\oplus\n_{k}$ be a polarization
of $\mathcal{O}_{[\nu]_{k}}$, and let $\n_{\l_{k}}$ be a maximal
nilpotent subalgebra of $\l_{k}$. Then,
\[
\epsilon_{\star}(\widehat{\xi}_{\mathcal{O}_{\nu}})=\min_{1\le k\le M}\frac{\mathrm{ss.rk}(\gl_{n_{k}})+\dim\n_{\l_{k}}}{\dim\n_{k}}=\min_{1\le k\le M}\frac{2\cdot\mathrm{ss.rk}(\gl_{n_{k}})+\left(\dim\mathcal{N}_{\gl_{n_{k}}}-\dim\mathcal{O}_{[\nu]_{k}}\right)}{\dim\mathcal{O}_{[\nu]_{k}}}.
\]
\end{rem}

\begin{example}
\label{exa:some examples of epsilon}It is worth analyzing two extreme
cases: 
\begin{enumerate}
\item If $\mathcal{O}$ is the regular orbit $\mathcal{O}_{\mathrm{reg}}$
(resp.~subregular orbit $\mathcal{O}_{\mathrm{sreg}}$), then its
Jordan normal form corresponds to the partition $\nu=(n)$ (resp.~$\nu=(n-1,1)$).
By (\ref{eq:formula for epsilon of orbit}) we have 
\begin{equation}
\epsilon_{\star}(\widehat{\xi}_{\mathcal{\mathcal{O}_{\mathrm{reg}}}})=\frac{2}{n}\text{ \,\,\,\,\,\, }\epsilon_{\star}(\widehat{\xi}_{\mathcal{\mathcal{O}_{\mathrm{sreg}}}})=\frac{2}{n-1}.\label{eq:regular+subreg orbit on GL_n}
\end{equation}
Since $\mathcal{\mathcal{O}_{\mathrm{reg}}}$ has a polarization with
Levi $\l=\t$, Eq.~(\ref{eq:regular+subreg orbit on GL_n}) agrees
with Theorem \ref{thm A: epsilon of Fourier of orbital integral is lct(D(G/M))}
and Proposition \ref{prop:Lct of Weyl discriminant}. 
\item Let $\mathfrak{p}_{1},\dots,\p_{n-1}\subseteq\gl_{n}$ be the maximal
standard parabolic subalgebras, such that $\p_{j}$ has Levi $\gl_{n-j}\times\gl_{j}$.
Using (\ref{eq:formula for epsilon of orbit}), it can be shown (see
Example \ref{exam:analyzing maximal parabolics}) that for the orbit
$\mathcal{O}_{j}$ with polarization $\p_{j}$, 
\begin{equation}
\epsilon_{\star}(\widehat{\xi}_{\mathcal{O}_{j}})=1.\label{eq:formula for maximal parabolics}
\end{equation}
In other words, for all $j$, $\widehat{\xi}_{\mathcal{O}_{j}}$ just
fails to be in $L_{\mathrm{loc}}^{2}(\gl_{n})$. In particular, since
$\mathcal{O}_{1}$ is the minimal orbit, it follows from Theorem \ref{thmC: epsilon of Fourier of orbital integral for GL_n}(2)
that $\epsilon_{\star}(\widehat{\xi}_{\mathcal{O}})\leq1$ for every
$\{0\}\neq\mathcal{O}\in\mathcal{O}(\mathcal{N}_{\gl_{n}})$.
\end{enumerate}
\end{example}

\begin{rem}
\label{rem: no simple formula for epsilon}As evident from (\ref{eq:formula for epsilon of orbit}),
given a partition corresponding to a nilpotent orbit $\mathcal{O}$
in $\gl_{n}$, the integrability exponent $\epsilon_{\star}(\widehat{\xi}_{\mathcal{O}})$
can be explicitly calculated. In Figure \ref{fig: epsilon calculation for gl10}
we include the values of $\epsilon_{\star}(\widehat{\xi}_{\mathcal{O}})$
for all nilpotent orbits in $\gl_{10}$, together with their orbit
closure relations. Observing (\ref{eq:regular+subreg orbit on GL_n})
and (\ref{eq:formula for maximal parabolics}), one could naively
guess that $\epsilon_{\star}(\widehat{\xi}_{\mathcal{O}})$ is equal
to $2/\nu_{1}$, where $\nu$ is the partition of the Jordan normal
form of $\mathcal{O}$. While this is true for $\gl_{n}$ for $n\le7$,
this fails for $n\geq8$. Taking Figure \ref{fig: epsilon calculation for gl10}
(and additional calculations) into account, it seems that the formula
given in Theorem \ref{thmC: epsilon of Fourier of orbital integral for GL_n}
can probably not be simplified much further. 
\end{rem}

\smallskip{}

We now move from Fourier transforms of orbital integrals to characters
of representations. By \cite{MW87}, when $\GG=\GL_{n}$, there is
a unique orbit $\mathcal{O}_{\mathrm{max}}$ in $\mathcal{N}_{\gl_{n}}$
appearing in the local character expansion of $\Theta_{\pi}$ (i.e.~we
have $c_{\pi}(\mathcal{O}_{\mathrm{max}})\neq0$ in (\ref{eq:local character expansion non-Archimedean})),
such that all other orbits $\mathcal{O}$ appearing in the character
expansion are contained in $\overline{\mathcal{O}}_{\mathrm{max}}$.
We further show the following:

\begin{theorem}\label{thm D:epsilon of a representation}Let $F$
be a non-Archimedean local field of characteristic $0$. Let $\pi\in\Irr(\GL_{n}(F))$
and let $\mathcal{O}_{\mathrm{max}}$ be the maximal orbit as above.
Then, 
\[
\epsilon_{\star}(\pi;e)=\epsilon_{\star}(\widehat{\xi}_{\mathcal{O}_{\mathrm{max}}}).
\]

\end{theorem}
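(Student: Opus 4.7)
The plan is to combine the Harish-Chandra--Howe local character expansion with a generalized Vandermonde argument that exploits the distinct homogeneity degrees of the $\widehat{\xi}_\mathcal{O}$'s. Since $\exp$ is a local $F$-analytic diffeomorphism near $0$, $\epsilon_{\star}(\pi;e) = \epsilon_{\star}(\theta_{\pi}\circ\exp;0)$. By Theorem~\ref{thm:local character expansion}(1), on a neighborhood $U\ni 0$ in $\g$,
\begin{equation*}
\theta_{\pi}(\exp X) = c_{\mathcal{O}_{\max}}(\pi)\,\widehat{\xi}_{\mathcal{O}_{\max}}(X) + \sum_{\mathcal{O}\neq\mathcal{O}_{\max}} c_{\mathcal{O}}(\pi)\,\widehat{\xi}_{\mathcal{O}}(X),
\end{equation*}
and by the $\mathrm{GL}_n$ result of \cite{MW87} cited above, every $\mathcal{O}$ with $c_{\mathcal{O}}(\pi)\neq 0$ lies in $\overline{\mathcal{O}}_{\max}$, strictly so when $\mathcal{O}\neq\mathcal{O}_{\max}$; in particular $\dim\mathcal{O}<\dim\mathcal{O}_{\max}$.

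For the lower bound $\epsilon_{\star}(\pi;e)\geq\epsilon_{\star}(\widehat{\xi}_{\mathcal{O}_{\max}})$, Theorem~\ref{thmC: epsilon of Fourier of orbital integral for GL_n}(2) gives $\epsilon_{\star}(\widehat{\xi}_{\mathcal{O}})\geq\epsilon_{\star}(\widehat{\xi}_{\mathcal{O}_{\max}})$ for every $\mathcal{O}$ in the expansion, so for $s<\epsilon_{\star}(\widehat{\xi}_{\mathcal{O}_{\max}})$ every summand above lies in $L^{1+s}$ on a sufficiently small ball around $0$, and hence so does $\theta_{\pi}\circ\exp$ by the triangle inequality.

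For the reverse inequality I use the homogeneity $\widehat{\xi}_{\mathcal{O}}(\lambda X)=|\lambda|_{F}^{-\dim\mathcal{O}}\widehat{\xi}_{\mathcal{O}}(X)$ for $\lambda\in F^{\times}$, a consequence of nilpotent orbits being $F^{\times}$-cones and of $\mu_{\mathcal{O}}$ scaling covariantly. Let $d_{0}>d_{1}>\cdots>d_{K}$ enumerate the distinct dimensions of orbits with $c_{\mathcal{O}}(\pi)\neq 0$; by \cite{MW87}, the top value $d_{0}=\dim\mathcal{O}_{\max}$ is attained \emph{only} by $\mathcal{O}_{\max}$. Setting $H_{k}(X):=\sum_{\dim\mathcal{O}=d_{k}}c_{\mathcal{O}}(\pi)\widehat{\xi}_{\mathcal{O}}(X)$, so that $H_{0}=c_{\mathcal{O}_{\max}}(\pi)\widehat{\xi}_{\mathcal{O}_{\max}}$, the expansion rescales to
\begin{equation*}
(\theta_{\pi}\circ\exp)(\lambda X)=\sum_{k=0}^{K}|\lambda|_{F}^{-d_{k}}H_{k}(X)
\end{equation*}
for a.e.\ $X$ with $\lambda X\in U$. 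Given $s<\epsilon_{\star}(\pi;e)$, fix a ball $B_{0}$ with $\theta_{\pi}\circ\exp\in L^{1+s}(B_{0})$, pick $\lambda_{0},\ldots,\lambda_{K}\in F^{\times}$ of pairwise distinct, sufficiently negative valuations so that $\lambda_{i}B\subseteq B_{0}\cap U$ for a common ball $B\ni 0$, and set $F_{i}(X):=(\theta_{\pi}\circ\exp)(\lambda_{i}X)$. The a.e.\ system $F_{i}=\sum_{k}|\lambda_{i}|_{F}^{-d_{k}}H_{k}$ on $B$ is governed by the matrix $V_{ik}:=|\lambda_{i}|_{F}^{-d_{k}}$, which is invertible by Descartes' rule of signs: a nontrivial expression $\sum_{k}a_{k}t^{d_{k}}$ has at most $K$ positive real zeros, whereas $V(a_{k})=0$ would supply $K+1$. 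A standard change of variables gives $F_{i}\in L^{1+s}(B)$ for each $i$, so inverting the system exhibits $H_{0}$ as a finite linear combination of the $F_{i}$'s; therefore $c_{\mathcal{O}_{\max}}(\pi)\widehat{\xi}_{\mathcal{O}_{\max}}\in L^{1+s}(B)$. Since $c_{\mathcal{O}_{\max}}(\pi)\neq 0$, this forces $s\leq\epsilon_{\star}(\widehat{\xi}_{\mathcal{O}_{\max}};0)=\epsilon_{\star}(\widehat{\xi}_{\mathcal{O}_{\max}})$, the last equality being Theorem~\ref{thm A: epsilon of Fourier of orbital integral is lct(D(G/M))}.

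The main obstacle is the generalized Vandermonde decoupling in the upper bound. It rests on two ingredients: (a) \cite{MW87}'s uniqueness of the top-dimensional orbit in the expansion, which is what makes $H_{0}$ a single nonzero multiple of $\widehat{\xi}_{\mathcal{O}_{\max}}$ rather than a sum over several equal-dimensional orbits in which cancellation could conceivably improve integrability; and (b) the Descartes-type linear independence of $\{t\mapsto t^{d_{k}}\}$ at the chosen valuation levels, which lets one solve for $H_{0}$ pointwise a.e. from finitely many rescalings of $\theta_{\pi}\circ\exp$.
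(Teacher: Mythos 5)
Your proof is correct in its essential structure and takes a genuinely different route from the paper's. The paper deduces the upper bound $\epsilon_{\star}(\pi;e)\le\epsilon_{\star}(\widehat{\xi}_{\mathcal{O}_{\max}})$ by rescaling so that the coefficients of the subleading $\widehat{\xi}_{\mathcal{O}}$'s become arbitrarily small and then invoking Lemma~\ref{lem:interaction between different orbits}, whose proof passes through the explicit description of the $f_{\mathcal{O}}$'s restricted to the split Cartan and the orders of vanishing along the divisors of the hyperplane-arrangement resolution from Theorem~\ref{Thm:resolution of hyperplane arrangements}. You instead rescale at $K+1$ different levels and invert the resulting generalized Vandermonde system to write $H_{0}=c_{\mathcal{O}_{\max}}(\pi)\widehat{\xi}_{\mathcal{O}_{\max}}$ as a finite linear combination of dilated copies of $\theta_{\pi}\circ\exp$, which is more elementary and sidesteps the resolution entirely; both approaches rely on the same two inputs---the uniqueness of the top orbit $\mathcal{O}_{\max}$ from \cite{MW87} (Proposition~\ref{prop:local character for GLn}) and the homogeneity of $\widehat{\xi}_{\mathcal{O}}$---and both use Theorem~\ref{thmC: epsilon of Fourier of orbital integral for GL_n}(2) to get the easy lower bound. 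Your route trades the extra information Lemma~\ref{lem:interaction between different orbits} carries (a statement about linear combinations that is of independent interest, cf.\ Question~\ref{que:cancelations}) for a shorter proof of this particular theorem.

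Two small slips should be fixed, though neither affects the logic. First, the homogeneity law is not $\widehat{\xi}_{\mathcal{O}}(\lambda X)=|\lambda|_{F}^{-\dim\mathcal{O}}\widehat{\xi}_{\mathcal{O}}(X)$ for arbitrary $\lambda\in F^{\times}$: nilpotent orbits are only stable under scaling by squares, and the correct statement (used in the paper, citing \cite[Lemma 17.2]{Kot05}) is $f_{\mathcal{O}}(c^{2}X)=|c|_{F}^{-\dim\mathcal{O}}f_{\mathcal{O}}(X)$ for $c\in F^{\times}$, equivalently degree $-\tfrac{1}{2}\dim\mathcal{O}$ for $\lambda\in(F^{\times})^{2}$. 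You should therefore take the $\lambda_{i}$ to be squares, e.g.\ $\lambda_{i}=\varpi^{2m_{i}}$ with distinct $m_{i}$; the Vandermonde argument is unaffected since the exponents $\tfrac{1}{2}d_{k}$ remain pairwise distinct. Second, you want $\lambda_{i}B\subseteq B_{0}\cap U$, i.e.\ you need $|\lambda_{i}|_{F}$ small, which for a uniformizer convention $|\varpi|_{F}<1$ means the valuations $v(\lambda_{i})$ should be large and \emph{positive}, not negative as written.
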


Finally, we discuss several applications of the above results. 

\subsubsection{\label{subsec:Upper-bounds-on}Upper bounds on the multiplicities
of $K$-types in $p$-adic groups }

Let $\underline{G}$ be a reductive algebraic $F$-group, and $G=\underline{G}(F)$.
Let $\pi\in\mathrm{Rep}(G)$, let $K\leq G$ be an open compact subgroup,
and let $\pi|_{K}=\sum_{\tau\in\Irr(K)}m_{\tau,\pi}\tau$ be the decomposition
of $\pi$ into irreducible representations of $K$, with $m_{\tau,\pi}:=\dim\mathrm{Hom}_{K}\left(\tau,\pi|_{K}\right)$.
Then we have the following equality of distributions (see Lemma \ref{lem:restiction to maximal compact}):
\[
\Theta_{\pi}|_{K}=\sum_{\tau\in\Irr(K)}m_{\tau,\pi}\Theta_{\tau}.
\]
By a generalization of the classic Hausdorff\textendash Young Theorem
to the setting of compact groups, the $L^{1+\epsilon}$-integrability
of $\Theta_{\pi}|_{K}$ can be translated in the (non-commutative)
Fourier picture, into upper bounds on the multiplicities $m_{\tau,\pi}$.
More precisely, we have the following upper bound on multiplicities
of $K$-types:

\begin{theorem}\label{thmE:upper bounds on multiplicities of K-types}Let
$\pi\in\mathrm{Rep}(G)$ and let $K\leq G$ be an open compact subgroup.
Then for every $\epsilon<\epsilon_{\star}(\pi)$, there exists $C=C(\epsilon,K,\pi)>0$
such that for every $\tau\in\Irr(K)$: 
\[
\dim\mathrm{Hom}_{K}\left(\tau,\pi|_{K}\right)<C\left(\dim\tau\right)^{1-\frac{2\epsilon}{1+\epsilon}}.
\]

\end{theorem}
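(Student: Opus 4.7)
The plan is to combine the character decomposition $\Theta_{\pi}|_{K}=\sum_{\tau\in\Irr(K)}m_{\tau,\pi}\Theta_{\tau}$ (Lemma \ref{lem:restiction to maximal compact}) with the non-commutative Hausdorff--Young inequality on the compact group $K$. The idea is to translate the $L^{1+\epsilon}$-integrability of $\Theta_{\pi}|_{K}$ into operator-norm bounds on its Fourier coefficients, which turn out to encode the multiplicities $m_{\tau,\pi}$.

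First, I set up the operator-valued Fourier transform on $K$. For $\tau\in\Irr(K)$ (taken unitary), define
\[
\widehat{\Theta_{\pi}|_{K}}(\tau):=\int_{K}\Theta_{\pi}(k)\,\tau(k^{-1})\,dk\in\mathrm{End}(V_{\tau}).
\]
Substituting the character decomposition above and applying Schur orthogonality term by term gives $\widehat{\Theta_{\pi}|_{K}}(\tau)=\frac{m_{\tau,\pi}}{\dim\tau}\cdot\mathrm{id}_{V_{\tau}}$, whose Schatten $q$-norm is therefore $m_{\tau,\pi}(\dim\tau)^{1/q-1}$.

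Next, I invoke the non-commutative Hausdorff--Young inequality on $K$: for $1\le p\le 2$ with conjugate exponent $q=p/(p-1)$ and any $f\in L^{p}(K)$,
\[
\sum_{\tau\in\Irr(K)}(\dim\tau)\,\|\widehat{f}(\tau)\|_{S^{q}}^{q}\;\le\;\|f\|_{p}^{q}.
\]
Applying this to $f=\Theta_{\pi}|_{K}$ with $p=1+\epsilon$ (so that $q=(1+\epsilon)/\epsilon$), and discarding every summand except the one at the given $\tau$, the computation of the Fourier coefficient yields $(\dim\tau)^{2-q}m_{\tau,\pi}^{q}\le \|\Theta_{\pi}|_{K}\|_{1+\epsilon}^{q}$, which rearranges exactly to
\[
m_{\tau,\pi}\le(\dim\tau)^{1-\frac{2\epsilon}{1+\epsilon}}\cdot\|\Theta_{\pi}|_{K}\|_{1+\epsilon},
\]
identifying the constant $C:=\|\Theta_{\pi}|_{K}\|_{1+\epsilon}$.

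Finally, I need to verify that $\Theta_{\pi}|_{K}\in L^{1+\epsilon}(K)$, which is where the hypothesis $\epsilon<\epsilon_{\star}(\pi)$ enters. Since $\epsilon_{\star}(\pi)=\inf_{x\in G}\epsilon_{\star}(\pi;x)$, we have $\epsilon<\epsilon_{\star}(\pi;x)$ for every $x\in K$, producing by definition an open ball $B_{x}\ni x$ with $\theta_{\pi}|_{B_{x}}\in L^{1+\epsilon}(B_{x})$; compactness of $K$ then gives a finite subcover, and summing the local $L^{1+\epsilon}$-norms yields $\|\Theta_{\pi}|_{K}\|_{1+\epsilon}<\infty$. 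The main care-point will be pinning down the precise form of the non-commutative Hausdorff--Young inequality (with the Plancherel weighting $\dim\tau$ on $\Irr(K)$) and justifying the termwise computation of Fourier coefficients from the character identity; a secondary issue is that the inequality directly applies only for $p=1+\epsilon\le 2$, but in the cases of interest (e.g.~$\GL_{n}$, by Example \ref{exa:some examples of epsilon} and Theorem \ref{thm D:epsilon of a representation}) one has $\epsilon_{\star}(\pi)\le 1$, so this range suffices.
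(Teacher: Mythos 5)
Your proof follows essentially the same route as the paper's (Corollary \ref{cor:exponential bounds on multiplicities of K-types}): decompose $\Theta_{\pi}|_{K}$ via Lemma \ref{lem:restiction to maximal compact}, apply the compact-group Hausdorff--Young inequality of Lemma \ref{Lemma: Hausdorff--Young}, and read off the multiplicity bound from the weighted Schatten-norm sum. Your derivation of the explicit constant $C=\|\Theta_{\pi}|_{K}\|_{1+\epsilon}$ by dropping all but one summand is in fact slightly cleaner than the paper's tail-estimate argument, and your compactness argument patching ``local $L^{1+\epsilon}$'' to ``$L^{1+\epsilon}(K)$'' makes explicit a step the paper takes for granted. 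The one loose end you flag --- that Hausdorff--Young needs $p=1+\epsilon\le 2$ --- is not fully resolved by invoking Example \ref{exa:some examples of epsilon} and Theorem \ref{thm D:epsilon of a representation}, since the statement is for general reductive $G$ and admits finite-dimensional $\pi$ with $\epsilon_{\star}(\pi)=\infty$; but the case $\epsilon>1$ is easy and can be disposed of directly: then $\theta_{\pi}\in L^{2}(K)$, Parseval gives $\sum_{\tau}m_{\tau,\pi}^{2}<\infty$, so $m_{\tau,\pi}\ne 0$ for only finitely many $\tau$ and any sufficiently large $C$ works. (The paper's proof does not address this case either.)
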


Applying Theorems \ref{thmC: epsilon of Fourier of orbital integral for GL_n}
and \ref{thmE:upper bounds on multiplicities of K-types}, in the
special case that $\pi=\mathrm{Ind}_{\PP(F)}^{\GL_{n}(F)}1$, we prove
the following:
\begin{cor}
\label{cor:upper bound for multiplicities in principal series}Let
$\PP\leq\GL_{n}$ be a standard parabolic subgroup, with Levi $\LL\simeq\LL_{\lambda}=\GL_{\lambda_{1}}\times\dots\times\GL_{\lambda_{N}}$
for a partition $\lambda=(\lambda_{1},\dots,\lambda_{N})$. Let $\nu=(\nu_{1},\dots,\nu_{M})$
be the conjugate partition to $\lambda$, defined by $\nu_{j}:=\#\left\{ i:\lambda_{i}\geq j\right\} $.
Then the following hold, for every $\epsilon>0$ satisfying: 
\[
\epsilon<\epsilon_{\star}(\widehat{\xi}_{\mathcal{O}_{\nu}})=\underset{1\leq k\leq M}{\min}\frac{\left(\sum_{j=1}^{k}j\nu_{j}\right)-1}{\binom{\sum_{j=1}^{k}\nu_{j}}{2}-\sum_{j=1}^{k}(j-1)\nu_{j}},
\]
\begin{enumerate}
\item The character $\Theta_{\mathrm{Ind}_{\PP(\mathcal{O}_{F})}^{\GL_{n}(\mathcal{O}_{F})}1}:\GL_{n}(\mathcal{O}_{F})\rightarrow\C$,
is in $L^{1+\epsilon}(\GL_{n}(\mathcal{O}_{F}))$, where $\mathcal{O}_{F}$
is the ring of integers of $F$. 
\item There exists $C=C(F,\epsilon,n)>0$ such that for every $\tau\in\Irr(\GL_{n}(\mathcal{O}_{F}))$,
we have: 
\[
\dim\mathrm{Hom}_{\GL_{n}(\mathcal{O}_{F})}\left(\tau,\mathrm{Ind}_{\PP(\mathcal{O}_{F})}^{\GL_{n}(\mathcal{O}_{F})}1\right)<C\left(\dim\tau\right)^{1-\frac{2\epsilon}{1+\epsilon}}.
\]
\end{enumerate}
\end{cor}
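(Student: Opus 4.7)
The plan has three steps, invoking Theorems~C, D, and E of the excerpt.

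\emph{Step 1: Identifying the maximal orbit.} The representation $\pi:=\mathrm{Ind}_{\PP(F)}^{\GL_n(F)}1$ is parabolic induction from the trivial character of $\LL_\lambda$. By the classical description of Richardson orbits in $\GL_n$, the Richardson orbit of $\PP_\lambda$ has Jordan type equal to the conjugate partition $\nu$ of $\lambda$; combined with the uniqueness of the maximal orbit in the local character expansion due to \cite{MW87}, this identifies $\mathcal{O}_{\max}=\mathcal{O}_\nu$. Theorem~D then yields $\epsilon_\star(\pi;e)=\epsilon_\star(\widehat{\xi}_{\mathcal{O}_\nu})$, and Theorem~C(1) provides the explicit combinatorial formula stated in the Corollary.

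\emph{Step 2: Propagating integrability to $K$.} Set $K:=\GL_n(\mathcal{O}_F)$. The Iwasawa decomposition $\GL_n(F)=K\cdot\PP(F)$ together with Mackey restriction gives $\pi|_K\simeq\mathrm{Ind}_{\PP(\mathcal{O}_F)}^K 1$, so $\Theta_{\mathrm{Ind}_{\PP(\mathcal{O}_F)}^K 1}=\theta_\pi|_K$ and part (1) reduces to the bound $\epsilon_\star(\pi;x)\geq\epsilon_\star(\widehat{\xi}_{\mathcal{O}_\nu})$ for every $x\in K$. At $x=e$ this is Step~1. For a general $x=su$ (Jordan decomposition) in $K$, Harish-Chandra descent expresses the local character expansion of $\pi$ near $s$ in terms of parabolic inductions in the centralizer $G_s\simeq\prod_i\GL_{n_i}$, induced from intersections of $\PP$ with $G_s$. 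The maximal orbits that arise lie in $\prod_i\gl_{n_i}$ and correspond to partitions obtained from $\nu$ by a natural combinatorial restriction. Applying Theorem~C(1) in each factor $\GL_{n_i}$ together with the orbit-closure monotonicity of Theorem~C(2) should produce the required lower bound. The hard part is this final combinatorial verification: showing that the min-of-ratios formula of Theorem~C(1) behaves monotonically under the restriction from $\nu$ to the sub-partitions arising in the centralizers.

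\emph{Step 3: Multiplicity bound.} Part (2) is then immediate from Theorem~E applied to $\pi$ and $K$, using the bound $\epsilon_\star(\pi|_K)\geq\epsilon_\star(\widehat{\xi}_{\mathcal{O}_\nu})$ from Step~2 together with the locality of the $K$-type multiplicity estimate (the proof of Theorem~E depends only on the integrability of $\theta_\pi$ over $K$).
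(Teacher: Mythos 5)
Your Step 1 is fine (the paper reaches the same conclusion via Remark~\ref{rem:A-field-dependent construction}, Howe's identification of $\widehat{\xi}_{\mathcal{O}_{\mathrm{st}}}\circ\log$ with $\theta_\pi$ near $e$, rather than via Theorem~\ref{thm D:epsilon of a representation}, but either route gives $\epsilon_{\star}(\pi;e)=\epsilon_{\star}(\widehat{\xi}_{\mathcal{O}_\nu})$). Step 3 is also correct once Step 2 is in place.

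The gap is in Step 2, and you have essentially flagged it yourself. You propose to prove $\epsilon_{\star}(\pi;x)\geq\epsilon_{\star}(\widehat{\xi}_{\mathcal{O}_\nu})$ for every $x\in K$ by Harish-Chandra descent to the centralizer of the semisimple part of $x$, then a factor-by-factor application of Theorem~\ref{thmC: epsilon of Fourier of orbital integral for GL_n}. This requires two nontrivial things you do not supply: (i) identifying, for each such $x$, the maximal nilpotent orbit that appears in the local character expansion of $\pi$ near $x$ inside $\prod_i\gl_{n_i}$, i.e.\ computing the relevant parabolic descent datum; and (ii) the ``monotonicity under restriction'' combinatorial estimate that you explicitly defer. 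Neither of these is a routine consequence of Theorem~\ref{thmC: epsilon of Fourier of orbital integral for GL_n}(2), which compares different orbits within one $\gl_n$, not orbits of a centralizer with the original orbit.

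The paper avoids this entirely by invoking van Dijk's explicit character formula (\cite[Theorem~3]{vD72}): $\Theta_{\mathrm{Ind}_{\PP(F)}^{\GL_n(F)}1}$ is, \emph{globally} on the regular set, an average of Weyl conjugates of $\left|D_{\GL_n(F)}/D_{L_\lambda(F)}\right|_F^{-1/2}$. This gives a pointwise majorant on all of $G$, so $\int_K|\theta_\pi|^{1+\epsilon}$ is bounded by the same integral of powers of the relative Weyl discriminant that is analyzed in the proofs of Theorem~\ref{thmB:lower bound on epsilon of reps} and Theorem~\ref{thm F:bound on multilicites in compact homogeneous}; this shows $\epsilon_{\star}(\pi)\geq 2\lct_{\C}(D_{\gl_n,\l_\lambda})=\epsilon_{\star}(\widehat{\xi}_{\mathcal{O}_\nu})$ in one stroke, with no need for a point-by-point local character expansion. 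If you want to fill your gap, replacing the Harish-Chandra-descent outline by van Dijk's formula is the efficient fix; alternatively you would need to actually prove the combinatorial monotonicity claim, which is a separate piece of work.
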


We end this subsection with a discussion on the relation between $\mathrm{GK}(\pi)$
and $\epsilon_{\star}(\pi)$. 
\begin{defn}
\label{def:Gelfand-Kirillov dimension}Let $K=\underline{G}(\mathcal{O}_{F})$.
For each $r\in\Z_{\geq1}$, let $K_{r}$ be the kernel of the map
$\underline{G}(\mathcal{O}_{F})\rightarrow\underline{G}(\mathcal{O}_{F}/\mathfrak{m}_{F}^{r})$,
where $\mathfrak{m}_{F}$ is the maximal ideal in $\mathcal{O}_{F}$,
with $q_{F}:=\#\mathcal{O}_{F}/\mathfrak{m}_{F}$. Given $\pi\in\Irr(\underline{G}(F))$,
let $f(r):=\dim\pi^{K_{r}}$. Then $f(2r)$ is a polynomial in $q_{F}^{r}$,
for $r$ large enough (see e.g.~\cite[Section 16]{Vin} and \cite[p.2]{Suz}).
The \emph{Gelfand\textendash Kirillov dimension} $\mathrm{GK}(\pi)$
is half the degree of this polynomial. 
\end{defn}

\begin{rem}
\label{rem:on GK dimension and epsilon}~
\begin{enumerate}
\item In view of Definition \ref{def:Gelfand-Kirillov dimension}, the Gelfand\textendash Kirillov
dimension $\mathrm{GK}(\pi)$ of $\pi$ measures the rate of growth
of the multiplicity of the trivial representation of $K_{r}$ in $\pi|_{K_{r}}$,
as one takes smaller and smaller open compacts $K_{r}$. On the other
hand, as we observe from Theorem \ref{thmE:upper bounds on multiplicities of K-types},
the  invariant $\epsilon_{\star}(\pi)$ gives information on the multiplicities
of all representations of $K_{r}$ appearing in $\Theta_{\pi}|_{K_{r}}$
for a given open compact $K_{r}$, where now the asymptotics is with
respect to the dimension of the representation instead of the depth
of the open compact. 
\item Back to Example \ref{exa:some examples of epsilon}(2), if $\PP_{j}<\GL_{2n}$
is the parabolic subgroup with $\p_{j}=\mathrm{Lie}(\PP_{j}(\Qp))$,
then one can observe that the two representations $\pi_{1}:=\mathrm{Ind}_{\PP_{1}(\Qp)}^{\GL_{2n}(\Qp)}1$
and $\pi_{2}:=\mathrm{Ind}_{\PP_{n}(\Qp)}^{\GL_{2n}(\Qp)}1$ satisfy
$\epsilon_{\star}(\pi_{1})=\epsilon_{\star}(\pi_{2})=1$, while on
the other hand $\mathrm{GK}(\pi_{1})=2n-1$ is very small while $\mathrm{GK}(\pi_{2})=n^{2}$
is much larger. 
\end{enumerate}
\end{rem}

\subsubsection{\label{subsec:Branching-multiplicities-in compact homogeneous}Branching
multiplicities in compact homogeneous spaces}

Let $\underline{G}$ be a complex, connected reductive algebraic group,
let $\underline{T}\subseteq\underline{G}$ be a maximal torus. Let
$\underline{L}$ be a Levi subgroup of $\underline{G}$, containing
$\underline{T}$. Let $G_{\C}=\underline{G}(\C)$, $T_{\C}=\underline{T}(\C)$
and $L_{\C}=\underline{L}(\C)$. Let $K\leq G_{\C}$ be a maximal
compact subgroup in $G_{\C}$, containing the maximal compact torus
$T$ of $T_{\C}$. We call the subgroup $L:=L_{\C}\cap K$ a \emph{Levi
subgroup} of $K$. Conversely, every compact connected Lie group $K$
is a maximal compact subgroup of a complex, connected reductive group
$G_{\C}$ (see e.g.~\cite[Section 5.2.5, Theorem 12]{OV90}).

In $\mathsection$(\ref{subsec:Bounds-on-multiplicities in compact homogeneous}),
by analyzing the integrability of $\Theta_{\mathrm{Ind}_{L}^{K}1}=\Theta_{L^{2}(K/L)}$,
we provide upper bounds on the multiplicities of the irreducible subrepresentations
in $L^{2}(K/L)$:

\begin{theorem}\label{thm F:bound on multilicites in compact homogeneous}Let
$K$ be a compact, connected Lie group, and let $L\leq K$ be a Levi
subgroup. Then:
\begin{enumerate}
\item We have
\begin{equation}
\epsilon_{\star}(\Theta_{L^{2}(K/L)})=\frac{1}{2}\underset{\t_{\C}\varsubsetneq\l'\subseteq\g_{\C}}{\min}\frac{\mathrm{ss.rk}(\l')+2\left|\Sigma^{+}(\l_{\C},\t_{\C})\cap\Sigma^{+}(\l',\t_{\C})\right|}{\left|\Sigma^{+}(\l',\t_{\C})\backslash\Sigma^{+}(\l_{\C},\t_{\C})\right|},\label{eq:epsilon of character}
\end{equation}
where $\l'$ runs over all pseudo-Levi subalgebras\footnote{A pseudo-Levi subalgebra of $\g_{\C}$ is the Lie algebra of the centralizer
of any semisimple element in $G_{\C}$.} of $\g_{\C}$ containing $\t_{\C}$.
\item For every $\epsilon<\epsilon_{\star}(\Theta_{L^{2}(K/L)})$, there
exists $C=C(\epsilon,K)>0$ such that for every $\tau\in\Irr(K)$:
\[
\dim\mathrm{Hom}_{K}\left(\tau,L^{2}(K/L)\right)=\dim\tau^{L}<C\cdot\left(\dim\tau\right)^{1-\frac{2\epsilon}{1+\epsilon}}.
\]
\end{enumerate}
\end{theorem}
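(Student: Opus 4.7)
The plan is to derive part (1) from an explicit character formula combined with a hyperplane-arrangement integrability analysis, and part (2) from part (1) via the compact-group Peter--Weyl plus Hausdorff--Young mechanism underlying Theorem \ref{thmE:upper bounds on multiplicities of K-types}.

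For part (1), I first establish the Frobenius-type character identity for compact induction from a Levi: on the regular semisimple locus $T_{\rss}$,
\[
\Theta_{L^{2}(K/L)}(t) = [W_{K}:W_{L}] \cdot |D_{K/L}(t)|^{-1}, \qquad D_{K/L} := D_{K}/D_{L},
\]
which follows from a Jacobian computation of the Frobenius integral $\chi(t) = \int_{K/L}1_{L}(g^{-1}tg)\,d(gL)$ on the discrete set $\{gL : g^{-1}tg \in L\} \simeq W_{K}/W_{L}$ for regular $t$. Together with Weyl integration, this gives
\[
\int_{K}|\Theta_{L^{2}(K/L)}|^{1+\epsilon}\,dk = \frac{[W_{K}:W_{L}]^{1+\epsilon}}{|W_{K}|}\int_{T}|D_{L}(t)|\cdot|D_{K/L}(t)|^{-\epsilon}\,dt,
\]
reducing the question to local integrability near every $s \in T$ of $|D_{L}|\cdot|D_{K/L}|^{-\epsilon}$. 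Writing $t = s\exp(X)$ in logarithmic coordinates and using (\ref{eq:reduction to hyperplane arrangement})--(\ref{eq:Weyl discriminant on Cartan}), the roots not vanishing at $s$ contribute a smooth nonvanishing factor, while the remaining ones are precisely the roots of the pseudo-Levi $\l' := \Lie((G_{\C})_{s})$. The local integrand becomes
\[
\prod_{\alpha\in\Sigma^{+}(\l_{\C},\t_{\C})\cap\Sigma^{+}(\l',\t_{\C})}|\alpha(X)|^{2}\cdot\prod_{\beta\in\Sigma^{+}(\l',\t_{\C})\setminus\Sigma^{+}(\l_{\C},\t_{\C})}|\beta(X)|^{-2\epsilon}.
\]

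The integrability at $X = 0$ of such a mixed hyperplane-arrangement integral is governed by the standard linear-programming criterion: convergence holds iff for every flat $W$ of the $\l'$-root arrangement on $\t_{\C}$,
\[
\mathrm{codim}(W) + 2\#\{\alpha\in\Sigma^{+}(\l_{\C})\cap\Sigma^{+}(\l') : W\subseteq\ker\alpha\} - 2\epsilon\#\{\beta\in\Sigma^{+}(\l')\setminus\Sigma^{+}(\l_{\C}) : W\subseteq\ker\beta\} > 0.
\]
The flats $W$ correspond bijectively to pseudo-Levi subalgebras $\l'' \supseteq \t_{\C}$ contained in $\l'$, with $\mathrm{codim}(W) = \mathrm{ss.rk}(\l'')$ and $\{\alpha\in\Sigma(\l') : W\subseteq\ker\alpha\} = \Sigma(\l'')$. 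Solving for $\epsilon$ and minimizing jointly over $s \in T$ and flats of $\l'$ collapses into a single minimum over pseudo-Levi subalgebras $\l'' \supsetneq \t_{\C}$ of $\g_{\C}$ (the strict inclusion reflects that the regular semisimple locus contributes no singularity), yielding (\ref{eq:epsilon of character}). Conceptually, this critical exponent for each flat equals $2\lct_{\C}(D_{\l'',\l_{\C}\cap\l''};0)$, and by Theorem \ref{thm A: epsilon of Fourier of orbital integral is lct(D(G/M))} is the integrability exponent of $\widehat{\xi}_{\mathcal{O}}$ for the stable Richardson orbit in $\l''$ with polarization $(\l_{\C}\cap\l'')\oplus(\n\cap\l'')$; the minimizing pseudo-Levi parameterizes the worst Richardson-orbit contribution to $\Theta_{L^{2}(K/L)}$.

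For part (2), once $\Theta_{L^{2}(K/L)} = \sum_{\tau\in\Irr(K)}(\dim\tau^{L})\chi_{\tau}$ is shown to lie in $L^{1+\epsilon}(K)$, the Hausdorff--Young inequality on $K$ gives $\dim\tau^{L} \leq C'(\dim\tau)^{1-2\epsilon/(1+\epsilon)}$ by the same argument behind Theorem \ref{thmE:upper bounds on multiplicities of K-types}, and Frobenius reciprocity identifies $\dim\tau^{L} = \dim\mathrm{Hom}_{K}(\tau,L^{2}(K/L))$. The hard part will be the rigorous derivation of the character formula on $T_{\rss}$: the formal sum $\sum_{\tau}(\dim\tau^{L})\chi_{\tau}$ is only conditionally convergent and must be interpreted as a distribution, and one must verify that this distribution is represented on $T_{\rss}$ by the function $[W_{K}:W_{L}]|D_{K/L}|^{-1}$ (equivalently, one must rigorously justify the Jacobian computation of the Frobenius integral above).
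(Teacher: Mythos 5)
Your overall strategy matches the paper's: use Weyl integration to reduce to an integral over $T$, localize at $s\in T$ where the pseudo-Levi $\mathfrak{s}_{\C}:=\Lie\,\mathrm{Cent}_{G_{\C}}(s)$ controls the singularity, solve the resulting hyperplane-arrangement integrability problem via a flat-indexed criterion, and use Borel--de Siebenthal to collapse the two-level minimum over $(s,W)$ into a single minimum over pseudo-Levis; Part (2) via the compact Hausdorff--Young machinery is also the paper's argument. However, your claimed character identity on $T_{\rss}$ is false. The fiber-wise Jacobian computation you cite does not produce the same Jacobian at each point of the discrete set $\{gL:g^{-1}tg\in L\}\simeq W(K,T)/W(L,T)$: the component indexed by $w$ contributes $\left|D_{L}(w^{-1}tw)\right|/\left|D_{K}(t)\right|$, and the correct identity is
\[
\Theta_{L^{2}(K/L)}(t)=\sum_{w\in W(K,T)/W(L,T)}\frac{\left|D_{L}(w^{-1}tw)\right|}{\left|D_{K}(t)\right|},
\]
whereas your $[W_{K}:W_{L}]\cdot\left|D_{K/L}(t)\right|^{-1}$ identifies all the summands. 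Your right-hand side is not $W(K,T)$-invariant on $T$ (the zero set of $D_{L}|_{T}$ is not $W(K,T)$-stable), so it cannot be the restriction of any class function on $K$; near a point $s$ at which some but not all $W(K,T)$-conjugates of $\Sigma(\l_{\C})$ vanish it gives the wrong local behavior. This is not a technicality that can be deferred, as you suggest at the end: as stated, the identity is provably false.

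The argument can be repaired, and the repair should be recorded. With the correct identity, positivity of the summands, the elementary inequality $\max_{w}a_{w}\le\sum_{w}a_{w}\le[W(K,T):W(L,T)]\max_{w}a_{w}$, and the $W(K,T)$-invariance of $\left|D_{K}\right|^{-\epsilon}\mu_{T}$ show that
\[
\int_{K}\left|\Theta_{L^{2}(K/L)}\right|^{1+\epsilon}\mu_{K}<\infty\iff\int_{T}\frac{\left|D_{L}\right|^{1+\epsilon}}{\left|D_{K}\right|^{\epsilon}}\mu_{T}<\infty,
\]
which is the quantity your local analysis then handles correctly. A secondary imprecision worth fixing: the flats $W$ of the $\mathfrak{s}_{\C}$-arrangement on $\t_{\C}$ are in bijection with \emph{Levi} subalgebras of $\mathfrak{s}_{\C}$ containing $\t_{\C}$ (Lemma \ref{lem:bijection betwweb dense elements and standard Levis} applied to $\mathfrak{s}_{\C}$), not with all pseudo-Levis of $\g_{\C}$ contained in $\mathfrak{s}_{\C}$. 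The Borel--de Siebenthal input is precisely that every Levi of a pseudo-Levi is a pseudo-Levi of $\g_{\C}$, and conversely that every pseudo-Levi $\l''\supsetneq\t_{\C}$ is a Levi of $\mathfrak{s}_{\C}$ for a suitable $s$; this is what collapses the double minimum over $(s,W)$ to the single minimum over $\l''$ in (\ref{eq:epsilon of character}).
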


In the case of $\g=\gl_{n}$, every pseudo-Levi subalgebra is a Levi
subalgebra. With the help of Proposition \ref{prop:formula for log canonical threshold},
Eq.~(\ref{eq:epsilon of character}) becomes $\epsilon_{\star}(\Theta_{L^{2}(\U_{n}/\mathrm{\U_{\lambda}})})=\frac{1}{2}\lct_{\C}(\triangle_{\g,\l_{\lambda}}^{\t};(\triangle_{\l_{\lambda}}^{\t})^{2},0)$,
where $\mathrm{lct_{F}}(f;g,x)$ is the log-canonical threshold of
$f$ relative to $g$ (see Definition \ref{def:relative log canonical threshold}).
We analyze this log-canonical threshold in Proposition \ref{Prop:lct of relative Weyl discriminant},
which yields a simplified formula for $\epsilon_{\star}(\Theta_{L^{2}(\U_{n}/\mathrm{\U_{\lambda}})})$,
as follows.
\begin{cor}
\label{cor:epsilon for Levis in SU_n}Let $\lambda=(\lambda_{1},\dots,\lambda_{N})\vdash n$
be a partition with $N$ non-zero parts. Let $\LL=\LL_{\lambda}:=\prod_{i=1}^{N}\GL_{\lambda_{i}}$
and $\U_{\lambda}=\prod_{i=1}^{N}\mathrm{U}_{\lambda_{i}}$ be the
corresponding Levi subgroups of $\GL_{n}$ and $\U_{n}$. Then, 
\begin{equation}
\epsilon_{\star}(\Theta_{L^{2}(\U_{n}/\mathrm{\U_{\lambda}})})=\frac{1}{2}\lct_{\C}(\triangle_{\g,\l_{\lambda}}^{\t};(\triangle_{\l_{\lambda}}^{\t})^{2},0)=\frac{1}{N}.\label{eq:concrete formula for epsilon}
\end{equation}
\end{cor}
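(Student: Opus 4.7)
\emph{Proof strategy.} The corollary combines Theorem F(1) with the two supporting propositions invoked in the paragraph preceding the statement, namely Proposition \ref{prop:formula for log canonical threshold} and Proposition \ref{Prop:lct of relative Weyl discriminant}.

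For the first equality of \eqref{eq:concrete formula for epsilon}, I specialize Theorem F(1) to $K = \U_n$, $L = \U_\lambda$, so that $\g_\C = \gl_n$ and $\l_\C = \l_\lambda$. A simplifying observation is that every pseudo-Levi subalgebra of $\gl_n$ containing $\t_\C$ is in fact a (standard) Levi subalgebra, so the minimum in \eqref{eq:epsilon of character} is taken over Levis $\l' \supsetneq \t_\C$ of $\gl_n$. Proposition \ref{prop:formula for log canonical threshold} then rewrites this combinatorial minimum as the relative log-canonical threshold $\frac{1}{2}\lct_\C(\triangle_{\g,\l_\lambda}^\t; (\triangle_{\l_\lambda}^\t)^2, 0)$, yielding the desired identity.

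For the upper bound $\lct_\C(\triangle_{\g,\l_\lambda}^\t; (\triangle_{\l_\lambda}^\t)^2, 0) \le 2/N$ I exhibit an explicit minimizer in Theorem F(1). Pick one index from each of the $N$ blocks of $\lambda$ and let $\l' \cong \gl_N \times \gl_1^{n-N}$ be the Levi whose unique non-singleton block consists of these $N$ chosen indices. For this $\l'$ one has $\mathrm{ss.rk}(\l') = N-1$ and $|\Sigma^+(\l',\t_\C)| = \binom{N}{2}$, while $\Sigma^+(\l_\lambda,\t_\C) \cap \Sigma^+(\l',\t_\C) = \emptyset$ because the selected indices lie in distinct blocks of $\lambda$. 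Substituting into \eqref{eq:epsilon of character} produces the ratio
\[
\frac{(N-1) + 2\cdot 0}{\binom{N}{2} - 0} \;=\; \frac{2}{N}.
\]

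The matching lower bound, asserting that every $\l' \supsetneq \t_\C$ produces a ratio at least $2/N$, is the substantive step and is supplied by Proposition \ref{Prop:lct of relative Weyl discriminant} specialized to this setting. Writing a general Levi $\l' \cong \prod_i \gl_{m_i}$ as a set partition $B_1,\dots,B_r$ of $\{1,\dots,n\}$ and setting $a_{ik} := |B_i \cap C_k|$ relative to the blocks $C_1,\dots,C_N$ of $\lambda$, the numerator of \eqref{eq:epsilon of character} becomes $\#\{i: m_i \ge 2\} + 2\sum_{i,k}\binom{a_{ik}}{2}$ and the denominator becomes $\sum_i \binom{m_i}{2} - \sum_{i,k}\binom{a_{ik}}{2}$. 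The required inequality then reduces to an elementary manipulation using convexity of $\binom{\cdot}{2}$ and the constraints $\sum_k a_{ik} = m_i$.

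The main obstacle is precisely this lower bound. The delicate point is that roots lying inside $\l_\lambda$ enter the numerator with weight $2$ but are subtracted only once from the denominator, which penalizes Levis $\l'$ that share roots with $\l_\lambda$ and forces the minimizer to put at most one index from each block of $\lambda$ into any non-singleton block of $\l'$. Converting this qualitative observation into the sharp bound $2/N$ is the technical core of Proposition \ref{Prop:lct of relative Weyl discriminant}; granting that proposition, the corollary is a direct verification.
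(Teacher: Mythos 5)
Your proposal follows the paper's proof exactly: appeal to Theorem \ref{thm F:bound on multilicites in compact homogeneous}(1), observe that in $\gl_n$ every pseudo-Levi containing $\t_\C$ is a Levi (so the minimum in (\ref{eq:epsilon of character}) runs over Levis), pass through Proposition \ref{prop:formula for log canonical threshold} with $m=2$ to obtain $\frac12\lct_\C(\triangle_{\g,\l_\lambda}^\t;(\triangle_{\l_\lambda}^\t)^2,0)$, and finally evaluate via Proposition \ref{Prop:lct of relative Weyl discriminant}(2) with $m=2$. Your explicit minimizer $\l'\cong\gl_N\times\gl_1^{n-N}$ obtained by picking one index from each block of $\lambda$ is a nice concrete realization of the $s=1$ term (equivalently $k=\lambda_1^{\mathrm t}=N$) in that proposition's minimum and is computed correctly.

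One small slip in your expository sketch of the lower bound: for $\l'\cong\prod_i\gl_{m_i}$ one has $\mathrm{ss.rk}(\l')=\sum_i(m_i-1)=n-r$, not $\#\{i:m_i\ge 2\}$ (these agree only when every non-singleton block of $\l'$ has size exactly $2$). This does not damage the proof, since you correctly delegate the lower bound to Proposition \ref{Prop:lct of relative Weyl discriminant}, but the formula as written would mislead a reader attempting to reconstruct that argument; it should read
\[
\frac{\sum_i(m_i-1)+2\sum_{i,k}\binom{a_{ik}}{2}}{\sum_i\binom{m_i}{2}-\sum_{i,k}\binom{a_{ik}}{2}}.
\]
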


\begin{rem}
\label{rem:two remarks}~
\begin{enumerate}
\item We note that pseudo-Levi subalgebras in semisimple Lie algebras are
fully classified in terms of the affine Dynkin diagram. More precisely,
in the same way that standard Levis are determined by subsets of vertices
in the Dynkin diagram, then the standard pseudo-Levi subalgebras are
determined by subsets of vertices of the affine Dynkin diagram (see
e.g. \cite{BdS49} and also \cite[Secion 1.2 and Proposition 2]{Som97}).
We therefore expect the methods of this paper to give a simplified
formula as in (\ref{eq:concrete formula for epsilon}) for any compact
connected semisimple Lie group $K$ and any Levi subgroup $L$ in
$K$, with some combinatorial challenges coming from the more complicated
structure of type $B,C,D$ (affine) root systems.
\item Similarly to the discussion in Remark \ref{rem:on GK dimension and epsilon},
the Gelfand-Kirillov dimension is also related to the growth of multiplicities
of $K$-representation in $L^{2}(K/L)$, see \cite[Theorem 1.2]{Vog78}.
However, this relation is not refined enough to give bounds on the
multiplicity of each individual representation. 
\end{enumerate}
\end{rem}

\subsubsection{\label{subsec:Applications-in-Random}Applications in random matrix
theory}

The field of random matrix theory focuses on understanding eigenvalue
statistics of matrices whose entries are drawn randomly according
to some probability distribution. One of the popular models is to
consider Haar-random $n\times n$-unitary matrices $\sX$ in $\U_{n}$,
also known as the Circular Unitary Ensemble (CUE) (see e.g.~\cite{AGZ10,Mec19}
and the references within). Given $\sX$, denote by $\{\lambda_{1},...,\lambda_{n}\}$
the set of its eigenvalues, where $\lambda_{i}\in\mathbb{S}^{1}\subseteq\C$.
A formula for the joint density of $\{\lambda_{1},...,\lambda_{n}\}$
is given by the Weyl integration formula \cite{Wey39}. In their seminal
work \cite{DS94}, Diaconis and Shahshahani have shown that the sequence
$(\tr(\sX),\tr(\sX^{2}),\ldots,\tr(\sX^{m}))$, i.e.~the sequence
$(\sum_{i=1}^{n}\lambda_{i}^{1},...,\sum_{i=1}^{n}\lambda_{i}^{m})$,
converges, as $n\rightarrow\infty$, to $m$ independent complex normal
random variables. The rate of convergence is known to be super-exponential
by Johansson \cite{Joh97}. 

Let $\mu_{\U_{n}}$ be the Haar probability measure on $\U_{n}$.
For each $\ell\in\{1,\ldots,m\}$, let $w_{\ell}:\mathrm{U}_{n}\rightarrow\mathrm{U}_{n}$
be the power map $X\mapsto X^{\ell}$. Then the pushforward measure
$\tau_{\ell,n}:=(w_{\ell})_{*}(\mu_{\mathrm{U}_{n}})$ describes the
distribution of the random matrix $\sX^{\ell}$. Since $\tau_{\ell,n}$
is a conjugation invariant measure, it is equal, as a distribution,
to a linear combination of characters of $\U_{n}$: 
\[
\tau_{\ell,n}=\sum_{\rho\in\Irr(\U_{n})}\overline{a_{n,\ell,\rho}}\cdot\Theta_{\rho},\text{\,\,\, where \,\,\,\,}a_{n,\ell,\rho}:=\int_{\U_{n}}\Theta_{\rho}(\sX^{\ell})\mu_{\U_{n}}.
\]
The result of \cite{DS94} is a statement about the low-dimensional
Fourier coefficients $a_{n,\ell,\rho}$ of $\tau_{\ell,n}$, i.e.~when
$\rho\in\Irr(\U_{n})$ has dimension which is at most polynomial in
$n$ (see e.g.~the discussion in \cite[Section 1.2.2]{AGL}). However,
in order to fully understand $\tau_{\ell,n}$, one has to analyze
the higher dimensional Fourier coefficients. The key component in
analyzing $\tau_{\ell,n}$ are the works of Rains \cite{Rai97,Rai03},
which showed (\cite[Theorem 1.3]{Rai03}) that the eigenvalue distribution
of $\sX^{\ell}$ is the same as the eigenvalue distribution of a Haar-random
matrix in $H_{n,\ell}:=\mathrm{U}_{\left\lfloor n/\ell\right\rfloor +1}^{j}\times\mathrm{U}_{\left\lfloor n/\ell\right\rfloor }^{\ell-j}$,
where $j:=n\mod\ell$. From Rain's result one can deduce (see Proposition
\ref{prop:Fourier coefficients of a power}) that 
\begin{equation}
a_{n,\ell,\rho}=\dim\rho^{H_{n,\ell}}=\dim\mathrm{Hom}_{\U_{n}}(\rho,L^{2}(\U_{n}/H_{n,\ell})),\label{eq:Fourier coefficients are dimension of fixed vectors}
\end{equation}
which implies the following equality of distributions: 
\begin{equation}
\tau_{\ell,n}=\Theta_{L^{2}(\U_{n}/H_{n,\ell})}.\label{eq:pushforward is a character}
\end{equation}

In joint works of the first author with Avni and Larsen \cite{AG,AGL},
we used (\ref{eq:Fourier coefficients are dimension of fixed vectors})
to obtain asymptotically sharp estimates on all Fourier coefficients
(see \cite[Proposition 7.4]{AGL}): 
\begin{equation}
a_{n,\ell,\rho}=\dim\rho^{H_{n,\ell}}\leq(\dim\rho)^{1-\frac{1}{\ell-1}+\delta},\text{ with }\delta=\delta(n)\underset{n\rightarrow\infty}{\longrightarrow}0.\label{eq:sharp estimates on Fourier coefficients}
\end{equation}
In this paper, we utilize (\ref{eq:pushforward is a character}),
to provide another high energy result regarding the random matrix
$\sX^{\ell}$, by determining the integrability of its distribution
$\tau_{\ell,n}$. More precisely, the measure $\tau_{\ell,n}$ is
absolutely continuous with respect to $\mu_{\U_{n}}$, and thus it
can be written as $\tau_{\ell,n}=f_{\ell,n}\mu_{\U_{n}}$, with $f_{\ell,n}\in L^{1}(\U_{n})$.
However, we show that $\tau_{\ell,n}$ further belongs to $L^{1+\frac{1}{\ell}-\delta}(\U_{n})$
for every $\delta>0$: 

\begin{theorem}\label{thmG:intergability of power measure}Let $n,\ell\geq2$
be integers, and let $\tau_{\ell,n}$ as above. Then, 
\[
\epsilon_{\star}(\tau_{\ell,n})=\frac{1}{\min(n,\ell)}.
\]

\end{theorem}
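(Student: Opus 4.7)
The plan is to reduce Theorem \ref{thmG:intergability of power measure} directly to Corollary \ref{cor:epsilon for Levis in SU_n} by invoking Rains' theorem. By Eq.~(\ref{eq:pushforward is a character}), we have the distributional identity $\tau_{\ell,n} = \Theta_{L^2(\U_n/H_{n,\ell})}$, so it suffices to compute $\epsilon_{\star}(\Theta_{L^2(\U_n/H_{n,\ell})})$.

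The key combinatorial step is to identify $H_{n,\ell}$ with a Levi subgroup $\U_\lambda$ of $\U_n$ corresponding to a partition $\lambda \vdash n$ with exactly $\min(n,\ell)$ nonzero parts. Setting $j := n \bmod \ell$, there are two cases to check. When $\ell \le n$ we have $\lfloor n/\ell\rfloor \ge 1$, so $\lambda$ consists of $j$ parts of size $\lfloor n/\ell\rfloor+1$ together with $\ell-j$ parts of size $\lfloor n/\ell\rfloor$, for a total of $\ell$ nonzero parts. When $\ell > n$ we have $\lfloor n/\ell\rfloor = 0$ and $j = n$, so, after discarding the trivial $\U_0$ factors, $H_{n,\ell} = \U_1^n$, corresponding to the partition $(1,\ldots,1)$ of length $n$. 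In either case the number of nonzero parts equals $N := \min(n,\ell)$.

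Applying Corollary \ref{cor:epsilon for Levis in SU_n} then yields
\[
\epsilon_\star(\tau_{\ell,n}) \;=\; \epsilon_\star(\Theta_{L^2(\U_n/\U_\lambda)}) \;=\; \frac{1}{N} \;=\; \frac{1}{\min(n,\ell)},
\]
as required. The analytic content (the integrability exponent) is packaged inside Corollary \ref{cor:epsilon for Levis in SU_n}, which in turn rests on Theorem \ref{thm F:bound on multilicites in compact homogeneous} and the log-canonical threshold computation of Proposition \ref{Prop:lct of relative Weyl discriminant}; the representation-theoretic input (that $\tau_{\ell,n}$ coincides with the character of an induced representation) is provided by Rains' theorem. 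Accordingly, there is no serious obstacle here: the only thing to verify is the combinatorial identification of $H_{n,\ell}$ as a Levi with $\min(n,\ell)$ parts, with a small amount of care required to treat correctly the edge cases $\ell \mid n$ and $\ell > n$, where some of the advertised $U_{\lfloor n/\ell\rfloor}$ factors collapse to the trivial group.
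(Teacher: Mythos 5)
Your proof is correct and follows exactly the same route as the paper: invoke the identity $\tau_{\ell,n}=\Theta_{L^2(\U_n/H_{n,\ell})}$ from Rains' theorem, identify $H_{n,\ell}$ as the compact Levi $\U_\lambda$ for a partition $\lambda$ with $\min(n,\ell)$ nonzero parts, and apply Corollary \ref{cor:epsilon for Levis in SU_n}. The only difference is that you spell out the case split $\ell\le n$ versus $\ell>n$ (including the collapse of $\U_0$ factors when $\ell>n$ and the degenerate case $\ell\mid n$), which the paper leaves implicit; this is a minor but welcome elaboration, not a different argument.
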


Theorems \ref{thmG:intergability of power measure} and \ref{thm F:bound on multilicites in compact homogeneous}(2)
imply the following bounds on $a_{n,\ell,\rho}$\footnote{Note that the case $\ell=2$ follows from the Gelfand property of
$(\U_{n},\U_{\left\lfloor n/2\right\rfloor }\times\U_{\lceil n/2\rceil})$.}, which on the one hand are dimension-dependent, but on the other
hand have improved exponent compared to (\ref{eq:sharp estimates on Fourier coefficients}).
\begin{cor}
\label{cor:Fourier coefficients of power word measure}For every $\ell,n\geq2$
and every $\delta>0$, there exists $C(n,\delta)>0$, such that for
every $\rho\in\Irr(\U_{n})$:
\[
a_{n,\ell,\rho}=\dim\rho^{H_{n,\ell}}\leq\begin{cases}
C(n,\delta)\cdot\left(\dim\rho\right)^{1-\frac{2}{\min(n,\ell)+1}+\delta} & \text{if }\ell\geq3\\
1 & \text{if }\ell=2.
\end{cases}
\]
\end{cor}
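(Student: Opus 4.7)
The plan is to combine Theorem \ref{thmG:intergability of power measure} with Theorem \ref{thm F:bound on multilicites in compact homogeneous}(2), using the identification $\tau_{\ell,n}=\Theta_{L^{2}(\U_{n}/H_{n,\ell})}$ from \eqref{eq:pushforward is a character}, together with the elementary observation that $H_{n,\ell}$ is a Levi subgroup of $\U_{n}$. Indeed, $H_{n,\ell}=\U_{\lfloor n/\ell\rfloor+1}^{j}\times\U_{\lfloor n/\ell\rfloor}^{\ell-j}$ with $j=n\bmod\ell$ corresponds to the partition of $n$ into $j$ parts of size $\lfloor n/\ell\rfloor+1$ and $\ell-j$ parts of size $\lfloor n/\ell\rfloor$, hence is block-diagonal in $\U_{n}$ and is the compact form of a Levi subgroup of $\GL_{n}(\C)$, so that Theorem \ref{thm F:bound on multilicites in compact homogeneous} applies with $K=\U_{n}$ and $L=H_{n,\ell}$.

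For the case $\ell\geq 3$, the first step is to invoke Theorem \ref{thmG:intergability of power measure}, which yields $\epsilon_{\star}(\Theta_{L^{2}(\U_{n}/H_{n,\ell})})=\epsilon_{\star}(\tau_{\ell,n})=1/\min(n,\ell)$. Applying Theorem \ref{thm F:bound on multilicites in compact homogeneous}(2), for every $\epsilon<1/\min(n,\ell)$ there exists $C=C(n,\epsilon)>0$ such that $\dim\rho^{H_{n,\ell}}<C\cdot(\dim\rho)^{1-\frac{2\epsilon}{1+\epsilon}}$ for every $\rho\in\Irr(\U_{n})$. It then remains only to perform the routine calibration of $\epsilon$: the map $\epsilon\mapsto 1-\frac{2\epsilon}{1+\epsilon}$ is continuous and
\[
\lim_{\epsilon\to 1/\min(n,\ell)}\left(1-\frac{2\epsilon}{1+\epsilon}\right)=1-\frac{2}{\min(n,\ell)+1},
\]
so given $\delta>0$ one can choose $\epsilon$ sufficiently close to $1/\min(n,\ell)$ to guarantee $1-\frac{2\epsilon}{1+\epsilon}<1-\frac{2}{\min(n,\ell)+1}+\delta$, and setting $C(n,\delta):=C(n,\epsilon)$ produces the claimed bound together with the identification $a_{n,\ell,\rho}=\dim\rho^{H_{n,\ell}}$ recorded in \eqref{eq:Fourier coefficients are dimension of fixed vectors}.

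The case $\ell=2$ is handled separately by invoking the classical fact that $(\U_{n},\U_{\lceil n/2\rceil}\times\U_{\lfloor n/2\rfloor})$ is a Gelfand pair, so that $L^{2}(\U_{n}/H_{n,2})$ decomposes multiplicity-freely as a $\U_{n}$-representation and $\dim\rho^{H_{n,2}}\leq 1$ for every $\rho\in\Irr(\U_{n})$. In this proposal there is essentially no obstacle beyond the bookkeeping above, since the substantive work---the integrability exponent of the power-pushforward measure and the Hausdorff--Young-type multiplicity bound---is already packaged in Theorems \ref{thmG:intergability of power measure} and \ref{thm F:bound on multilicites in compact homogeneous}(2). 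The only subtlety worth verifying is that the constant $C$ produced by Theorem \ref{thm F:bound on multilicites in compact homogeneous}(2) depends only on $K$ and on $\epsilon$ (and not on $\rho$), which then becomes a function of $n$ and $\delta$ after the final choice of $\epsilon$.
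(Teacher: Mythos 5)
Your proposal is correct and matches the paper's intended argument exactly: the paper presents Corollary \ref{cor:Fourier coefficients of power word measure} as an immediate consequence of Theorem \ref{thmG:intergability of power measure} and Theorem \ref{thm F:bound on multilicites in compact homogeneous}(2), with the $\ell=2$ case disposed of by the Gelfand property of $(\U_{n},\U_{\lfloor n/2\rfloor}\times\U_{\lceil n/2\rceil})$, just as you do. One tiny bookkeeping point you could make explicit: your choice of $\epsilon$ depends on $\min(n,\ell)$ and hence a priori on $\ell$, so the constant $C(n,\epsilon)$ from Theorem \ref{thm F:bound on multilicites in compact homogeneous}(2) initially depends on $\ell$ as well; but since $\min(n,\ell)$ takes only the finitely many values $\{3,\dots,n\}$ as $\ell$ ranges over integers $\geq 3$, taking the maximum of the finitely many resulting constants yields a bound $C(n,\delta)$ independent of $\ell$, as the statement requires.
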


\begin{rem}
\label{rem:A few remarks}~
\begin{enumerate}
\item We expect a similar statement as Theorem \ref{thmG:intergability of power measure}
to hold for $\ell$-th powers of Haar-random matrices in any compact,
simple Lie group. The Fourier estimates of (\ref{eq:sharp estimates on Fourier coefficients})
were recently generalized from $\U_{n}$ to any compact simple Lie
group by Saar Bader \cite{Bad}.
\item In \cite[Question 1.15]{GHS}, we conjectured more generally, that
if $\sX_{1},\ldots,\sX_{r}$ are independent Haar-random matrices
in a compact simple Lie group $K$, and $w(\sX_{1},\ldots,\sX_{r})$
is a word on $r$ letters (e.g.~$w(\sX,\sY)=\sX\sY\sX^{-1}\sY^{-1}$
is the commutator word), the integrability exponent $\epsilon_{\star}(\tau_{w,K})$
of $\tau_{w,K}:=w_{*}\mu_{K^{r}}$ is bounded from below by some constant
$\epsilon(w)>0$ which \textbf{depends only on }$w$. Theorem \ref{thmG:intergability of power measure}
provides evidence to this conjecture. Various other dimension-independent
results in this spirit were made in the setting of compact $p$-adic
groups (e.g.~for $\mathrm{SL}_{n}(\Zp)$) in \cite{GHb}.
\end{enumerate}
\end{rem}

\subsubsection{Transfer of the results to large positive characteristic}

As mentioned above, the distributions $\widehat{\xi}_{\mathcal{O}}$
and consequently the Harish-Chandra characters $\Theta_{\pi}$ are
tame objects in a certain model\textendash theoretic sense, and their
study is amenable to techniques from model theory, and specifically
the theory of motivic integration. Based on this, we believe that
Theorems \ref{thm A: epsilon of Fourier of orbital integral is lct(D(G/M))},
\ref{thmB:lower bound on epsilon of reps}, \ref{thmC: epsilon of Fourier of orbital integral for GL_n},
\ref{thm D:epsilon of a representation} and \ref{thmE:upper bounds on multiplicities of K-types}
as well as Corollary \ref{cor:upper bound for multiplicities in principal series}
also hold for every local field $F$ of characteristic larger than
$M$, for some constant $M$ depending only on the absolute root datum
of $\underline{G}$.

Some of the key components in the results above are based on resolution
of singularities of hyperplane arrangements, so one can use good reduction
in algebraic geometry to transfer those components to large enough
positive characteristic. However, to transfer those results to large
positive characteristic in full generality, one need to generalize
the transfer results of the second author jointly with Cluckers and
Halupczok in \cite{CGH14a,CGH14b,CGH18} in the theory of motivic
integration. This is investigated in a sequel to this paper. 

\subsection{\label{subsec:Methods of proof and further discussion}Main ideas
of proofs and some open questions}

\subsubsection{\label{subsec:Proof-of-Theorem A}Sketch of the proof of Theorem
\ref{thm A: epsilon of Fourier of orbital integral is lct(D(G/M))}}

We prove Theorem \ref{thm A: epsilon of Fourier of orbital integral is lct(D(G/M))}
in $\mathsection$\ref{sec:Geometric construction of Fourier transform}.
Let $\mathcal{O}$ be a Richardson nilpotent orbit in $\ggm:=\Lie(\GG)$,
with polarization $\ppm=\llm\oplus\nnm$ (see Definition \ref{def:Richardson orbit}),
and let $\nnm^{-}$ be the opposite of $\nnm$. We consider the polynomial
map $\Phi:\ggm\rightarrow\ggm$, given by $(X,Z)\mapsto\Ad(\exp(X))(Z)$
for $(X,Z)\in\nnm^{-}\times\ppm\simeq\ggm$. We then use a geometric
construction which is based on the generalized Springer resolution
(see $\mathsection$\ref{subsec:geometric construction}), to show
that for each local field $F$ of characteristic zero, $\widehat{\xi}_{\mathcal{\mathcal{O}_{\mathrm{st}}}}$
is given as the pushforward of the Haar measure $\mu_{\g}$ on $\g=\ggm(F)$
by $\Phi$ (see Corollary \ref{cor:Fourier transform of orbital integrals as pushforward measures}
and the proof of Proposition \ref{prop:explicit description of the Fourier transform of obital integral }). 

A direct calculation shows that the Jacobian of $\Phi$ at a point
$(X,Z)\in\n^{-}\times\p$ satisfies $\left|\mathrm{Jac}(\Phi)|_{(X,Z)}\right|_{F}=\left|D_{\g,\l}(\overline{Z})\right|_{F}^{\frac{1}{2}}$,
where $\overline{Z}$ is the projection of $Z\in\p$ to $\l$. Since
$\Phi$ is a dominant analytic map between spaces of equal dimension,
it fits the setting of \cite[Proposition 4.1]{GHS}, which implies,
\begin{equation}
\epsilon_{\star}(\widehat{\xi}_{\mathcal{\mathcal{O}_{\mathrm{st}}}};0)=\epsilon_{\star}(\Phi_{*}\mu_{\g};0)=\lct_{F}(\mathrm{Jac}(\Phi);0),\label{eq:epsilon is the same as lct of the Jacobian}
\end{equation}
where $\mathrm{Jac}(\Phi)$ denotes the Jacobian of $\Phi$. The theorem
now follows from (\ref{eq:epsilon is the same as lct of the Jacobian}),
along with the homogeneity of $\widehat{\xi}_{\mathcal{\mathcal{O}_{\mathrm{st}}}}$
which guarantees that $\epsilon_{\star}(\widehat{\xi}_{\mathcal{\mathcal{O}_{\mathrm{st}}}})=\epsilon_{\star}(\widehat{\xi}_{\mathcal{\mathcal{O}_{\mathrm{st}}}};0)$. 

We see from the proof above that $\widehat{\xi}_{\mathcal{\mathcal{O}_{\mathrm{st}}}}$
is a positive function. On the other hand, its non-stable constituents
$\widehat{\xi_{\mathcal{\mathcal{O}}}}$ might not be positive functions,
so potential cancellations might occur among them. Nonetheless, we
expect the following to have an affirmative answer: 
\begin{question}
\label{que:non-stable orbits}Let $\mathcal{O}$ be a nilpotent orbit
in $\text{\ensuremath{\ggm}}$ and let $\mathcal{O}_{1}\cup\dots\cup\mathcal{O}_{N}=\mathcal{O}(F)=\mathcal{O}_{\mathrm{st}}$
be the corresponding stable orbit. Is it true that $\epsilon_{\star}(\widehat{\xi}_{\mathcal{O}_{i}})=\epsilon_{\star}(\widehat{\xi}_{\mathcal{O}_{st}})$
for each $i$?
\end{question}

A potentially hard problem (see Remark \ref{rem:Rango Rao} for a
discussion on the difficulties) would be: 
\begin{problem}
Find a formula similarly to Theorem \ref{thmC: epsilon of Fourier of orbital integral for GL_n}
for $\epsilon_{\star}(\widehat{\xi}_{\mathcal{O}})$, for a non-Richardson
orbit $\mathcal{O}$.
\end{problem}

\subsubsection{\label{subsec:Proof-of-Theorem C}Sketch of the proof of Theorem
\ref{thmC: epsilon of Fourier of orbital integral for GL_n}}

The proof of Theorem \ref{thmC: epsilon of Fourier of orbital integral for GL_n}
is the heart of the paper and it is done in Sections \ref{sec:Some-integrals-associated to coxeter arrangements}
and \ref{subsec:An-explicit-formula for epsilon(O)}. While the theorem
is stated for $\GL_{n}$, some of the components of the proof work
in a larger generality. Here are the main steps of the proof: 
\begin{itemize}
\item First note that Theorem \ref{thm A: epsilon of Fourier of orbital integral is lct(D(G/M))}
reduces Theorem \ref{thmC: epsilon of Fourier of orbital integral for GL_n}
to a \textbf{geometric problem} of analyzing the singularities (precisely,
the $F$-log-canonical threshold) of the relative Weyl discriminant
$D_{\g,\l}$, where $\ppm=\llm\oplus\nnm$ is a polarization of the
orbit $\mathcal{O}$. 
\item \textbf{Step 1: reduction to the setting of hyperplane arrangements
of Coxeter type}: here we assume that $\GG$ is $F$-split. We use
the Weyl integration formula (Proposition \ref{prop:Weyl integration formula})
to express the integral corresponding to $2\lct_{F}(D_{\g,\l};0)$
as a finite sum of integrals over representatives of $L=\LL(F)$-conjugacy
classes of maximal $F$-tori in $L$:
\begin{equation}
\int_{B}\left|D_{\g,\l}(Y)\right|_{F}^{-\frac{s}{2}}dY=\sum_{T\subseteq L}\frac{1}{\#W(L,T)}\int_{\t}\left|D_{\g,\l}(X)\right|_{F}^{-\frac{s}{2}}\left|D_{\l}(X)\right|_{F}\left(\int_{L/T}1_{B}(\Ad(l).X)d\overline{l}\right)dX,\label{eq:Weyl integration}
\end{equation}
where $B$ is a small ball around $0$ in $\l$, $W(L,T)$ is the
Weyl group of $L$, and $d\overline{l}$ is a suitably normalized
$L$-invariant measure on $L/T$. We use the fact that $\GG$ is $F$-split
to show:
\begin{itemize}
\item The contribution to (\ref{eq:Weyl integration}) is maximal for the
$F$-split torus $T_{0}$, with $\Lie(T_{0})=\t_{0}$.
\item The term $\left|D_{\l}(X)\right|_{F}^{1/2}\left(\int_{L/T_{0}}1_{B}(\Ad(l).X)d\overline{l}\right)dX$
is bounded and positive near $0$. 
\item We have $\left|D_{\l}|_{\t_{0}}\right|_{F}^{\frac{1}{2}}=\left|\triangle_{\l}^{\t_{0}}\right|_{F}$
and $\left|D_{\g}|_{\t_{0}}\right|_{F}^{\frac{1}{2}}=\left|\triangle_{\g}^{\t_{0}}\right|_{F}$
as in (\ref{eq:relative Weyl discriminant}) and (\ref{eq:reduction to hyperplane arrangement}),
and the vanishing set of $\triangle_{\g}^{\t_{0}}$ (and $\triangle_{\l}^{\t_{0}}$)
is a finite collection $\left\{ W_{\alpha}\right\} _{\alpha}$ of
hyperplanes $W_{\alpha}:=\{\alpha=0\}\subseteq\t_{0}$, or a \emph{hyperplane
arrangement} (see $\mathsection$\ref{subsec:Hyperplanes-arrangements--basic}),
which is of \emph{Coxeter type}. 

It is left to estimate, for each ball $B'$ around $0$ in $\t_{0}$,
the integral
\begin{equation}
\int_{B'}\left|\triangle_{\g}^{\t_{0}}\right|_{F}^{-s}\left|\triangle_{\l}^{\t_{0}}\right|_{F}^{1+s}dX.\label{eq:reduced integral}
\end{equation}

\end{itemize}
\item \textbf{Step 2: reduction to a complicated combinatorial formula}:
we use a known algorithm to resolve singularities of hyperplane arrangements
(Theorem \ref{Thm:resolution of hyperplane arrangements}), to turn
(\ref{eq:reduced integral}) into finitely many integrals of monomial
functions, which can be easily analyzed, and deduce that:
\begin{equation}
2\lct_{F}(D_{\g,\l};0)=\underset{W\in\mathcal{L}}{\mathrm{min}}\beta(W),\label{eq:difficult combinatorial formula}
\end{equation}
where $W$ runs over all possible intersections $\mathcal{L}$ of
hyperplanes from $\left\{ W_{\alpha}\right\} _{\alpha\in\Sigma^{+}(\g,\t_{0})}$,
and each $\beta(W)$ is a rational number explicitly determined by
$W$, $\g$ and $\l$. Note that to compute (\ref{eq:difficult combinatorial formula}),
naively one has to consider intersections of $2^{\left|\Sigma^{+}(\g,\t_{0})\right|}$
possible subsets of hyperplanes in $\left\{ W_{\alpha}\right\} _{\alpha\in\Sigma^{+}(\g,\t_{0})}$. 
\item \textbf{Step 3: simplifying the combinatorial formula}: we first identify
each $W\in\mathcal{L}$ with a Levi subalgebra $\t_{0}\subseteq\l_{W}\subseteq\g$
(Lemma \ref{lem:bijection betwweb dense elements and standard Levis}).
We then show that it is enough to consider only $W\in\mathcal{L}$
with $[\l_{W},\l_{W}]$ simple, and express $\beta(W)$ in terms of
$\l_{W}$, obtaining the following (Proposition \ref{prop:formula for log canonical threshold}):
\begin{equation}
2\lct_{F}(D_{\g,\l};0)=\underset{\substack{\t\varsubsetneq\l'\subseteq\g\\{}
[\l',\l']\text{ is simple}
}
}{\min}\frac{\mathrm{rk}[\l',\l']+\left|\Sigma^{+}(\l,\t)\cap\Sigma^{+}(\l',\t)\right|}{\left|\Sigma^{+}(\l',\t)\backslash\Sigma^{+}(\l,\t)\right|}.\label{eq:simplified combinatorial formula}
\end{equation}
\end{itemize}
The results up until now are valid for any $F$-split $\GG$ . To
further simplify (\ref{eq:simplified combinatorial formula}), we
assume $\GG=\GL_{n}$ for steps 4-6.
\begin{itemize}
\item \textbf{Step 4}: for $\g=\gl_{n}$, we describe all possible embeddings
of Levis $\t_{0}\varsubsetneq\l'\subseteq\gl_{n}$ with $[\l',\l']$
simple (Lemma \ref{lem:structure of Levis}). We then show that for
each $2\leq k\leq n$ and $\l$, there is a \textbf{deterministic
Levi} $\l'$ with $\mathrm{rk}[\l',\l']=k-1$, which minimizes the
expression in (\ref{eq:simplified combinatorial formula}), obtaining
the following:
\begin{equation}
2\lct_{F}(D_{\g,\l};0)=\underset{2\leq k\leq n}{\min}\frac{k-1+\psi_{\l,k}}{\binom{k}{2}-\psi_{\l,k}},\label{eq:even more simplified formula}
\end{equation}
where $\psi_{\l,k}\in\N$ is explicitly determined by $\l$ and $k$
(see the proof of Proposition \ref{Prop:lct of relative Weyl discriminant}). 
\item \textbf{Step 5}: If $\nu_{1}\geq\dots\geq\nu_{M}$ are the sizes of
blocks in the Jordan normal form of $\mathcal{O}$, we can partition
the interval $[2,n]$ into $M$ subintervals, on which the inner term
in (\ref{eq:even more simplified formula}) is decreasing in $k$.
Hence the minimum in (\ref{eq:even more simplified formula}) must
be obtained at the end points of each subinterval. This proves Item
(1) of Theorem \ref{thmC: epsilon of Fourier of orbital integral for GL_n}. 
\item \textbf{Step 6}: given $\mathcal{O},\mathcal{O}'\in\mathcal{O}(\mathcal{N}_{\gl_{n}(F)})$,
with Jordan blocks corresponding to the partitions $\nu_{1}\geq\dots\geq\nu_{M}$
and $\nu'_{1}\geq\dots\geq\nu'_{M'}$, respectively, we have $\mathcal{O}'\subseteq\overline{\mathcal{O}}$
if and only if the partition $\nu$ \emph{dominates} $\nu'$ (see
Definition \ref{def:partitions} and Theorem \ref{thm:orbit closure and dominance}).
Then Item (2) of Theorem \ref{thmC: epsilon of Fourier of orbital integral for GL_n}
follows by observing that (\ref{eq:formula for epsilon of orbit})
becomes smaller as $\nu$ becomes more dominant. 
\end{itemize}
We further stress that Steps 1-3 above are valid for $F$-split groups,
while Steps 4-6 are valid only for $\GL_{n}$. However, we expect
that a similar analysis can be done for stable Richardson orbits in
$F$-split simple algebraic groups, with a few difficulties, due to
the more complicated combinatorics of the nilpotent orbit structure.

It is natural to ask whether a similar phenomenon as in Item (2) of
Theorem \ref{thmC: epsilon of Fourier of orbital integral for GL_n}
holds in general:
\begin{question}
\label{que:behavior of epsilon with respect to orbit closure}Let
$\text{\ensuremath{\GG}}$ be a connected reductive $F$-group. Given
two orbits $\mathcal{O},\mathcal{O}'\in\mathcal{O}(\mathcal{N}_{\g})$
such that $\mathcal{O}'\subseteq\overline{\mathcal{O}}$, is it true
that $\epsilon_{\star}(\widehat{\xi}_{\mathcal{O}'})\geq\epsilon_{\star}(\widehat{\xi}_{\mathcal{O}})$?
\end{question}

\subsubsection{\label{subsec:Proof-of-Theorem D}Sketch of the proof of Theorem
\ref{thm D:epsilon of a representation}}

In order to compute $\epsilon_{\star}(\pi;e)$ of $\pi\in\Irr(\GG(F))$
for a connected, reductive $F$-group $\text{\ensuremath{\GG}}$,
we consider the local character expansion of $\Theta_{\pi}$ near
$0\in\g$:
\begin{equation}
\theta_{\pi}(\exp(X))=\sum_{\mathcal{O}\in\mathcal{O}(\mathcal{N}_{\g})}c_{\mathcal{O}}(\pi)\cdot\widehat{\xi}_{\mathcal{O}}(X).\label{eq:local character}
\end{equation}
Naively, one would hope that $\epsilon_{\star}(\pi;e)=\underset{\mathcal{O}:c_{\mathcal{O}}(\pi)\neq0}{\min}\epsilon_{\star}(\widehat{\xi}_{\mathcal{O}})$.
However, there might be cancellations between the terms in (\ref{eq:local character}).
When $\underline{G}=\GL_{n}$, two pleasant phenomena occur: 
\begin{itemize}
\item There is a unique orbit $\mathcal{O}_{\mathrm{max}}$ such that $c_{\mathcal{O}_{\mathrm{max}}}(\pi)\neq0$
and any other orbit $\mathcal{O}'$ with $c_{\mathcal{O}'}(\pi)\neq0$
must be contained in $\overline{\mathcal{O}}_{\mathrm{max}}$ (see
Proposition \ref{prop:local character for GLn}).
\item By Theorem \ref{thmC: epsilon of Fourier of orbital integral for GL_n},
$\epsilon_{\star}(\widehat{\xi}_{\mathcal{O}_{\mathrm{max}}})=\underset{\mathcal{O}:c_{\mathcal{O}}(\pi)\neq0}{\min}\epsilon_{\star}(\widehat{\xi}_{\mathcal{O}})$. 
\end{itemize}
Thus, in order to verify the naive hope above, one has to check whether
$\epsilon_{\star}(\pi;e)$ is equal to $\epsilon_{\star}(\widehat{\xi}_{\mathcal{O}_{\mathrm{max}}})$.
The fact that $\mathcal{O}_{\mathrm{max}}$ is the unique orbit in
(\ref{eq:local character}) of largest dimension allows us to use
the \textbf{homogeneity} of $\widehat{\xi}_{\mathcal{O}}$ to show
that an effective cancellation between $\widehat{\xi}_{\mathcal{O}_{\mathrm{max}}}$
and the other orbits cannot occur, so that indeed $\epsilon_{\star}(\pi;e)=\epsilon_{\star}(\widehat{\xi}_{\mathcal{O}_{\mathrm{max}}})$.
The proof can be found in $\mathsection$\ref{subsec:A-formula-for epsilon(pi)}.

Two natural questions arise following the above discussion. The first
question is about potential cancellations between the $\widehat{\xi}_{\mathcal{O}}$,
generalizing Question \ref{que:non-stable orbits}:
\begin{question}
\label{que:cancelations}Let $\text{\ensuremath{\GG}}$ be a connected
reductive $F$-group.\textup{ }Given a collection of complex numbers
$\left\{ a_{\mathcal{O}}\right\} _{\mathcal{O}\in\mathcal{O}(\mathcal{N}_{\g})}$,
is it true that
\[
\epsilon_{\star}\left(\sum_{\mathcal{O}\in\mathcal{O}(\mathcal{N}_{\g})}a_{\mathcal{O}}\cdot\widehat{\xi}_{\mathcal{O}}(X)\right)=\underset{\mathcal{O}:a_{\mathcal{O}}\neq0}{\min}\epsilon_{\star}(\widehat{\xi}_{\mathcal{O}})?
\]
\end{question}

Secondly, one can consider the local character expansion of $\pi$
near an arbitrary semisimple element $x$ (see e.g.~\cite[Theorem 16.2]{HC99}),
and analyze $\epsilon_{\star}(\pi;x)$. As $x$ becomes more regular,
the potential nilpotent orbits appearing in the character expansion
are of smaller dimension. One may wonder: 
\begin{question}
Let $\pi\in\Irr(\GG(F))$ for $\GG$ connected, reductive $F$-group.
Is it true that $\epsilon_{\star}(\pi;x)$ minimal for a central element
$x$ of $\GG(F)$?
\end{question}

Finally, we expect Theorem \ref{thm D:epsilon of a representation}
to hold in the Archimedean case as well, so that $\epsilon_{\star}(\pi;e)=\epsilon_{\star}(\widehat{\xi}_{\mathcal{O}_{\mathrm{max}}})$,
where $\mathcal{O}_{\mathrm{max}}$ is the unique maximal orbit appearing
in the Barbasch\textendash Vogan asymptotic expansion. 

\subsection{Conventions}

Throughout the paper we use the following conventions: 
\begin{itemize}
\item We use underlined letters to indicate algebraic group (e.g.~$\GG,\LL,\PP$)
and underlined Gothic letters to denote Lie algebras (e.g.~$\ggm,\llm,\ppm$).
The corresponding non-underlined letters $G,L,P,\g,\l,\p$ denote
the $F$-points of these objects, for a suitable local field $F$.
\item We write $Z(H)$ (resp.~$Z(\mathfrak{h})$) for the center of the
group $H$ (resp.~Lie algebra $\mathfrak{h}$).
\item We write $\mathrm{Cent}_{G}(H)$ (resp.~$\mathrm{Cent}_{\g}(\mathfrak{h})$)
for the centralizer of a subgroup $H$ in $G$ (resp.~the centralizer
of a Lie subalgebra $\mathfrak{h}$ in $\g$). 
\end{itemize}
\begin{acknowledgement*}
We thank Nir Avni, Joseph Bernstein, Nero Budur, Dan Ciubotaru, Dmitry
Gourevitch, Max Gurevich, Erez Lapid and Eitan Sayag. We especially
thank Rami Aizenbud for many useful discussions on the project. We
further thank Weizmann Institute of Science for their hospitality. 

I.G.~was supported by ISF grant 3422/24. J.G.~was supported by NSERC.
Y.H.~was partially supported by the Labex CEMPI (ANR-11-LABX-0007-01)
and by an FWO fellowship of the Research Foundation - Flanders, file
number 12B4X24N. 
\end{acknowledgement*}

\section{\label{sec:Preliminaries}Preliminaries}

\subsection{Embedded resolution of singularities }

Let $F$ be a local field of characteristic zero. 
\begin{defn}
\label{def:blowup, exceptional divisors, proper transform }Let $\mathbb{P}_{F}^{n}$
be the $n$-dimensional projective space. 
\begin{enumerate}
\item Let $U\subseteq F^{n}$ be an open subset, and let $L\subseteq F^{n}$
be the subspace of dimension $n-r$, given by $x_{1}=\ldots=x_{r}=0$.
We set 
\[
\mathrm{Bl}_{L\cap U}U:=\left\{ \left((x_{1},\dots,x_{n}),(X_{1}:\dots:X_{n})\right)\in U\times\mathbb{P}_{F}^{n-1}:x_{i}X_{j}=x_{j}X_{i},\text{ for all }1\leq i,j\leq r\right\} .
\]
The projection $\pi_{U,L\cap U}:\mathrm{Bl}_{L\cap U}U\rightarrow U$
is called \emph{the blowup of $U$ along $L\cap U$}. 
\item More generally, if $X$ is an $n$-dimensional $F$-analytic manifold,
and $Z\subseteq X$ is a closed submanifold of dimension $n-r$, then
by choosing any atlas $\left\{ (V_{i},\phi_{V_{i}})\right\} _{i}$,
the above construction can be glued into a space $\mathrm{Bl}_{Z}X$
and a map $\pi_{X,Z}:\mathrm{Bl}_{Z}X\rightarrow X$, which is the
\emph{blowup of $X$ along $Z$}. Note that $\pi_{X,Z}:\pi^{-1}(X\backslash Z)\rightarrow X\backslash Z$
is an $F$-bi-analytic map. 
\item The inverse image $\pi_{X,Z}^{-1}(Z)$ is called the \emph{exceptional
divisor} of the blowup. If $W\subseteq X$ is any closed submanifold,
the \emph{proper transform} of $W$ is the submanifold $\overline{\pi^{-1}(W\backslash Z)}$. 
\end{enumerate}
\end{defn}

We will use the following analytic version of Hironaka's theorem \cite{Hir64}
on embedded resolution of singularities. 
\begin{thm}[{\cite[Theorem 2.3]{VZG08}, \cite[Theorem 2.2]{DvdD88}, \cite{BM89}
and \cite{Wlo09}}]
\label{thm:analytic resolution of sing}Let $U\subseteq F^{n}$ be
an open subset, and let $f_{1},\dots,f_{r}:U\rightarrow F$ be $F$-analytic
maps, generating a non-zero ideal $J$ in the algebra of $F$-analytic
functions on $U$. Then there exist an $F$-analytic manifold $Y$,
a proper $F$-analytic map $\Lambda:Y\rightarrow U$ and a collection
of closed submanifolds $\left\{ E_{i}\right\} _{i\in T}$ of $Y$
of codimension $1$, equipped with pairs of non-negative integers
$\left\{ (a_{i},b_{i})\right\} _{i\in T}$, such that the following
hold: 
\begin{enumerate}
\item $\Lambda$ is locally a composition of a finite number of blow-ups
at closed submanifolds, and is an isomorphism over the complement
of the common zero set $\mathrm{V}(J)$ of $J$ in $U$. 
\item For every $c\in Y$, there are local coordinates $(y_{1},\dots,y_{n})$
in a neighborhood $V\ni c$, such that each $E_{i}$ containing $c$
is given by the equation $y_{i}=0$. Moreover, if without loss of
generality $E_{1},\dots,E_{m}$ contain $c$, then there exists an
$F$-analytic unit $v:V\rightarrow F$, such that the pullback of
$J$ is the principal ideal 
\begin{equation}
\Lambda^{*}J=\langle y_{1}^{a_{1}}\cdots y_{m}^{a_{m}}\rangle\label{eq:prinipalization}
\end{equation}
and such that the Jacobian of $\Lambda$ (i.e.~$\det\,d_{y}\Lambda$)
is given by the monomial: 
\begin{equation}
\operatorname{Jac}_{y}(\Lambda)=v(y)\cdot y_{1}^{b_{1}}\cdots y_{m}^{b_{m}}.\label{eq:principalization of Jacobian}
\end{equation}
\end{enumerate}
\end{thm}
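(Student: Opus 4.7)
My plan is to deduce the statement from algorithmic embedded resolution of singularities applied to the ideal $J$, combined with a direct computation of the Jacobian of a blowup along a smooth center. First I would apply functorial embedded resolution to the coherent ideal $J$ on $U$. This produces a composition $\Lambda\colon Y\to U$ of finitely many blowups along smooth closed $F$-analytic submanifolds contained in the successive loci of maximal order of vanishing of (the proper transform of) $J$. By construction, $\Lambda$ is proper, it is an isomorphism over $U\setminus V(J)$, and the total transform $\Lambda^{*}J$ is a locally principal ideal whose zero locus is a simple normal crossings divisor $\sum_{i} E_{i}$. This already gives assertion (1). Around any point $c\in Y$, the simple normal crossings condition supplies local coordinates $(y_{1},\ldots,y_{n})$ on a neighborhood $V$ in which each $E_{i}$ through $c$ is defined by $y_{i}=0$, and then the principalization writes $\Lambda^{*}J$ locally as $\langle y_{1}^{a_{1}}\cdots y_{m}^{a_{m}}\rangle$ times an $F$-analytic unit $v$, with $a_{i}$ equal to the multiplicity of $\Lambda^{*}J$ along $E_{i}$; this is exactly (\ref{eq:prinipalization}).

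For the Jacobian formula (\ref{eq:principalization of Jacobian}) I would reduce to a single blowup: a direct computation in the standard affine charts shows that the Jacobian of a blowup along a smooth center of codimension $r$ equals a unit times the equation of the new exceptional divisor raised to the power $r-1$. Composing these contributions along the whole sequence of blowups expresses $\operatorname{Jac}_{y}(\Lambda)$ as a product of the equations $y_{i}$ of the exceptional divisors passing through $c$, raised to nonnegative integer powers $b_{i}$, times an analytic unit. This is the claimed monomial form, and the exponents $b_{i}$ are determined explicitly by how many times $E_{i}$ was created or pulled back through subsequent blowups.

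The main obstacle, and the reason this result is quoted in the paper rather than reproved from scratch, is running the resolution algorithm in the $F$-analytic category for an arbitrary local field $F$ of characteristic zero. For $F=\mathbb{R}$ or $\mathbb{C}$ this is classical Bierstone\textendash Milman and W\l odarczyk; for non-Archimedean $F$ one either works with a local algebraic model and uses that the algorithm is canonical and hence compatible with analytification, or appeals directly to the analytic formulations of Denef\textendash van den Dries and Veys\textendash Z\'u\~niga-Galindo. The cited references carry out exactly this passage, so in practice the proof amounts to invoking their output and reading off the three conclusions (1), (\ref{eq:prinipalization}), and (\ref{eq:principalization of Jacobian}).
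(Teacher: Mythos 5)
The paper does not prove this theorem; it is stated as a citation to Veys--Z\'u\~niga-Galindo, Denef--van den Dries, Bierstone--Milman, and W\l odarczyk, and is used as a black box. Your sketch correctly outlines how the statement follows from algorithmic embedded resolution (principalization of $J$ plus SNC local coordinates plus the $\operatorname{Jac}=(\text{unit})\cdot(\text{exceptional})^{r-1}$ computation for a single smooth blowup), and you rightly flag that the only real work is transporting the algorithm to the $F$-analytic category over a non-Archimedean field, which is exactly what the cited references do; so your account is consistent with how the paper treats the result.
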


\begin{defn}
\label{def:log-principalizationl}Given an ideal $J=\langle f_{1},\dots,f_{r}\rangle$
of $F$-analytic functions on $U$, a map $\Lambda:Y\rightarrow U$
satisfying the conditions above is called a \emph{log-principalization}
of $J$. $\left\{ E_{i}\right\} _{i\in T}$ is the collection of \emph{divisors}
attached to $\Lambda$.
\end{defn}

Using Theorem \ref{thm:analytic resolution of sing}, the following
formula for $\lct_{F}(f;x)$ (recall Definition \ref{def:log canonical threshold })
can be given.
\begin{lem}[{see e.g.~\cite[Lemma 2.3]{GHS}}]
\label{lem:formula for lct using resolution}Let $f:F^{n}\to F$
be an $F$-analytic map. Let $\Lambda:Y\rightarrow F^{n}$ be a log-principalization
of $f$, with data $\left\{ E_{i}\right\} _{i\in T}$ and $\left\{ (a_{i},b_{i})\right\} _{i\in T}$
as in Theorem \ref{thm:analytic resolution of sing}. Then:
\begin{equation}
\lct_{F}(f;x)=\min_{i:\,x\in\Lambda(E_{i})}\frac{b_{i}+1}{a_{i}}.\label{eq:formula for lct using resolution data}
\end{equation}
\end{lem}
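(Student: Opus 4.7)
The plan is to compute $\int_B |f|_F^{-s}$ for a small ball $B$ around $x$ by pulling back via the log-principalization $\Lambda$, and reading off the convergence condition directly from the monomial description on $Y$.

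First I would reduce to an integral on $Y$. Since $\Lambda$ is proper and is an isomorphism over the complement of $\mathrm{V}(f)$, which is a set of measure zero, the change of variables formula gives
\begin{equation*}
\int_B |f(x)|_F^{-s}\,dx \;=\; \int_{\Lambda^{-1}(B)} |f\circ\Lambda(y)|_F^{-s}\,|\operatorname{Jac}_y(\Lambda)|_F\,dy.
\end{equation*}
Because $\Lambda$ is proper and $B$ is relatively compact, $\Lambda^{-1}(B)$ is compact, so it can be covered by finitely many coordinate charts $V$ of the type produced by Theorem \ref{thm:analytic resolution of sing}. In each such chart, using (\ref{eq:prinipalization}) and (\ref{eq:principalization of Jacobian}), the integrand equals
\begin{equation*}
|u(y)|_F^{-s}\,|v(y)|_F\,\prod_{i=1}^{m}|y_i|_F^{b_i-s a_i},
\end{equation*}
where $u,v$ are $F$-analytic units (so $|u|_F,|v|_F$ are bounded above and below by positive constants on $\overline V$), and the $y_i$ with $i>m$ do not appear.

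Next, I would analyze convergence chart by chart. By Fubini, the local integral is comparable to a product of one-dimensional integrals of the form $\int_{|y_i|_F\le c}|y_i|_F^{b_i-s a_i}\,dy_i$, together with a bounded factor in the remaining variables. The one-dimensional integral converges if and only if $b_i - s a_i > -1$, i.e.\ $s < (b_i+1)/a_i$. Hence the local integral over a chart $V$ is finite precisely when $s<\min_{i:E_i\cap V\ne\emptyset}(b_i+1)/a_i$, and summing over the finite cover, $\int_B|f|_F^{-s}<\infty$ iff $s<\min_{i:E_i\cap\Lambda^{-1}(B)\ne\emptyset}(b_i+1)/a_i$.

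Finally, I would let $B$ shrink to $x$. Using again the properness of $\Lambda$ (so that $\Lambda(E_i)$ is closed in $F^n$), one checks that there is a neighborhood basis of $x$ by balls $B$ for which $\{i:E_i\cap\Lambda^{-1}(B)\ne\emptyset\}=\{i:x\in\Lambda(E_i)\}$: any divisor not meeting $\Lambda^{-1}(x)$ has image disjoint from a sufficiently small ball $B$ around $x$, while those with $x\in\Lambda(E_i)$ always meet $\Lambda^{-1}(B)$. Taking the supremum over such $B$ yields
\begin{equation*}
\lct_F(f;x)\;=\;\min_{i:\,x\in\Lambda(E_i)}\frac{b_i+1}{a_i},
\end{equation*}
as asserted. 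The only point requiring genuine care is the last step, where properness of $\Lambda$ is used to match the divisors contributing asymptotically as $B\to\{x\}$ with those whose image contains $x$; everything else is a standard Fubini/change-of-variables computation against the explicit monomial form of $\Lambda^*\langle f\rangle$ and $\operatorname{Jac}(\Lambda)$.
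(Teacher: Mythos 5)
Your proof is correct and is precisely the standard argument for this formula; the paper itself does not prove this lemma but cites \cite[Lemma 2.3]{GHS}, which follows the same change-of-variables/Fubini computation against the monomial normal form and the same use of properness to identify the contributing divisors as those whose image contains $x$. The only points worth making explicit for full rigor are that one should cover $\Lambda^{-1}(\overline{B})$ (compact, by properness) by finitely many charts shrunk to polydiscs, and that $\Lambda(E_i)$ is closed because a proper continuous map into a locally compact Hausdorff space is closed—both of which you implicitly use.
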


It will be useful to introduce the following relative variant of the
$F$-log-canonical threshold. A similar notion was introduced in \cite[p.2]{DM22}.
\begin{defn}
\label{def:relative log canonical threshold}Let $f:F^{n}\to F$ and
$g:F^{n}\rightarrow F$ be two analytic maps. Then the\emph{ $F$-log-canonical
threshold of $f$ relative to }$g$, at $x\in F^{n}$ is:
\[
\lct_{F}(f;g,x):=\sup\left\{ s>0:\exists\text{ a ball }x\in B\subseteq F^{n}\text{ s.t.}\int_{B}\frac{\left|g(x)\right|_{F}}{\left|f(x)\right|_{F}^{s}}dx<\infty\right\} .
\]
\end{defn}

\begin{rem}
\label{rem:Lct of polynomials on smooth varieties and behavior with field extensions}Let
$K$ be a field of characteristic zero, and $F\supseteq K$ a local
field. Given a smooth $K$-variety $X$, and polynomials $f,g\in K[X]$,
one can extend the notion of $\lct_{F}(f;g,x_{0})$, for $x_{0}\in X(F)$
to this setting, by analyzing integrability of $\frac{\left|g(x)\right|_{F}}{\left|f(x)\right|_{F}^{s}}\left|\omega\right|_{F}$
near $x_{0}$, for some invertible top form $\omega$. We then have
$\lct_{F}(f;g,x_{0})\geq\lct_{F'}(f;g,x_{0})$ for every field extension
$F'\supseteq F$. If $K=\Q$, then we further have $\lct_{F}(f;g)\geq\lct_{\C}(f;g)$.
For further details about this phenomenon we refer to \cite[Corollary 2]{Saia}
and \cite[Section 1.5]{Bud12}.
\end{rem}

\subsection{\label{subsec:Hyperplanes-arrangements--basic}Hyperplane arrangements
and their resolutions of singularities}

We first recall basic notions from the theory of hyperplane arrangements,
which will be used in $\mathsection$\ref{sec:Some-integrals-associated to coxeter arrangements}.
We follow \cite{Sta07}.
\begin{defn}
\label{def:hyperplane arrangement}~
\begin{enumerate}
\item A \emph{finite hyperplane arrangement} $\mathcal{A}$ is a finite
set of affine hyperplanes $W_{1},\dots,W_{m}$ (defined by $L_{W_{i}}(x)=0$)
in some $n$-dimensional vector space $V$ over a field $K$. 
\item Given a hyperplane arrangement $\mathcal{A}$, its \emph{defining
polynomial} is 
\[
Q_{\mathcal{A}}(x)=\prod_{W\in\mathcal{A}}L_{W}(x).
\]
\item The \emph{dimension of $\mathcal{A}$}, denoted $\dim\mathcal{A}$,
is $\mathrm{dim}V=n$. The \emph{rank of $\mathcal{A}$}, denoted
$\mathrm{rk}\mathcal{A}$, is the dimension of the subspace spanned
by the normal vectors to $W\in\mathcal{A}$. A hyperplane arrangement
$\mathcal{A}$ is called \emph{essential} if $\mathrm{rk}\mathcal{A}=\dim\mathcal{A}$.
\item An arrangement $\mathcal{A}$ is called \emph{central} if $\bigcap_{W\in\mathcal{A}}W\neq\slashed{O}$.
Then $\mathcal{A}$ is central if and only if it is a translate of
an arrangement of hyperplanes passing through $0\in V$. Moreover,
if $\mathcal{A}$ is central, then $\mathrm{rk}\mathcal{A}=\mathrm{codim}\bigcap_{W\in\mathcal{A}}W$. 
\item A central hyperplane arrangement $\mathcal{A}$ is called \emph{indecomposable}
if there is no linear change of coordinates on $V$ such that $Q_{\mathcal{A}}$
can be written as the product of two non-constant polynomials in disjoint
sets of variables. 
\end{enumerate}
\end{defn}

\begin{defn}
\label{def:more on hyperplane arrangements}Let $\mathcal{A}$ be
a finite hyperplane arrangement in $V$.
\begin{enumerate}
\item Let 
\[
L(\mathcal{A}):=\left\{ \bigcap_{W\in\mathcal{A}'}W\,:\mathcal{A}'\subseteq\mathcal{A}\text{ and }\bigcap_{W\in\mathcal{A}'}W\neq\slashed{O}\right\} .
\]
Define $W_{1}\leq W_{2}$ in $L(\mathcal{A})$ if $W_{1}\supseteq W_{2}$.
We call $L(\mathcal{A})$ the \emph{intersection poset of $\mathcal{A}$}. 
\item A \emph{lattice} is a poset $(S,\leq)$ for which any two elements
have a unique upper bound and a unique lower bound. If $\mathcal{A}$
is central, then $L(\mathcal{A})$ is a lattice, and is thus called
the \emph{intersection lattice of $\mathcal{A}$}. 
\item An element $W\in L(\mathcal{A})$ is called \emph{dense} if the hyperplane
arrangement $\mathcal{A}_{W}$ given by the image of $\bigcup_{W'\in\mathcal{A}:W'\supseteq W}W'$
in $V/W$ is indecomposable. Let $L_{\mathrm{dense}}(\mathcal{A})$
be the set of dense elements $W\in L(\mathcal{A})$. 
\item For each $W\in L(\mathcal{A})$ we write $r(W):=\mathrm{codim}W$
and $s(W):=\#\left\{ W'\in\mathcal{A}:W'\supseteq W\right\} $. 
\end{enumerate}
Hyperplane arrangements have a simple algorithm for log-resolution
of singularities, and the numerical data of the resolution can be
given as below. 
\end{defn}

\begin{thm}[{\cite[Theorem 8]{STV95} and \cite{DP95}. See also \cite[p.133]{BMT11}}]
\label{Thm:resolution of hyperplane arrangements}Let $F$ be a local
field of characteristic $0$, $V$ be an $F$-vector space, and let
$\mathcal{A}$ be an arrangement in $V$. Let $\Lambda:Y\rightarrow V$
be the map obtained by successive blowups, by taking for $d=0,1,\dots,\dim V-2$
the blowup along the (proper transform of) the union of the dense
elements $W\in L_{\mathrm{dense}}(\mathcal{A})$ of dimension $d$.
Then:
\begin{enumerate}
\item $\Lambda$ is a log-principalization of $Q_{\mathcal{A}}$. 
\item The set of divisors $\left\{ E_{W}\right\} _{W\in L_{\mathrm{dense}}(\mathcal{A})}$
of $\Lambda$ is parameterized by the dense elements $L_{\mathrm{dense}}(\mathcal{A})$. 
\item For each $W\in L_{\mathrm{dense}}(\mathcal{A})$, let $k_{W}$ be
the order of vanishing of $\operatorname{Jac}(\Lambda)$ along $E_{W}$,
let $a_{W}$ be the order of vanishing of $Q_{\mathcal{A}}$ along
$E_{W}$. Then:
\[
k_{W}=r(W)-1,\text{ \,\,\,\,and\,\,\,\, }a_{W}=s(W).
\]
Explicitly, for each $W'\in\mathcal{A}$, the order of vanishing of
$L_{W'}$ along $E_{W}$ is $1$ if $W'\supseteq W$ and $0$ otherwise. 
\end{enumerate}
\end{thm}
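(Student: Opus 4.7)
The plan is to establish the theorem by induction on $\dim V$, combined with a local analysis at each stage of the successive blowup algorithm. Fix a point $p\in V$ and pass to the central sub-arrangement $\mathcal{A}_p:=\{W\in\mathcal{A}:p\in W\}$, since hyperplanes missing $p$ play no role near $p$. The main technical goal is to show that at each stage $d=0,1,\ldots,\dim V-2$, after all earlier blowups have been performed, the proper transforms of the dense elements of dimension $d$ are pairwise disjoint smooth subvarieties, so that their simultaneous blowup is unambiguous and preserves smoothness of the ambient space.

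The first key step is the combinatorial/geometric lemma: distinct dense elements $W_1\neq W_2$ of the same dimension $d$ become disjoint after the earlier blowups. This uses the characterization of density via indecomposability of $\mathcal{A}_W$ in $V/W$. If $W_1\cap W_2\in L(\mathcal{A})$ has dimension less than $d$, then either $W_1\cap W_2$ is itself dense (and hence already blown up at an earlier stage, its exceptional divisor separating the proper transforms of $W_1$ and $W_2$), or else $W_1\cap W_2$ is decomposable, in which case one verifies that it lies in a join of strictly smaller dense elements which have already been blown up, again separating $W_1$ from $W_2$. Once disjointness is in place, the blowup along the union of dense $d$-dimensional elements is locally a single blowup along a smooth center of codimension $r(W)$.

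The numerical data is then read off by a local computation in adapted coordinates $(y_1,\ldots,y_n)$ in which the (proper transform of the) center $W$ is cut out by $y_1=\cdots=y_{r(W)}=0$: in the chart where $E_W$ has equation $y_1=0$, the Jacobian of that blowup equals $y_1^{r(W)-1}$, giving $k_W=r(W)-1$. Each linear form $L_{W'}$ with $W'\supseteq W$ vanishes to order exactly $1$ on $W$ (being a hyperplane equation), so pulls back to a function divisible by $y_1$ exactly once, while forms with $W'\not\supseteq W$ pull back to units generically along $E_W$; multiplying over all $W'\in\mathcal{A}$ yields $a_W=s(W)$. Subsequent blowups are performed along centers meeting $E_W$ only in proper subvarieties, so these generic multiplicities persist in the final map $\Lambda$. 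A simultaneous inductive check shows that at every point $c\in Y$ the exceptional divisors through $c$, together with the proper transforms of the hyperplanes through $c$, form a simple normal crossings divisor, since the dense elements whose $E_W$ meet at $c$ form a chain in $L_\mathrm{dense}(\mathcal{A})$ contributing transversal coordinates and the complementary hyperplanes form a product arrangement already in normal crossings form by the inductive hypothesis. The principal obstacle is the disjointness/separation lemma in the second paragraph, which is the combinatorial heart of the De Concini--Procesi wonderful model construction; once that is in place, the log-principalization assertion of Theorem~\ref{thm:analytic resolution of sing} and the multiplicity formulas for $k_W$ and $a_W$ follow from the routine local calculations above.
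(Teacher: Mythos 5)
The paper cites this theorem from \cite{STV95} and \cite{DP95} and does not supply a proof, so there is no in-paper argument to compare against; what follows assesses your sketch on its own terms.

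Your sketch follows the outline of the De Concini--Procesi wonderful model construction, and the local computations giving $k_W=r(W)-1$ and $a_W=s(W)$ once the iterated blowup structure is known are correct. However, the pivotal claim that after the blowups at stages $<d$ the proper transforms of the dense elements of dimension $d$ are pairwise disjoint is false, and the argument offered for it --- that a decomposable $W_1\cap W_2$ ``lies in a join of strictly smaller dense elements which have already been blown up'' --- does not hold. Take $\mathcal{A}$ to be the product of two $A_2$ arrangements in $F^4$, with hyperplanes $x_1=0$, $x_2=0$, $x_1+x_2=0$, $x_3=0$, $x_4=0$, $x_3+x_4=0$. The only dense elements of dimension $\le 2$ are $W_1=\{x_1=x_2=0\}$ and $W_2=\{x_3=x_4=0\}$: the origin $W_1\cap W_2$ is decomposable (hence not dense, hence never blown up), and its De Concini--Procesi factors are $W_1$ and $W_2$ themselves, not strictly smaller dense elements. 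So at stage $d=2$ nothing has yet been blown up and $W_1$, $W_2$ still meet. What makes the construction work is not disjointness but the fact that the relevant proper transforms are in normal-crossings position, so that the blowup along their union is smooth and the order of same-dimensional blowups is immaterial; establishing this is the genuine combinatorial content of the theorem and requires the nested-set machinery of \cite{DP95}. For the same reason, the dense elements whose exceptional divisors pass through a common point of $Y$ form a nested set, not a chain, as $\{W_1,W_2\}$ above illustrates, so the normal-crossings argument at the end of your sketch also needs repair.
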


Theorem \ref{Thm:resolution of hyperplane arrangements} and Lemma
\ref{lem:formula for lct using resolution} imply the following: 
\begin{cor}[{cf.~\cite[Corollary 0.3]{Mus06}}]
\label{cor:lct of hyperplane arrangement}Let $\mathcal{A}$ be a
central hyperplane arrangement in $V$. Then:
\[
\mathrm{lct}_{F}(Q_{\mathcal{A}})=\underset{W\in L_{\mathrm{dense}}(\mathcal{A})}{\mathrm{min}}\frac{k_{W}+1}{a_{W}}=\underset{W\in L_{\mathrm{dense}}(\mathcal{A})}{\mathrm{min}}\frac{r(W)}{s(W)}.
\]
\end{cor}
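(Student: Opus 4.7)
The plan is to obtain the corollary as an immediate combination of Theorem \ref{Thm:resolution of hyperplane arrangements} and Lemma \ref{lem:formula for lct using resolution}, with a small observation exploiting centrality.

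First I would invoke Theorem \ref{Thm:resolution of hyperplane arrangements} applied to the central arrangement $\mathcal{A}$. This produces a log-principalization $\Lambda : Y \to V$ of the defining polynomial $Q_{\mathcal{A}}$, together with the divisor data $\{E_W\}_{W \in L_{\mathrm{dense}}(\mathcal{A})}$, indexed by the dense elements of the intersection lattice. The theorem moreover identifies the numerical invariants attached to each divisor: the order of vanishing of $Q_{\mathcal{A}}$ along $E_W$ is $a_W = s(W)$, and the order of vanishing of the Jacobian of $\Lambda$ along $E_W$ is $k_W = r(W)-1$.

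Next I would apply Lemma \ref{lem:formula for lct using resolution} to this specific log-principalization, which yields
\[
\lct_F(Q_{\mathcal{A}}; x) \;=\; \min_{W \in L_{\mathrm{dense}}(\mathcal{A}) : \; x \in \Lambda(E_W)} \frac{k_W + 1}{a_W}.
\]
The only remaining point is to verify that all dense $W$ actually contribute at the relevant base point. After translating so that the common intersection $\bigcap_{W' \in \mathcal{A}} W'$ contains the origin (using that $\mathcal{A}$ is central), every $W \in L(\mathcal{A})$ contains $0$, and therefore $0 \in \Lambda(E_W)$ for every dense $W$. Substituting $k_W + 1 = r(W)$ and $a_W = s(W)$ into the displayed formula above then gives the asserted equality.

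The main obstacle is essentially absent: both ingredients are already stated, and the only check required is the trivial remark that centrality forces every dense element, hence every divisor $E_W$, to sit above the origin. If the statement is interpreted as the infimum of $\lct_F(Q_{\mathcal{A}}; x)$ over $x \in V$ rather than the value at the center, then one additionally notes that for $x \neq 0$ only a proper subset of the dense elements satisfies $x \in \Lambda(E_W)$, so the minimum can only become larger, confirming that the value at the center is the smallest.
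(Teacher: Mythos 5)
Your proof is correct and follows exactly the route the paper intends: the corollary is stated there as an immediate consequence of Theorem \ref{Thm:resolution of hyperplane arrangements} and Lemma \ref{lem:formula for lct using resolution}, and you have supplied precisely the missing bookkeeping, including the observation that centrality forces every dense element (hence every divisor $E_W$) to lie over the origin, so that all dense $W$ contribute to the minimum.
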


\subsection{\label{subsec:Nilpotent-orbits-in gln}Nilpotent orbits in $\protect\gl_{n}$}

We follow the book of Collingwood\textendash McGovern \cite{CM93}. 
\begin{defn}
~\label{def:partitions}
\begin{enumerate}
\item A \emph{partition} $\lambda$ of $m\in\N$, denoted $\lambda\vdash m$,
is a non-increasing sequence $\lambda=(\lambda_{1},\dots,\lambda_{k})$
of non-negative integers that sum to $m$. Two partitions are equivalent
if they have the same non-zero parts. A partition $\lambda=(\lambda_{1},\dots,\lambda_{k})$,
with $\lambda_{k}>0$ is graphically encoded by a \emph{Young diagram},
which is a finite collection of boxes arranged in $k$ left-justified
rows, where the $j$-th row has $\lambda_{j}$ boxes. It is convenient
to write $\lambda=(1^{a_{1}}\cdots m^{a_{m}})$ for the partition
$m=\underset{a_{1}\text{ times}}{\underbrace{(1+\ldots+1)}}+\ldots+\underset{a_{m}\text{ times}}{\underbrace{(m+\ldots+m)}}$.
We denote by $\mathcal{P}(m)$ the set of all partitions of $m$. 
\item Given two partitions $\lambda=(\lambda_{1},\dots,\lambda_{k})$, $\mu=(\mu_{1},\dots,\mu_{k'})$
of $m$, where we may assume $k'=k$, we say that \emph{$\lambda$
dominates $\mu$}, and write $\lambda\succeq\mu$, if 
\begin{equation}
\sum_{j=1}^{u}\lambda_{j}\geq\sum_{j=1}^{u}\mu_{j}\text{ for all }u\leq k.\label{eq:dominance relation}
\end{equation}
\item Given a partition $\lambda=(\lambda_{1},\dots,\lambda_{k})$ of $m$,
we define the \emph{conjugate partition} $\lambda^{\mathrm{t}}=(\lambda_{1}^{\mathrm{t}},\dots,\lambda_{k'}^{\mathrm{t}})\vdash m$
where $\lambda_{j}^{\mathrm{t}}=\#\left\{ i:\lambda_{i}\geq j\right\} $.
The map $\lambda\mapsto\lambda^{\mathrm{t}}$ defines an involution
on $\mathcal{P}(m)$, which is order reversing, i.e., $\lambda\succeq\mu$
if and only if $\lambda^{\mathrm{t}}\preceq\mu^{\mathrm{t}}$. 
\end{enumerate}
\end{defn}

We write $\mathcal{N}\subseteq\gl_{n}$ for the set of nilpotent matrices
in $\gl_{n}$. Each $\GL_{n}(\C)$-orbit $\mathcal{O}$ in $\mathcal{N}$
is classified by its Jordan normal form $J_{\mathcal{O}}$. The block
sizes in $J_{\mathcal{O}}$ give rise to a partition $\lambda_{\mathcal{O}}\vdash n$. 
\begin{thm}[{\cite[Theorem 6.2.5]{CM93}, \cite{Ger61,Hes76}}]
\label{thm:orbit closure and dominance}The map $\mathcal{O}\mapsto\lambda_{\mathcal{O}}$
is a bijection $\mathcal{O}(\mathcal{N})\rightarrow\mathcal{P}(n)$.
Moreover, we have $\mathcal{O}'\subseteq\overline{\mathcal{O}}$ if
and only if $\lambda_{\mathcal{O}'}\preceq\lambda_{\mathcal{O}}$. 
\end{thm}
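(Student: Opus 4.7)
The statement is a classical result of Gerstenhaber and Hesselink. My plan is to break it into the bijection statement and the closure-order statement, and handle the latter via the rank invariants $r_j(X):=\dim\ker(X^j)$.

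\textbf{Step 1 (the bijection).} Every nilpotent $X\in\gl_n$ is $\GL_n$-conjugate to a matrix in Jordan normal form with all eigenvalues equal to $0$, and two such Jordan forms are $\GL_n$-conjugate if and only if their multisets of block sizes coincide. Arranging block sizes in non-increasing order gives a well-defined map $\mathcal{O}\mapsto\lambda_{\mathcal{O}}\in\mathcal{P}(n)$, and the existence of a Jordan form for any partition shows this map is a bijection.

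\textbf{Step 2 (semi-continuity gives one direction).} The crucial computation is that, for $X$ nilpotent of Jordan type $\lambda$,
\[
r_j(X)\;=\;\dim\ker(X^j)\;=\;\sum_{i=1}^{j}\lambda^{\mathrm{t}}_{i},
\]
which one checks block-by-block on Jordan form. The map $X\mapsto\dim\ker(X^j)=n-\mathrm{rk}(X^j)$ is lower semi-continuous on $\gl_n$, since $\mathrm{rk}(X^j)\ge r$ is an open condition (non-vanishing of some $r\times r$ minor). Hence if $\mathcal{O}'\subseteq\overline{\mathcal{O}}$ and $X'\in\mathcal{O}'$, $X\in\mathcal{O}$, then $r_j(X')\ge r_j(X)$ for all $j$, i.e.\ $\sum_{i=1}^{j}(\lambda_{\mathcal{O}'}^{\mathrm{t}})_{i}\ge \sum_{i=1}^{j}(\lambda_{\mathcal{O}}^{\mathrm{t}})_{i}$ for all $j$. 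This says $\lambda_{\mathcal{O}'}^{\mathrm{t}}\succeq\lambda_{\mathcal{O}}^{\mathrm{t}}$, which, since conjugation reverses the dominance order, is equivalent to $\lambda_{\mathcal{O}'}\preceq\lambda_{\mathcal{O}}$.

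\textbf{Step 3 (elementary degenerations give the converse).} One reduces to the case of an atomic move: recall that the dominance order on $\mathcal{P}(n)$ is generated by transitions $\lambda\to\lambda'$ obtained by moving a single box from row $i$ to row $k>i$, where $\lambda_{i}>\lambda_{i+1}$ and $\lambda_{k-1}>\lambda_{k}$. By transitivity of orbit closure, it suffices to produce, for each such pair $(\lambda,\lambda')$, a one-parameter family $X(t)$ of nilpotent matrices with $X(t)\in\mathcal{O}_{\lambda}$ for $t\neq 0$ and $X(0)\in\mathcal{O}_{\lambda'}$. Working in the obvious basis adapted to $\lambda$, one writes $X(t)$ as the Jordan type $\lambda$ matrix perturbed by a rank-one nilpotent term of size $t$ linking the bottom of the $i$-th chain to the top of the $k$-th chain; a direct calculation of $r_j(X(t))$ verifies that $X(t)$ has Jordan type $\lambda$ for $t\neq 0$ and type $\lambda'$ at $t=0$.

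\textbf{Main obstacle.} Step 1 and Step 2 are essentially formal once the formula $r_j(X)=\sum_{i\le j}\lambda^{\mathrm{t}}_{i}$ is in hand. The real work is in Step 3: it requires both the combinatorial fact that one-box moves generate the dominance order (a Young-diagram argument) and the explicit construction of a deformation realizing such a move as an orbit degeneration. Getting the matrix entries of $X(t)$ right so that the rank invariants jump in exactly the prescribed way, and no more, is the delicate point; this is where one most closely follows Gerstenhaber/Hesselink.
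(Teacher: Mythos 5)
The paper does not prove this statement; it is stated as a citation to \cite[Theorem 6.2.5]{CM93} and to Gerstenhaber and Hesselink, so there is no ``paper proof'' to compare against. Your sketch is the standard argument and the overall structure (bijection by Jordan form; one direction by semi-continuity of the rank invariants $r_j$; the converse by reducing to single-box moves and exhibiting an explicit degenerating family) is exactly how Gerstenhaber's proof goes, and it is correct in outline.

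One small slip worth fixing: in Step~2 you assert that $X\mapsto\dim\ker(X^j)=n-\mathrm{rk}(X^j)$ is \emph{lower} semi-continuous, while justifying it by the openness of $\{\mathrm{rk}(X^j)\ge r\}$. That openness shows $\mathrm{rk}(X^j)$ is lower semi-continuous, hence $\dim\ker(X^j)$ is \emph{upper} semi-continuous — and upper semi-continuity is precisely what you then use to conclude $r_j(X')\ge r_j(X)$ on the closure. So the inequality you draw is the right one; only the terminology is reversed. Beyond that, Step~3 is where the real content lies, as you note: you would need both the combinatorial fact that single-box moves generate the dominance order (the paper itself cites \cite[Proposition~2.3]{Bry73} for exactly this in the proof of Lemma~\ref{lem:volume of partitions}) and a careful verification that the proposed one-parameter family $X(t)$ has constant Jordan type $\lambda$ for $t\neq 0$ and type $\lambda'$ at $t=0$, which requires writing down the rank-one perturbation explicitly and computing the $r_j(X(t))$. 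As a sketch this is all standard and sound, but neither the combinatorial generation claim nor the existence of the deformation is proved here.
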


Let $\t\subseteq\gl_{n}$ be the standard Cartan subalgebra of diagonal
matrices. The root subspace decomposition of $\gl_{n}$ is 
\[
\gl_{n}=\t\oplus\bigoplus_{1\leq i\neq j\leq n}\g_{\alpha_{i,j}},
\]
where $\alpha_{i,j}:=\varepsilon_{i}-\varepsilon_{j}$, and $\varepsilon_{k}:\t\rightarrow\C$
is the projection to the $k$-th coordinate. Let $\Sigma(\gl_{n},\t)$
be the set of roots $\{\alpha_{i,j}\}_{i\neq j}$. Let $\mathfrak{b}\subseteq\gl_{n}$
be the Borel subalgebra of upper triangular matrices, let $\Sigma^{+}(\gl_{n},\t)\subseteq\Sigma(\gl_{n},\t)$
be the corresponding subset of \emph{positive roots}, so that $\mathfrak{b}=\t\oplus\bigoplus_{\alpha_{i,j}\in\Sigma^{+}(\gl_{n},\t)}\g_{\alpha_{i,j}}$,
and let $\Pi(\gl_{n},\t)=\left\{ \alpha_{i,i+1}\right\} _{i=1}^{n-1}$
be the set of simple roots associated with our choice of $\mathfrak{b}$.
We write $W\simeq S_{n}$ for the Weyl group of the root system. 

A parabolic subalgebra $\p\subseteq\gl_{n}$ is called \emph{standard},
if it contains $\mathfrak{b}$. A subset $R\subseteq\Pi(\gl_{n},\t)$
defines a standard parabolic subalgebra $\p_{R}$ as follows: let
$\t_{R}:=\bigcap_{\alpha\in R}\mathrm{Ker}\alpha\subseteq\t$ and
$\l_{R}:=\mathrm{Cent}_{\gl_{n}}(\t_{R})$. Then define $\p_{R}:=\l_{R}\oplus\n_{R}$,
where 
\[
\n_{R}:=\bigoplus_{\alpha\in\Sigma^{+}(\gl_{n},\t)\backslash\Sigma^{+}(\l_{R},\t)}\g_{\alpha}.
\]
Note that, up to conjugating with an element of $W$, $\l_{R}$ is
of the form $\text{\ensuremath{\l_{R}}}=\gl_{\nu_{1}}\oplus\dots\oplus\gl_{\nu_{M}}$
where $\nu=(\nu_{1},\dots,\nu_{M})\vdash n$. We write $\p_{\nu}:=\l_{\nu}\oplus\n_{\nu}$
for the standard parabolic with Levi $\l_{\nu}=\gl_{\nu_{1}}\oplus\dots\oplus\gl_{\nu_{M}}$,
and nilpotent radical $\n_{\nu}$. We can now state the following
result: 
\begin{prop}
\label{prop:parabolic subalgebra correspond to partitions}The map
$\nu\mapsto\p_{\nu}$ is a bijection between $\mathcal{P}(n)$ and
the set of conjugacy classes of parabolic subalgebras of $\gl_{n}$. 
\end{prop}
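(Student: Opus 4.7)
The plan is to reduce to the case of standard parabolic subalgebras (those containing the fixed Borel $\mathfrak{b}$) and then analyze $\GL_n$-conjugation among them. Every parabolic subalgebra $\p \subseteq \gl_n$ contains some Borel subalgebra $\mathfrak{b}'$, and since all Borel subalgebras of $\gl_n$ are $\GL_n$-conjugate, writing $\mathfrak{b}' = \Ad(g)\mathfrak{b}$ yields $\Ad(g^{-1})\p \supseteq \mathfrak{b}$. Hence every conjugacy class of parabolic subalgebras admits a standard representative $\p_R$ for some subset $R \subseteq \Pi(\gl_n, \t)$, and the corresponding Levi $\l_R$ decomposes as $\gl_{\mu_1} \oplus \cdots \oplus \gl_{\mu_M}$ for an ordered composition $(\mu_1, \ldots, \mu_M)$ of $n$ determined by the connected components of $R$ inside the type-$A_{n-1}$ Dynkin diagram.

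For surjectivity of $\nu \mapsto \p_\nu$, I would exhibit, for any composition $(\mu_1, \ldots, \mu_M)$ and any permutation $\sigma \in S_M$, a permutation matrix $w \in \GL_n$ conjugating $\p_{(\mu_1, \ldots, \mu_M)}$ to $\p_{(\mu_{\sigma(1)}, \ldots, \mu_{\sigma(M)})}$; this is simply the block-permutation matrix that reorders the standard coordinate blocks of sizes $\mu_1, \ldots, \mu_M$. Choosing $\sigma$ so that the reordered tuple is non-increasing produces a partition $\nu$ with $\p_\nu$ in the $\GL_n$-conjugacy class of $\p$.

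For injectivity, suppose $\p_{\nu'} = \Ad(g)\p_\nu$ for partitions $\nu, \nu'$. By uniqueness of the Levi decomposition up to conjugation by the unipotent radical, the Levi subalgebras $\l_\nu$ and $\l_{\nu'}$ are themselves $\GL_n$-conjugate. Decomposing into simple ideals and comparing dimensions recovers the multiset of block sizes, and since both tuples are written in non-increasing order this forces $\nu = \nu'$. The main technical subtlety is precisely this last step: showing that the multiset of block sizes of $\l_\nu$ is a $\GL_n$-conjugation invariant of $\p_\nu$. A cleaner, more geometric route that bypasses these bookkeeping details is the flag interpretation: each parabolic subalgebra of $\gl_n$ is the stabilizer of a unique partial flag $0 = V_0 \subsetneq V_1 \subsetneq \cdots \subsetneq V_M = F^n$, two such stabilizers are $\GL_n$-conjugate if and only if the multisets $\{\dim V_i/V_{i-1}\}$ coincide, and these multisets are exactly partitions of $n$.
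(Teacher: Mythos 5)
There is a genuine gap in your surjectivity argument. The block-permutation matrix $w$ that reorders the coordinate blocks conjugates the Levi $\l_{(\mu_1,\ldots,\mu_M)}$ to $\l_{(\mu_{\sigma(1)},\ldots,\mu_{\sigma(M)})}$, but it does \emph{not} carry the parabolic $\p_{(\mu_1,\ldots,\mu_M)}$ to $\p_{(\mu_{\sigma(1)},\ldots,\mu_{\sigma(M)})}$: after conjugation the nilpotent radical is no longer the standard block-upper-triangular one, so $w\p_{\mu}w^{-1}$ is not a standard parabolic. The same slip recurs in your flag reformulation: stabilizers of partial flags are $\GL_n$-conjugate if and only if the \emph{ordered} tuples of quotient dimensions $(\dim V_1/V_0,\ldots,\dim V_M/V_{M-1})$ coincide, not merely the multisets, since $\GL_n$ acts transitively on flags of a fixed type and the type is an ordered tuple. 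Concretely, in $\gl_3$ the parabolic $\p_{(2,1)}$ preserves a unique proper nonzero subspace, of dimension $2$, while $\p_{(1,2)}$ preserves a unique proper nonzero subspace, of dimension $1$; since conjugation preserves dimensions of invariant subspaces, these two parabolics are not $\GL_3$-conjugate even though $(2,1)$ and $(1,2)$ give the same partition.

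In fact the gap cannot be closed, because the proposition as literally stated overcounts: standard parabolic subalgebras containing the fixed Borel $\mathfrak{b}$ are pairwise non-conjugate, so conjugacy classes of parabolic subalgebras of $\gl_n$ are indexed by subsets of simple roots, i.e.\ by \emph{compositions} of $n$ ($2^{n-1}$ of them), not by the $p(n)$ partitions. The statement that is actually true, and that the paper uses downstream (e.g.~in Theorem~\ref{thm:Richardson orbits and parabolics}, where only the Richardson orbit matters, and that is determined by the Levi), is that $\nu\mapsto\p_\nu$ gives a bijection onto \emph{association} classes of parabolics, equivalently onto $\GL_n$-conjugacy classes of Levi subalgebras. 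Your reduction to standard parabolics and your extraction of the Levi block sizes are both sound; the surjectivity step just needs to be stated at the level of Levis rather than parabolics.
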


We will need the notion of Richardson orbits, which we recall below. 
\begin{defn}
\label{def:Richardson orbit}$\GG$ be a connected reductive algebraic
$F$-group, let $\ggm=\Lie(\GG)$. 
\begin{enumerate}
\item A nilpotent $\GG$-orbit $\mathcal{O}$ in $\ggm$ is called a \emph{Richardson
orbit}, if there exists an $F$-parabolic subgroup $\PP\subseteq\GG$,
with Levi decomposition $\PP=\LL\cdot\NN$, i.e.~$\LL$ is a Levi
subgroup, and $\NN$ is the unipotent radical of $\PP$, such that
$\mathcal{O}\cap\nnm$ is Zariski dense in $\nnm:=\Lie(\NN)$. In
this case, we say that $\PP=\LL\cdot\NN$ (or $\ppm=\llm\oplus\nnm$)
is a \emph{polarization} of $\mathcal{O}$. 
\item For any Richardson $\GG$-orbit $\mathcal{O}$ in $\ggm$ defined
over $F$, the set $\mathcal{O}(F)$ is a disjoint union of $\GG(F)$-orbits
$\mathcal{O}_{1},\dots,\mathcal{O}_{m}$, in $\ggm(F)$. We call $\mathcal{O}_{\mathrm{st}}:=\mathcal{O}(F)$
a \emph{stable Richardson orbit. If }$\PP=\LL\cdot\NN$ (or $\ppm=\llm\oplus\nnm$)
is a polarization of $\mathcal{O}$, then $P=L\cdot N$ (or $\p=\l\oplus\n$)
is a polarization of $\mathcal{O}_{\mathrm{st}}$, where $P,L,N,\p,\l,\n$
are the corresponding $F$-points of $\PP,\LL,\NN,\ppm,\llm,\nnm$. 
\end{enumerate}
\end{defn}

In $\GL_{n}$, it turns out that all nilpotent orbits are Richardson
orbits (see \cite[Theorem 7.2.3]{CM93}). Given a parabolic subalgebra
$\p=\l\oplus\n$, there is a unique orbit $\mathcal{O}\in\mathcal{O}(\mathcal{N})$
such that $\mathcal{O}\cap\n$ is open and Zariski dense in $\n$,
which is the Richardson orbit $\mathcal{O}_{\p}$ attached to $\p$.
Note that the same orbit $\mathcal{O}_{\p}$ is attached to each $\p'$
which is conjugate to $\p$. We now state the following remarkable
fact: 
\begin{thm}[{\cite[Theorem 7.2.3]{CM93}}]
\label{thm:Richardson orbits and parabolics}Let $\nu\in\mathcal{P}(n)$,
let $\p_{\nu}$ be the corresponding standard parabolic subalgebra,
and let $\mathcal{O}_{\p_{\nu}}$ be the Richardson orbit attached
to $\p_{\nu}$. Then we have $\lambda_{\mathcal{O}_{\p_{\nu}}}=\nu^{\mathrm{t}}$. 
\end{thm}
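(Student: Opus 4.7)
The plan is to determine the Jordan type of a generic element of $\n_\nu$ by controlling $\dim \ker(X^k)$ for each $k$. Let $F_0 \subset F_1 \subset \cdots \subset F_M$ be the standard partial flag in $F^n$ stabilized by $\p_\nu$, where $\dim F_k = n_k := \nu_1 + \cdots + \nu_k$. Since $\n_\nu$ is characterized by $X(F_k) \subseteq F_{k-1}$, iterating gives $X^k(F_k) \subseteq F_0 = 0$, yielding the universal lower bound $\dim \ker(X^k) \geq n_k$ for every $X \in \n_\nu$ and every $k \geq 1$.

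Next I would exhibit one explicit $X_0 \in \n_\nu$ achieving this bound. Decompose $F^n = V_1 \oplus \cdots \oplus V_M$ with $\dim V_j = \nu_j$ compatible with the flag, and define $X_0|_{V_j} \colon V_j \to V_{j-1}$ to be the embedding of $F^{\nu_j}$ onto the first $\nu_j$ coordinates of $F^{\nu_{j-1}}$ for $j \geq 2$, with $X_0|_{V_1} = 0$. This is well-defined precisely because $\nu_j \leq \nu_{j-1}$. A direct check then shows that $X_0^k$ vanishes on $V_1 \oplus \cdots \oplus V_k = F_k$ and is injective on $V_{k+1} \oplus \cdots \oplus V_M$, so $\ker(X_0^k) = F_k$ and thus $\dim \ker(X_0^k) = n_k$ for every $k$.

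Because $X \mapsto \dim \ker(X^k)$ is upper semi-continuous on $\n_\nu$, the set $U_k := \{X \in \n_\nu : \dim \ker(X^k) \leq n_k\}$ is Zariski open, and non-empty thanks to $X_0$. Combined with the universal lower bound this forces equality on $U_k$, so the intersection $U := U_1 \cap \cdots \cap U_{\nu_1}$ is a non-empty Zariski-open subset of $\n_\nu$ on which $\dim \ker(X^k) = n_k$ holds simultaneously for all $k$.

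For any $X \in U$, the conjugate of the Jordan-type partition $\mu := \lambda_{\mathcal{O}_X}$ satisfies $\mu^{\mathrm{t}}_k = \dim \ker(X^k) - \dim \ker(X^{k-1}) = n_k - n_{k-1} = \nu_k$, so $\mu = \nu^{\mathrm{t}}$. Hence every element of the dense open set $U \subseteq \n_\nu$ lies in the single orbit $\mathcal{O}_{\nu^{\mathrm{t}}}$, which identifies this orbit as the Richardson orbit $\mathcal{O}_{\p_\nu}$ and gives $\lambda_{\mathcal{O}_{\p_\nu}} = \nu^{\mathrm{t}}$. The only substantive step is writing down $X_0$; the rest is semi-continuity together with the standard dictionary between partitions and kernel dimensions of nilpotent matrices, so I do not anticipate a serious obstacle.
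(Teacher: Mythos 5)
Your proof is correct. The paper states this result without proof, simply citing \cite[Theorem 7.2.3]{CM93}, so there is no paper argument to compare against; you have supplied a self-contained proof of a fact the authors import from the literature.

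A few remarks on the content. The key observations are all sound: the characterization $\n_{\nu}=\{X:X(F_k)\subseteq F_{k-1}\}$ immediately forces $F_k\subseteq\ker(X^k)$, giving the universal lower bound $\dim\ker(X^k)\geq n_k$; your explicit $X_0$ (shifting $V_j\hookrightarrow V_{j-1}$ along the first $\nu_j$ coordinates, well-defined because $\nu$ is non-increasing) achieves equality, since $X_0^k$ kills $F_k$ and is an injection $V_{k+1}\oplus\cdots\oplus V_M\hookrightarrow V_1\oplus\cdots\oplus V_{M-k}$; and upper semi-continuity of $X\mapsto\dim\ker(X^k)$ (equivalently lower semi-continuity of rank) makes each $U_k$ Zariski open and nonempty, hence dense in the irreducible affine space $\n_\nu$, so $U=\bigcap_{k}U_k$ is dense open. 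The translation $\mu^{\mathrm{t}}_k=\dim\ker(X^k)-\dim\ker(X^{k-1})$ for the Jordan type $\mu$ is the standard identity $\dim\ker(X^k)=\sum_i\min(k,\mu_i)=\sum_{j\leq k}\mu^{\mathrm{t}}_j$, and the conclusion that the orbit meeting $\n_\nu$ in a dense set is $\mathcal{O}_{\nu^{\mathrm{t}}}$ is exactly the defining property of the Richardson orbit. This is essentially the classical argument found in Collingwood--McGovern and elsewhere; you have reconstructed it correctly, including the one nontrivial input (the explicit generic element $X_0$).
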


\begin{rem}
\label{rem:pathology for nilpotent orbit closure}Note that given
two standard parabolic subalgebras $\p_{\nu}$ and $\p_{\nu'}$, such
that $\p_{\nu}\subseteq\p_{\nu'}$, one has $\n_{\nu}\supseteq\n_{\nu'}$
and hence $\mathcal{O}_{\p_{\nu'}}\subseteq\overline{\mathcal{O}}_{\p_{\nu}}$.
The converse, however, is false. Take $\g=\gl_{4}$ and consider the
partitions $\nu=(2,2)$ and $\nu'=(3,1)$. Then $\p_{\nu}\nsubseteq\p_{\nu'}$
and $\p_{\nu'}\nsubseteq\p_{\nu}$. On the other hand, since $\nu'\succ\nu$,
we have $\nu'^{\mathrm{t}}\prec\nu^{\mathrm{t}}$, and hence by Theorems
\ref{thm:orbit closure and dominance} and \ref{thm:Richardson orbits and parabolics},
$\mathcal{O}_{\p_{\nu'}}\subseteq\overline{\mathcal{O}}_{\p_{\nu}}$. 
\end{rem}

\subsection{\label{subsec:Weyl-discriminants}Weyl discriminants}

Let $G$ be a reductive $F$-group of absolute rank $r$, with Lie
algebra $\g$. The Weyl discriminant $D_{\g}:\g\rightarrow F$ of
$\g$ is defined as
\begin{equation}
D_{\g}(X):=\mathrm{coeff}_{r}\left(\det\left(s-\mathrm{ad}_{X}|_{\g}\right)\right),\label{eq:Weyl discriminants-1}
\end{equation}
where $\mathrm{coeff}_{r}(\sum_{i}a_{i}s^{i})=a_{r}$ denotes the
coefficient of $s^{r}$ in the polynomial $\sum_{i}a_{i}s^{i}$. Note
that this is a polynomial condition in the variables of $\g$, so
that $D_{\g}:\g\rightarrow F$ is a polynomial. Similarly, the Weyl
discriminant of $G$ is $D_{G}(g):=\mathrm{coeff}_{r}\left(\det\left(s-\left(\mathrm{Ad}(g)-1\right)|_{\g}\right)\right)$,
which is again a polynomial. We refer to \cite[Section 7]{Kot05}
for further details. 

\section{\label{sec:Some-integrals-associated to coxeter arrangements}Some
integrals associated to finite Coxeter arrangements}

In this section we compute the log-canonical threshold of relative
Weyl discriminants, whose singularities have a strong relationship
with the singularities of Fourier transforms of orbital integrals. 

We assume that $\GG$ is a complex reductive group, and set $G=\GG(\C)$
and $\g=\mathrm{Lie}(G)$. We fix a maximal torus $T\subseteq G$,
with $\t:=\Lie(T)$. Let
\[
\g=\t\oplus\bigoplus_{\alpha\in\Sigma(\g,\t)}\g_{\alpha}
\]
be the root space decomposition, where $\Sigma=\Sigma(\g,\t)\subseteq\t^{*}$
is the set of roots of $\g$ relative to $\t$. Let $\Pi(\g,\t)$
be a basis of simple roots in $\Sigma(\g,\t)$, with $\Sigma^{+}(\g,\t)$
the corresponding set of positive roots. The \emph{semisimple rank}
of $\g$ is $\mathrm{ss.rk}(\g):=\mathrm{rk}([\g,\g])$. 

\subsection{\label{subsec:On-the-Coxeter}On the Coxeter number of simple Lie
algebras}
\begin{defn}[{\cite[Sections 3.16, 3.18, 3.20]{Hum90}}]
\label{def:The-Coxeter-number}Suppose that $[\g,\g]$ is simple,
so that $\Sigma(\g,\t)$ is irreducible. The \emph{Coxeter number}
of $\Sigma(\g,\t)$ is the order of the product of all simple reflections
in $W$. We denote it by $h_{\g}$, as it is independent of the choice
of a Cartan. Equivalently:
\begin{enumerate}
\item $h_{\g}$ is equal to $\underset{\alpha\in\Sigma(\g,\t)}{\max}\mathrm{ht}(\alpha)+1$,
where $\mathrm{ht}(\alpha)$ is the height of the root $\alpha$. 
\item We have $h_{\g}:=\frac{\left|\Sigma(\g,\t)\right|}{\mathrm{ss.rk}(\g)}$. 
\end{enumerate}
\end{defn}

The following lemma is used in $\mathsection$\ref{subsec:Evaluation-of-some}. 
\begin{lem}
\label{lem:inequalities on coxeter numbers}Let $\g$ be a complex
simple Lie algebra, and let $\t\subseteq\l\subseteq\g$ be a Levi
subalgebra. Then
\[
\frac{\left|\Sigma(\l,\t)\right|}{\mathrm{ss.rk}(\l)}\leq\frac{\left|\Sigma(\g,\t)\right|}{\mathrm{ss.rk}(\g)}=h_{\g}.
\]
In particular, if $[\l,\l]$ is simple, then $h_{\l}\leq h_{\g}$. 
\end{lem}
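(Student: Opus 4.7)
The plan is to reduce the statement to the case where $[\l,\l]$ is simple, and then exploit the classical description of $h_{\g}-1$ as the maximal height of a root.

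First I would handle the reduction. Decompose $[\l,\l]=\l_{1}\oplus\cdots\oplus\l_{k}$ into simple ideals, and let $\t_{i}:=\t\cap\l_{i}$. Since $\Sigma(\l,\t)=\bigsqcup_{i}\Sigma(\l_{i},\t_{i})$ and $\mathrm{ss.rk}(\l)=\sum_{i}\mathrm{rk}(\l_{i})$, Definition \ref{def:The-Coxeter-number}(2) gives
\[
\frac{\left|\Sigma(\l,\t)\right|}{\mathrm{ss.rk}(\l)}=\frac{\sum_{i}\left|\Sigma(\l_{i},\t_{i})\right|}{\sum_{i}\mathrm{rk}(\l_{i})}=\frac{\sum_{i}h_{\l_{i}}\cdot\mathrm{rk}(\l_{i})}{\sum_{i}\mathrm{rk}(\l_{i})}\leq\max_{i}h_{\l_{i}},
\]
which is a weighted average bounded by its maximum. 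Thus it suffices to prove the ``in particular'' clause: $h_{\l}\leq h_{\g}$ whenever $[\l,\l]$ is simple.

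For the simple case, I would use the fact that any Levi subalgebra $\l\supseteq\t$ is \emph{standard} with respect to a suitable choice of positive roots: one can pick $\Sigma^{+}(\g,\t)$ so that $\Sigma^{+}(\l,\t)=\Sigma(\l,\t)\cap\Sigma^{+}(\g,\t)$ and so that the set $\Pi_{\l}$ of simple roots of $\Sigma(\l,\t)$ is a subset of the set $\Pi_{\g}$ of simple roots of $\Sigma(\g,\t)$. Assuming $[\l,\l]$ is simple, let $\theta_{\l}\in\Sigma^{+}(\l,\t)$ be the highest root of $\Sigma(\l,\t)$. By Definition \ref{def:The-Coxeter-number}(1), the height of $\theta_{\l}$ computed in $\Pi_{\l}$ equals $h_{\l}-1$. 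Writing $\theta_{\l}=\sum_{\alpha\in\Pi_{\l}}c_{\alpha}\alpha$ and using $\Pi_{\l}\subseteq\Pi_{\g}$, the height of $\theta_{\l}$ viewed as a root of $\g$ with respect to $\Pi_{\g}$ is the same sum $\sum_{\alpha}c_{\alpha}=h_{\l}-1$. Applying once more Definition \ref{def:The-Coxeter-number}(1), this time to $\g$, yields $h_{\l}-1\leq h_{\g}-1$, i.e., $h_{\l}\leq h_{\g}$.

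The ``hard'' part is really just the structural input that a Levi containing $\t$ can be made standard for a well-chosen positive system with $\Pi_{\l}\subseteq\Pi_{\g}$; this is entirely classical (it is the statement that Levi subalgebras correspond, up to conjugation, to subsets of simple roots, cf.\ the discussion preceding Proposition \ref{prop:parabolic subalgebra correspond to partitions} in the $\gl_{n}$ case). Once this is in hand, the rest is the height-of-root trick together with the weighted-average reduction, and no case-by-case analysis of root systems is required.
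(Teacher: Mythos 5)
Your proof is correct and follows essentially the same approach as the paper's: it uses the height characterization (Definition \ref{def:The-Coxeter-number}(1)) for the case $[\l,\l]$ simple and a sum/averaging argument via Definition \ref{def:The-Coxeter-number}(2) for the general case, merely in the opposite order. You also make explicit the (correct, classical) structural input that a Levi containing $\t$ can be taken standard so that $\Pi_{\l}\subseteq\Pi_{\g}$, which the paper's terser phrase ``the highest root of $\Sigma(\l,\t)$ is also a root of $\Sigma(\g,\t)$'' leaves implicit; this is a minor clarification rather than a different route.
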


\begin{proof}
If $[\l,\l]$ is simple, then the lemma directly follows from Definition
\ref{def:The-Coxeter-number}(1), since the highest root of $\Sigma(\l,\t)$
is also a root of $\Sigma(\g,\t)$. 

Now suppose that $[\l,\l]$ is not simple. Then $[\l,\l]=\bigoplus_{i=1}^{k}\l_{i}$,
where each $\l_{i}$ is simple. Since $[\l_{i}+\t,\l_{i}+\t]=\l_{i}$
is simple, we have $\frac{\left|\Sigma(\l_{i}+\t,\t)\right|}{\mathrm{rk}(\l_{i})}\leq h_{\g}$
and therefore $\left|\Sigma(\l_{i}+\t,\t)\right|\leq h_{\g}\mathrm{rk}(\l_{i})$.
This implies: 
\[
\frac{\left|\Sigma(\l,\t)\right|}{\mathrm{ss.rk}(\l)}=\frac{\left|\Sigma([\l,\l],\t)\right|}{\mathrm{ss.rk}([\l,\l])}=\frac{\sum_{i=1}^{k}\left|\Sigma(\l_{i}+\t,\t)\right|}{\sum_{i=1}^{k}\mathrm{rk}(\l_{i})}\leq h_{\g}.\qedhere
\]
\end{proof}

\subsection{\label{subsec:Evaluation-of-some}Integrability of relative Weyl
discriminants over $\protect\C$}

Let $\p$ be a standard parabolic subalgebra of $\g$, with Levi $\l\supseteq\t$
and nilpotent radical $\n\subseteq\bigoplus_{\alpha\in\Sigma^{+}(\g,\t)}\g_{\alpha}$.
Recall the definitions of $D_{\g},D_{\l},\triangle_{\g}^{\t},\triangle_{\l}^{\t},\triangle_{\g,\l}^{\t}$
from (\ref{eq:Weyl discriminant on Cartan}) and $\mathsection$\ref{subsec:Weyl-discriminants}.
Since each of the roots $\alpha:\t\rightarrow\C$ is a linear map,
$\triangle_{\g}^{\t}$ is the defining polynomial of a central hyperplane
arrangement, which we denote by $\mathcal{A}_{\g,\t}$. Recall that
$L(\mathcal{A}_{\g,\t})$ is the set of all nonempty intersections
of hyperplanes in $\mathcal{A}_{\g,\t}$. We write $W_{\alpha}=\ker(\alpha)$
for the hyperplane corresponding to $\alpha\in\Sigma^{+}(\g,\t)$.
Denote by $\mathcal{A}_{\l,\t}\subseteq\mathcal{A}_{\g,\t}$ the hyperplanes
corresponding to $\alpha\in\Sigma^{+}(\l,\t)$. These kinds of arrangements
are called \emph{finite Coxeter arrangements}. In order to analyze
convergence of integrals involving $\triangle_{\g,\l}^{\t}(X)$ and
$\triangle_{\g}^{\t}(X)$, we need to apply Theorem \ref{Thm:resolution of hyperplane arrangements}
in the setting of Coxeter arrangements. 

Any subset $R\subseteq\Sigma^{+}(\g,\t)$ of positive roots defines
a subspace $\t_{R}:=\bigcap_{\alpha\in R}\mathrm{Ker}\alpha$, so
that the centralizer $\l_{R}:=\mathrm{Cent}_{\g}(\t_{R})$ is a Levi
subalgebra. Let $\Sigma(\l_{R},\t)$ be the induced root system, so
that
\[
\l_{R}=\t\oplus\bigoplus_{\alpha\in\Sigma(\l_{R},\t)}\g_{\alpha}=\t\oplus\bigoplus_{\alpha\in\Sigma(\g,\t):\alpha(\t_{R})=0}\g_{\alpha}.
\]
Moreover, $\Sigma(\l_{R},\t)\cap\Sigma^{+}(\g,\t)$ defines a system
of positive roots for $\Sigma(\l_{R},\t)$, which we denote by $\Sigma^{+}(\l_{R},\t)$.
Recall $s(\,)$ and $r(\,)$ in Definition \ref{def:more on hyperplane arrangements}(4),
and note that:
\begin{equation}
r(\t_{R}):=\mathrm{codim}_{\t}\t_{R}=\mathrm{ss.rk}(\l_{R}).\label{eq:formula for rank}
\end{equation}
and 
\begin{equation}
s(\t_{R})=\left|\left\{ \beta\in\Sigma^{+}(\g,\t):\mathrm{Ker}\beta\supseteq\t_{R}\right\} \right|=\left|\left\{ \beta\in\Sigma^{+}(\g,\t):\g_{\beta}\subseteq\l_{R}\right\} \right|=\left|\Sigma^{+}(\l_{R},\t)\right|.\label{eq:formula for s}
\end{equation}

\begin{lem}
\label{lem:bijection betwweb dense elements and standard Levis}The
map $\l\mapsto Z(\l)$, which sends $\l$ to its center, induces a
bijection between the set of Levi subalgebras $\t\subseteq\l\subseteq\g$
and $L(\mathcal{A}_{\g,\t})$, with inverse $\t'\mapsto\mathrm{Cent}_{\g}(\t')$.
This bijection restricts to a bijection between the Levi subalgebras
$\t\subseteq\l\subseteq\g$ such that $[\l,\l]$ is a simple Lie algebra,
and $L_{\mathrm{dense}}(\mathcal{A}_{\g,\t})$.
\end{lem}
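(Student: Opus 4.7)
The plan is to establish the first bijection by directly verifying that the maps $\l\mapsto Z(\l)$ and $\t'\mapsto\mathrm{Cent}_{\g}(\t')$ are mutually inverse via the root space decomposition, and then to translate the density condition on $W\in L(\mathcal{A}_{\g,\t})$ into simplicity of $[\l_W,\l_W]$ by identifying the induced arrangement $\mathcal{A}_W$ with the Coxeter arrangement of $\Sigma(\l_W,\t)$.

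First I would check the bijection. Every Levi $\l$ with $\t\subseteq\l\subseteq\g$ has a root space decomposition $\l=\t\oplus\bigoplus_{\alpha\in\Sigma(\l,\t)}\g_{\alpha}$, and since $\l$ is reductive with Cartan $\t$, its center $Z(\l)$ lies in $\t$; an element $X\in\t$ centralizes $\l$ iff $\alpha(X)=0$ for every $\alpha\in\Sigma(\l,\t)$, so
\[
Z(\l)=\bigcap_{\alpha\in\Sigma(\l,\t)}\ker\alpha\in L(\mathcal{A}_{\g,\t}).
\]
Conversely, for $\t'\in L(\mathcal{A}_{\g,\t})$, the centralizer $\mathrm{Cent}_{\g}(\t')=\t\oplus\bigoplus_{\alpha:\alpha|_{\t'}=0}\g_{\alpha}$ is visibly a Levi containing $\t$. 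To verify mutual inversion: writing $\t'=\bigcap_{\alpha\in R}\ker\alpha$ and $S:=\{\alpha\in\Sigma(\g,\t):\alpha|_{\t'}=0\}\supseteq R$, the center of $\mathrm{Cent}_{\g}(\t')$ equals $\bigcap_{\alpha\in S}\ker\alpha$, which is sandwiched between $\t'$ (the inclusion $\subseteq$ uses $S\supseteq R$; the inclusion $\supseteq$ is the defining property of $S$). In the other direction, since $\l$ is a Levi, $\l=\mathrm{Cent}_{\g}(\mathfrak{s})$ for some $\mathfrak{s}\subseteq\t$; this $\mathfrak{s}$ lies in the center of its own centralizer $\l$, hence $\mathfrak{s}\subseteq Z(\l)$, giving $\mathrm{Cent}_{\g}(Z(\l))\subseteq\mathrm{Cent}_{\g}(\mathfrak{s})=\l$, with the reverse inclusion automatic.

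For the density statement, take $W=Z(\l)$ with $\l=\mathrm{Cent}_{\g}(W)$. By the first part, the hyperplanes of $\mathcal{A}_{\g,\t}$ containing $W$ are exactly $\{W_{\alpha}:\alpha\in\Sigma^{+}(\l,\t)\}$. Passing to the quotient $\t/W$, which is canonically identified with a Cartan of $[\l,\l]$, the induced arrangement $\mathcal{A}_{W}$ becomes the Coxeter arrangement of $\Sigma(\l,\t)$ viewed as the root system of $[\l,\l]$. I then invoke the standard facts that (a) a Coxeter arrangement is indecomposable iff the underlying root system is irreducible, and (b) a complex reductive Lie algebra has irreducible root system iff its derived subalgebra is simple. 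Combining these identifies $L_{\mathrm{dense}}(\mathcal{A}_{\g,\t})$ with the set of Levis $\l$ for which $[\l,\l]$ is simple.

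The main (mild) subtlety is the verification of $\mathrm{Cent}_{\g}(Z(\l))=\l$, which relies crucially on $\l$ being itself the centralizer of some subspace of $\t$; without this Levi hypothesis, the argument would fail. The remaining steps amount to a dictionary between Lie-theoretic and arrangement-theoretic descriptions, so I do not anticipate further obstacles.
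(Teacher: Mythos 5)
Your proposal is correct and follows essentially the same route as the paper: both parts establish the bijection via the root space decomposition (with the paper asserting and you verifying $Z(\mathrm{Cent}_{\g}(\t'))=\t'$ and $\mathrm{Cent}_{\g}(Z(\l))=\l$ in the same way), and both identify the quotient arrangement $\mathcal{A}_W$ with the Coxeter arrangement of $[\l_W,\l_W]$ to reduce density to irreducibility of the root system. The one thing you invoke as a "standard fact" without a reference — that a Coxeter arrangement is indecomposable iff the root system is irreducible — is exactly where the paper supplies citations (Crapo, Schenck–Terao–Vust, Björner–Sagan) for the nontrivial direction; that is the same lemma-level input, not a different argument.
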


\begin{proof}
Let $W\in L(\mathcal{A}_{\g,\t})$. Then $W\subseteq\t$ is given
as $W=\bigcap_{\alpha\in R}W_{\alpha}$ for some subset $R\subseteq\Sigma^{+}(\g,\t)$.
Since $W$ is a subtorus, we have $\mathrm{Cent}_{\g}(W)=\l\supseteq\t$
is a Levi subalgebra. On the other hand, given a Levi $\t\subseteq\l$,
its center $Z(\l)\subseteq\t$ is a subtorus, and it is given as $Z(\l):=\bigcap_{\alpha\in\Sigma^{+}(\l,\t)}W_{\alpha}$,
so that $Z(\l)\in L(\mathcal{A}_{\g,\t})$. Finally, for any subtorus
$\t'\subseteq\t$ we have $Z(\mathrm{Cent}_{\g}(\t'))=\t'$ and for
any Levi $\l\supseteq\t$, we have $\mathrm{Cent}_{\g}(Z(\l))=\l$.
This shows that $\l\mapsto Z(\l)$ is indeed a bijection with inverse
$\t'\mapsto\mathrm{Cent}_{\g}(\t')$.

For the second statement, suppose that $\t\subseteq\l\subseteq\g$
is a Levi, which corresponds to $Z(\l):=\mathfrak{t'}\in L(\mathcal{A}_{\g,\t})$.
Set $\t'':=\t/\mathfrak{t'}$ with projection $p_{\mathfrak{t'}}:\t\rightarrow\t''$.
If $\l=\t\oplus\bigoplus_{\alpha\in R}\g_{\alpha}$, then we set:
\[
\left(\mathcal{A}_{\g,\t}\right)_{\mathfrak{t'}}:=\left\{ p_{\mathfrak{t'}}(W_{\alpha}):\alpha\in R\right\} .
\]
Since $\t'$ is the center of $\l$, $\left(\mathcal{A}_{\g,\t}\right)_{\mathfrak{t'}}\simeq\mathcal{A}_{\mathfrak{[\l,\l]},\t''}$.
Now if $[\l,\l]=\l_{1}\oplus\l_{2}$ with $\l_{i}$ semisimple Lie
algebras, then $\mathcal{A}_{\mathfrak{[\l,\l]},\t''}\simeq\mathcal{A}_{\l_{1},\t}\cup\mathcal{A}_{\l_{2},\t}$
so $\mathcal{A}_{\mathfrak{[\l,\l]},\t}$ is not indecomposable.

For the other direction, suppose that $[\l,\l]$ is simple. Then $\mathcal{A}_{\mathfrak{[\l,\l]},\t''}$
is the arrangement associated to some complex simple Lie algebra (also
called an irreducible \emph{Weyl arrangement}), and such arrangements
are indecomposable. Indeed, by \cite[Proposition 4 and Theorem 5]{STV95}
(see also \cite[Theorem 2]{Cra67}), the indecomposability of a complex
irreducible Weyl arrangement $\mathcal{W}$ is equivalent to the condition
$(t-1)^{2}\nmid\chi_{\mathcal{W}}(t)$, where $\chi_{\mathcal{W}}(t)$
is the characteristic polynomial of $\mathcal{W}$, and this can be
seen in \cite[p.125]{BS98}. 
\end{proof}
We now analyze the log-canonical threshold of $\triangle_{\g}^{\t}$: 
\begin{prop}
\label{prop:lct for simple}Let $G$ be a complex connected simple
group, $\g=\mathrm{Lie}(G)$. Then: 
\[
\lct_{\C}(\triangle_{\g}^{\t})=\frac{2}{h_{\g}}.
\]
\end{prop}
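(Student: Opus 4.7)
The plan is to combine three results already established in the excerpt: Corollary \ref{cor:lct of hyperplane arrangement} (log-canonical threshold of a central arrangement in terms of its dense intersection lattice), Lemma \ref{lem:bijection betwweb dense elements and standard Levis} (the bijection between dense elements and Levi subalgebras with simple derived algebra), and Lemma \ref{lem:inequalities on coxeter numbers} (monotonicity of the Coxeter number along such Levis). The whole argument is an assembly; no hard analytic input remains.

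First, observe that $\triangle_{\g}^{\t}$ is by definition the defining polynomial $Q_{\mathcal{A}_{\g,\t}}$ of the Coxeter arrangement in $\t$, so Corollary \ref{cor:lct of hyperplane arrangement} gives
\[
\lct_{\C}(\triangle_{\g}^{\t}) \;=\; \min_{W \in L_{\mathrm{dense}}(\mathcal{A}_{\g,\t})} \frac{r(W)}{s(W)}.
\]
By Lemma \ref{lem:bijection betwweb dense elements and standard Levis}, each dense element $W$ is uniquely of the form $Z(\l)$ for a Levi subalgebra $\t \subseteq \l \subseteq \g$ with $[\l,\l]$ simple. Formulas \eqref{eq:formula for rank} and \eqref{eq:formula for s} translate the numerical data as $r(Z(\l)) = \mathrm{ss.rk}(\l)$ and $s(Z(\l)) = |\Sigma^{+}(\l,\t)|$. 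Since $[\l,\l]$ is simple, its root system is irreducible, so Definition \ref{def:The-Coxeter-number}(2) yields $|\Sigma^{+}(\l,\t)| = h_{\l}\cdot\mathrm{ss.rk}(\l)/2$, whence
\[
\frac{r(Z(\l))}{s(Z(\l))} \;=\; \frac{2}{h_{\l}}.
\]

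To finish, I apply Lemma \ref{lem:inequalities on coxeter numbers}: for every Levi $\l$ with $[\l,\l]$ simple one has $h_{\l} \leq h_{\g}$, so $2/h_{\l} \geq 2/h_{\g}$. The bound is attained at the Levi $\l = \g$ itself: $\g$ is a (trivial) Levi of $\g$, $[\g,\g] = \g$ is simple by hypothesis, and the corresponding dense element is $Z(\g) = \{0\}$, which genuinely belongs to $L(\mathcal{A}_{\g,\t})$ because the Coxeter arrangement of a complex simple Lie algebra is essential (the roots span $\t^{*}$). Taking the minimum gives $\lct_{\C}(\triangle_{\g}^{\t}) = 2/h_{\g}$, as required.

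The only step that requires a moment of thought is verifying that $\{0\} \in L_{\mathrm{dense}}(\mathcal{A}_{\g,\t})$, i.e.\ that the arrangement is essential and that $\{0\}$ corresponds to a Levi whose derived algebra is simple; both are immediate from $\g$ being simple. Everything else is a direct substitution into the formulas from Sections \ref{sec:Preliminaries} and \ref{sec:Some-integrals-associated to coxeter arrangements}.
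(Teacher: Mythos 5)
Your proof is correct and follows essentially the same route as the paper: apply Corollary \ref{cor:lct of hyperplane arrangement}, translate dense elements to Levis with simple derived algebra via Lemma \ref{lem:bijection betwweb dense elements and standard Levis} and Eq.~(\ref{eq:formula for rank})--(\ref{eq:formula for s}), identify each ratio as $2/h_{\l}$, and conclude with Lemma \ref{lem:inequalities on coxeter numbers}. Your extra remark verifying that $\{0\} = Z(\g)$ is indeed a dense element corresponding to $\l = \g$, so that the bound $2/h_{\g}$ is actually attained, is a useful point that the paper leaves implicit.
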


\begin{proof}
By Corollary \ref{cor:lct of hyperplane arrangement}, and Eq. (\ref{eq:formula for rank}),
(\ref{eq:formula for s}) and Lemma \ref{lem:bijection betwweb dense elements and standard Levis},
we deduce
\begin{equation}
\lct_{\C}(\triangle_{\g}^{\t})=\underset{\substack{\slashed{O}\neq R\subseteq\Sigma^{+}(\g,\t)\\
\t_{R}\in L_{\mathrm{dense}}(\mathcal{A}_{\g,\t})
}
}{\min}\frac{r(\t_{R})}{s(\t_{R})}=\underset{\substack{\t\varsubsetneq\l\subseteq\g\\{}
[\l,\l]\text{ is simple}
}
}{\min}\frac{\mathrm{ss.rk}(\l)}{\left|\Sigma^{+}(\l,\t)\right|}=\underset{\substack{\t\varsubsetneq\l\subseteq\g\\{}
[\l,\l]\text{ is simple}
}
}{\min}\frac{2}{h_{\l}}.\label{eq:formula for lct}
\end{equation}
The proposition now follows by Lemma \ref{lem:inequalities on coxeter numbers}.
\end{proof}
\begin{cor}
\label{cor:lct of Weil discriminant}Let $G$ be a complex connected
reductive group, let $\g=\mathrm{Lie}(G)$, and let $[\g,\g]=\bigoplus_{i=1}^{M}\g_{i}$
be a decomposition of $[\g,\g]$ into its simple ideals. Then 
\[
\lct_{\C}(\triangle_{\g}^{\t})=\underset{1\leq i\leq M}{\min}\frac{2}{h_{\mathfrak{g_{i}}}}.
\]
\end{cor}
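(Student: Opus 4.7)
The plan is to reduce the reductive case to the simple case via a product-of-arrangements argument, and then invoke Proposition \ref{prop:lct for simple}. The key observation is that $\triangle_{\g}^{\t}$ factors into a product of the $\triangle_{\g_i}^{\t_i}$ over disjoint sets of variables, and that lct is well-behaved under such products.

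First I would reduce to the semisimple part. Writing $\t = Z(\g) \oplus \t'$ where $\t' := \t \cap [\g,\g]$ is a Cartan of $[\g,\g]$, every root $\alpha \in \Sigma(\g,\t)$ vanishes on $Z(\g)$, so $\triangle_{\g}^{\t}$ depends only on the $\t'$-coordinates. Since constancy in the $Z(\g)$ direction does not affect integrability of $|\triangle_{\g}^{\t}|^{-s}$ on a product of balls around $0$, we obtain
\[
\lct_{\C}(\triangle_{\g}^{\t};0) = \lct_{\C}(\triangle_{[\g,\g]}^{\t'};0).
\]

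Next I would use the ideal decomposition $[\g,\g] = \bigoplus_{i=1}^M \g_i$. The Cartan splits as $\t' = \bigoplus_{i=1}^M \t'_i$ with $\t'_i$ a Cartan of $\g_i$, and the root system is the disjoint union $\Sigma^+([\g,\g],\t') = \bigsqcup_i \Sigma^+(\g_i,\t'_i)$, where each root in $\Sigma^+(\g_i,\t'_i)$ factors through the projection $\t' \to \t'_i$. Consequently, writing $X = (X_1,\ldots,X_M) \in \bigoplus_i \t'_i$,
\[
\triangle_{[\g,\g]}^{\t'}(X) = \prod_{i=1}^M \triangle_{\g_i}^{\t'_i}(X_i),
\]
a product of polynomials in disjoint variables.

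Finally, taking a product ball $B = \prod_i B_i$ and applying Fubini gives
\[
\int_B \bigl|\triangle_{[\g,\g]}^{\t'}(X)\bigr|_{\C}^{-s} dX = \prod_{i=1}^M \int_{B_i} \bigl|\triangle_{\g_i}^{\t'_i}(X_i)\bigr|_{\C}^{-s} dX_i,
\]
which is finite if and only if each factor is finite. Hence $\lct_{\C}(\triangle_{[\g,\g]}^{\t'};0) = \min_i \lct_{\C}(\triangle_{\g_i}^{\t'_i};0)$, and Proposition \ref{prop:lct for simple} applied to each simple $\g_i$ yields $\lct_{\C}(\triangle_{\g_i}^{\t'_i};0) = 2/h_{\g_i}$. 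Combining these gives the claimed formula. There is no real obstacle here; the only point requiring a line of care is the disjoint-variables factorization of the lct, which is completely elementary once one works on a product ball.
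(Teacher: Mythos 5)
Your proof is correct and follows essentially the same route as the paper: split off the center to reduce to $[\g,\g]$, factor $\triangle_{[\g,\g]}^{\t'}$ into a product over disjoint sets of variables corresponding to the simple ideals, and apply Proposition \ref{prop:lct for simple} to each factor. The only difference is that you spell out the Fubini argument for why the lct of a product in disjoint variables is the minimum, a step the paper leaves implicit.
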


\begin{proof}
Write $\g=[\g,\g]\oplus Z(\g)$, and $\t=\t_{0}\oplus Z(\g)$. Since
$\mathcal{A}_{\g,\t}$ and $\mathcal{A}_{\mathfrak{[\g,\g]},\t_{0}}$
have the same defining polynomial up to a change of variables coming
from $\g=[\g,\g]\oplus Z(\g)$, we have:
\[
\lct_{\C}(\triangle_{\g}^{\t})=\lct_{\C}(\triangle_{[\g,\g]}^{\t_{0}}).
\]
Note that $\triangle_{[\g,\g]}^{\t_{0}}=\prod_{i=1}^{M}\triangle_{\g_{i}}^{\t_{i}}$,
where each $\triangle_{\g_{i}}^{\t_{i}}$ has a different set of variables.
This implies:
\[
\lct_{\C}(\triangle_{\g}^{\t})=\underset{i}{\min}\lct_{\C}(\triangle_{\g_{i}}^{\t_{i}})=\underset{i}{\min}\frac{2}{h_{\mathfrak{g_{i}}}},
\]
where the last equality follows from Proposition \ref{prop:lct for simple}.
We are now done. 
\end{proof}
\begin{prop}
\label{prop:formula for log canonical threshold}Let $G$ be a complex
connected simple group, $\g=\mathrm{Lie}(G)$. Let $\p$ be a parabolic
subalgebra with Levi $\l$. Then for every $m\geq0$, we have 
\[
\lct_{\C}(\triangle_{\g,\l}^{\t};(\triangle_{\l}^{\t})^{m},0)=\underset{\substack{\t\varsubsetneq\l'\subseteq\g\\{}
[\l',\l']\text{ is simple}
}
}{\min}\frac{\mathrm{ss.rk}(\l')+m\left|\Sigma^{+}(\l,\t)\cap\Sigma^{+}(\l',\t)\right|}{\left|\Sigma^{+}(\l',\t)\backslash\Sigma^{+}(\l,\t)\right|}
\]
\end{prop}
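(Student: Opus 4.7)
The plan is to adapt the resolution-of-singularities approach used in the proof of Proposition~\ref{prop:lct for simple} to the relative log-canonical threshold. Apply Theorem~\ref{Thm:resolution of hyperplane arrangements} to obtain a log-principalization $\Lambda:Y\to\t$ of the Coxeter arrangement $\mathcal{A}_{\g,\t}$ with defining polynomial $\triangle_\g^\t$ and exceptional divisors $\{E_W\}_{W\in L_{\mathrm{dense}}(\mathcal{A}_{\g,\t})}$. Since both $\triangle_\l^\t$ and $\triangle_{\g,\l}^\t$ divide $\triangle_\g^\t$, the same $\Lambda$ simultaneously log-principalizes all three polynomials.

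My first step is a routine relative analogue of Lemma~\ref{lem:formula for lct using resolution}: for analytic $f,g:F^n\to F$ whose product admits a log-principalization $\Lambda$ with orders of vanishing $a_W^{(f)}$, $a_W^{(g)}$ and $b_W$ of $f$, $g$ and $\operatorname{Jac}(\Lambda)$ along $E_W$, one has
\[
\lct_F(f;g,x)=\min_{\substack{W:\,x\in\Lambda(E_W),\\ a_W^{(f)}>0}}\frac{b_W+1+a_W^{(g)}}{a_W^{(f)}}.
\]
This is proved by pulling back the integral in Definition~\ref{def:relative log canonical threshold} through $\Lambda$, localizing via a partition of unity around $\Lambda^{-1}(x)$ to coordinate charts where $f$, $g$ and $\operatorname{Jac}(\Lambda)$ are all monomial up to analytic units, and applying Fubini to reduce to one-dimensional integrals of the form $\int |y|_F^{-s a_W^{(f)}+a_W^{(g)}+b_W}\,dy$, which converge if and only if $s<(b_W+1+a_W^{(g)})/a_W^{(f)}$.

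The second step reads off each exponent on a divisor $E_W$. By Theorem~\ref{Thm:resolution of hyperplane arrangements}(3), for $\alpha\in\Sigma^+(\g,\t)$ the order of vanishing of the linear form $\alpha$ along $E_W$ is $1$ if $W_\alpha\supseteq W$ and $0$ otherwise. Using Lemma~\ref{lem:bijection betwweb dense elements and standard Levis} to write $W=Z(\l')$ for a unique Levi $\l'\supseteq\t$ with $[\l',\l']$ simple, the condition $W_\alpha\supseteq Z(\l')$ is equivalent to $\alpha\in\Sigma^+(\l',\t)$. Consequently
\[
\operatorname{ord}_{E_W}\triangle_{\g,\l}^\t=|\Sigma^+(\l',\t)\setminus\Sigma^+(\l,\t)|,\qquad \operatorname{ord}_{E_W}(\triangle_\l^\t)^m=m\cdot|\Sigma^+(\l,\t)\cap\Sigma^+(\l',\t)|,
\]
and combining Theorem~\ref{Thm:resolution of hyperplane arrangements}(3) with Eq.~(\ref{eq:formula for rank}) gives $\operatorname{ord}_{E_W}\operatorname{Jac}(\Lambda)=r(W)-1=\mathrm{ss.rk}(\l')-1$.

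Finally, substituting these three exponents into the formula from the first step yields the claimed expression, with the minimum restricted to those $\l'$ for which $\Sigma^+(\l',\t)\not\subseteq\Sigma^+(\l,\t)$; divisors corresponding to $\l'\subseteq\l$ lie off the zero locus of $\triangle_{\g,\l}^\t$ and impose no convergence constraint. The only mildly technical point is the relative analogue of Lemma~\ref{lem:formula for lct using resolution} used in the first step, in particular verifying that the numerator factor $|g|_F$ is absorbed cleanly into the monomial form on each coordinate chart; otherwise no genuine obstacle is expected, since the combinatorial identifications of divisors with Levi subalgebras are already in place from Lemma~\ref{lem:bijection betwweb dense elements and standard Levis}.
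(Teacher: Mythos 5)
Your proposal is correct and follows essentially the same route as the paper: apply the hyperplane-arrangement resolution of Theorem~\ref{Thm:resolution of hyperplane arrangements} to $\triangle_\g^\t$, identify the dense-element divisors with Levi subalgebras $\l'$ having simple derived algebra via Lemma~\ref{lem:bijection betwweb dense elements and standard Levis}, and read off the three local vanishing orders to get the monomial convergence criterion. The only cosmetic difference is that you abstract the integral computation into a relative analogue of Lemma~\ref{lem:formula for lct using resolution} while the paper carries it out inline (rewriting $|\triangle_\l^\t|^m/|\triangle_{\g,\l}^\t|^s$ as $|\triangle_\l^\t|^{m+s}/|\triangle_\g^\t|^s$), but the underlying calculation is identical.
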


\begin{proof}
Let $\Lambda:Y\rightarrow\t$ be a resolution of singularities for
$\triangle_{\g}^{\t}$, as constructed in Theorem \ref{Thm:resolution of hyperplane arrangements}.
Then, restricting to any ball $B'$ around $0\in\t$, and changing
coordinates using $\Lambda$, one has: 
\begin{equation}
\int_{B'}\frac{\left|\triangle_{\l}^{\t}(X)\right|_{\C}^{m}}{\left|\triangle_{\g,\l}^{\t}(X)\right|_{\C}^{s}}dX=\int_{B'}\frac{\left|\triangle_{\l}^{\t}(X)\right|_{\C}^{s+m}}{\left|\triangle_{\g}^{\t}(X)\right|_{\C}^{s}}dX=\int_{\Lambda^{-1}(B')}\frac{\left|\triangle_{\l}^{\t}\circ\Lambda(y)\right|_{\C}^{s+m}}{\left|\triangle_{\g}^{\t}\circ\Lambda(y)\right|_{\C}^{s}}\left|\operatorname{Jac}_{y}(\Lambda)\right|_{\C}dy.\label{eq:changing coordinates}
\end{equation}
The integral in (\ref{eq:changing coordinates}) is finite if and
only if it is finite on each ball in $\Lambda^{-1}(B')$. By Theorem
\ref{Thm:resolution of hyperplane arrangements}, on a small enough
ball $B''$ around each point in $\Lambda^{-1}(B')$, there exist
divisors $E_{W_{i_{j}}}$ for some $W_{i_{1}},\dots,W_{i_{N}}\in L_{\mathrm{dense}}(\mathcal{A}_{\g,\t})$,
and a tuple of non-negative integers $\left\{ (a_{i_{j}},b_{i_{j}},k_{i_{j}})\right\} _{1\leq j\leq N}$
such that after changing coordinates from $B''$ to an open neighborhood
$\widetilde{B''}$ of $0$ in $\C^{\mathrm{rk}(G)}$, one has: 
\[
\triangle_{\g}^{\t}\circ\Lambda(y)=y_{1}^{a_{i_{1}}}\dots y_{N}^{a_{i_{N}}}u_{1}(y)
\]
\[
\triangle_{\l}^{\t}\circ\Lambda(y)=y_{1}^{b_{i_{1}}}\dots y_{N}^{b_{i_{N}}}u_{2}(y)
\]
\[
\operatorname{Jac}_{y}(\Lambda)=y_{1}^{k_{i_{1}}}\dots y_{N}^{k_{i_{N}}}v(y),
\]
with $u_{1},u_{2}$ and $v$ invertible near $0$. Since $\triangle_{\l}^{\t}|\triangle_{\g}^{\t}$,
we have $b_{i_{j}}\leq a_{i_{j}}$ for all $1\leq j\leq N$. Altogether,
the convergence of the integral on the right hand side of (\ref{eq:changing coordinates}),
restricted to $B''$, is equivalent to 
\begin{equation}
\int_{\widetilde{B''}}\frac{\left(\left|y_{1}\right|_{\C}^{b_{i_{1}}}\dots\left|y_{N}\right|_{\C}^{b_{i_{N}}}\right)^{m+s}}{\left|y_{1}\right|_{\C}^{sa_{i_{1}}}\dots\left|y_{N}\right|_{\C}^{sa_{i_{N}}}}\left|y_{1}\right|_{\C}^{k_{i_{1}}}\dots\left|y_{N}\right|_{\C}^{k_{i_{N}}}dy<\infty.\label{eq:reduction to a monomial integral}
\end{equation}
Recall from the footnote in Definition \ref{def:log canonical threshold }
that $\left|\,\cdot\,\right|_{\C}$ is normalized to be the square
of the standard absolute value on $\C$. Hence, (\ref{eq:reduction to a monomial integral})
converges if and only if for all $j=1,\dots,N$:
\[
mb_{i_{j}}+k_{i_{j}}+s(b_{i_{j}}-a_{i_{j}})>-1,
\]
or equivalently,
\[
s<\frac{1+k_{i_{j}}+mb_{i_{j}}}{a_{i_{j}}-b_{i_{j}}}.
\]
Recall from Lemma \ref{lem:bijection betwweb dense elements and standard Levis}
that each $W\in L_{\mathrm{dense}}(\mathcal{A}_{\g,\t})$ is of the
form $W=\t'$, where $\l'=\mathrm{Cent}_{\g}(\t')$ is a Levi such
that $[\l',\l']$ is simple. By (\ref{eq:formula for rank}), (\ref{eq:formula for s})
and by Theorem \ref{Thm:resolution of hyperplane arrangements}(3),
we have:
\[
k_{W}+1=\mathrm{ss.rk}(\l'),\,\,\,\,\,a_{W}=\left|\Sigma^{+}(\l',\t)\right|,\,\,\,\,\:\text{and }b_{W}=\left|\Sigma^{+}(\l,\t)\cap\Sigma^{+}(\l',\t)\right|.
\]
The result then follows since $\Lambda^{-1}(B')$ intersects all divisors
$\{E_{W}\}_{W\in L_{\mathrm{dense}}(\mathcal{A}_{\g,\t})}$. 
\end{proof}

\subsection{\label{subsec:Integrability in gln}Integrability of relative Weyl
discriminants in $\protect\gl_{n}$ }

In this section we consider the case of $\GG=\GL_{n}$, so that $\g=\gl_{n}$.
We follow the notation of $\mathsection$\ref{subsec:Nilpotent-orbits-in gln}.
We give a more detailed version of Proposition \ref{prop:formula for log canonical threshold},
and discuss the relations between $\mathrm{lct}_{\C}(\triangle_{\g,\l}^{\t};\triangle_{\l}^{\t},0)$,
for different Levi subalgebras. For the proof we need Lemmas \ref{lem:structure of Levis},
\ref{lem:elementary lemma} and \ref{lem:volume of partitions}, and
the following definition: given a matrix $A\in\gl_{n}$, and a subset
$I\subseteq[n]$ of size $k$, the\emph{ $I$-th minor of $A$} is
the matrix $A_{I}\in\gl_{n}$, such that:
\[
(A_{I})_{jk}:=\begin{cases}
A_{jk} & \text{if }j,k\in I,\\
0 & \text{otherwise}.
\end{cases}
\]
We write $\gl_{n,I}$ for the subalgebra of $\gl_{n}$ consisting
of all $A_{I}$, for $A\in\gl_{n}$. 
\begin{lem}
\label{lem:structure of Levis}Let $\t\varsubsetneq\l'\subseteq\gl_{n}$
be a Levi subalgebra of $\gl_{n}$. Suppose that $[\l',\l']\simeq\sl_{k}$,
for $k\geq2$.
\begin{enumerate}
\item There exists a permutation $\sigma\in S_{n}\simeq W$, such that $\sigma.([\l',\l'])\hookrightarrow\gl_{n}$
is the embedding $\sl_{k}\hookrightarrow\gl_{n}$, where $\sl_{k}$
is embedded in the lowest right $k\times k$-block. 
\item $\l'$ is the subalgebra spanned by $\t$ and $\gl_{n,I}$, for $I:=\{\sigma^{-1}(1),\dots,\sigma^{-1}(k)\}\subseteq[n]$.
\end{enumerate}
\end{lem}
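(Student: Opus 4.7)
The plan is to reduce the structural statement to the classification of Levi subalgebras of $\gl_n$ containing $\t$ in terms of set partitions of $[n]$, and then to identify the partitions whose associated Levi has simple semisimple part.

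First, I would invoke the (unrestricted half of the) bijection in Lemma \ref{lem:bijection betwweb dense elements and standard Levis}, which identifies Levi subalgebras $\t \subseteq \l' \subseteq \gl_n$ with elements of $L(\mathcal{A}_{\gl_n,\t})$ via $\l' \mapsto Z(\l')$. Because every root of $\gl_n$ has the form $\alpha_{i,j} = \varepsilon_i - \varepsilon_j$, a subspace $\t' \in L(\mathcal{A}_{\gl_n,\t})$ is the common zero set of linear equations $\varepsilon_i = \varepsilon_j$, and therefore defines an equivalence relation on $[n]$ by declaring $i \sim j$ iff $\varepsilon_i|_{\t'} = \varepsilon_j|_{\t'}$. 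Letting $[n] = I_1 \sqcup \cdots \sqcup I_M$ denote its equivalence classes, the corresponding Levi is
\[
\l' \;=\; \mathrm{Cent}_{\gl_n}(\t') \;=\; \t \,+\, \bigoplus_{s=1}^{M}\,\bigoplus_{\substack{i,j \in I_s \\ i \neq j}}(\gl_n)_{\alpha_{i,j}} \;=\; \bigoplus_{s=1}^{M}\gl_{n,I_s}.
\]

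Second, the semisimple part of $\l'$ then decomposes as $[\l',\l'] \simeq \bigoplus_{s=1}^M \sl_{|I_s|}$. The hypothesis $[\l',\l'] \simeq \sl_k$ with $k \geq 2$ forces exactly one equivalence class, say $I$, to have size $|I| = k$, with all other classes being singletons. This already yields assertion (2), since in that case $\l' = \t + \gl_{n,I}$.

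Third, for assertion (1), I would pick any permutation $\sigma \in S_n \simeq W$ sending $I$ to $\{1,\dots,k\}$, so that $I = \{\sigma^{-1}(1),\dots,\sigma^{-1}(k)\}$ as stated. Since $W$ acts on Levi subalgebras containing $\t$ by permuting the indices labeling root spaces, $\sigma.\l'$ is the standard Levi with partition $(\{1,\dots,k\},\{k+1\},\dots,\{n\})$, and $\sigma.[\l',\l']$ is the corresponding standard copy of $\sl_k$ sitting inside $\gl_n$ as a diagonal block; composing $\sigma$ with the longest element of $W$ if necessary moves this block to the lowest right position required by the statement. The main obstacle is essentially bookkeeping: the substantive input is already Lemma \ref{lem:bijection betwweb dense elements and standard Levis}, and the only care needed is the translation between the lattice-theoretic description of $L(\mathcal{A}_{\gl_n,\t})$ and the combinatorics of set partitions of $[n]$, which is direct for $\gl_n$ because all roots are differences of coordinate functions.
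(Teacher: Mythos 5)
Your proof is correct, and it takes a different route from the paper's. The paper's argument is root-theoretic: it extends a simple system of $\Sigma(\l',\t)$ to one of $\Sigma(\gl_n,\t)$, uses the transitivity of the Weyl group on simple systems, and then composes with a further permutation to place the $\sl_k$ factor in the required diagonal block. You instead feed Lemma \ref{lem:bijection betwweb dense elements and standard Levis} through the standard identification of $L(\mathcal{A}_{\gl_n,\t})$ with the partition lattice of $[n]$, which produces the explicit description $\l' = \bigoplus_s \gl_{n,I_s}$ and hence $[\l',\l'] \simeq \bigoplus_s \sl_{|I_s|}$; simplicity then forces a single nontrivial block, and both conclusions follow by choosing a permutation carrying $I$ to a coordinate block. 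Your route is more self-contained for $\gl_n$ --- it avoids the (standard but nontrivial) fact that a simple system for a Levi subsystem extends to one for the ambient root system --- at the cost of relying on the type-$A$-specific partition-lattice combinatorics, whereas the paper's argument reads more like one that could be adapted to other types, its only type-$A$-specific step being the final permutation. As a side note, both your argument and the paper's produce a $\sigma$ placing the block in the lower-right $k\times k$ corner, for which the index set is $\{\sigma^{-1}(n-k+1),\dots,\sigma^{-1}(n)\}$ rather than $\{\sigma^{-1}(1),\dots,\sigma^{-1}(k)\}$; this appears to be a small typo in part (2) of the statement, not an issue with either proof.
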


\begin{proof}
1) Let $\beta_{1},\dots,\beta_{k-1}$ be a basis for the root system
$\Sigma(\l',\t)$. Then $\beta_{1},\dots,\beta_{k-1}$ can be extended
to a basis $\beta_{1},\dots,\beta_{k-1},\beta_{k},\dots,\beta_{n-1}$
of $\Sigma(\gl_{n},\t)$. Since the Weyl group acts transitively on
the set of bases \cite[Chapter 4.2.4, Theorem 7]{OV90}, there exists
a permutation matrix $\sigma$ such that $\sigma(\beta_{1},\dots,\beta_{k-1})$
is a subset of the standard basis $\alpha_{1,2},\alpha_{2,3},\dots,\alpha_{n-1,n}$.
Since $[\l',\l']\simeq\sl_{k}$, by composing with a cyclic permutation,
we may assume that $\sigma(\{\beta_{1},\dots,\beta_{k-1}\})=\{\alpha_{n-k+1,n-k+2},\dots,\alpha_{n-1,n}\}$.

2) Follows from Item (1), by analyzing $\sigma^{-1}(\gl_{k})$ for
$\gl_{k}\hookrightarrow\gl_{n}$, embedded as the lowest right $k\times k$-block. 
\end{proof}
\begin{lem}
\label{lem:elementary lemma}Let $a_{1},\dots,a_{N},b_{1},\dots,b_{N}\in\Z_{\geq1}$.
Suppose that $\frac{a_{1}}{b_{1}}\leq\dots\leq\frac{a_{N}}{b_{N}}$.
Then $\frac{a_{1}}{b_{1}}\leq\frac{a_{1}+a_{2}}{b_{1}+b_{2}}\leq\dots\leq\frac{\sum_{i=1}^{N}a_{i}}{\sum_{i=1}^{N}b_{i}}$.
The same result holds when $\leq$ is replaced with $<,\geq$ or $>$. 
\end{lem}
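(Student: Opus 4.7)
The plan is to prove the chain of inequalities by induction on $N$, using the elementary \emph{mediant inequality}: for positive integers $b,d$, if $\frac{a}{b}\leq\frac{c}{d}$, then
\[
\frac{a}{b}\leq\frac{a+c}{b+d}\leq\frac{c}{d}.
\]
This is immediate by clearing denominators: $\frac{a}{b}\leq\frac{c}{d}$ is equivalent to $ad\leq bc$, and then $a(b+d)=ab+ad\leq ab+bc=b(a+c)$ gives the left inequality, while the right one is symmetric. Crucially, the same argument shows that strict inequalities are preserved (if $ad<bc$ then both resulting inequalities are strict).

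Next, I would establish by a short auxiliary induction on $k$ the monotonicity bound
\[
\frac{A_k}{B_k}:=\frac{\sum_{i=1}^{k}a_i}{\sum_{i=1}^{k}b_i}\leq\frac{a_k}{b_k},
\]
i.e.\ the partial mean is bounded above by the last (and therefore largest) ratio seen so far. The case $k=1$ is an identity. For the inductive step, the hypothesis gives $\frac{A_{k-1}}{B_{k-1}}\leq\frac{a_{k-1}}{b_{k-1}}\leq\frac{a_k}{b_k}$, and the mediant inequality applied to the pairs $(A_{k-1},B_{k-1})$ and $(a_k,b_k)$ yields $\frac{A_k}{B_k}\leq\frac{a_k}{b_k}$.

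With this in hand, the main chain follows at once: for each $k$, I have $\frac{A_k}{B_k}\leq\frac{a_k}{b_k}\leq\frac{a_{k+1}}{b_{k+1}}$, so a second application of the mediant inequality to $(A_k,B_k)$ and $(a_{k+1},b_{k+1})$ gives $\frac{A_k}{B_k}\leq\frac{A_{k+1}}{B_{k+1}}$, and concatenating over $k=1,\dots,N-1$ produces the required chain. The strict version ($<$) is obtained by the same argument, since each step of the mediant inequality preserves strict inequality when the relevant input is strict; the statements for $\geq$ and $>$ follow by reversing the order of the sequence (or equivalently by applying the already-established cases to $(a_i',b_i'):=(a_{N+1-i},b_{N+1-i})$).

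There is no real obstacle here; the proof is a short double application of the mediant inequality, and the only thing to be careful about is keeping track of the strict/non-strict variants, which is handled uniformly by the observation that the mediant inequality preserves whichever of $\leq,<,\geq,>$ one starts with.
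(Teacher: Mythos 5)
Your proof is correct and takes essentially the same route as the paper: both establish each link $\frac{A_k}{B_k}\leq\frac{A_{k+1}}{B_{k+1}}$ by the cross-multiplication inequality that you package as the mediant inequality. The only structural difference is that the paper obtains $\frac{A_n}{B_n}\leq\frac{a_{n+1}}{b_{n+1}}$ in one stroke by summing $b_{n+1}a_i\leq a_{n+1}b_i$ over $i\leq n$, whereas you derive the same bound through a separate auxiliary induction showing $\frac{A_k}{B_k}\leq\frac{a_k}{b_k}$ and then chaining; both are valid, the paper's being marginally more direct.
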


\begin{proof}
We prove the claim for $\leq$, by induction on $n$, where the other
cases are similar. Suppose that $b_{n+1}a_{i}\leq a_{n+1}b_{i}$ for
each $i<n+1$. Hence $b_{n+1}\sum_{i=1}^{n}a_{i}\leq a_{n+1}\sum_{i=1}^{n}b_{i}$.
The lemma now follows by the induction hypothesis:
\[
\left(\sum_{i=1}^{n}a_{i}\right)\left(\sum_{i=1}^{n+1}b_{i}\right)=b_{n+1}\sum_{i=1}^{n}a_{i}+\left(\sum_{i=1}^{n}a_{i}\right)\left(\sum_{i=1}^{n}b_{i}\right)\leq\left(\sum_{i=1}^{n+1}a_{i}\right)\left(\sum_{i=1}^{n}b_{i}\right).\qedhere
\]
\end{proof}
\begin{rem}
In the statements and proofs below, use the convention that $\binom{n}{k}=0$
if $k>n$. 
\end{rem}

\begin{lem}
\label{lem:volume of partitions}Let $\eta=(\eta_{1},...,\eta_{N}),\eta'=(\eta'_{1},...,\eta'_{N'})\in\mathcal{P}(n)$
be two partitions, with $\eta_{N},\eta'_{N'}\geq1$, such that $\eta'\preceq\eta$.
Then:
\begin{equation}
\sum_{l=1}^{N'}\binom{\eta'_{l}}{2}\leq\sum_{l=1}^{N}\binom{\eta_{l}}{2}.\label{eq:volume of partitions}
\end{equation}
\end{lem}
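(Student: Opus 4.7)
The plan is to reduce the inequality to the covering relation in the dominance order on partitions and verify it for a single elementary box move by direct computation.

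It is a classical fact about the dominance lattice of partitions that $\eta' \preceq \eta$ is equivalent to the existence of a finite chain $\eta = \eta^{(0)} \succeq \eta^{(1)} \succeq \cdots \succeq \eta^{(k)} = \eta'$ of partitions of $n$, in which each $\eta^{(s+1)}$ is obtained from $\eta^{(s)}$ by choosing indices $i < j$ and replacing $(\eta^{(s)}_i, \eta^{(s)}_j)$ by $(\eta^{(s)}_i - 1, \eta^{(s)}_j + 1)$, the resulting sequence being again a partition (equivalently, dominance is the transitive closure of ``Robin Hood'' single-box transfers). First I would invoke this reduction, so that it suffices to prove (\ref{eq:volume of partitions}) when $\eta'$ is obtained from $\eta$ by exactly one such elementary move.

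For the single-move verification, a direct computation using $\binom{a}{2} - \binom{a-1}{2} = a-1$ gives
\[
\sum_l \binom{\eta'_l}{2} - \sum_l \binom{\eta_l}{2} = \left(\binom{\eta_i-1}{2} - \binom{\eta_i}{2}\right) + \left(\binom{\eta_j+1}{2} - \binom{\eta_j}{2}\right) = \eta_j - \eta_i + 1.
\]
Since the new sequence is still non-increasing, one checks (splitting into the cases $j = i + 1$ and $j > i + 1$ and using $\eta_{i+1} \geq \cdots \geq \eta_{j-1}$ in the latter case) that necessarily $\eta_i \geq \eta_j + 2$. Hence the displayed difference is $\leq -1 < 0$, and telescoping along the chain yields (\ref{eq:volume of partitions}).

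An equivalent, slicker route is to observe that, because $\sum_l \eta_l = \sum_l \eta'_l = n$ and $x \mapsto \binom{x}{2}$ is convex, the inequality is a direct instance of Karamata's majorization inequality for Schur-convex functions. The whole argument is entirely elementary; the only mildly subtle point is the reduction to single-box moves, but this is a standard and well-known feature of the dominance lattice, so no real obstacle arises.
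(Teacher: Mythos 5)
Your proof is correct and takes essentially the same approach as the paper's: both reduce to single-box moves in the dominance order and compute the change in $\sum_l \binom{\eta_l}{2}$ directly, with you going from $\eta$ down to $\eta'$ (each move decreases the sum by $\eta_i - \eta_j - 1 \geq 1$) while the paper goes from $\eta'$ up to $\eta$ (each move increases it by $\eta'_{l'} - \eta'_l + 1 \geq 1$). The Karamata/Schur-convexity remark at the end is a valid and slicker packaging of the same convexity computation, worth noting, though you only gesture at it rather than carry it out.
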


\begin{proof}
Since $\eta'\preceq\eta$, the partition $\eta$ can be obtained from
$\eta'$ by successive steps of moving one box in the Young diagram
of $\eta'$ from part $l$ to part $l'<l$ (see e.g.~\cite[Proposition 2.3]{Bry73}).
Since $\binom{\eta'_{l'}+1}{2}-\binom{\eta'_{l'}}{2}=\eta'_{l'}$
and $\binom{\eta'_{l}-1}{2}-\binom{\eta'_{l}}{2}=-(\eta'_{l}-1)$,
each such operation increases the sum $\sum_{l=1}^{N'}\binom{\eta'_{l}}{2}$
by a value of $\eta'_{l'}-\eta'_{l}+1>0$. 
\end{proof}
\begin{prop}
\label{Prop:lct of relative Weyl discriminant}Let $\g=\gl_{n}$.
Let $(n)\neq\nu=(\nu_{1},\dots,\nu_{N})\in\mathcal{P}(n)$ be a partition,
and let $\lambda=(\lambda_{1},\dots,\lambda_{M}):=\nu^{\mathrm{t}}$
be its conjugate partition.
\begin{enumerate}
\item Set $\l_{\nu}=\gl_{\nu_{1}}\oplus\dots\oplus\gl_{\nu_{N}}$ and $\p_{\nu}=\l_{\nu}\oplus\n_{\nu}$
as before, then 
\[
\lct_{\C}(\triangle_{\g,\l_{\nu}}^{\t})=\frac{\mathrm{ss.rk}(\g)}{\left|\Sigma^{+}(\g,\t)\backslash\Sigma^{+}(\l_{\nu},\t)\right|}=\frac{\mathrm{ss.rk}(\g)}{\dim\n_{\nu}}.
\]
\item For $1\le s\le\nu_{1}$, set $n_{s}:=\sum_{i=1}^{s}\lambda_{i}$.
Then for every integer $m\geq1$, 
\begin{equation}
\lct_{\C}(\triangle_{\g,\l_{\nu}}^{\t};(\triangle_{\l_{\nu}}^{\t})^{m},0)=\begin{cases}
\underset{1\leq s\leq\nu_{1}}{\min}\frac{\left(n_{s}-1\right)+\sum_{j=1}^{s}(j-1)\lambda_{j}}{\binom{n_{s}}{2}-\sum_{j=1}^{s}(j-1)\lambda_{j}} & \text{if }m=1,\\
\frac{2}{\lambda_{1}} & \text{if }m\geq2.
\end{cases}\label{eq:combinatorial formula for relative lct}
\end{equation}
\end{enumerate}
\end{prop}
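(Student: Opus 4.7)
My plan is to apply Proposition \ref{prop:formula for log canonical threshold} with $\l = \l_\nu$, reducing both parts to a combinatorial minimization of
\[
R(\l') := \frac{\mathrm{ss.rk}(\l') + m\,|\Sigma^+(\l_\nu,\t) \cap \Sigma^+(\l',\t)|}{|\Sigma^+(\l',\t) \setminus \Sigma^+(\l_\nu,\t)|}
\]
over Levi subalgebras $\t \subsetneq \l' \subseteq \gl_n$ with $[\l',\l']$ simple. By Lemma \ref{lem:structure of Levis}, each such $\l'$ is, up to a Weyl group element, parametrized by a subset $I \subseteq [n]$ with $|I| = k \geq 2$. Writing $B_1, \ldots, B_N$ for the blocks of $\l_\nu$ and $t_i := |I \cap B_i|$, direct counting gives $\mathrm{ss.rk}(\l') = k - 1$, $|\Sigma^+(\l',\t)| = \binom{k}{2}$, and $|\Sigma^+(\l_\nu,\t) \cap \Sigma^+(\l',\t)| = a$ where $a := \sum_i \binom{t_i}{2}$. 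The task becomes minimizing
\[
R(k,a) := \frac{(k-1) + ma}{\binom{k}{2} - a}
\]
over admissible pairs $(k,a)$, with $I$ spanning at least two blocks of $\l_\nu$ so that the denominator is nonzero.

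For fixed $k$, one checks that $\partial_a R(k,a) > 0$, so the minimum is attained at the smallest feasible $a$. By convexity of $\binom{t}{2}$, equivalently Lemma \ref{lem:volume of partitions}, the minimum of $\sum_i \binom{t_i}{2}$ subject to $\sum t_i = k$ and $t_i \leq \nu_i$ is attained by the water-filling distribution $t_i = \min(s, \nu_i)$; this is admissible precisely when $k = n_s := \sum_i \min(s, \nu_i) = \sum_{j=1}^s \lambda_j$, and an interchange of summation then gives $a_s := \sum_i \binom{\min(s,\nu_i)}{2} = \sum_{j=1}^s (j-1)\lambda_j$. I then plan to show, using a mediant-type argument based on Lemma \ref{lem:elementary lemma}, that for any $k \in (n_{s-1}, n_s)$ the quantity $R(k, a_{\min}(k))$ is bracketed above the minimum of $R(n_{s-1}, a_{s-1})$ and $R(n_s, a_s)$, reducing the global optimization to the finite set $\{n_1, \dots, n_{\nu_1}\}$. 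This produces, for all $m \geq 0$, the formula
\[
\lct_\C(\triangle_{\g,\l_\nu}^\t;(\triangle_{\l_\nu}^\t)^m, 0) = \min_{1 \leq s \leq \nu_1} \frac{(n_s - 1) + m\, a_s}{\binom{n_s}{2} - a_s},
\]
which is precisely the formula asserted in part (2) for $m = 1$.

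It remains to minimize this finite expression in $s$ for the two other cases. For part (1), set $m = 0$: another application of Lemma \ref{lem:elementary lemma}, comparing $s$ and $s+1$ via a mediant, shows the ratio is minimized at $s = \nu_1$; since $n_{\nu_1} = n$ and $a_{\nu_1} = \sum_i \binom{\nu_i}{2}$, this gives $(n-1)/(\binom{n}{2} - \sum_i \binom{\nu_i}{2}) = \mathrm{ss.rk}(\g)/\dim \n_\nu$, as claimed. For $m \geq 2$ in part (2), the value at $s = 1$ is $(\lambda_1 - 1)/\binom{\lambda_1}{2} = 2/\lambda_1$ (independent of $m$, since $a_1 = 0$), and a direct algebraic check using $m \geq 2$ and the bounds $\lambda_j \leq \lambda_1$ shows every $s \geq 2$ yields a strictly larger value. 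The hardest step, in my view, is the monotonicity argument reducing the optimization over $k$ to the discrete set $\{n_s\}$: within a subinterval $(n_{s-1}, n_s)$ the function $R(k, a_{\min}(k))$ is not monotone in $k$, so Lemma \ref{lem:elementary lemma} must be applied delicately to bound interior values against the endpoints. The remaining one-variable inequalities in $s$ are more routine.
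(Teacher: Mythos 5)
Your proposal is correct and follows essentially the same route as the paper: apply Proposition \ref{prop:formula for log canonical threshold}, classify the relevant Levi subalgebras via Lemma \ref{lem:structure of Levis}, reduce to the combinatorial quantity $a = \sum_i \binom{t_i}{2}$, minimize $a$ for fixed $k$ by water-filling using Lemma \ref{lem:volume of partitions}, and then control the optimization over $k$ with the mediant inequality of Lemma \ref{lem:elementary lemma}. One small wording issue: the water-filling minimizer exists for every $k$, not only for $k = n_s$; it just has the clean closed form $t_i = \min(s,\nu_i)$ and $a_{\min}(k) = \sum_{j=1}^s(j-1)\lambda_j$ precisely at $k = n_s$, which is what you use in the mediant bracketing anyway. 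The only stylistic difference with the paper is in Item~(1): for $m=0$ the term-by-term ratios $1/(u+1-\phi_{u+1})$ are globally non-increasing in $u$, so the cumulative ratio is already globally decreasing in $k$ and the minimum is immediately at $k=n$ without first passing through the reduction to $\{n_s\}$ — a shortcut the paper takes, but your two-step version reaches the same conclusion.
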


\begin{proof}
Recall that by Proposition \ref{prop:formula for log canonical threshold}
we have, 
\begin{equation}
\lct_{\C}(\triangle_{\g,\l_{\nu}}^{\t};(\triangle_{\l_{\nu}}^{\t})^{m},0)=\underset{\substack{\t\varsubsetneq\l'\subseteq\g\\{}
[\l',\l']\text{ is simple}
}
}{\min}\frac{\mathrm{ss.rk}(\l')+m\left|\Sigma^{+}(\l_{\nu},\t)\cap\Sigma^{+}(\l',\t)\right|}{\left|\Sigma^{+}(\l',\t)\backslash\Sigma^{+}(\l_{\nu},\t)\right|}.\label{eq:non refined formula for relative lct}
\end{equation}
Note that for each $k\geq2$, and $\l'$ such that $[\l',\l']\simeq\sl_{k}$,
the right hand side of (\ref{eq:non refined formula for relative lct})
is minimal when $\left|\Sigma^{+}(\l_{\nu},\t)\cap\Sigma^{+}(\l',\t)\right|$
is minimal. Recall from Lemma \ref{lem:structure of Levis} that $\l'=\t+\gl_{n,I}$
for a subset $I=\{i_{1},...,i_{k}\}\subseteq[n]$ of size $k$. Denote
$J_{l,\nu}:=\{1+\sum_{i=1}^{l-1}\nu_{i},2+\sum_{i=1}^{l-1}\nu_{i}\dots,\sum_{i=1}^{l}\nu_{i}\}\subseteq[n]$.
Note that $\left|\Sigma^{+}(\l_{\nu},\t)\cap\Sigma^{+}(\l',\t)\right|$
is equal to $\sum_{l=1}^{N}\binom{\#I\cap J_{l,\nu}}{2}$, so it is
determined by the multiset $\left\{ \#I\cap J_{l,\nu}\right\} _{l=1}^{N}$.
Let us first replace $I$ with a more convenient subset $\widetilde{I}$
with the same multiset of intersections. Since $\nu_{1}\geq\ldots\geq\nu_{N}$,
there exists a subset $\widetilde{I}\subseteq[n]$, such that $\left\{ \#I\cap J_{l,\nu}\right\} _{l=1}^{N}=\left\{ \#\widetilde{I}\cap J_{l,\nu}\right\} _{l=1}^{N}$
and $\#\widetilde{I}\cap J_{1,\nu}\geq\ldots\geq\#\widetilde{I}\cap J_{N,\nu}$
is a partition $\eta\vdash k$. We then have:
\[
\left|\Sigma^{+}(\l_{\nu},\t)\cap\Sigma^{+}(\l',\t)\right|=\sum_{l=1}^{N}\binom{\#I\cap J_{l,\nu}}{2}=\sum_{l=1}^{N}\binom{\#\widetilde{I}\cap J_{l,\nu}}{2}=\sum_{l=1}^{N}\binom{\eta_{l}}{2}.
\]
Set $\mathcal{P}_{\nu}(k):=\left\{ \eta\vdash k:\eta_{l}\leq\nu_{l},\forall l\in[N]\right\} $,
and note that for each $k\geq2$
\begin{equation}
\underset{\substack{\t\varsubsetneq\l'\subseteq\g\\{}
[\l',\l']\simeq\sl_{k}
}
}{\min}\left|\Sigma^{+}(\l_{\nu},\t)\cap\Sigma^{+}(\l',\t)\right|=\underset{\eta\in\mathcal{P}_{\nu}(k)}{\min}\sum_{l=1}^{N}\binom{\eta_{l}}{2}.\label{eq:minimal expression}
\end{equation}
There exists a unique partition $\eta_{\min,\nu}$ in $\mathcal{P}_{\nu}(k)$,
such that $\eta_{\min,\nu}\preceq\eta$ for all $\eta\in\mathcal{P}_{\nu}(k)$.
The partition $\eta_{\min,\nu}$ is obtained by adding $k$ boxes,
one by one, to fill the columns in the Young diagram of $\nu$, from
top to bottom, left to right. The first column can fill $\lambda_{1}$
possible boxes, the second column has $\lambda_{2}$ possible boxes,
etc. Explicitly, the $u$-th box will be entered at the $\phi_{u}$-th
column, such that:
\[
\min(\sum_{i=1}^{\phi_{u}-1}\lambda_{i},k)+1\leq u\leq\min(\sum_{i=1}^{\phi_{u}}\lambda_{i},k).
\]
By Lemma \ref{lem:volume of partitions}, the minimum in (\ref{eq:minimal expression})
is obtained for this $\eta_{\min,\nu}$. Hence, by setting $\psi_{\nu,k}:=\sum_{u=1}^{k}(\phi_{u}-1)$,
by observing that $\sum_{l=1}^{N}\binom{(\eta_{\min,\nu})_{l}}{2}=\psi_{\nu,k}$,
and by (\ref{eq:minimal expression}), we now have:
\begin{equation}
\lct_{\C}(\triangle_{\g,\l_{\nu}}^{\t})=\underset{1\leq k\leq n}{\min}\underset{\substack{\t\varsubsetneq\l'\subseteq\g\\{}
[\l',\l']\simeq\sl_{k}
}
}{\min}\frac{\mathrm{ss.rk}(\l')}{\left|\Sigma^{+}(\l',\t)\backslash\Sigma^{+}(\l_{\nu},\t)\right|}=\underset{2\leq k\leq n}{\min}\frac{k-1}{\binom{k}{2}-\psi_{\nu,k}}.\label{eq:refined formula for lct}
\end{equation}
Note that $u+1-\phi_{u+1}$ is increasing in $u$. By Lemma \ref{lem:elementary lemma},
and since $\phi_{1}=1$, the following expression:
\[
\frac{k-1}{\binom{k}{2}-\psi_{\nu,k}}=\frac{\sum_{u=1}^{k-1}1}{\sum_{u=1}^{k-1}(u-(\phi_{u+1}-1))},
\]
is decreasing in $k$. Hence, the minimum in (\ref{eq:refined formula for lct})
is obtained when $k=n$, and Item (1) follows.

For Item (2), note that for $m\geq1$,
\begin{equation}
\lct_{\C}(\triangle_{\g,\l_{\nu}}^{\t};(\triangle_{\l_{\nu}}^{\t})^{m},0)=\underset{2\leq k\leq n}{\min}\frac{k-1+m\cdot\psi_{\nu,k}}{\binom{k}{2}-\psi_{\nu,k}}=\underset{2\leq k\leq n}{\min}\frac{\sum_{u=1}^{k-1}(m\phi_{u+1}-m+1)}{\sum_{u=1}^{k-1}(u+1-\phi_{u+1})}.\label{eq:refined formula for relative lct}
\end{equation}
Note that on each interval $n_{s-1}<u\leq n_{s}$, we have $\phi_{u}=s$,
and hence $\frac{m\phi_{u}-m+1}{(u-\phi_{u})}=\frac{ms-m+1}{u-s}$
is a decreasing function of $u$. Hence, by Lemma \ref{lem:elementary lemma},
the function $\frac{\sum_{u=1}^{k-1}(m\phi_{u+1}-m+1)}{\sum_{u=1}^{k-1}(u+1-\phi_{u+1})}$
is piecewise monotone in $k\in\{2,\ldots,n\}$, and therefore the
minimum in (\ref{eq:refined formula for relative lct}) must be obtained
when $k\in\left\{ \lambda_{1},\lambda_{1}+\lambda_{2},\dots,\sum_{j=1}^{\nu_{1}}\lambda_{j}\right\} $.
Since $\psi_{\nu,n_{s}}=\sum_{j=1}^{s}(j-1)\lambda_{j}$, we deduce
that:
\begin{equation}
\lct_{\C}(\triangle_{\g,\l_{\nu}}^{\t};(\triangle_{\l_{\nu}}^{\t})^{m},0)=\underset{1\leq s\leq\nu_{1}}{\min}\frac{\left(n_{s}-1\right)+m\sum_{j=1}^{s}(j-1)\lambda_{j}}{\binom{n_{s}}{2}-\sum_{j=1}^{s}(j-1)\lambda_{j}},\label{eq:non simplified formula for rlct}
\end{equation}
for every $m\geq1$. In particular, this implies the proposition in
the case $m=1$. 

To deal with the case that $m\geq2$, it is left to show that the
right hand side of (\ref{eq:non simplified formula for rlct}) is
equal to $\frac{2}{\lambda_{1}}$. Write $a_{s}^{(m)}:=\left(n_{s}-1\right)+m\sum_{j=1}^{s}(j-1)\lambda_{j}$
and $b_{s}:=\binom{n_{s}}{2}-\sum_{j=1}^{s}(j-1)\lambda_{j}$ so that
Eq.~(\ref{eq:non simplified formula for rlct}) becomes $\lct_{\C}(\triangle_{\g,\l_{\nu}}^{\t};(\triangle_{\l_{\nu}}^{\t})^{m},0)=\underset{1\leq s\leq\nu_{1}}{\min}\frac{a_{s}^{(m)}}{b_{s}}$.
Note that $\frac{a_{1}^{(m)}}{b_{1}}=\frac{\lambda_{1}-1}{\binom{\lambda_{1}}{2}}=\frac{2}{\lambda_{1}}$
and hence $\lct_{\C}(\triangle_{\g,\l_{\nu}}^{\t};(\triangle_{\l_{\nu}}^{\t})^{m},0)\leq\frac{2}{\lambda_{1}}$
for every $m\in\N$. To show the inequality in the other direction,
it is enough to show that 
\[
\frac{a_{u}^{(m)}-a_{u-1}^{(m)}}{b_{u}-b_{u-1}}\geq\frac{2}{\lambda_{1}}\,\,\,\text{ for every }u\in\left\{ 1,\dots,\nu_{1}\right\} ,\tag{\ensuremath{\star}}
\]
where $a_{0}^{(m)}=b_{0}=0$, since then by Lemma \ref{lem:elementary lemma}
we have $\frac{a_{s}^{(m)}}{b_{s}}=\frac{\sum_{u=1}^{s}\left(a_{u}^{(m)}-a_{u-1}^{(m)}\right)}{\sum_{u=1}^{s}\left(b_{u}-b_{u-1}\right)}\geq\frac{2}{\lambda_{1}}$.
Note that 
\begin{align}
 & \frac{a_{u}^{(m)}-a_{u-1}^{(m)}}{b_{u}-b_{u-1}}=\frac{\lambda_{u}+m(u-1)\lambda_{u}}{\binom{n_{u-1}+\lambda_{u}}{2}-\binom{n_{u-1}}{2}-(u-1)\lambda_{u}}\nonumber \\
= & \frac{2\left(m(u-1)+1\right)\lambda_{u}}{\left(n_{u-1}+\lambda_{u}\right)\left(n_{u-1}+\lambda_{u}-1\right)-n_{u-1}(n_{u-1}-1)-2(u-1)\lambda_{u}}\nonumber \\
= & \frac{2\left(m(u-1)+1\right)\lambda_{u}}{2\lambda_{u}n_{u-1}+\lambda_{u}^{2}-(2u-1)\lambda_{u}}=\frac{2\left(m(u-1)+1\right)}{2n_{u-1}+\lambda_{u}-(2u-1)}.\label{eq:simplification}
\end{align}
Applying the inequalities $\lambda_{u}\leq\lambda_{1}$, $n_{u-1}\leq(u-1)\lambda_{1}$
and $m\geq2$ to Eq.~(\ref{eq:simplification}) yields: 
\[
\frac{a_{u}^{(m)}-a_{u-1}^{(m)}}{b_{u}-b_{u-1}}\geq\frac{2\left(2u-1\right)}{(2u-1)\lambda_{1}-(2u-1)}=\frac{2}{\lambda_{1}-1}>\frac{2}{\lambda_{1}},
\]
which implies $(\star)$. This concludes the proof. 
\end{proof}
\begin{cor}
\label{cor:geometric formula for relative lct}In the setting of Proposition
\ref{Prop:lct of relative Weyl discriminant}, for each $1\le s\le\nu_{1}$,
set $[\lambda]_{s}:=(\lambda_{1},\ldots,\lambda_{s})$, let $\l_{s}\subseteq\gl_{n_{s}}$
be the Levi subalgebra corresponding to the conjugate partition $([\lambda]_{s})^{t}\in\mathcal{P}(n_{s})$
and let $\p_{s}=\l_{s}\oplus\n_{s}$ be the Levi decomposition of
the corresponding parabolic subalgebra. Let $\n_{\l_{s}}$ be a maximal
nilpotent subalgebra of $\l_{s}$. Then:
\[
\lct_{\C}(\triangle_{\gl_{n},\l_{\nu}}^{\t};\triangle_{\l_{\nu}}^{\t},0)=\underset{1\leq s\leq\nu_{1}}{\min}\frac{\mathrm{ss.rk}(\gl_{n_{s}})+\dim\n_{\l_{s}}}{\dim\n_{s}}.
\]
\end{cor}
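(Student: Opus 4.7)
The plan is to derive the geometric formula directly from the combinatorial identity already proved in Proposition \ref{Prop:lct of relative Weyl discriminant}(2) with $m=1$, which gives
\[
\lct_{\C}(\triangle_{\g,\l_{\nu}}^{\t};\triangle_{\l_{\nu}}^{\t},0) = \min_{1\le s\le\nu_1}\frac{(n_s-1)+\sum_{j=1}^{s}(j-1)\lambda_j}{\binom{n_s}{2}-\sum_{j=1}^{s}(j-1)\lambda_j}.
\]
To match this against the claimed expression, I would verify three dictionary entries, term by term, and then substitute. First, $n_s - 1 = \mathrm{ss.rk}(\gl_{n_s})$, which is immediate. Second, $\dim \n_{\l_s} = \sum_{j=1}^{s}(j-1)\lambda_j$. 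Third, $\dim \n_s = \binom{n_s}{2} - \sum_{j=1}^{s}(j-1)\lambda_j$; once the second identity is established, this follows from the decomposition of positive roots in $\gl_{n_s}$, namely $\dim \n_s + \dim \n_{\l_s} = |\Sigma^+(\gl_{n_s},\t)| = \binom{n_s}{2}$.

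For the second identity, which is the only non-tautological step, I would write $\mu := ([\lambda]_s)^{\mathrm t}$, so by construction the Levi $\l_s \subseteq \gl_{n_s}$ is conjugate to $\gl_{\mu_1}\oplus\cdots\oplus\gl_{\mu_{M'}}$ and hence a maximal nilpotent subalgebra has dimension $\dim \n_{\l_s} = \sum_j \binom{\mu_j}{2}$. A standard double counting argument (pairs of boxes lying in the same column of the Young diagram of $[\lambda]_s$) then gives
\[
\sum_j \binom{\mu_j}{2} \,=\, \sum_{1\le i_1 < i_2 \le s}\min(\lambda_{i_1},\lambda_{i_2}) \,=\, \sum_{i_2=2}^{s}(i_2-1)\lambda_{i_2} \,=\, \sum_{j=1}^{s}(j-1)\lambda_j,
\]
where the middle equality uses that $\lambda_1\ge \cdots \ge \lambda_s$, so $\min(\lambda_{i_1},\lambda_{i_2}) = \lambda_{i_2}$ when $i_1 < i_2$.

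Substituting the three identities into Proposition \ref{Prop:lct of relative Weyl discriminant}(2) yields the corollary. I do not expect a genuine obstacle: the statement is essentially a geometric re-packaging of an already-established combinatorial formula, and the only place to be careful is confirming that the Levi $\l_s$ in $\gl_{n_s}$ (whose block sizes are the \emph{conjugate} of $[\lambda]_s$, not $[\lambda]_s$ itself) is the correct geometric object indexed by $s$—this is exactly what makes $\sum_j \binom{\mu_j}{2}$ rather than $\sum_j \binom{\lambda_j}{2}$ appear in the numerator, matching the combinatorial formula.
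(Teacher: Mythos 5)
Your proof is correct and follows essentially the same route as the paper: both start from Proposition \ref{Prop:lct of relative Weyl discriminant}(2) with $m=1$ and substitute the three dictionary identities $n_s-1=\mathrm{ss.rk}(\gl_{n_s})$, $\dim\n_{\l_s}=\sum_{j=1}^s(j-1)\lambda_j$, and $\dim\n_s=\binom{n_s}{2}-\dim\n_{\l_s}$. The only cosmetic difference is that you prove the key combinatorial identity $\sum_j\binom{\mu_j}{2}=\sum_{j=1}^s(j-1)\lambda_j$ by an explicit double-counting argument on the Young diagram of $[\lambda]_s$, while the paper cites it from Macdonald's book.
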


\begin{proof}
Fix $1\le s\le\nu_{1}$ and set $\nu^{s}=([\lambda]_{s})^{t}$. Applying
the identity $\sum_{j=1}^{s}(j-1)\eta_{j}=\sum_{j=1}^{\eta_{1}}\binom{(\eta^{t})_{j}}{2}$
(see \cite[Page 3, Equation 1.6]{Mac95}) with $\eta=[\lambda]_{s}$,
to Eq.~(\ref{eq:combinatorial formula for relative lct}), we get,
\begin{equation}
\frac{\left(\sum_{j=1}^{s}j\lambda_{j}\right)-1}{\binom{\sum_{j=1}^{s}\lambda_{j}}{2}-\sum_{j=1}^{s}(j-1)\lambda_{j}}=\frac{-1+n_{s}+\sum_{j=1}^{s}(j-1)\lambda_{j}}{\binom{n_{s}}{2}-\sum_{j=1}^{s}(j-1)\lambda_{j}}=\frac{\mathrm{ss.rk}(\gl_{n_{s}})+\sum_{j=1}^{\lambda_{1}}\binom{(\nu^{s})_{j}}{2}}{\binom{n_{s}}{2}-\sum_{j=1}^{\lambda_{1}}\binom{(\nu^{s})_{j}}{2}}.\label{eq:auxiliary formula}
\end{equation}
With $\l_{s},\p_{s},\n_{s}$ as in the statement of the corollary,
one can observe that $\dim\n_{\l_{s}}=\sum_{j=1}^{\lambda_{1}}\binom{(\nu^{s})_{j}}{2}$.
We therefore have: 
\begin{equation}
\dim\n_{s}=\binom{n_{s}}{2}-\sum_{j=1}^{\lambda_{1}}\binom{(\nu^{s})_{j}}{2}.\label{eq:identity}
\end{equation}
Applying (\ref{eq:identity}) to (\ref{eq:auxiliary formula}) finishes
the proof. 
\end{proof}
We can now prove the following corollary: 
\begin{cor}
\label{cor:dominant Levi means large rlct}Let $\nu,\nu'\in\mathcal{P}(n)$.
Suppose that $\nu\succeq\nu'$. Then:
\[
\lct_{\C}(\triangle_{\g,\l_{\nu}}^{\t};\triangle_{\l_{\nu}}^{\t},0)\geq\lct_{\C}(\triangle_{\g,\l_{\nu'}}^{\t};\triangle_{\l_{\nu'}}^{\t},0).
\]
\end{cor}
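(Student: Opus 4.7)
My plan is to extract an intermediate formula from inside the proof of Proposition \ref{Prop:lct of relative Weyl discriminant}(2) and then read off the inequality from two elementary monotonicities. Specifically, Equation (\ref{eq:refined formula for relative lct}), specialized to $m=1$, gives
\[
\lct_\C(\triangle_{\g,\l_\nu}^\t;\triangle_{\l_\nu}^\t,0)=\min_{2\le k\le n}h_\nu(k),\qquad h_\nu(k):=\frac{k-1+\psi_{\nu,k}}{\binom{k}{2}-\psi_{\nu,k}},
\]
where $\psi_{\nu,k}=\sum_{u=1}^{k}(\phi^\nu_u-1)$ and $\phi^\nu_u$ is the column index of the $u$-th box when the Young diagram of $\nu$ is filled column by column (top to bottom, left to right). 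The case $\nu=(n)$ is trivial (the left-hand side is then $\infty$), and $\nu\succeq\nu'$ forces $\nu'\ne(n)$; in all remaining cases the denominator is positive for every $k\in[2,n]$, by the same argument used in the proof of Proposition \ref{Prop:lct of relative Weyl discriminant}.

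My first step is to establish the pointwise bound $\psi_{\nu',k}\le\psi_{\nu,k}$. With $\lambda:=\nu^t$ and $n^\lambda_s:=\sum_{j\le s}\lambda_j$, the definition of $\phi^\nu_u$ unpacks to $\phi^\nu_u=\min\{s:n^\lambda_s\ge u\}$. Since conjugation reverses the dominance order (Definition \ref{def:partitions}(3)), the hypothesis $\nu\succeq\nu'$ is equivalent to $\lambda\preceq\lambda':=(\nu')^t$, i.e.\ $n^\lambda_s\le n^{\lambda'}_s$ for every $s$. Therefore $\{s:n^\lambda_s\ge u\}\subseteq\{s:n^{\lambda'}_s\ge u\}$ for every $u$, which gives $\phi^{\nu'}_u\le\phi^\nu_u$, and summing over $u$ up to $k$ yields $\psi_{\nu',k}\le\psi_{\nu,k}$.

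My second step is the observation that $h(k;\psi):=(k-1+\psi)/(\binom{k}{2}-\psi)$ is strictly increasing in $\psi$ on its natural domain, by a direct differentiation:
\[
\frac{\partial h}{\partial\psi}(k;\psi)=\frac{\binom{k}{2}+k-1}{(\binom{k}{2}-\psi)^2}>0.
\]
Combining the two monotonicities gives $h_{\nu'}(k)\le h_\nu(k)$ for every $k\in[2,n]$, and taking minima yields the desired inequality $\lct_\C(\triangle_{\g,\l_{\nu'}}^\t;\triangle_{\l_{\nu'}}^\t,0)\le\lct_\C(\triangle_{\g,\l_\nu}^\t;\triangle_{\l_\nu}^\t,0)$. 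There is no real obstacle; the entire argument amounts to recognizing that $\psi_{\nu,k}$ is a convenient monotone-in-$\nu$ invariant that controls the formula pointwise in $k$, bypassing the more delicate comparison of the final formula in Proposition \ref{Prop:lct of relative Weyl discriminant}(2) termwise.
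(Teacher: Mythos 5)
Your proof is correct and follows the same overall architecture as the paper's: invoke the intermediate formula $\lct_\C(\triangle_{\g,\l_\nu}^\t;\triangle_{\l_\nu}^\t,0)=\min_{2\le k\le n}(k-1+\psi_{\nu,k})/(\binom{k}{2}-\psi_{\nu,k})$ from the proof of Proposition \ref{Prop:lct of relative Weyl discriminant}, establish $\psi_{\nu,k}\ge\psi_{\nu',k}$ pointwise in $k$, and conclude by monotonicity of the fraction in $\psi$. Where you differ is in how $\psi_{\nu,k}\ge\psi_{\nu',k}$ is proved. The paper argues $\eta_{\min,\nu}\succeq\eta_{\min,\nu'}$ and then applies Lemma \ref{lem:volume of partitions}, a box-moving argument on Young diagrams; you instead unpack $\phi_u^\nu=\min\{s:n_s^\lambda\ge u\}$ and compare term by term, using only that conjugation reverses dominance, so the inequality $\phi_u^{\nu'}\le\phi_u^\nu$ is immediate from set containment. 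Your route is more self-contained — it sidesteps both the unproved dominance claim $\eta_{\min,\nu}\succeq\eta_{\min,\nu'}$ and Lemma \ref{lem:volume of partitions} — and it also makes explicit the monotonicity in $\psi$ that the paper leaves implicit. The trade-off is that the paper's appeal to $\eta_{\min}$ is consistent with the bookkeeping already set up in Proposition \ref{Prop:lct of relative Weyl discriminant}, whereas yours requires re-deriving the threshold characterization of $\phi_u$; both are short. One small wording note: $\nu\succeq\nu'$ does not by itself force $\nu'\neq(n)$ — rather, if $\nu'=(n)$ then $\nu=(n)$ as well, which is the trivial case you already handled — but the logic of your reduction is sound.
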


\begin{proof}
In the notation of Proposition \ref{Prop:lct of relative Weyl discriminant},
if $\nu\succeq\nu'$ then also $\eta_{\min,\nu}\succeq\eta_{\min,\nu'}$.
By Lemma \ref{lem:volume of partitions},
\[
\psi_{\nu,k}=\sum_{l=1}^{N}\binom{(\eta_{\min,\nu})_{l}}{2}\geq\sum_{l=1}^{N}\binom{(\eta_{\min,\nu'})_{l}}{2}=\psi_{\nu',k},\text{ for every }2\leq k\leq n.
\]
We therefore get:
\[
\lct_{\C}(\triangle_{\g,\l_{\nu'}}^{\t};\triangle_{\l_{\nu'}}^{\t},0)=\underset{2\leq k\leq n}{\min}\frac{k-1+\psi_{\nu',k}}{\binom{k}{2}-\psi_{\nu',k}}\leq\underset{2\leq k\leq n}{\min}\frac{k-1+\psi_{\nu,k}}{\binom{k}{2}-\psi_{\nu,k}}=\lct_{\C}(\triangle_{\g,\l_{\nu}}^{\t};\triangle_{\l_{\nu}}^{\t},0).\qedhere
\]
\end{proof}

\section{\label{sec:Geometric construction of Fourier transform}Fourier transform
of stable Richardson orbital integrals through geometry}

Let $\GG$ be a connected reductive algebraic $F$-group, let $\ggm:=\Lie(\GG)$.
Let $\mathcal{O}\in\mathcal{O}(\mathcal{N}_{\ggm})$ be a nilpotent
orbit, with polarization $\PP=\LL\cdot\NN$ where $\Lie(\PP)=\ppm=\llm\oplus\nnm$.
We write $G,\g,\p,\l,\n$ for the $F$-points of $\GG,\ggm,\ppm,\llm,\nnm$. 

In this section we prove Theorem \ref{thm A: epsilon of Fourier of orbital integral is lct(D(G/M))}. 
\begin{thm}[Theorem \ref{thm A: epsilon of Fourier of orbital integral is lct(D(G/M))}]
\label{thm:Theorem A in section 4}Let $\mathcal{O}$ be a Richardson
orbit in $\ggm$ with a polarization $\PP=\LL\cdot\NN$. Let $\mathcal{O}_{\mathrm{st}}:=\mathcal{O}(F)$
be the corresponding stable Richardson orbit. Then 
\[
\epsilon_{\star}(\widehat{\xi}_{\mathcal{O}_{\mathrm{st}}})=2\lct_{F}(D_{\g,\l},0).
\]
\end{thm}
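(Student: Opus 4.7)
The strategy I would follow is the one sketched in §\ref{subsec:Proof-of-Theorem A}, reducing the statement to a Jacobian computation combined with the general integrability-vs-lct principle from \cite{GHS}. The plan proceeds in three substantive steps plus a homogeneity argument.

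First, I would introduce the polynomial map $\Phi: \nnm^{-}\oplus\ppm \to \ggm$ given by $\Phi(X,Z):=\Ad(\exp(X))(Z)$, where $\nnm^{-}$ is the nilradical opposite to $\nnm$. Since $\ggm=\nnm^{-}\oplus\ppm$ as vector spaces, $\Phi$ is a dominant polynomial map between equidimensional $F$-varieties, and an open immersion on a Zariski-dense subset. Second, I would establish the key identity
\[
\widehat{\xi}_{\mathcal{O}_{\mathrm{st}}} \;=\; c\cdot \Phi_{*}\mu_{\g}
\]
as tempered distributions on $\g$, for some nonzero normalization constant $c$. This is the geometric heart of the theorem: it is the incarnation, on the Fourier side, of the generalized Springer resolution $\GG\times^{\PP}\nnm \to \overline{\mathcal{O}}$. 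Concretely, for $f\in\mathcal{S}(\g)$ one expands $\xi_{\mathcal{O}_{\mathrm{st}}}(\widehat{f})$ as an integral of $\widehat{f}$ over the parabolically induced set $G\cdot\n$, applies Fourier inversion along $\n$ against its orthogonal complement $\n^{-}$ relative to $\langle\,,\,\rangle$, and parameterizes $G/L$-cosets by $\exp(X)$ with $X\in\n^{-}$. The resulting expression is exactly $c\int_{\n^{-}\oplus\p} f(\Phi(X,Z))\,dX\,dZ$.

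Third, I would compute the Jacobian of $\Phi$. Using that $\ad(X)$ is nilpotent for $X\in\n^{-}$ and that $\ad$ of the $\n$-component of $Z$ is also nilpotent, a direct block-triangular calculation of $d\Phi_{(X,Z)}$ yields
\[
\bigl|\mathrm{Jac}(\Phi)(X,Z)\bigr|_{F} \;=\; \bigl|\det\!\bigl(\ad(\overline{Z}):\n^{-}\to\n^{-}\bigr)\bigr|_{F} \;=\; \bigl|D_{\g,\l}(\overline{Z})\bigr|_{F}^{1/2},
\]
where $\overline{Z}\in\l$ denotes the Levi component of $Z$ and the last equality uses $D_{\g,\l}(\overline Z) = \det(\ad\overline Z:\g/\l\to\g/\l)$ together with the pairing of positive and negative root spaces. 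Combined with the pushforward identity of Step 2 and \cite[Proposition~4.1]{GHS}, we obtain
\[
\epsilon_{\star}\bigl(\widehat{\xi}_{\mathcal{O}_{\mathrm{st}}};0\bigr) \;=\; \epsilon_{\star}\bigl(\Phi_{*}\mu_{\g};0\bigr) \;=\; \lct_{F}\bigl(\mathrm{Jac}(\Phi);0\bigr) \;=\; 2\lct_{F}(D_{\g,\l};0).
\]
To upgrade $\epsilon_{\star}(\,\cdot\,;0)$ to $\epsilon_{\star}(\,\cdot\,)$, I would invoke the homogeneity of $\widehat{\xi}_{\mathcal{O}_{\mathrm{st}}}$ under the dilation action of $F^{\times}$ on $\g$, which together with its $G$-invariance and analyticity on $\g_{\rss}$ implies that the integrability exponent achieves its infimum at $0$.

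The principal obstacle is the pushforward identity in Step 2. Although morally a consequence of the parabolic induction description of Richardson orbital integrals, a careful proof requires fixing compatible Haar normalizations on $L$, $N$, $N^{-}$, $G/L$, and $\g$; justifying convergence and interchange of integrals (leaning on Rao's temperedness theorem to ensure $\xi_{\mathcal{O}_{\mathrm{st}}}(\widehat{f})$ is well-defined); and using the \emph{stability} hypothesis essentially — stability is what guarantees that $G\cdot\n$ captures all $F$-rational points of $\mathcal{O}$ with the correct multiplicity and that no cancellations arise among the individual $G(F)$-orbits comprising $\mathcal{O}_{\mathrm{st}}$ (cf.~Question~\ref{que:non-stable orbits}). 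Once this identity is in hand, the Jacobian computation, the lct-pushforward principle, and the homogeneity reduction are essentially mechanical.
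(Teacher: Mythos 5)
Your proposal follows the Introduction's sketch (\S\ref{subsec:Proof-of-Theorem A}) and lands on the same Jacobian-plus-lct mechanism as the paper, but the way Section~\ref{sec:Geometric construction of Fourier transform} actually organizes the argument differs in two places worth comparing, and you have one small but real slip.

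The paper does not establish $\widehat{\xi}_{\mathcal{O}_{\mathrm{st}}}=c\,\Phi_*\mu_{\g}$ by Fourier inversion and Weyl-type unwinding of $\xi_{\mathcal{O}_{\mathrm{st}}}(\widehat{f})$; instead it builds the generalized Springer resolution $\widetilde{\mathcal{O}}=\GG\times_{\PP}\nnm$ and its ``conormal'' partner $\widetilde{\mathcal{O}}^{\perp}=\GG\times_{\PP}\ppm$, constructs canonical $\GG$-invariant top forms on both (Lemma~\ref{lem:invariant top form on an alteration}), and shows that the fiberwise Fourier transform $\mathcal{F}_{\mathcal{P},\g}$ sends $\xi_{\widetilde{\mathcal{O}}(F)}$ to $\xi_{\widetilde{\mathcal{O}}^\perp(F)}$; Corollary~\ref{cor:Fourier transform of orbital integrals as pushforward measures} then gives $\widehat{\xi}_{\mathcal{O}_{\mathrm{st}}}=\pi^\perp_*\bigl(\mu_{\widetilde{\mathcal{O}}^\perp(F)}\bigr)$. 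The advantage of the geometric route is that the stability of the orbit is packaged automatically: $\left|\omega_{\mathcal{O}}\right|_F$ is $\GG(\overline F)$-invariant, hence stably invariant (Lemma~\ref{lem: push of measure on bundle is an orbital integral}), with no Haar-normalization bookkeeping across the rational forms of $\mathcal{O}$. Your analytic route, via $\widehat{\xi}_{\mathcal{O}_{\mathrm{st}}}=\widehat{\mathrm{Ind}_P^G(\delta_{\{0\},\l})}=\mathrm{Ind}_P^G(\mu_\l)$, is exactly the alternative the paper records in Remark~\ref{rem:A-field-dependent construction}, and it works, but it requires more hands-on care with the normalizations and with Rao's temperedness, as you flag.

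The second difference is more substantive. You apply \cite[Proposition~4.1]{GHS} directly to $\Phi:\n^{-}\times\p\to\g$, but $\Phi$ is \emph{not} proper (its fiber over $0$ is $\n^{-}\times\{0\}$, which is unbounded), and the paper deliberately avoids this: it invokes the GHS pushforward-lct equality only for the proper projection $\pi^\perp:\widetilde{\mathcal{O}}^\perp(F)\to\g$, obtaining $\epsilon_\star(\widehat{\xi}_{\mathcal{O}_{\mathrm{st}}})=\lct_F(\mathrm{Jac}(\pi^\perp))$ globally in one stroke, and then uses $\psi^\perp\circ(\exp,\mathrm{Id})$ and $G$-equivariance only to reduce the Jacobian computation to $\Phi$. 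This sidesteps both the homogeneity upgrade and any ``mass at infinity'' concern about the unbounded fiber. Your homogeneity argument (together with the fact that $\mathrm{Jac}(\Phi)$ depends only on $\overline Z$) can patch this, but it should be stated as the bridge, not taken for granted once GHS is cited for a non-proper map.

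Finally, a small but genuine error in your Step~2 narrative: the orthogonal complement of $\nnm$ with respect to $\langle\cdot,\cdot\rangle$ is $\ppm$, not $\nnm^{-}$ (the paper uses $\nnm^\perp=\ppm$ precisely to define $\widetilde{\mathcal{O}}^\perp$); the space $\nnm^{-}$ is the dual of $\nnm$ under the pairing, which is what actually enters the chart $\n^{-}\times\p\simeq\g$. Likewise, $\exp(X)$ with $X\in\nnm^{-}$ parameterizes the big cell of $G/P$, not $G/L$-cosets. These are presentation slips rather than mathematical obstacles, but they should be fixed before filling in the details.
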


To prove the theorem, we need to describe $\widehat{\xi}_{\mathcal{O}_{\mathrm{st}}}$.
Theorem \ref{thm:Theorem A in section 4} has an alternative proof
using harmonic analysis, using methods presented in \cite[Section 13]{Kot05},
see Remark \ref{rem:A-field-dependent construction}. However, we
present a geometric construction, which produces $\widehat{\xi}_{\mathcal{O}(F)}=\widehat{\xi}_{\mathcal{O}_{\mathrm{st}}}$
after specializing to any local field $F$. 

\subsection{\label{subsec:geometric construction}A geometric description of
$\widehat{\xi}_{\mathcal{O}_{\mathrm{st}}}$ }

For a smooth, finite type $F$-scheme $X$, let $\mathcal{O}_{X,x}$
be the local ring of $X$ at $x$, and let $\m_{x}$ be its maximal
ideal. The cotangent space of $X$ at $x\in X$ is $T_{x}^{*}X:=\m_{x}/\m_{x}^{2}$,
which is a vector space over the residue field $k_{x}:=\mathcal{O}_{X,x}/\m_{x}$.
Similarly, the tangent space is $T_{x}X:=\left(\m_{x}/\m_{x}^{2}\right)^{*}$.
For each $1\leq m\leq\dim X$, let $\bigwedge^{m}T_{x}^{*}X$ be the
space of $m$-forms of $X$ at $x$. Let $TX$ (resp.~$T^{*}X$)
be the tangent (resp.~cotangent) sheaf of $X$ and let $\Omega^{m}X:=\bigwedge^{m}T^{*}X$
be the sheaf of $m$-differential forms on $X$. We write $\Omega^{\mathrm{top}}X:=\Omega^{\dim X}X$
for the sheaf of top differential forms. The ring of sections of a
sheaf $\mathcal{G}$ over an open subset $U\subseteq X$ is denoted
$\Gamma(U,\mathcal{G})$. An element $\omega\in\Gamma(X,\Omega^{m}X)$
is called a \emph{regular $m$-form} on $X$. An element $\omega\in\Gamma(U,\Omega^{m}X)$,
for an open set $U\subseteq X$, is called a \emph{rational $m$-form}
on $X$, and two rational forms are equivalent if they agree on a
smaller open set.

Note that for $U,X$ as above, $X(F)$ is a smooth $F$-analytic manifold,
and each $\omega\in\Gamma(U,\Omega^{\mathrm{top}}X)$ gives rise to
a rational differential form $\omega_{F}$ on $X(F)$. To such $\omega_{F}$,
we can attach a natural measure $\mu_{\omega_{F}}:=\left|\omega_{F}\right|_{F}$
on $X(F)$ (see e.g.~\cite[Definition 3.1]{AA16}).

For each parabolic subgroup $\PP\leq\GG$ denote by $\underline{\mathcal{P}}:=\{g\PP g^{-1}:g\in\GG\}\simeq\GG/\PP$
the corresponding flag variety. For a parabolic subgroup $\QQ$ we
denote by $\qqm$ its Lie algebra and by $\nnm_{\qqm}$ the radical
of $\frak{\qqm}$. We define a bundle $\widetilde{\mathcal{O}}$ over
$\underline{\mathcal{P}}$ by
\[
\widetilde{\mathcal{O}}:=\GG\times_{\PP}\nnm\simeq\left\{ (\QQ,X)\in\underline{\mathcal{P}}\times\ggm:X\in\nnm_{\qqm}\right\} \subset\underline{\mathcal{P}}\times\ggm.
\]
The variety $\widetilde{\mathcal{O}}$ is smooth, and it can be interpreted
as the total space of the cotangent bundle $T^{*}(\underline{\mathcal{P}})$
of $\underline{\mathcal{P}}$. Moreover, there is a $\GG$-action
on $\widetilde{\mathcal{O}}$ by $g.(\QQ,X):=(g\QQ g^{-1},gXg^{-1})$.
Finally, when $\PP$ is a polarization of a Richardson orbit $\mathcal{O}$,
projecting from $\widetilde{\mathcal{O}}\subset\underline{\mathcal{P}}\times\ggm$
to $\g$ gives rise to the \emph{generalized Springer resolution}
$\pi_{\mathcal{O}}:\widetilde{\mathcal{O}}\to\overline{\mathcal{O}}$. 
\begin{prop}
The map $\pi_{\mathcal{O}}$ is a $\GG$-equivariant, projective and
surjective morphism, which is \'etale over $\mathcal{O}$. 
\end{prop}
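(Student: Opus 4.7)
The plan is to verify the four claims (equivariance, projectivity, surjectivity, and étaleness over $\mathcal{O}$) separately, using the description of $\widetilde{\mathcal{O}}$ both as $\GG\times_{\PP}\nnm$ and as the incidence subvariety of $\underline{\mathcal{P}}\times\ggm$. The $\GG$-equivariance is immediate: the $\GG$-action on $\widetilde{\mathcal{O}}$ is $g.(\QQ,X)=(g\QQ g^{-1},\Ad(g)X)$, and $\pi_{\mathcal{O}}$ is the restriction of the second projection $\underline{\mathcal{P}}\times\ggm\to\ggm$, which intertwines this action with the adjoint action on $\ggm$. Projectivity of $\pi_{\mathcal{O}}$ follows since $\underline{\mathcal{P}}\simeq\GG/\PP$ is projective, so the second projection $\underline{\mathcal{P}}\times\ggm\to\ggm$ is proper and affine-on-base; then $\pi_{\mathcal{O}}$ is the restriction to the closed subscheme $\widetilde{\mathcal{O}}$, giving a proper map into the affine variety $\overline{\mathcal{O}}\subseteq\ggm$, which is projective.

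For surjectivity, I would use that $\mathcal{O}\cap\nnm$ is Zariski dense in $\nnm$ (the definition of a Richardson polarization): every $X\in\mathcal{O}\cap\nnm$ is the image of $[e,X]\in\GG\times_{\PP}\nnm$, so $\mathcal{O}\cap\nnm\subseteq\pi_{\mathcal{O}}(\widetilde{\mathcal{O}})$, and since $\pi_{\mathcal{O}}$ is $\GG$-equivariant this yields $\mathcal{O}\subseteq\pi_{\mathcal{O}}(\widetilde{\mathcal{O}})$. Since the image is closed by properness, it contains $\overline{\mathcal{O}}$. For the reverse inclusion, the image $\GG\cdot\nnm$ is irreducible (as $\widetilde{\mathcal{O}}$ is), and a dimension count using $\dim\widetilde{\mathcal{O}}=\dim(\GG/\PP)+\dim\nnm=2\dim\nnm=\dim\mathcal{O}$ forces $\GG\cdot\nnm=\overline{\mathcal{O}}$.

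The main technical step is étaleness over $\mathcal{O}$. By $\GG$-equivariance it suffices to check étaleness at points $[e,X_0]$ with $X_0\in\mathcal{O}\cap\nnm$. I would identify the tangent space of $\widetilde{\mathcal{O}}=\GG\times_{\PP}\nnm$ at $[e,X_0]$ with $\nnm^{-}\oplus\nnm$ via the splitting $\ggm=\nnm^{-}\oplus\ppm$, and compute that the differential of $\pi_{\mathcal{O}}$ acts by $(Y,Z)\mapsto[Y,X_0]+Z\in T_{X_0}\mathcal{O}=[\ggm,X_0]$. Since $[\ppm,X_0]\subseteq\nnm$ (because $\nnm$ is an ideal in $\ppm$), the image contains $[\nnm^-,X_0]+\nnm$ and one checks this equals $[\ggm,X_0]$; the kernel is $\{Y\in\nnm^-:[Y,X_0]\in\nnm\}$. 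The hard point is to show this kernel vanishes. I would argue this by a dimension comparison: the fibers of $\pi_{\mathcal{O}}$ over $\mathcal{O}$ are finite (so $0$-dimensional), and $\dim\widetilde{\mathcal{O}}=\dim\mathcal{O}=2\dim\nnm$; combined with smoothness of both source and target over $\mathcal{O}$, an injective differential at every point in $\pi_{\mathcal{O}}^{-1}(\mathcal{O})$ is equivalent to the morphism being unramified there, which together with finiteness (from properness plus $0$-dimensional fibers) gives étaleness. Concretely, the vanishing of $\{Y\in\nnm^{-}:[Y,X_0]\in\nnm\}$ is equivalent to the map $\ad(X_0):\nnm^-\to\ggm/\ppm\cong\nnm^-$ being injective, which is a standard characterization of Richardson elements; I would cite \cite[Theorem 7.1.1]{CM93} or verify it directly using $\dim\mathrm{Cent}_{\GG}(X_0)=\dim\ggm-2\dim\nnm$ and $\mathrm{Cent}_{\GG}(X_0)\subseteq\PP$ for a Richardson $X_0$ in the polarizing parabolic.
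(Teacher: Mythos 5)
Your handling of equivariance, projectivity, and surjectivity is essentially the paper's, but for \'etaleness you take a genuinely different route. You compute the differential of $\pi_{\mathcal{O}}$ at $[e,X_0]$ explicitly (identifying the tangent space with $\nnm^-\oplus\nnm$, with differential $(Y,Z)\mapsto[Y,X_0]+Z$) and reduce the claim to the vanishing of $\{Y\in\nnm^-:[Y,X_0]\in\nnm\}$, for which you appeal to Richardson's centralizer theorem. The paper instead argues softly: $\pi_{\mathcal{O}}$ restricted over $\mathcal{O}$ is a dominant morphism between smooth, equidimensional, irreducible varieties over a field of characteristic zero, hence generically smooth; by $\GG$-equivariance and transitivity of $\GG$ on $\mathcal{O}$, smoothness at one point of the fiber propagates to all of $\pi_{\mathcal{O}}^{-1}(\mathcal{O})$, and smooth of relative dimension zero is exactly \'etale. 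The paper's version is shorter and needs no tangent-space bookkeeping, at the cost of burying where the Richardson hypothesis enters; yours makes it explicit that what is really being used is $\mathrm{Cent}_{\ggm}(X_0)\subseteq\ppm$.

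There is, however, a local error in your final reduction. You assert that vanishing of $\{Y\in\nnm^-:[Y,X_0]\in\nnm\}$ is \emph{equivalent} to injectivity of the induced map $\ad(X_0):\nnm^-\to\ggm/\ppm$. That is not the case: the kernel of the latter is the a priori larger set $\{Y\in\nnm^-:[Y,X_0]\in\ppm\}$, and this map can in fact be identically zero for a Richardson element. Already for $\g=\gl_2$, $\ppm$ the upper-triangular Borel, $X_0$ the regular nilpotent, one has $[\nnm^-,X_0]\subseteq\t\subseteq\ppm$, so $\ad(X_0):\nnm^-\to\ggm/\ppm$ vanishes, yet $\{Y\in\nnm^-:[Y,X_0]\in\nnm\}=0$ as required. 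So the ``equivalent to'' claim should be removed. Your alternative ``verify directly'' argument is the right one, and it closes the gap: $\mathrm{Cent}_{\ggm}(X_0)\subseteq\ppm$ (which is the form of the centralizer statement actually needed; the inclusion $\mathrm{Cent}_{\GG}(X_0)\subseteq\PP$ at the group level may fail by components, but this does not affect the dimension count) gives $\dim[\ppm,X_0]=\dim\ppm-\dim\mathrm{Cent}_{\ggm}(X_0)=\dim\nnm$ and hence $[\ppm,X_0]=\nnm$; then for $Y\in\nnm^-$ with $[Y,X_0]\in\nnm=\ppm^\perp$, pairing against $\ppm$ yields $Y\perp[X_0,\ppm]=\nnm$, so $Y\in\nnm^\perp\cap\nnm^-=\ppm\cap\nnm^-=0$.
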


\begin{proof}
Since $\mathcal{O}$ is a Richardson orbit, by e.g.~\cite[Section 4.9, Eq. 1 and Lemma in 8.8]{Jan04},
we have $\dim\overline{\mathcal{O}}=2\dim(\GG/\PP)=\dim T^{*}(\GG/\PP)$,
and $\GG.\nnm=\overline{\mathcal{O}}$, so that $\pi_{\mathcal{O}}$
is well defined. The map $\pi_{\mathcal{O}}$ is projective e.g.~by
\cite[Page 3, Construction]{Hes79}, and it is clearly $G$-invariant.
Since $\pi_{\mathcal{O}}$ is dominant, it is smooth over a point
in $\mathcal{O}$, and hence it is smooth (hence \'etale) over $\mathcal{O}$. 
\end{proof}
Let $\NN^{-}$ be the unipotent radical of the opposite parabolic
to $\PP$, and let $\psi:\NN^{-}\times\nnm\rightarrow\widetilde{\mathcal{O}}$
be the map defined by $\psi(x,Y):=(x\PP x^{-1},\Ad(x)Y)$. Let $\omega_{\NN^{-}}$
be an $\NN^{-}$-invariant top form on $\NN^{-}$, and let $\omega_{\nnm}$
be an $\nnm$-invariant top form on $\nnm$. Both forms are unique
up to a constant. Moreover, $\omega_{\nnm}$ is also $(\Ad(\PP),\delta_{\PP})$-equivariant,
where $\delta_{\PP}$ is the modular character of $\PP$.
\begin{lem}
\label{lem:invariant top form on an alteration}Keeping the notation
as above, the following hold:
\begin{enumerate}
\item There exist $\GG$-invariant regular top forms $\omega_{\widetilde{\mathcal{O}}}$
on $\widetilde{\mathcal{O}}$ and $\omega_{\mathcal{O}}$ on $\mathcal{O}$,
such that $\omega_{\widetilde{\mathcal{O}}}=(\pi_{\mathcal{O}})^{*}\omega_{\mathcal{O}}$. 
\item The map $\psi$ is an isomorphism onto its image $(\NN^{-}.\PP)\times_{\PP}\nnm$,
and $\psi^{*}\omega_{\widetilde{\mathcal{O}}}=\omega_{\NN^{-}}\wedge\omega_{\nnm}$. 
\end{enumerate}
\end{lem}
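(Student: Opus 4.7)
For Part (1), my plan is to exploit that both $\mathcal{O}$ and $\widetilde{\mathcal{O}}$ carry natural $\GG$-invariant regular top forms, and to match them up via the birationality of $\pi_{\mathcal{O}}$. The orbit $\mathcal{O}\simeq\GG/\GG_{X}$ has unimodular stabilizer (recalled in $\mathsection$\ref{subsec:Fourier-transform-of} via \cite[Remark 3.27]{SS70}), so it admits a $\GG$-invariant regular top form $\omega_{\mathcal{O}}$, unique up to a nonzero scalar. The already-noted identification $\widetilde{\mathcal{O}}=T^{*}\underline{\mathcal{P}}$ equips $\widetilde{\mathcal{O}}$ with the Liouville symplectic form, whose top exterior power is a canonical $\GG$-invariant regular top form $\omega_{\widetilde{\mathcal{O}}}$. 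Since $\pi_{\mathcal{O}}$ is \'etale over $\mathcal{O}$, the rational form $\pi_{\mathcal{O}}^{*}\omega_{\mathcal{O}}$ is regular and $\GG$-invariant on the open dense subset $\pi_{\mathcal{O}}^{-1}(\mathcal{O})\subseteq\widetilde{\mathcal{O}}$, and by the uniqueness above it equals $c\cdot\omega_{\widetilde{\mathcal{O}}}$ there for some nonzero scalar $c$. Rescaling $\omega_{\mathcal{O}}$ by $1/c$ gives $\pi_{\mathcal{O}}^{*}\omega_{\mathcal{O}}=\omega_{\widetilde{\mathcal{O}}}$ as rational forms on $\widetilde{\mathcal{O}}$, and since the right-hand side is regular, so is the left on all of $\widetilde{\mathcal{O}}$.

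For Part (2), I first verify that $\psi$ is an isomorphism onto its image. Injectivity: if $\psi(x_{1},Y_{1})=\psi(x_{2},Y_{2})$, then $x_{2}^{-1}x_{1}$ normalizes $\PP$ and hence lies in $N_{\GG}(\PP)=\PP$ (parabolics are self-normalizing); combined with $x_{2}^{-1}x_{1}\in\NN^{-}$, this forces $x_{2}^{-1}x_{1}\in\NN^{-}\cap\PP=\{e\}$, so $x_{1}=x_{2}$ and then $Y_{1}=Y_{2}$. The first-coordinate image of $\psi$ sits in the big Bruhat cell $\NN^{-}\PP/\PP\subseteq\underline{\mathcal{P}}$, and the natural map $x\mapsto x\PP$ is an open immersion $\NN^{-}\hookrightarrow\underline{\mathcal{P}}$ onto this cell; over the latter, the vector bundle $\widetilde{\mathcal{O}}\to\underline{\mathcal{P}}$ is trivialized by identifying the fiber $\Ad(x)\nnm$ over $x\PP x^{-1}$ with $\nnm$ via $\Ad(x)$. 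Thus $\psi$ is an isomorphism onto $(\NN^{-}\cdot\PP)\times_{\PP}\nnm$.

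For the top-form identity, I would run a symmetry argument. Both $\psi^{*}\omega_{\widetilde{\mathcal{O}}}$ and $\omega_{\NN^{-}}\wedge\omega_{\nnm}$ are regular top forms on $\NN^{-}\times\nnm$ and hence differ by a regular function $f(x,Y)$. A direct check shows $\psi$ is $\NN^{-}$-equivariant for left translation on the first factor of the source and the restriction of the $\GG$-action on the target; together with the $\GG$-invariance of $\omega_{\widetilde{\mathcal{O}}}$ and the left invariance of $\omega_{\NN^{-}}$, this forces $f$ to be independent of $x$. Similarly, the fiber translation $(x,Y)\mapsto(x,Y+Y_{0})$ corresponds under $\psi$ to the fiber-translation automorphism of $T^{*}\underline{\mathcal{P}}$, which preserves the Liouville symplectic form and therefore its top power $\omega_{\widetilde{\mathcal{O}}}$; combined with the translation invariance of $\omega_{\nnm}$, this shows $f$ is independent of $Y$ as well. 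Thus $f$ is a constant, which can be fixed by evaluating at $(e,0)$; absorbing it into the normalization of $\omega_{\mathcal{O}}$ from Part (1) completes the argument. The main subtlety will be justifying the fiber-translation invariance of the Liouville top form rigorously in the algebraic-geometric setting; should this prove awkward, an alternative is a direct coordinate computation on $\NN^{-}\times\nnm$, which reduces to a routine check since $\NN^{-}$ is affine space and the fibers of $\widetilde{\mathcal{O}}\to\underline{\mathcal{P}}$ are canonically trivialized.
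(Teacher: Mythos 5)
Your Part (1) is correct and takes a slightly different route than the paper: you invoke the existence and uniqueness (up to scalar) of $\GG$-invariant top forms on $\mathcal{O}$ (via the unimodularity of stabilizers) and on $\widetilde{\mathcal{O}}$ (via the Liouville form), and then compare them on $\pi_{\mathcal{O}}^{-1}(\mathcal{O})$, whereas the paper compares the Kirillov--Kostant $2$-form on $\mathcal{O}$ with the canonical $2$-form on $T^{*}\underline{\mathcal{P}}$ directly, using $\GG$-invariance at the level of $2$-forms, and only then takes top exterior powers. Both are fine. Your explicit verification that $\psi$ is an isomorphism onto its image in Part (2) is correct and is actually more detail than the paper gives.

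However, there is a genuine error in the symmetry argument that your Part (2) hinges on. You claim that the fiber translation $(x,Y)\mapsto(x,Y+Y_{0})$ corresponds, via $\psi$, to a fiber-translation automorphism of $T^{*}\underline{\mathcal{P}}$ \emph{which preserves the Liouville symplectic form}, and you deduce invariance of the top power from that. This is false as stated: fiber translation on a cotangent bundle $T^{*}M$ by a $1$-form $\alpha$ on $M$ pulls back the canonical symplectic form $\omega$ to $\omega+\pi^{*}d\alpha$, and hence preserves $\omega$ only when $\alpha$ is closed, which need not hold for the section $x\mapsto\Ad(x)Y_{0}$ coming from your trivialization. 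What \emph{is} true is that such a translation preserves the top power $\omega^{\dim\underline{\mathcal{P}}}$: in canonical coordinates $dp_{i}\mapsto dp_{i}+\sum_{j}\partial_{j}\alpha_{i}\,dq_{j}$, and all correction terms die after wedging with $dq_{1}\wedge\cdots\wedge dq_{n}$. So your conclusion is rescuable, but the reasoning needs to be replaced by this computation (or by your proposed direct coordinate check), rather than deduced from symplectic-form invariance. The paper avoids fiber-translation altogether: after using $\NN^{-}$-equivariance to write $\psi^{*}\omega_{\widetilde{\mathcal{O}}}=\omega_{\NN^{-}}\wedge\widetilde{\omega}$ with $\widetilde{\omega}$ a top form on $\nnm$, it restricts to $\{e\}\times\nnm$, observes that this restriction is $\PP$-invariant while $\omega_{\NN^{-}}|_{\{e\}}$ is $(\PP,\delta_{\PP}^{-1})$-equivariant, and concludes $\widetilde{\omega}$ is $(\PP,\delta_{\PP})$-equivariant, hence proportional to $\omega_{\nnm}$. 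That argument uses only the $\GG$-invariance you already have in hand, together with uniqueness (up to scalar) of $(\PP,\delta_{\PP})$-equivariant top forms on $\nnm$, and so is more robust than the fiber-translation route.
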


\begin{proof}
(1) Since $\widetilde{\mathcal{O}}$ is identified with the cotangent
bundle $T^{*}(\underline{\mathcal{P}})$ of a smooth variety $\underline{\mathcal{P}}$,
it has a canonical non-degenerate regular $2$-form $\eta_{\mathcal{\widetilde{O}}}$,
which is $\GG$-invariant, as $\underline{\mathcal{P}}$ is. In addition,
there exists a $\GG$-invariant non-degenerate regular $2$-form $\eta_{\mathcal{O}}$
on $\mathcal{O}$, called the \emph{Kirillov\textendash Kostant form}
(see e.g.~\cite[II.3]{Aud04} or \cite[Section 17.3]{Kot05}). Since
the pullback $(\pi_{\mathcal{O}})^{*}\eta_{\mathcal{O}}$ is $\GG$-invariant,
we have $\eta_{\mathcal{\widetilde{O}}}=(\pi_{\mathcal{O}})^{*}\eta_{\mathcal{O}}$,
up to a constant. Taking the $\dim\GG/2$-exterior power $\eta_{\mathcal{\widetilde{O}}}\wedge\dots\wedge\eta_{\mathcal{\widetilde{O}}}$
of $\eta_{\mathcal{\widetilde{O}}}$ yields a $\GG$-invariant regular
top form $\omega_{\widetilde{\mathcal{O}}}$ on $\widetilde{\mathcal{O}}$,
which is a pullback $\omega_{\widetilde{\mathcal{O}}}=(\pi_{\mathcal{O}})^{*}\omega_{\mathcal{O}}$
of $\omega_{\mathcal{O}}:=\eta_{\mathcal{\mathcal{O}}}\wedge\dots\wedge\eta_{\mathcal{\mathcal{O}}}$.

For Item (2), note that $\psi:\NN^{-}\times\nnm\rightarrow\widetilde{\mathcal{O}}$
is $\NN^{-}$-equivariant from the left, and since $\omega_{\widetilde{\mathcal{O}}}$
is $\GG$-invariant, $\psi^{*}\omega_{\widetilde{\mathcal{O}}}$ must
be of the form $\omega_{\NN^{-}}\wedge\widetilde{\omega}$ for some
top form $\widetilde{\omega}$ on $\nnm$. By the $\NN^{-}$-invariance,
to show that $\psi^{*}\omega_{\widetilde{\mathcal{O}}}=\omega_{\NN^{-}}\wedge\omega_{\nnm}$
it is enough to show that $\left(\psi^{*}\omega_{\widetilde{\mathcal{O}}}\right)|_{\{e\}\times\nnm}=\omega_{\NN^{-}}\wedge\widetilde{\omega}|_{\{e\}\times\nnm}=\omega_{\NN^{-}}\wedge\omega_{\nnm}|_{\{e\}\times\nnm}$.
Note that $\omega_{\NN^{-}}|_{\{e\}}$ is $(\PP,\delta_{\PP}^{-1})$-invariant.
Since $\left(\psi^{*}\omega_{\widetilde{\mathcal{O}}}\right)|_{\{e\}\times\nnm}$
is $\PP$-invariant, this implies that $\widetilde{\omega}$ is $(\PP,\delta_{\PP})$-invariant,
thus equals to $\omega_{\nnm}$, up to a constant. 
\end{proof}
\begin{defn}
Following Lemma \ref{lem:invariant top form on an alteration}, we
set:
\begin{enumerate}
\item $\mu_{\widetilde{\mathcal{O}}(F)}:=\left|\omega_{\widetilde{\mathcal{O}}}\right|_{F}$
which is a $G$-invariant measure on $\widetilde{\mathcal{O}}(F)$. 
\item $\xi_{\widetilde{\mathcal{O}}(F)}:=i_{*}\mu_{\widetilde{\mathcal{O}}(F)}$
where $i$ is the closed embedding $i:\widetilde{\mathcal{O}}\hookrightarrow\underline{\mathcal{P}}\times\ggm.$
\end{enumerate}
\end{defn}

\begin{lem}
\label{lem: push of measure on bundle is an orbital integral}The
measure $(\pi_{\mathcal{O}})_{*}(\mu_{\widetilde{\mathcal{O}}(F)})$
is equal to $\mu_{\mathcal{O}_{\mathrm{st}}}$, up to a constant. 
\end{lem}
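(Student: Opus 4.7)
The plan is to exploit the identity $\omega_{\widetilde{\mathcal{O}}}=\pi_{\mathcal{O}}^{*}\omega_{\mathcal{O}}$ from Lemma \ref{lem:invariant top form on an alteration}(1), combined with the fact that $\pi_{\mathcal{O}}$ is étale over $\mathcal{O}$.

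First, I would note that both measures are $G(F)$-invariant Radon measures supported on $\mathcal{O}(F)=\mathcal{O}_{\mathrm{st}}$. The pushforward is $G$-invariant because $\mu_{\widetilde{\mathcal{O}}(F)}$ is $G$-invariant by construction and $\pi_{\mathcal{O}}$ is $G$-equivariant, while $\mu_{\mathcal{O}_{\mathrm{st}}}$ is invariant by the Deligne--Rao construction. Moreover, $\pi_{\mathcal{O}}$ sends $\widetilde{\mathcal{O}}\setminus\pi_{\mathcal{O}}^{-1}(\mathcal{O})$ into the Zariski-closed subvariety $\overline{\mathcal{O}}\setminus\mathcal{O}$ of strictly smaller dimension, which has measure zero for any smooth top-dimensional density; hence $(\pi_{\mathcal{O}})_{*}\mu_{\widetilde{\mathcal{O}}(F)}$ is supported on $\mathcal{O}(F)$, and one reduces to comparing the two measures on $\mathcal{O}(F)$.

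Next, I would compute the pushforward locally using étaleness. Around any $X_{0}\in\mathcal{O}(F)$, the map $\pi_{\mathcal{O}}:\pi_{\mathcal{O}}^{-1}(\mathcal{O})(F)\to\mathcal{O}(F)$ is a local $F$-analytic diffeomorphism, so a sufficiently small open neighborhood $V\ni X_{0}$ admits a preimage decomposition $\pi_{\mathcal{O}}^{-1}(V)(F)=\bigsqcup_{i=1}^{n(X_{0})}V_{i}$ with each $V_{i}\xrightarrow{\sim}V$ bi-analytic. Since $\omega_{\widetilde{\mathcal{O}}}=\pi_{\mathcal{O}}^{*}\omega_{\mathcal{O}}$, the change-of-variables formula gives
\[
(\pi_{\mathcal{O}})_{*}\mu_{\widetilde{\mathcal{O}}(F)}\bigr|_{V}=n(X_{0})\cdot|\omega_{\mathcal{O}}|_{F}\bigr|_{V}.
\]
The fiber count $X\mapsto n(X)$ is locally constant and $G(F)$-invariant, hence constant on each $G(F)$-orbit constituent of $\mathcal{O}_{\mathrm{st}}$.

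The main obstacle will be to conclude a \emph{single} overall constant across all $G(F)$-orbits making up $\mathcal{O}_{\mathrm{st}}$. The point is that $\omega_{\mathcal{O}}$ is a single algebraic top form on $\mathcal{O}$ defined over $F$, so $|\omega_{\mathcal{O}}|_{F}$ restricted to $\mathcal{O}(F)$ provides the natural stable normalization of $\mu_{\mathcal{O}_{\mathrm{st}}}$, while over $\overline{F}$ the étale cover $\pi_{\mathcal{O}}^{-1}(\mathcal{O})\to\mathcal{O}$ has a uniform geometric degree $N$. A Galois descent argument for this finite étale cover then identifies the per-orbit $F$-fiber counts $n(X_{0})$ with precisely the weights distinguishing the $G(F)$-orbits in $\mathcal{O}_{\mathrm{st}}$, so that, with $\mu_{\mathcal{O}_{\mathrm{st}}}$ normalized consistently via $\omega_{\mathcal{O}}$, the constant is the same on every component and equals $N$. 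This would yield $(\pi_{\mathcal{O}})_{*}\mu_{\widetilde{\mathcal{O}}(F)}=N\cdot\mu_{\mathcal{O}_{\mathrm{st}}}$, as desired.
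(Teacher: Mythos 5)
Your proposal follows exactly the paper's route up to the last step: identify $\mu_{\mathcal{O}_{\mathrm{st}}}$ with $|\omega_{\mathcal{O}}|_{F}$ (the paper does this by noting $\omega_{\mathcal{O}}$ is $\GG(\overline{F})$-invariant, hence $|\omega_{\mathcal{O}}|_{F}$ is a stably invariant measure), use the pullback identity $\omega_{\widetilde{\mathcal{O}}}=\pi_{\mathcal{O}}^{*}\omega_{\mathcal{O}}$ and \'etaleness to express the pushforward locally as (fiber count)$\cdot|\omega_{\mathcal{O}}|_{F}$, and then reduce everything to showing the $F$-rational fiber count is the same over every point of $\mathcal{O}(F)$. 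You correctly flag this last point as the genuine issue; the paper's own proof simply records it parenthetically as ``the (constant) size of the fibers'' without further argument.

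The gap is in your resolution of that issue. Your ``Galois descent'' paragraph is internally contradictory: you first assert that the per-orbit fiber counts $n(X_{0})$ are ``precisely the weights distinguishing the $G(F)$-orbits'' in $\mathcal{O}_{\mathrm{st}}$ (i.e.\ that they \emph{vary} from component to component), and then in the same sentence conclude that ``the constant is the same on every component and equals $N$'' (i.e.\ that they \emph{do not} vary, and moreover every geometric fiber point is $F$-rational). You cannot have both, and the second claim is exactly what needs proof. Concretely, the fiber scheme over $X\in\mathcal{O}(F)$ is $\GG_{X}/(\PP\cap\GG_{X})$, a finite \'etale $F$-scheme of geometric degree $N$; its $F$-structure is determined by the Galois action on the finite set $A(X)/A_{P}(X)$, and a priori this can change as $X$ moves between the $G(F)$-orbits making up $\mathcal{O}_{\mathrm{st}}$, since those orbits are themselves distinguished by Galois cohomology. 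Invoking ``Galois descent'' does not yield constancy --- if anything it is the mechanism by which the counts could differ. The assertion is trivially fine when $N=1$ (as for $\GG=\GL_{n}$), but for a general stable Richardson orbit with $N>1$ it requires an argument that your sketch does not supply.
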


\begin{proof}
Note that $\left|\omega_{\mathcal{O}}\right|_{F}$ is a $\GG(F)$-invariant
measure on $\mathcal{O}(F)$, but in fact it is stably invariant,
since $\omega_{\mathcal{O}}$ is $\GG(\overline{F})$-invariant. Hence
$\left|\omega_{\mathcal{O}}\right|_{F}=\mu_{\mathcal{O}_{\mathrm{st}}}$.
By Lemma \ref{lem:invariant top form on an alteration}(1), if $M$
denotes the (constant) size of the fibers of $\pi_{\mathcal{O}}:\pi_{\mathcal{O}}^{-1}(\mathcal{O}(F))\rightarrow\mathcal{O}(F)$,
then $(\pi_{\mathcal{O}})_{*}(\mu_{\widetilde{\mathcal{O}}(F)})=M\cdot\mu_{\mathcal{O}_{\mathrm{st}}}$.
\end{proof}
Recall we defined in $\mathsection$\ref{subsec:Fourier-transform-of}
the Fourier transform $\mathcal{F}:\mathcal{S}(\g)\rightarrow\mathcal{S}(\g)$,
also denoted $f\mapsto\widehat{f}$, where we identified $\g$ with
$\g^{*}$ using the Killing form. Since $\widetilde{\mathcal{O}}\subseteq\underline{\mathcal{P}}\times\ggm$,
we may consider the fiber-wise Fourier transform of functions and
distributions on $\widetilde{\mathcal{O}}$ as follows. 
\begin{defn}
\label{def:fiberwise Fourier transform}Let $M$ be a smooth $F$-analytic
manifold, let $f\in\mathcal{S}(M\times\g)$ and $\xi\in\mathcal{S}^{*}(M\times\g)$.
For each $(m,Y)\in M\times\g$, set $f_{m}:=f|_{\{m\}\times\g}:\g\to\C$,
and define 
\[
\mathcal{F}_{M,\g}(f)(m,Y)=\widehat{f_{m}}(Y)\text{ \,\,\,\,\,\text{ and \,\,\,\,\,\,}}\langle\mathcal{F}_{M,\g}(\xi),f\rangle:=\langle\xi,\mathcal{F}_{M,\g}(f)\rangle.
\]
\end{defn}

Let $\Pi:\underline{\mathcal{P}}\times\ggm\rightarrow\ggm$ be the
projection, and let $\mathcal{P}=\underline{\mathcal{P}}(F)\simeq G/P$
(see e.g. \cite[Theorem 4.13a]{BT65}). The next lemma follows from
the above definition.
\begin{lem}
\label{lem: push by proper projection commutes with Fourier}For each
$f\in S(\g)$, we have $\mathcal{F}_{\mathcal{P},\g}(\Pi^{*}f)=\Pi^{*}(\mathcal{F}(f))$.
In particular, for each $\xi\in S^{*}(\mathcal{P}\times\g)$, we have
$\Pi_{*}(\mathcal{F}_{\mathcal{P},\g}(\xi))=\mathcal{F}(\Pi_{*}\xi)$. 
\end{lem}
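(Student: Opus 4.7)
The plan is to prove the first assertion by direct computation from Definition \ref{def:fiberwise Fourier transform}, and then deduce the second assertion by dualizing via the $\Pi^{*}$--$\Pi_{*}$ adjunction, using that $\Pi$ is proper.

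For the first statement, I would unwind the definitions. Since $\Pi:\mathcal{P}\times\g\to\g$ is projection onto the second factor, the pullback $\Pi^{*}f$ is constant in the first variable, i.e.\ $(\Pi^{*}f)_{m}(Y)=f(Y)$ for every $m\in\mathcal{P}$ and $Y\in\g$. Applying Definition \ref{def:fiberwise Fourier transform} we get
\[
\mathcal{F}_{\mathcal{P},\g}(\Pi^{*}f)(m,Y)=\widehat{(\Pi^{*}f)_{m}}(Y)=\widehat{f}(Y)=\Pi^{*}(\mathcal{F}(f))(m,Y),
\]
which gives the desired equality. This step is essentially formal; there is no obstacle.

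For the second statement, since $\underline{\mathcal{P}}$ is a projective $F$-variety, $\mathcal{P}=\underline{\mathcal{P}}(F)$ is compact, so $\Pi$ is proper and the pushforward of tempered distributions $\Pi_{*}:\mathcal{S}^{*}(\mathcal{P}\times\g)\to\mathcal{S}^{*}(\g)$ is defined by the adjunction $\langle\Pi_{*}\xi,f\rangle=\langle\xi,\Pi^{*}f\rangle$ for $f\in\mathcal{S}(\g)$ (noting that $\Pi^{*}f\in\mathcal{S}(\mathcal{P}\times\g)$ precisely because $\mathcal{P}$ is compact). Similarly, $\mathcal{F}_{\mathcal{P},\g}$ is self-adjoint by definition. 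Combining these facts with the first part,
\[
\langle\Pi_{*}\bigl(\mathcal{F}_{\mathcal{P},\g}(\xi)\bigr),f\rangle=\langle\mathcal{F}_{\mathcal{P},\g}(\xi),\Pi^{*}f\rangle=\langle\xi,\mathcal{F}_{\mathcal{P},\g}(\Pi^{*}f)\rangle=\langle\xi,\Pi^{*}\widehat{f}\rangle=\langle\Pi_{*}\xi,\widehat{f}\rangle=\langle\mathcal{F}(\Pi_{*}\xi),f\rangle,
\]
giving the claim.

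There is no real obstacle here; the only subtle point to verify is that $\Pi^{*}$ genuinely maps $\mathcal{S}(\g)$ into $\mathcal{S}(\mathcal{P}\times\g)$ and that the adjunction formula for $\Pi_{*}$ holds in the non-Archimedean and Archimedean settings uniformly. In both cases this follows from the compactness of $\mathcal{P}$: Schwartz functions on $\g$ pulled back to $\mathcal{P}\times\g$ remain Schwartz, and the fiber-wise Fourier transform is readily seen to be continuous on $\mathcal{S}(\mathcal{P}\times\g)$, so the self-adjointness under pairing is standard (it reduces to Fubini applied to the Fourier kernel $\chi(\langle X,Y\rangle)$).
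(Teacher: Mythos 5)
Your proof is correct and matches the paper's intent: the paper gives no explicit proof, simply stating that "the next lemma follows from the above definition," and your argument is precisely the expected unwinding of Definition \ref{def:fiberwise Fourier transform} together with the adjunctions defining $\Pi_{*}$ and the Fourier transform on distributions. The only cosmetic point is that calling $\mathcal{F}_{\mathcal{P},\g}$ "self-adjoint" is a slight abuse; what you actually use is the defining adjunction $\langle\mathcal{F}_{\mathcal{P},\g}(\xi),f\rangle=\langle\xi,\mathcal{F}_{\mathcal{P},\g}(f)\rangle$, which is exactly what the computation invokes.
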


Recall that $\nnm^{\perp}=\ppm$, where $\perp$ denotes the orthogonal
complement with respect to the Killing form. Let $\widetilde{\mathcal{O}}^{\perp}:=\GG\times_{\PP}\ppm\subseteq\underline{\mathcal{P}}\times\ggm$,
let $i^{\perp}:\widetilde{\mathcal{O}}^{\perp}\to\underline{\mathcal{P}}\times\ggm$
be the corresponding closed embedding and $\pi^{\perp}:\widetilde{\mathcal{O}}^{\perp}\to\ggm$
be the restriction to $\widetilde{\mathcal{O}}^{\perp}$ of the projection
$\Pi:\underline{\mathcal{P}}\times\ggm\rightarrow\ggm$ . Since $\nnm^{\perp}=\ppm$,
the bundle $\widetilde{\mathcal{O}}^{\perp}$ can be identified with
the conormal bundle of $\underline{\mathcal{P}}$ inside $\underline{\mathcal{P}}\times\ggm$,
and furthermore $\dim\widetilde{\mathcal{O}}^{\perp}=\dim\GG-\dim\PP+\dim\ppm=\dim\ggm$. 
\begin{lem}
There exists a $\GG$-invariant top form $\omega_{\widetilde{\mathcal{O}}^{\perp}}$
on $\widetilde{\mathcal{O}}^{\perp}$, such that if we set $\mu_{\widetilde{\mathcal{O}}^{\perp}(F)}:=\left|\omega_{\widetilde{\mathcal{O}}^{\perp}}\right|_{F}$
and $\xi_{\widetilde{\mathcal{O}}^{\perp}(F)}:=(i^{\perp})_{*}\mu_{\widetilde{\mathcal{O}}^{\perp}(F)}$,
then $\xi_{\widetilde{\mathcal{O}}^{\perp}(F)}=\mathcal{F}_{\mathcal{P},\g}(\xi_{\widetilde{\mathcal{O}}(F)})$. 
\end{lem}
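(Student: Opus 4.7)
The plan is to construct $\omega_{\widetilde{\mathcal{O}}^{\perp}}$ in direct analogy with the construction of $\omega_{\widetilde{\mathcal{O}}}$, and then to verify the Fourier identity by reduction to a single fiber, using the $\GG$-equivariance of $\mathcal{F}_{\mathcal{P},\g}$. For the construction, I would first produce a translation-invariant top form $\omega_{\ppm}$ on $\ppm$ that is $(\Ad(\PP),\delta_{\PP})$-equivariant. Such a form exists because the $\PP$-action on $\ppm/\nnm\simeq\llm$ factors through the reductive group $\LL$, whose adjoint representation has trivial determinant, and hence $\det\Ad_{\ppm}(p) = \det\Ad_{\ppm/\nnm}(p)\cdot\det\Ad_{\nnm}(p) = \delta_{\PP}(p)$. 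Concretely I would set $\omega_{\ppm} := \omega_{\llm}\wedge\omega_{\nnm}$ for a translation-invariant top form $\omega_{\llm}$ on $\llm$, whose normalization will be pinned down below. Exactly as in Lemma \ref{lem:invariant top form on an alteration}, this data descends to a $\GG$-invariant top form $\omega_{\widetilde{\mathcal{O}}^{\perp}}$ on $\widetilde{\mathcal{O}}^{\perp} = \GG\times_{\PP}\ppm$, uniquely characterized by the local identity $(\psi^{\perp})^{*}\omega_{\widetilde{\mathcal{O}}^{\perp}} = \omega_{\NN^{-}}\wedge\omega_{\ppm}$, where $\psi^{\perp}:\NN^{-}\times\ppm\to\widetilde{\mathcal{O}}^{\perp}$, $(x,Z)\mapsto(x\PP x^{-1},\Ad(x)Z)$, is the analogue of $\psi$.

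For the Fourier identity $\xi_{\widetilde{\mathcal{O}}^{\perp}(F)} = \mathcal{F}_{\mathcal{P},\g}(\xi_{\widetilde{\mathcal{O}}(F)})$, the key observation is that $\mathcal{F}_{\mathcal{P},\g}$ is $\GG$-equivariant, since the Killing form pairing defining $\mathcal{F}$ is $\Ad(\GG)$-invariant, and that both $\xi_{\widetilde{\mathcal{O}}(F)}$ and the candidate $\xi_{\widetilde{\mathcal{O}}^{\perp}(F)}$ are $\GG$-invariant distributions on $\mathcal{P}\times\g$. By the transitivity of $\GG$ on $\mathcal{P}$, it therefore suffices to check the identity on the fiber over the base point $\PP\in\mathcal{P}$. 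There it reduces to the classical statement that the Fourier transform (with respect to the Killing form on $\g$) of the Haar measure $|\omega_{\nnm}|_{F}$ on the subspace $\nnm\subseteq\g$ equals a positive constant $c$ times the Haar measure $|\omega_{\ppm}|_{F}$ on its Killing-orthogonal complement $\ppm = \nnm^{\perp}$; the constant $c$ depends only on $F$, $\chi$, and on the normalizations of $\omega_{\nnm}$ and $\omega_{\ppm}$.

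The only remaining step is to fix $\omega_{\llm}$ so that $c = 1$; this is a single scalar condition, amounting to choosing $\omega_{\llm}$ self-dual with respect to $\chi$ and the restriction of the Killing form to $\llm$ (which is non-degenerate since $\LL$ is reductive). With this normalization, the fiber-wise equality together with $\GG$-equivariance yields the global identity. I expect the main technical point requiring care to be the rigorous justification of the reduction to a single fiber, namely a Fubini-type interchange between integration over $\mathcal{P}$ and the fiber-wise Fourier transform; this should follow routinely from the compactness of $\mathcal{P} = \underline{\mathcal{P}}(F)$ as the $F$-points of the projective variety $\underline{\mathcal{P}}$, together with the Schwartz decay of test functions $f\in\mathcal{S}(\mathcal{P}\times\g)$ in the $\g$-variable.
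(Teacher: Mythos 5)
Your proposal is essentially the paper's own argument: construct $\omega_{\widetilde{\mathcal{O}}^{\perp}}$ by descending the $(\Ad(\PP),\delta_{\PP})$-equivariant top form $\omega_{\ppm}$ on $\ppm$ exactly as in Lemma~\ref{lem:invariant top form on an alteration}, then reduce the global Fourier identity to the classical fact that the Fourier transform (with respect to the Killing pairing) of Lebesgue measure on the isotropic subspace $\nnm$ is a normalized Lebesgue measure on $\nnm^{\perp}=\ppm$, absorbing the resulting positive constant into the normalization of $\omega_{\ppm}$. The one place where your write-up differs in packaging is the reduction step: you phrase it as ``check on a single fiber by $\GG$-transitivity'' and worry about a Fubini-type interchange, whereas the actual subtlety is that a distribution on $\mathcal{P}\times\g$ is not literally determined by a fiber restriction; what makes the reduction rigorous is passing to the big-cell chart $\Psi:\NN^{-}\times\ggm\to\underline{\mathcal{P}}\times\ggm$, $(x,Y)\mapsto(x\PP x^{-1},\Ad(x)Y)$, where the intertwining identity $\mathcal{F}_{N^{-},\g}\circ\Psi^{*}=\Psi^{*}\circ\mathcal{F}_{\mathcal{P},\g}$ (your $\GG$-equivariance of $\mathcal{F}_{\mathcal{P},\g}$, restated) converts the global statement into the single product-measure computation $\mathcal{F}_{N^{-},\g}\bigl((j_{\nnm})_{*}|\omega_{\NN^{-}}\wedge\omega_{\nnm}|_{F}\bigr)=(j_{\ppm})_{*}|\omega_{\NN^{-}}\wedge\omega_{\ppm}|_{F}$; no Fubini interchange over the compact $\mathcal{P}$ is needed beyond what is built into the definition of the fiber-wise Fourier transform by duality.
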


\begin{proof}
Let $\Psi:\NN^{-}\times\ggm\rightarrow\underline{\mathcal{P}}\times\ggm$
be the map $\Psi(x,Y):=(x\PP x^{-1},\Ad(x).Y)$, and let $j_{\nnm}:\NN^{-}\times\nnm\hookrightarrow\NN^{-}\times\ggm$
and $j_{\ppm}:\NN^{-}\times\ppm\hookrightarrow\NN^{-}\times\ggm$
be the standard inclusions. Let $\psi^{\perp}:\NN^{-}\times\ppm\rightarrow\widetilde{\mathcal{O}}^{\perp}$
be the map defined by $\Psi\circ j_{\ppm}$. Let $\omega_{\ppm}$
be a translation invariant top form on $\ppm$, and note that $\omega_{\ppm}$
is $(\PP,\delta_{\PP})$-equivariant. Set $\widetilde{\omega}^{\perp}:=\omega_{\NN^{-}}\wedge\omega_{\ppm}$,
which is a top form on $\NN^{-}\times\ppm$. Similarly to Lemma \ref{lem:invariant top form on an alteration},
the form $\omega^{\perp}:=((\psi^{\perp})^{-1})^{*}\widetilde{\omega}^{\perp}$
extends to a $\GG$-invariant top form $\omega_{\widetilde{\mathcal{O}}^{\perp}}$
on $\widetilde{\mathcal{O}}^{\perp}$.

Since $\mathcal{F}\circ(\Ad(g)^{*}f)=\Ad(g)^{*}\mathcal{F}(f)$ for
$f\in\mathcal{S}(\g)$ and $g\in G$, we have:
\begin{equation}
\mathcal{F}_{N^{-},\g}\circ\Psi^{*}=\Psi^{*}\circ\mathcal{F}_{\mathcal{P},\g}.\label{eq:Fourier commutes}
\end{equation}
Moreover, since $\n^{\perp}=\p$, the following holds:
\[
\mathcal{F}_{N^{-},\g}\left((j_{\nnm})_{*}\left|\omega_{\NN^{-}}\wedge\omega_{\nnm}\right|_{F}\right)=(j_{\ppm})_{*}\left|\omega_{\NN^{-}}\wedge\omega_{\ppm}\right|_{F}.
\]
We therefore deduce that:
\begin{align*}
\mathcal{F}_{\mathcal{P},\g}(\xi_{\widetilde{\mathcal{O}}(F)}) & =\mathcal{F}_{\mathcal{P},\g}\left((\Psi\circ j_{\nnm})_{*}\left|\omega_{\NN^{-}}\wedge\omega_{\nnm}\right|_{F}\right)=\Psi_{*}\mathcal{F}_{N^{-},\g}\left((j_{\nnm})_{*}\left|\omega_{\NN^{-}}\wedge\omega_{\nnm}\right|_{F}\right)\\
 & =\Psi_{*}(j_{\ppm})_{*}\left|\omega_{\NN^{-}}\wedge\omega_{\ppm}\right|_{F}=(i^{\perp})_{*}\left|\omega_{\widetilde{\mathcal{O}}^{\perp}}\right|_{F}=\xi_{\widetilde{\mathcal{O}}^{\perp}(F)}.\qedhere
\end{align*}
\end{proof}
\begin{cor}
\label{cor:Fourier transform of orbital integrals as pushforward measures}We
have 
\[
\widehat{\xi}_{\mathcal{O}_{\mathrm{st}}}=\pi_{*}^{\perp}(\mu_{\widetilde{\mathcal{O}}^{\perp}(F)}).
\]
\end{cor}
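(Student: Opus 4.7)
The plan is a short diagram chase assembling the three results proved immediately above the corollary. The geometric input is that the generalized Springer resolutions factor through the ambient projection $\Pi:\underline{\mathcal{P}}\times\ggm\to\ggm$, namely $\pi_{\mathcal{O}}=\Pi\circ i$ and $\pi^{\perp}=\Pi\circ i^{\perp}$. Combined with functoriality of pushforward, this reduces the claim to the commutation of $\Pi_{*}$ with the Fourier transform.

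First, I would rewrite $\xi_{\mathcal{O}_{\mathrm{st}}}$ as a pushforward through $\Pi$ of $\xi_{\widetilde{\mathcal{O}}(F)}$. By Lemma \ref{lem: push of measure on bundle is an orbital integral}, one has $(\pi_{\mathcal{O}})_{*}\mu_{\widetilde{\mathcal{O}}(F)}=\mu_{\mathcal{O}_{\mathrm{st}}}$ up to a positive constant; using $\pi_{\mathcal{O}}=\Pi\circ i$ and the definition $\xi_{\widetilde{\mathcal{O}}(F)}=i_{*}\mu_{\widetilde{\mathcal{O}}(F)}$, this gives $\Pi_{*}\xi_{\widetilde{\mathcal{O}}(F)}=\xi_{\mathcal{O}_{\mathrm{st}}}$ after absorbing the constant into the normalization of $\omega_{\nnm}$.

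Next, I apply the Fourier transform. By Lemma \ref{lem: push by proper projection commutes with Fourier}, pushforward under the proper projection $\Pi$ commutes with the fiberwise Fourier transform, hence $\mathcal{F}(\Pi_{*}\xi_{\widetilde{\mathcal{O}}(F)})=\Pi_{*}\mathcal{F}_{\mathcal{P},\g}(\xi_{\widetilde{\mathcal{O}}(F)})$. The preceding lemma identifies $\mathcal{F}_{\mathcal{P},\g}(\xi_{\widetilde{\mathcal{O}}(F)})=\xi_{\widetilde{\mathcal{O}}^{\perp}(F)}=(i^{\perp})_{*}\mu_{\widetilde{\mathcal{O}}^{\perp}(F)}$. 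Using $\Pi\circ i^{\perp}=\pi^{\perp}$, we conclude
\[
\widehat{\xi}_{\mathcal{O}_{\mathrm{st}}}=\mathcal{F}(\Pi_{*}\xi_{\widetilde{\mathcal{O}}(F)})=\Pi_{*}(i^{\perp})_{*}\mu_{\widetilde{\mathcal{O}}^{\perp}(F)}=\pi^{\perp}_{*}\mu_{\widetilde{\mathcal{O}}^{\perp}(F)}.
\]

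There is no substantive obstacle, only bookkeeping: one must verify that the normalizations of $\omega_{\NN^{-}}$, $\omega_{\nnm}$, and $\omega_{\ppm}$ are chosen compatibly so that the undetermined constant in Lemma \ref{lem: push of measure on bundle is an orbital integral} is absorbed, and that $\Pi$ is proper on the support of $\xi_{\widetilde{\mathcal{O}}(F)}$ so that $\Pi_{*}$ is well defined as a map of tempered distributions and legitimately commutes with the Fourier transform. Both are immediate from the construction, since $\widetilde{\mathcal{O}}\hookrightarrow\underline{\mathcal{P}}\times\ggm$ is closed with $\underline{\mathcal{P}}$ projective.
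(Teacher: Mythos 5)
Your proof is correct and follows essentially the same chain of equalities as the paper's proof, read in the opposite direction: the paper starts from $\pi_{*}^{\perp}(\mu_{\widetilde{\mathcal{O}}^{\perp}(F)})=\Pi_{*}\xi_{\widetilde{\mathcal{O}}^{\perp}(F)}$ and works toward $\widehat{\xi}_{\mathcal{O}_{\mathrm{st}}}$, while you start from $\widehat{\xi}_{\mathcal{O}_{\mathrm{st}}}=\mathcal{F}(\Pi_{*}\xi_{\widetilde{\mathcal{O}}(F)})$ and unwind toward $\pi_{*}^{\perp}(\mu_{\widetilde{\mathcal{O}}^{\perp}(F)})$, invoking exactly the same three ingredients (Lemma \ref{lem: push of measure on bundle is an orbital integral}, the unnamed lemma identifying $\mathcal{F}_{\mathcal{P},\g}(\xi_{\widetilde{\mathcal{O}}(F)})=\xi_{\widetilde{\mathcal{O}}^{\perp}(F)}$, and Lemma \ref{lem: push by proper projection commutes with Fourier}). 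Your added remark about absorbing the degree-of-the-map constant from Lemma \ref{lem: push of measure on bundle is an orbital integral} into the normalization of the top forms is a reasonable clarification, as the paper is implicitly normalizing so that this constant is $1$.
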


\begin{proof}
Indeed, by Lemma \ref{lem: push by proper projection commutes with Fourier},
\[
\pi_{*}^{\perp}(\mu_{\widetilde{\mathcal{O}}^{\perp}(F)})=\Pi_{*}\xi_{\widetilde{\mathcal{O}}^{\perp}(F)}=\Pi_{*}\mathcal{F}_{\mathcal{P},\g}(\xi_{\widetilde{\mathcal{O}}(F)})=\mathcal{F}(\Pi_{*}\xi_{\widetilde{\mathcal{O}}(F)})=\widehat{\xi}_{\mathcal{O}_{\mathrm{st}}}.\qedhere
\]
\end{proof}
Note that the map $\pi^{\perp}:\widetilde{\mathcal{O}}^{\perp}\rightarrow\ggm$
is dominant. By e.g.~\cite[Corollary 3.6]{AA16}, it follows that
$\pi_{*}^{\perp}(\mu_{\widetilde{\mathcal{O}}^{\perp}})$ has a density
which is given by an absolutely integrable function. This agrees with
Harish-Chandra's result (see the discussion in $\mathsection$\ref{subsec:Fourier-transform-of}). 
\begin{prop}
\label{prop:explicit description of the Fourier transform of obital integral }In
the notation of Theorem \ref{thm:Theorem A in section 4}, the distribution
$\widehat{\xi}_{\mathcal{O}_{\mathrm{st}}}$ is represented on the
set of regular semisimple elements $\g_{\rss}$ by the function
\begin{equation}
f_{\mathcal{O}_{\mathrm{st}}}(Y)=\frac{1}{\left|D_{\g}(Y)\right|_{F}^{\frac{1}{2}}}\sum_{i=1}^{M_{Y}}\left|D_{\mathfrak{\l}}(Y_{i})\right|_{F}^{1/2},\label{eq:concrete formula}
\end{equation}
where $Y_{1},\dots,Y_{M_{Y}}$ are representatives of $L$-conjugacy
classes of elements in $\l$, which are $G$-conjugate to $Y\in\g$.
\end{prop}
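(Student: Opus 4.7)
The plan is to compute the density of the pushforward $\pi^{\perp}_{*}\mu_{\widetilde{\mathcal{O}}^{\perp}(F)}$ (from Corollary~\ref{cor:Fourier transform of orbital integrals as pushforward measures}) at each $Y\in\g_{\rss}$. Since $\pi^{\perp}:\widetilde{\mathcal{O}}^{\perp}(F)\simeq G\times_{P}\p\to\g$ is a dominant map between equidimensional smooth $F$-analytic manifolds, and will be shown to have a finite fiber of non-vanishing Jacobian over $\g_{\rss}$, the usual change of variables formula identifies the density of $\pi^{\perp}_{*}\mu_{\widetilde{\mathcal{O}}^{\perp}(F)}$ with respect to $\mu_{\g}$ at such $Y$ as
\[
f_{\mathcal{O}_{\mathrm{st}}}(Y)=\sum_{[g,Z]\in(\pi^{\perp})^{-1}(Y)}\frac{1}{\left|\mathrm{Jac}(\pi^{\perp})([g,Z])\right|_{F}}.
\]
The remaining tasks are to enumerate the fiber and to evaluate each Jacobian contribution.

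First I would identify the fiber $(\pi^{\perp})^{-1}(Y)$. For $[g,Z]\in(\pi^{\perp})^{-1}(Y)$, the element $Z=\Ad(g^{-1})Y$ is regular semisimple in $\g$ and lies in $\p$; writing $Z=\overline{Z}+N_{Z}$ with $\overline{Z}\in\l$ and $N_{Z}\in\n$, and noting that $Z$ and $\overline{Z}$ have the same characteristic polynomial (as $Z$ is block upper triangular in $\l\oplus\n$), the projection $\overline{Z}$ is itself regular semisimple in $\g$, so $\ad(\overline{Z})|_{\n}$ is invertible. A Hensel-type iteration then produces $n\in N$ with $\Ad(n)Z=\overline{Z}$, so every fiber class admits a representative $[g',Y_{i}]$ with $Y_{i}\in\l$ and $\Ad(G)$-conjugate to $Y$. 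Conversely, if $[g_{1},Y_{i}]=[g_{2},Y_{j}]$ with $Y_{i},Y_{j}\in\l$, decomposing the connecting element $p\in P$ as $p=ln$ ($l\in L$, $n\in N$) and using the same invertibility forces $n=e$, so $Y_{i}$ and $Y_{j}$ are $L$-conjugate; moreover $\mathrm{Cent}_{G}(Y_{i})$ is a maximal torus of $G$ contained in $L\subseteq P$, leaving no residual freedom in $g'$ modulo $P$. This yields a bijection between $(\pi^{\perp})^{-1}(Y)$ and the $L$-conjugacy classes $\{[Y_{1}]_{L},\ldots,[Y_{M_{Y}}]_{L}\}$ of elements of $\l\cap\Ad(G)Y$.

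Next I would compute $\left|\mathrm{Jac}(\pi^{\perp})([g_{0},Y_{i}])\right|_{F}$ using the open embedding $\psi^{\perp}:\NN^{-}\times\p\hookrightarrow\widetilde{\mathcal{O}}^{\perp}$ from $\mathsection$\ref{subsec:geometric construction}, along which $\omega_{\widetilde{\mathcal{O}}^{\perp}}$ pulls back to $\omega_{\NN^{-}}\wedge\omega_{\p}$. Translating by $\Ad(g_{0})$ (whose determinant on $\g$ has unit absolute value for reductive $G$) reduces the problem to the Jacobian of $\Phi(n,Z)=\Ad(n)Z$ at $(e,Y_{i})$. The differential $d\Phi|_{(e,Y_{i})}:\n^{-}\oplus\p\to\g$ sends $(X,W)\mapsto[X,Y_{i}]+W$; because $Y_{i}\in\l$ preserves each of $\n^{-},\l,\n$ under $\ad$, this map is block-diagonal with respect to $\g=\n^{-}\oplus\l\oplus\n$ with identity on the $\l\oplus\n$ block, and its determinant coincides with that of $X\mapsto[X,Y_{i}]$ on $\n^{-}$. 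Choosing $Y_{i}\in\t$ (possible after $L$-conjugation), this endomorphism of $\n^{-}=\bigoplus_{\alpha\in\Sigma(\n^{-},\t)}\g_{\alpha}$ is diagonal with eigenvalues $-\alpha(Y_{i})$, giving
\[
\left|\mathrm{Jac}(\Phi)(e,Y_{i})\right|_{F}=\prod_{\alpha\in\Sigma(\n,\t)}\left|\alpha(Y_{i})\right|_{F}=\left|D_{\g,\l}(Y_{i})\right|_{F}^{1/2}.
\]
Summing reciprocals and using $\left|D_{\g}(Y_{i})\right|_{F}=\left|D_{\g}(Y)\right|_{F}$ by $\Ad(G)$-invariance of $D_{\g}$ yields the claimed formula.

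I expect the main obstacle to be the fiber description above, in particular the Hensel-type reduction of a regular semisimple element of $\p$ to its Levi component by $N$-conjugation, and the rigidity statement that $L$-inequivalent Levi representatives give rise to inequivalent classes in $G\times_{P}\p$. Both assertions are essentially classical, but combining them cleanly requires some bookkeeping because the original $\p$-element $Z$ need not lie in $\l$. The Jacobian computation, by contrast, is essentially a linear algebra identity once the big-cell trivialization is in place.
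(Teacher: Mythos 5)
Your proposal is correct and follows essentially the same strategy as the paper: identify $\widehat{\xi}_{\mathcal{O}_{\mathrm{st}}}$ as the pushforward $\pi^\perp_{*}\mu_{\widetilde{\mathcal{O}}^\perp(F)}$, apply the Gelfand--Leray formula, enumerate the fiber, and compute the Jacobian. The one step where you genuinely deviate is the fiber parametrization: the paper reduces to $Y\in\l\cap\g_{\rss}$, shows $(\pi^\perp)^{-1}(Y)$ consists of pairs $(\q,Y)$ with $\q\supseteq\t=\mathrm{Cent}_\g(Y)$, and identifies the fiber with $W(G,T)/W(L,T)$ (citing Springer's conjugacy of Levi factors by $N$); you instead directly $N$-conjugate any regular semisimple $Z\in\p$ to its Levi component $\overline{Z}$ by a Hensel iteration, using that $\ad(\overline{Z})|_{\n}$ is invertible, and then argue rigidity by decomposing the gluing element $p=ln$. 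Both yield the same indexing set. You also compute the Jacobian at $(e,Y_i)$ with $Y_i\in\l$ directly via the block-diagonal structure of $\ad(Y_i)$, whereas the paper computes at $(0,\widetilde Y_i)$ with $\widetilde Y_i\in\p$ by a first-order expansion of $\Ad(\exp(tX'))$ and then uses $P$-conjugation invariance; your version is marginally cleaner, but is substantively the same linear-algebra identity. One small caution: you assert that $\overline Z$ is regular semisimple from equality of characteristic polynomials; what you actually get immediately is $D_\g(\overline Z)=D_\g(Z)\neq 0$, hence $\ad(\overline Z)|_{\n}$ invertible, which is all the Hensel step needs (semisimplicity of $\overline Z$ then follows a posteriori from $\overline Z=\Ad(n)Z$).
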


\begin{proof}
Let $\Phi:=\pi^{\perp}\circ\psi^{\perp}\circ(\exp,\mathrm{Id}):\n^{-}\times\p\simeq\g\rightarrow\g$,
be the map $\Phi(X,Z)=\mathrm{Ad}(\exp(X)).Z$, where $\exp:\n^{-}\rightarrow N^{-}$
is the exponential map. It follows from Corollary \ref{cor:Fourier transform of orbital integrals as pushforward measures}
that
\[
\Phi_{*}\mu_{\g}=\pi_{*}^{\perp}\circ\psi_{*}^{\perp}\left|\omega_{\NN^{-}}\wedge\omega_{\ppm}\right|_{F}=\pi_{*}^{\perp}(\mu_{\widetilde{\mathcal{O}}^{\perp}(F)})=\widehat{\xi}_{\mathcal{O}_{\mathrm{st}}}.
\]
Let us first characterize the fibers of $\pi^{\perp}$ over $\g_{\rss}$.
If $Y\notin G.\p$, then $(\pi^{\perp})^{-1}(Y)=\varnothing$. Otherwise,
$Y\in\g_{\rss}$ is conjugated to $\widetilde{Y}\in\p\cap\g_{\rss}$.
Since $\widetilde{Y}$ is contained in some Levi factor of $\p$,
and any two such Levis are conjugated by $N$ (\cite[Proposition 16.1.1(2)]{Spr98}),
$Y$ is conjugated to $\l\cap\g_{\rss}$. Since $\pi_{\mathcal{O}}$
is $G$-invariant, we may assume $Y\in\l\cap\g_{\rss}$. Since $Y$
is regular, $\t:=\mathrm{Cent}_{\g}(Y)$ is a Cartan contained in
$\l$, and $(\pi^{\perp})^{-1}(Y)=\left\{ (Q,Y)\in\mathcal{P}\times\g:\q\ni Y\right\} $.
Note that each such $\q$ must contain $\t$ as well, so $(\pi^{\perp})^{-1}(Y)$
is identified with the $W(G,T)$-orbit of $P$ in $\mathcal{P}$,
where $T$ is a maximal torus with $\Lie(T)=\t$. Since $W(L,T)$
is the stabilizer of this action, $(\pi^{\perp})^{-1}(Y)=\left\{ (wPw^{-1},Y)\right\} _{w\in W_{G/L}}$,
where $W_{G/L}$ is a set of representatives for $W(G,T)/W(L,T)$.
Moreover, $W_{G/L}.Y=\left\{ Y_{1},\dots,Y_{M_{Y}}\right\} $ is a
set of representatives of $L$-conjugacy classes of elements in $\l$,
which are $G$-conjugate to $Y\in\g$.

Note that the set
\[
S:=\left\{ Y\in\g_{\rss}:(\pi^{\perp})^{-1}(Y)\subseteq N^{-}P/P\times\{Y\}\right\} 
\]
is of full measure in $\g_{\rss}$ since its complement is a finite
union of subvarieties of codimension $\geq1$. Moreover, for each
$Y\in S$ we have $\Phi^{-1}(Y)=\{(X_{i},\widetilde{Y_{i}})\}_{i=1}^{M_{Y}}$,
where $\widetilde{Y}_{i}\in\p$ is $P$-conjugated to $Y_{i}$. 

Let $L_{\widetilde{X}}:\n^{-}\rightarrow\n^{-}$ be the map $L_{\widetilde{X}}(X)=\log\left(\exp(\widetilde{X})\exp(X)\right)$.
By the Baker\textendash Campbell\textendash Hausdorff formula, and
since $\ad_{\widetilde{X}}|_{\n^{-}}$ is nilpotent, it follows that
$\mathrm{Jac}(L_{\widetilde{X}})|_{0}=1$. Since $\Phi=\mathrm{Ad}(\exp(-\widetilde{X}))\circ\Phi\circ(L_{\widetilde{X}},\mathrm{Id})$,
we have by the chain rule, for every $(\widetilde{X},\widetilde{Z})\in\n^{-}\times\p$:
\[
\mathrm{Jac}(\Phi)|_{(0,\widetilde{Z})}=\mathrm{Jac}(\mathrm{Ad}(\exp(-\widetilde{X})))|_{\mathrm{Ad}(\exp(\widetilde{X}).\widetilde{Z}}\cdot\mathrm{Jac}(\Phi)|_{(\widetilde{X},\widetilde{Z})}\cdot\mathrm{Jac}((L_{\widetilde{X}},\mathrm{Id}))|_{(0,\widetilde{Z})}=\mathrm{Jac}(\Phi)|_{(\widetilde{X},\widetilde{Z})},
\]
By the Gelfand\textendash Leray formula for a density of a pushforward
measure (see e.g.~\cite[Corollary 3.6]{AA16}), for each $Y\in S$,
one has: 
\begin{equation}
f_{\mathcal{O}_{\mathrm{st}}}(Y)=\sum_{i=1}^{M_{Y}}\left|\mathrm{Jac}(\Phi)|_{(X_{i},\widetilde{Y}_{i})}\right|_{F}^{-1}=\sum_{i=1}^{M_{Y}}\left|\mathrm{Jac}(\Phi)|_{(0,\widetilde{Y}_{i})}\right|_{F}^{-1}.\label{eq:Gelfand-Leray}
\end{equation}
Note that
\[
\Phi(X,Z)=\Ad(\exp(X))Z=\exp(\ad(X))(Z)=Z+[X,Z]+\frac{1}{2}[X,[X,Z]]\dots
\]
For each $W,Z'\in\p$, $X'\in\n^{-}$, and a formal variable $t$,
we have:
\[
\Phi(tX',W+tZ')=W+tZ'+t[X',W]+\text{higher terms in }t,
\]
and therefore $D_{\Phi}|_{(0,W)}(X',Z')=Z'+[X',W].$ Hence
\begin{equation}
\left|\mathrm{Jac}(\Phi)|_{(0,\widetilde{Y}_{i})}\right|_{F}=\left|\det(\ad\widetilde{Y}_{i}|_{\n^{-}})\right|_{F}=\left|\det(\ad Y_{i}|_{\n^{-}})\right|_{F}=\left|\frac{D_{\g}(Y_{i})}{D_{\mathfrak{\l}}(Y_{i})}\right|_{F}^{\frac{1}{2}}=\left|\frac{D_{\g}(Y)}{D_{\mathfrak{\l}}(Y_{i})}\right|_{F}^{\frac{1}{2}}.\label{eq:formula for Jacobian}
\end{equation}
Combining (\ref{eq:Gelfand-Leray}) with (\ref{eq:formula for Jacobian}),
the proposition follows.
\end{proof}
\begin{proof}[Proof of Theorem \ref{thm:Theorem A in section 4}]
By (\ref{eq:Gelfand-Leray}) and (\ref{eq:formula for Jacobian}),
we have for each $(X,Z)\in\n^{-}\times\p$:
\begin{equation}
\left|\mathrm{Jac}(\Phi)|_{(X,Z)}\right|_{F}=\left|D_{\g,\l}(\overline{Z})\right|_{F}^{\frac{1}{2}},\label{eq:Jacobian is relative discriminant}
\end{equation}
where $\overline{Z}$ is the projection of $Z\in\p$ to $\l$. Since
$\widehat{\xi}_{\mathcal{O}_{\mathrm{st}}}=\pi_{*}^{\perp}(\mu_{\widetilde{\mathcal{O}}^{\perp}(F)})$
is a pushforward of a smooth measure by a proper analytic map $\pi^{\perp}:\widetilde{\mathcal{O}}^{\perp}(F)\to\g$,
by \cite[Theorem 1.1 or Proposition 4.1]{GHS} we have $\epsilon_{\star}(\widehat{\xi}_{\mathcal{O}_{\mathrm{st}}})=\lct_{F}(\mathrm{Jac}(\pi^{\perp}))$.
Recall that $\Phi=\pi^{\perp}\circ\psi^{\perp}\circ(\exp,\mathrm{Id})$.
Since $\psi^{\perp}\circ(\exp,\mathrm{Id}):\n^{-}\times\p\rightarrow\widetilde{\mathcal{O}}^{\perp}(F)$
is an analytic isomorphism to its image, by the $G$-action on $\widetilde{\mathcal{O}}^{\perp}(F)$,
and by (\ref{eq:Jacobian is relative discriminant}) we deduce,
\[
\epsilon_{\star}(\widehat{\xi}_{\mathcal{O}_{\mathrm{st}}})=\lct_{F}(\mathrm{Jac}(\pi^{\perp}))=\lct_{F}(\mathrm{Jac}(\Phi))=2\lct_{F}(D_{\g,\l}).\qedhere
\]
\end{proof}
\begin{rem}
\label{rem:A-field-dependent construction}An analytic construction
of $\widehat{\xi}_{\mathcal{O}_{\mathrm{st}}}$ goes as follows. Let
$\delta_{\{0\},\l}$ be the delta distribution on $\l$, i.e.~$\delta_{\{0\},\l}(f):=f(0)$,
for $f\in\mathcal{S}(\l)$. We identify $\xi_{\mathcal{O}_{\mathrm{st}}}$
with $\mathrm{Ind}_{P}^{G}(\delta_{\{0\},\l})\in\mathcal{S}^{*}(\g)$,
where $\mathrm{Ind}_{P}^{G}:\mathcal{S}^{*}(\l)\rightarrow\mathcal{S}^{*}(\g)$
denotes parabolic induction of distributions (see e.g.~\cite[Section 13.4]{Kot05}).
Since Fourier transform commutes with taking parabolic induction \cite[Lemma 13.4]{Kot05},
we have $\widehat{\xi}_{\mathcal{O}_{\mathrm{st}}}=\widehat{\mathrm{Ind}_{P}^{G}(\delta_{\{0\},\l})}=\mathrm{Ind}_{P}^{G}(\mu_{\l})$,
where $\mu_{\l}$ is the Haar measure on $\l$. Writing $\mathrm{Ind}_{P}^{G}(\mu_{\l})$
using the formula for $\mathrm{Ind}_{P}^{G}$, we deduce that $\widehat{\xi}_{\mathcal{O}_{\mathrm{st}}}$,
as a distribution on $\g$, is given by integration with respect to
the function $f_{\mathcal{O}_{\mathrm{st}}}$ as in (\ref{eq:concrete formula})
(see e.g.~\cite[Lemma 13.2]{Kot05}).

It is worth noting another description of $\widehat{\xi}_{\mathcal{O}_{\mathrm{st}}}$
in a small neighborhood of $0$.  In \cite{How74}, in the case $\GG=\GL_{n}$,
Howe identified $\widehat{\xi}_{\mathcal{O}_{\mathrm{st}}}\circ\log(g)$,
for $g$ in a sufficiently small neighborhood of $e\in\GL_{n}(F)$,
with the character of $\mathrm{Ind}_{\underline{P}(F)}^{\GL_{n}(F)}1$,
where $\underline{P}$ is a polarization of $\mathcal{O}$. This was
generalized by Moeglin and Waldspurger \cite{MW87}. 
\end{rem}

\begin{rem}
\label{rem:Rango Rao}If $\mathcal{O}$ in Theorem \ref{thm:Theorem A in section 4}
is an even orbit, then one can take the Richardson parabolic to be
the Jacobson\textendash Morozov parabolic associated to $\mathcal{O}$
(see \cite[p.505-506]{Rao72}). Then, the generalized Springer resolution
$\pi_{\mathcal{O}}:\GG\times_{\PP}\nnm\to\overline{\mathcal{O}}$
is in fact a symplectic resolution of singularities (see e.g. \cite[Proposition 1.5]{Fu03}). 

On the other hand, if one takes a non-even orbit $\mathcal{O}$, and
uses its Jacobson\textendash Morozov parabolic, then we can still
repeat the construction above, but two significant problems arise: 
\begin{itemize}
\item We still have $\mu_{\mathcal{O}_{\mathrm{st}}}:=(\pi_{\mathcal{O}})_{*}(\mu_{\widetilde{\mathcal{O}}(F)})$,
but now the measure $\mu_{\widetilde{\mathcal{O}}(F)}$ has a more
complicated description than in the Richardson case. This is computed
in \cite{Rao72}, and the extra complication involves the function
$\varphi(X)$ in the notation of \cite[Theorem 1]{Rao72}. In particular,
the partial Fourier transform $\mathcal{F}_{\mathcal{P},\g}(i_{*}\mu_{\widetilde{\mathcal{O}}(F)})$
is not necessarily a positive measure anymore. 
\item The map $\pi^{\perp}:\widetilde{\mathcal{O}}^{\perp}\to\ggm$ is not
a map between varieties of equal dimensions ($\dim\widetilde{\mathcal{O}}^{\perp}>\dim\ggm$),
and it is significantly harder to determine the integrability of pushforward
measures under such maps. Hence, even if $\mathcal{F}_{\mathcal{P},\g}(i_{*}\mu_{\widetilde{\mathcal{O}}(F)})$
was a positive measure, it would be hard to determine the integrability
of $\widehat{\xi}_{\mathcal{O}_{\mathrm{st}}}=\pi_{*}^{\perp}(\mathcal{F}_{\mathcal{P},\g}(i_{*}\mu_{\widetilde{\mathcal{O}}(F)})$). 
\end{itemize}
\end{rem}

\section{\label{sec:Integrability-of-global characters}Integrability of $\widehat{\xi}_{\mathcal{O}}$
and of Harish-Chandra characters in $\protect\GL_{n}$}

In this section we prove Theorems \ref{thmB:lower bound on epsilon of reps},
\ref{thmC: epsilon of Fourier of orbital integral for GL_n} and \ref{thm D:epsilon of a representation}.

\subsection{\label{subsec:An-explicit-formula for epsilon(O)}A formula and a
lower bound for $\epsilon_{\star}(\widehat{\xi}_{\mathcal{O}})$ }

We would like to use Proposition \ref{prop:formula for log canonical threshold},
in order to compute $\lct_{F}(D_{\g,\l};0)$. We use the following
versions of the Weyl integration formula for the Lie algebra and the
group:
\begin{prop}[{see e.g.~\cite[Eq. (7.7.1)]{Kot05}, \cite[Lemma 4]{HC70} and \cite[Proposition 5.27]{Kna01}}]
\label{prop:Weyl integration formula}Let $G=\GG(F)$ for a connected
reductive group $\GG$, with $\g=\Lie(G)$. For each maximal $F$-torus
$T$, let $W(G,T):=N_{G}(T)/T$ be the corresponding Weyl group, and
let $d\overline{g}$ be a $G$-invariant measure on $G/T$. Then for
a suitable normalization of the Haar measures $dX$ on $\t$, $dY$
on $\g$, $dt$ on $T$ and of the measure $d\overline{g}$, the following
hold for each $f\in\mathcal{S}(\g)$ and $h\in\mathcal{S}(G)$:
\begin{align*}
\int_{\g}f(Y)dY & =\sum_{T}\frac{1}{\#W(G,T)}\int_{\t}\left|D_{\g}(X)\right|_{F}\left(\int_{G/T}f(\Ad(g).X)d\overline{g}\right)dX,\\
\int_{G}h(g)\mu_{G} & =\sum_{T}\frac{1}{\#W(G,T)}\int_{T}\left|D_{G}(t)\right|_{F}\left(\int_{G/T}h(gtg^{-1})d\overline{g}\right)dt,
\end{align*}
where $T$ runs over all representatives of $G$-conjugacy classes
of maximal $F$-tori in $G$.
\end{prop}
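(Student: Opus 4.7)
The plan is to derive both identities from a single change-of-variables computation on the regular semisimple locus. First, observe that $\g \setminus \g_{\rss}$ is the zero set of the Weyl discriminant $D_{\g}$ and hence a proper closed subvariety of measure zero, so $\int_{\g} f\,dY = \int_{\g_{\rss}} f\,dY$. Every regular semisimple element of $\g$ lies in a unique Cartan subalgebra, namely its own centralizer, and over $F$ the Cartan subalgebras of $\g$ are exactly the Lie algebras of maximal $F$-tori of $G$. Consequently $\g_{\rss}$ decomposes as a finite disjoint union
\[
\g_{\rss} = \bigsqcup_{[T]} \Ad(G)\cdot \t_{\rss},
\]
indexed by $G$-conjugacy classes of maximal $F$-tori $T$. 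The group side $G_{\rss}$ admits the parallel decomposition, using $G$-centralizers of semisimple elements.

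The heart of the argument is to study, for each class $[T]$, the $F$-analytic map
\[
\Psi_T \colon G/T \times \t_{\rss} \longrightarrow \Ad(G)\cdot \t_{\rss}, \qquad (\bar g, X) \longmapsto \Ad(g).X .
\]
A dimension count gives $\dim(G/T) + \dim \t = \dim \g$, and an elementary calculation shows that two pairs $(\bar g_1, X_1)$ and $(\bar g_2, X_2)$ have the same image iff there exists $w \in W(G,T) = N_G(T)/T$ with $X_2 = \Ad(w).X_1$ and $\bar g_2 = \overline{g_1 w^{-1}}$; thus $\Psi_T$ is a Galois covering of degree $\#W(G,T)$. Linearizing at a point $(\bar e, X)$ (and transporting the computation by left $G$-equivariance), the differential on the $\t$-factor is the identity, while on the complementary direction $\g/\t$ it is $\ad X$. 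Its absolute Jacobian is therefore $|\det(\ad X \mid \g/\t)|_F = |D_{\g}(X)|_F$ by the definition of the Weyl discriminant. The analytic change-of-variables formula, applied piece by piece and summed over $[T]$, then yields the Lie algebra identity for a compatible normalization of $dX$, $dY$, and the quotient measure $d\overline{g}$.

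The group version is proved in exactly the same way, with $\Psi_T$ replaced by $(\bar g, t) \mapsto gtg^{-1}$; linearization now gives the operator $\Ad(t) - 1$ on $\g/\t$, whose absolute determinant equals $|D_G(t)|_F$ by definition. The main obstacle, and the reason this result is usually quoted rather than reproved, is the finiteness of the set of $G$-conjugacy classes of maximal $F$-tori: for non-Archimedean $F$ this is a non-trivial input from Galois cohomology (stable classes are parametrized by a subset of $H^1(F,T)$ / $W$-action), while for Archimedean $F$ it follows from the classical classification of Cartan subgroups. The remaining checks — compatibility of the measure normalizations on $T$, $G/T$, and $G$ (resp.\ on $\t$, $G/T$, and $\g$), and absolute convergence when $f, h$ are Schwartz — are routine once the Jacobian computation and the covering property of $\Psi_T$ are in hand.
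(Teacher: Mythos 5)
The paper does not prove this proposition; it simply cites it (Kottwitz, Harish-Chandra, Knapp), so there is no internal proof to compare against. Your sketch is the standard argument found in those references: stratify the regular semisimple locus by $G$-conjugacy classes of maximal $F$-tori, show the conjugation map $G/T \times \t_{\rss} \to \g_{\rss}$ is a $\#W(G,T)$-fold covering onto its open image, compute the Jacobian as $|\det(\ad X \mid \g/\t)|_F = |D_\g(X)|_F$ using the definition of the Weyl discriminant, and apply the change-of-variables formula; the group version is parallel with $\Ad(t)-1$ replacing $\ad X$. The Jacobian identification is correct (for regular semisimple $X$, $\ad_X$ is invertible on $\g/\t$ and its determinant is exactly $D_\g(X)$ up to a harmless sign, since $|\Sigma|$ is even), the covering degree claim is the standard fact that the fiber over $\Ad(g).X$ is the $W(G,T)$-orbit, and you rightly flag the only genuinely nontrivial input --- finiteness of the set of $G$-conjugacy classes of maximal $F$-tori --- as coming from Galois cohomology (or the Archimedean classification). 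This is a correct and complete proof outline at the level of detail appropriate for a cited classical result.
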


\begin{rem}
\label{rem:Chevalley model}Any connected $F$-split reductive group
$\GG$ is a base change of a $\Q$-split reductive group $\GG_{\mathrm{Ch}}$
(its \emph{Chevalley model}, see \cite[XXV.1.1]{ABDGGRS65}). If $\ggm_{\mathrm{Ch}}:=\Lie(\GG_{\mathrm{Ch}})$,
and $\Q\subseteq K$ is a field, we denote by $\underline{\g}_{K}$
the base change to $K$ of $\ggm_{\mathrm{Ch}}$, and write $\g_{K}:=\underline{\g}_{K}(K)$.
In particular, $\g_{F}\simeq\g$. 
\end{rem}

\begin{lem}
\label{lem:hyperplane arrangments in the F-split case}Let $G=\GG(F)$,
for a connected $F$-split reductive group $\GG$, with $\g=\Lie(G)$.
Let $T$ be an $F$-split torus, with $\t:=\Lie(T)$. Let $\p$ be
a parabolic subalgebra with Levi $\l$. Then for every $m\in\N$,
\begin{equation}
\lct_{F}\left(\triangle_{\g,\l}^{\t};(\triangle_{\l}^{\t})^{m},0\right)=\lct_{\C}\left(\triangle_{\g_{\C},\l_{\C}}^{\t_{\C}};(\triangle_{\l_{\C}}^{\t_{\C}})^{m},0\right).\label{eq:Lemma 5.3}
\end{equation}
\end{lem}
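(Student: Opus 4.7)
The plan is to observe that the argument of Proposition \ref{prop:formula for log canonical threshold} works uniformly over any local field of characteristic $0$, yielding a combinatorial formula for the relative log-canonical threshold that depends only on the root datum.

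Since $\GG$ is $F$-split and $T$ is $F$-split, via the Chevalley model (Remark \ref{rem:Chevalley model}) the subalgebras $\t \subseteq \l \subseteq \g$ and the linear forms defining the positive roots of $\Sigma^+(\g,\t)$ and $\Sigma^+(\l,\t)$ are all defined over $\Q$. In particular, the Weyl arrangement $\mathcal{A}_{\g,\t}$ together with its sub-arrangement $\mathcal{A}_{\l,\t}$ are $\Q$-arrangements, and the De Concini--Procesi log-principalization $\Lambda \colon Y \to \t$ of $\triangle_\g^\t$ provided by Theorem \ref{Thm:resolution of hyperplane arrangements} (being a successive composition of blowups along $\Q$-subspaces) is defined over $\Q$, so it base-changes to a log-principalization over $F$. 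The numerical data $(a_W, b_W, k_W)$ attached to each dense subspace $W$ are the field-independent combinatorial invariants
\[
k_W = r(W)-1,\quad a_W = s(W),\quad b_W = \left|\Sigma^+(\l,\t) \cap \Sigma^+(\l_W,\t)\right|,
\]
where $\l_W = \mathrm{Cent}_\g(W)$.

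After pulling back along $\Lambda$ as in Eq.~(\ref{eq:changing coordinates}), the integral defining $\lct_F(\triangle_{\g,\l}^\t;(\triangle_\l^\t)^m,0)$ localizes, near each point of $\Lambda^{-1}(0)$ and in suitable analytic coordinates, to a monomial integral
\[
\int_{\widetilde{B''}} \prod_{j=1}^{N} \left|y_j\right|_F^{\,m b_{i_j} + k_{i_j} + s(b_{i_j} - a_{i_j})}\, dy,
\]
with the very same integer exponents $a_{i_j}, b_{i_j}, k_{i_j}$ as in the proof of Proposition \ref{prop:formula for log canonical threshold}. For any local field $F$ of characteristic $0$, with the normalized absolute value fixed in the footnote of Definition \ref{def:log canonical threshold }, such a monomial integral converges on a sufficiently small neighborhood of the origin if and only if each exponent is strictly greater than $-1$. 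This convergence criterion is independent of $F$, and produces exactly the same sharp constraints on $s$ as in the complex case.

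Taking the supremum of the admissible $s$ over all points of $\Lambda^{-1}(0)$ yields the same formula for $\lct_F(\triangle_{\g,\l}^\t;(\triangle_\l^\t)^m,0)$ as established in Proposition \ref{prop:formula for log canonical threshold} over $\C$, which proves the equality (\ref{eq:Lemma 5.3}). The only step requiring attention is verifying that the convergence test for monomial integrals is uniform across the Archimedean and non-Archimedean cases, which is a standard computation using the normalizations from Definition \ref{def:log canonical threshold }; no novel obstruction arises, and I expect the verification to be routine.
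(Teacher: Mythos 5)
Your proof is correct and follows essentially the same route as the paper's: reduce to the Chevalley model over $\Q$, invoke Theorem \ref{Thm:resolution of hyperplane arrangements} to get a log-principalization whose numerical data $(a_W, b_W, k_W)$ depend only on the combinatorics of the $\Q$-arrangement, and observe that the convergence criterion for the resulting monomial integrals (exponent $>-1$) is uniform across all local fields of characteristic zero with the normalized absolute values.
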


\begin{proof}
We would like to estimate the integral $\int_{B}\frac{\left|\triangle_{\l}^{\t}(X)\right|_{F}^{m+s}}{\left|\triangle_{\g}^{\t}(X)\right|_{F}^{s}}dX$,
for a small ball $B$ around $0$ in $\t$. We may assume $\GG=(\GG_{\mathrm{Ch}})_{F}$
and that $\underline{T}$ and $\LL$ are the base change to $F$ of
$\Q$-split torus and Levi in $\GG_{\mathrm{Ch}}$. In particular,
we have $\Sigma(\g_{\C},\t_{\C})\simeq\Sigma(\g,\t)$, and all roots
are defined over $\Q$. Moreover, $\triangle_{\l}^{\t}$ and $\triangle_{\g}^{\t}$
are the defining polynomials of two hyperplane arrangements with coefficients
in $\Q$, and the dimension of each intersection of hyperplanes $W=\bigcap_{\alpha\in R}W_{\alpha}$
in $\t(\C)$ for $R\subseteq\Sigma^{+}(\g,\t)$ is the same as in
$\t(F)$. This gives a bijection $L_{\mathrm{dense}}(\mathcal{A}_{\g_{\C},\t_{\C}})\simeq L_{\mathrm{dense}}(\mathcal{A}_{\g,\t})$. 

The map $\Lambda$ (resp.~$\Lambda_{\C}$) from Theorem \ref{Thm:resolution of hyperplane arrangements}
is a log-principalization for $\triangle_{\l}^{\t},\triangle_{\g}^{\t}$
(resp.~$\triangle_{\l_{\C}}^{\t_{\C}},\triangle_{\g_{\C}}^{\t_{\C}}$).
Repeating the proof of Proposition \ref{prop:formula for log canonical threshold},
the number of divisors $\left\{ E_{W}\right\} _{W\in L_{\mathrm{dense}}(\mathcal{A}_{\g,\t})}$,
and the numerical data $\left\{ (a_{W},b_{W},k_{W})\right\} _{W\in L_{\mathrm{dense}}(\mathcal{A}_{\g,\t})}$
are the same for $\Lambda$ and $\Lambda_{\C}$. We therefore get
the same formula as in Proposition \ref{prop:formula for log canonical threshold},
which implies the lemma. 
\end{proof}
\begin{prop}
\label{prop:relating lct to relative lct in split groups}In the setting
of of Lemma \ref{lem:hyperplane arrangments in the F-split case},
we have:
\begin{align}
2\lct_{F}(D_{\g,\l};0) & =\lct_{\C}\left(\triangle_{\g_{\C},\l_{\C}}^{\t_{\C}};\triangle_{\l_{\C}}^{\t_{\C}},0\right),\text{ and, }\label{eq:Prop 5.4}\\
2\lct_{F}(D_{\g};0) & =\lct_{\C}\left(\triangle_{\g_{\C}}^{\t_{\C}};\triangle_{\g_{\C}}^{\t_{\C}},0\right)=1+\lct_{\C}(\triangle_{\g_{\C}}^{\t_{\C}};0)\label{eq:Prop 5.4 lower}
\end{align}
\end{prop}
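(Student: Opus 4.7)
The plan is to apply the Weyl integration formula to the $F$-integrals defining $\lct_F(D_{\g,\l};0)$ and $\lct_F(D_\g;0)$, decompose them into contributions indexed by $L$- or $G$-conjugacy classes of maximal $F$-tori, and isolate the piece coming from the $F$-split Cartan $T_0$. On $T_0$ the identities in (\ref{eq:reduction to hyperplane arrangement}) convert the resulting integral into one involving the Coxeter-arrangement polynomials $\triangle_\g^{\t_0}, \triangle_\l^{\t_0}$, and Lemma~\ref{lem:hyperplane arrangments in the F-split case} then transfers the answer to the complex setting.

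For (\ref{eq:Prop 5.4}), I would apply Proposition~\ref{prop:Weyl integration formula} to $\int_B |D_{\g,\l}(Y)|_F^{-s}\,dY$ for a small ball $B$ around $0\in\l$, obtaining
\[
\int_B |D_{\g,\l}(Y)|_F^{-s}\,dY = \sum_{T \subseteq L} \frac{1}{\#W(L,T)} \int_\t \frac{|D_\l(X)|_F}{|D_{\g,\l}(X)|_F^s}\, \Phi_{T,B}(X)\, dX,
\]
where $\Phi_{T,B}(X) := \int_{L/T} 1_B(\Ad(l).X)\,d\bar l$. By Harish-Chandra's boundedness theorem (cited in the introduction), the normalized orbital integral $|D_\l(X)|_F^{1/2}\Phi_{T,B}(X)$ is locally bounded on $\t$, and for $B$ small enough its regular limit at $0$ is positive. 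Substituting $|D_\l|_F^{1/2} = |\triangle_\l^{\t_0}|_F$ and $|D_{\g,\l}|_F^{1/2} = |\triangle_{\g,\l}^{\t_0}|_F$ from (\ref{eq:reduction to hyperplane arrangement}) on the split torus $T_0$, its contribution reduces, up to a bounded positive factor near $0$, to an integral of $|\triangle_\l^{\t_0}(X)|_F/|\triangle_{\g,\l}^{\t_0}(X)|_F^{2s}$ on a neighborhood of $0$ in $\t_0$. This converges precisely when $2s < \lct_F(\triangle_{\g,\l}^{\t_0};\triangle_\l^{\t_0},0)$, which by Lemma~\ref{lem:hyperplane arrangments in the F-split case} (with $m=1$) equals the right-hand side of (\ref{eq:Prop 5.4}).

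The reverse inequality requires every non-split torus contribution to remain finite up to the split threshold. This is the main obstacle, and it is where the $F$-split hypothesis on $\GG$ is used decisively: for a non-split maximal $F$-torus $T \subseteq L$, the restriction $D_\g|_\t$ factors into a product of norm forms $N_{E_\alpha/F}$ indexed by Galois orbits of roots of $\g$, and a direct singularity analysis via Theorem~\ref{thm:analytic resolution of sing} applied to each orbit's norm form shows that the $F$-log-canonical threshold of the non-split contribution is no smaller than that of $T_0$.

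The second equality (\ref{eq:Prop 5.4 lower}) follows by the same scheme applied to $\int_B |D_\g|_F^{-s}\,dY$ on $\g$: the $T_0$-contribution collapses, after absorbing the Harish-Chandra factor $|D_\g|_F^{1/2}\Phi_{T_0,B}$, to an integral of $|\triangle_\g^{\t_0}|_F^{1-2s}$ (times a bounded positive factor) on a neighborhood of $0$ in $\t_0$, which is finite iff $2s < 1 + \lct_F(\triangle_\g^{\t_0};0)$. Since $\triangle_\g^{\t_0}$ is a $\Q$-rational Coxeter arrangement, the resolution algorithm of Theorem~\ref{Thm:resolution of hyperplane arrangements} yields field-independent numerical data, so Corollary~\ref{cor:lct of hyperplane arrangement} gives $\lct_F(\triangle_\g^{\t_0};0) = \lct_\C(\triangle_{\g_\C}^{\t_\C};0)$. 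The remaining identity $\lct_\C(f;f,0) = 1 + \lct_\C(f;0)$ is immediate from the defining integrals, as $|f|\cdot|f|^{-s}=|f|^{1-s}$.
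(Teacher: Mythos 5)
Your proof matches the paper's up through the forward inequality: both apply the Weyl integration formula to reduce $\lct_F(D_{\g,\l};0)$ to a minimum over $L$-conjugacy classes of maximal $F$-tori, both isolate the $F$-split torus $T_0$, and both use the explicit formula for the split orbital integral (the paper cites Kottwitz's \cite[Lemma 13.3]{Kot05}, which gives exactly that $\int_{L/T_0}1_B(\Ad(l).X)\,d\bar l = |D_\l|_F^{-1/2}\widetilde{\psi_B}(X)$ with $\widetilde{\psi_B}>0$ near $0$, rather than the Harish-Chandra boundedness theorem you invoke, which only gives the upper bound on the normalized orbital integral). This yields $2\lct_F(D_{\g,\l};0)\leq\lct_F(\triangle_{\g,\l}^{\t_0};\triangle_\l^{\t_0},0)=\lct_{\C}(\triangle_{\g_{\C},\l_{\C}}^{\t_{\C}};\triangle_{\l_{\C}}^{\t_{\C}},0)$, and everything here is correct.

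The gap is in your reverse inequality. You correctly identify it as the main obstacle, but you propose to resolve it by a direct singularity analysis of the non-split torus contributions: factoring $D_\g|_\t$ into Galois norm forms and showing each non-split $\lct$ is at least the split one via Theorem \ref{thm:analytic resolution of sing}. That is not carried out, and it is not at all routine — the restriction of a Coxeter arrangement to a non-split Cartan is not itself a hyperplane arrangement over $F$, so Theorem \ref{Thm:resolution of hyperplane arrangements} and its neat numerical data are unavailable, and one would have to control the interaction between the norm-form singularities and the orbital integral factor from scratch. The paper sidesteps the entire issue with a simpler argument you did not consider: by Remark \ref{rem:Lct of polynomials on smooth varieties and behavior with field extensions}, $\lct_F(D_{\g,\l};0)\geq\lct_{\C}(D_{\g,\l};0)$, and over $\C$ all maximal tori are conjugate, so applying the same Weyl-integration computation to $\LL(\C)$ gives $2\lct_{\C}(D_{\g,\l};0)=\lct_{\C}(\triangle_{\g_{\C},\l_{\C}}^{\t_{\C}};\triangle_{\l_{\C}}^{\t_{\C}},0)$; combining yields the reverse inequality without ever touching a non-split torus. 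You should replace your norm-form sketch with this base-change comparison. Your treatment of (\ref{eq:Prop 5.4 lower}) is fine, including the observation that $\lct_{\C}(f;f,0)=1+\lct_{\C}(f;0)$, but the reverse inequality there has the same gap and the same fix.
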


\begin{proof}
We prove (\ref{eq:Prop 5.4}). The proof (\ref{eq:Prop 5.4 lower})
is almost identical. By applying the Weyl integration formula (Proposition
\ref{prop:Weyl integration formula}) for $G=L$ and $f=1_{B}\cdot\left|D_{\g,\l}\right|_{F}^{-s/2}$,
for a small ball $B$ around $0$ in $\l$, we get: 
\[
\int_{B}\left|D_{\g,\l}(Y)\right|_{F}^{-s/2}dY=\sum_{T\subseteq L}\frac{1}{\#W(L,T)}\int_{\t}\left|D_{\g,\l}(X)\right|_{F}^{-s/2}\left|D_{\l}(X)\right|_{F}\left(\int_{L/T}1_{B}(\Ad(l).X)d\overline{l}\right)dX.
\]
Since each of the summands is positive, we deduce that:
\begin{equation}
2\lct_{F}(D_{\g,\l};0)=\underset{T\subseteq L}{\min}\sup\left\{ s>0:\exists B\subseteq\l\text{ s.t.}\int_{\t}\frac{\left|D_{\l}\right|_{F}^{\left(1+s\right)/2}}{\left|D_{\g}\right|_{F}^{s/2}}\left(\left|D_{\l}\right|_{F}^{1/2}\int_{L/T}1_{B}(\Ad(l).X)d\overline{l}\right)dX<\infty\right\} ,\label{eq:auxilary}
\end{equation}
where $T\subseteq L$ runs over all $L$-conjugacy classes of maximal
$F$-tori in $L$. Now let $T_{0}\subseteq L$ be a maximal $F$-split
torus. By \cite[Lemma 13.3]{Kot05}, we have 
\begin{equation}
\int_{L/T_{0}}1_{B}(\Ad(l).X)d\overline{l}=\left|D_{\l}\right|_{F}^{-1/2}\widetilde{\psi_{B}}(X),\label{eq:descend for split torus}
\end{equation}
where $\widetilde{\psi_{B}}(X)>0$ in a neighborhood of $0$ in $\t_{0}$.
Plugging (\ref{eq:descend for split torus}) into (\ref{eq:auxilary}),
by (\ref{eq:reduction to hyperplane arrangement}) and by Lemma \ref{lem:hyperplane arrangments in the F-split case},
we deduce that:
\begin{equation}
2\lct_{F}(D_{\g,\l};0)\leq\lct_{F}(\triangle_{\g,\l}^{\t_{0}};\triangle_{\l}^{\t_{0}},0)=\lct_{\C}(\triangle_{\g_{\C},\l_{\C}}^{\t_{0,\C}};\triangle_{\l_{\C}}^{\t_{0,\C}},0).\label{eq:5.3}
\end{equation}
Applying the Weyl integration formula to $\LL(\C)$, and again by
(\ref{eq:descend for split torus}), we get:
\begin{equation}
\lct_{\C}(\triangle_{\g_{\C},\l_{\C}}^{\t_{0,\C}};\triangle_{\l_{\C}}^{\t_{0,\C}},0)=2\lct_{\C}(D_{\g,\l};0).\label{eq:5.4}
\end{equation}
By (\ref{eq:5.3}), (\ref{eq:5.4}) and since $\lct_{F}\geq\lct_{\C}$
(Remark \ref{rem:Lct of polynomials on smooth varieties and behavior with field extensions}),
we get the opposite inequality to (\ref{eq:5.3}), which implies the
proposition. 
\end{proof}
\begin{proof}[Proof of Theorem \ref{thmB:lower bound on epsilon of reps}]
We first prove Item (1). Let $\xi$ be a distribution as in Item
(1). Then by the discussion before Theorem \ref{thmB:lower bound on epsilon of reps},
$\xi$ is given by a locally integrable function $f_{\xi}\in L_{\mathrm{Loc}}^{1}(\g)$
and $\left|D_{\g}\right|_{F}^{\frac{1}{2}}\cdot f_{\xi}$ is locally
bounded. Let $F'$ be a field extension of $F$ so that $\underline{G}_{F'}$
is $F'$-split. Since $\lct_{F}\geq\lct_{F'}$ (see Remark \ref{rem:Lct of polynomials on smooth varieties and behavior with field extensions}),
by Proposition \ref{prop:relating lct to relative lct in split groups}
and by Corollary \ref{cor:lct of Weil discriminant} , we have 
\[
2\lct_{F}(D_{\g};0)\geq2\lct_{F'}(D_{\g_{F'}};0)=1+\lct_{\C}(\triangle_{\g_{\C}}^{\t_{\C}},0)=1+\underset{1\leq i\leq M}{\min}\frac{2}{h_{\mathfrak{g_{i}}}}.
\]
For every compact subset $B\subseteq\g$, and every $s<\underset{1\leq i\leq M}{\min}\frac{2}{h_{\mathfrak{g_{i}}}}$,
one has:
\begin{equation}
\int_{B}\left|f_{\xi}(X)\right|^{1+s}dX=\int_{B}\left(\left|D_{\g}(X)\right|_{F}^{\frac{1}{2}}\cdot\left|f_{\xi}(X)\right|\right)^{1+s}\left|D_{\g}(X)\right|_{F}^{-\frac{1+s}{2}}dX\leq M^{1+s}\int_{B}\left|D_{\g}(X)\right|_{F}^{-\frac{1+s}{2}}dX<\infty,\label{eq:proof of Theorem B}
\end{equation}
where $M$ is an upper bound on the $L^{\infty}$-norm of $\left|D_{\g}\right|_{F}^{\frac{1}{2}}\cdot f_{\xi}|_{B}$.
This finishes the proof of Item (1). 

We now turn to the proof of Item (2). Let $\pi\in\mathrm{Rep}(G)$.
Let $\pi_{1},\dots,\pi_{M'}$ be the irreducible subquotients of $\pi$.
Clearly $\epsilon_{\star}(\pi)\geq\underset{1\leq i\leq M'}{\min}\epsilon_{\star}(\pi_{i})$,
so we may assume $\pi\in\Irr(G)$. By \cite[Theorem 16.3]{HC99} and
\cite[Theorem 10.35]{Kna01}, the function $\left|D_{G}\right|_{F}^{\frac{1}{2}}\left|\theta_{\pi}\right|$
is locally bounded for every $\pi\in\Irr(G)$. This means that for
every ball $B$ in $G$, and every $s>0$, there exists $C>0$ such
that: 
\begin{equation}
\int_{B}\left|\theta_{\pi}\right|^{1+s}\mu_{G}\leq C\int_{B}\left|D_{G}\right|_{F}^{-\frac{1+s}{2}}\mu_{G}.\label{eq:reduction to integrability of Weyl discriminant}
\end{equation}
Note that $\mu_{G}$ is given by $\left|\omega_{\underline{G}}\right|_{F}$
for an invertible top-form $\omega_{\underline{G}}$ on $\underline{G}$.
Let $\underline{H}$ be the Chevalley group such that $\underline{H}_{\overline{F}}\simeq\underline{G}_{\overline{F}}$
(as in Remark \ref{rem:Chevalley model}) so that $\underline{H}_{F'}\simeq\underline{G}_{F'}$
for some finite extension $F'\supseteq F$. By Remark \ref{rem:Lct of polynomials on smooth varieties and behavior with field extensions},
we have $\lct_{F}(D_{G})\geq\lct_{F'}(D_{G})=\lct_{F'}(D_{\underline{H}(F')})\geq\lct_{\C}(D_{\underline{H}(\C)})$.
It is enough to show:
\[
\int_{B'}\left|D_{\underline{H}(\C)}\right|_{\C}^{-\frac{1+s}{2}}\mu_{\underline{H}(\C)}<\infty\text{ \,\,\,\,for each }s<\underset{1\leq i\leq M}{\min}\frac{2}{h_{\mathfrak{g_{i}}}}\text{ and each ball }B'\subseteq\underline{H}(\C).\tag{\ensuremath{\star}}
\]
By Proposition \ref{prop:Weyl integration formula} and by the Archimedean
version of (\ref{eq:descend for split torus}) (\cite[Lemma 10.17]{Kna01}),
we deduce,
\[
(\star)\text{ holds}\,\Longleftrightarrow\int_{B''}\left|D_{\underline{H}(\C)}\right|_{\C}^{-\frac{s}{2}}\mu_{\underline{T}(\C)}<\infty\text{ \,\,\,\,for each }s<\underset{1\leq i\leq M}{\min}\frac{2}{h_{\mathfrak{g_{i}}}}\text{ and each ball }B''\subseteq\underline{T}(\C),
\]
where $\underline{T}$ is a maximal torus in $\underline{H}$. Changing
coordinates via the exponential map $\exp:\t(\C)\rightarrow\underline{T}(\C)$,
which is a local diffeomorphism, reduces our integral to $\int_{B'''}\left|D_{\underline{H}(\C)}\circ\exp\right|_{\C}^{-\frac{s}{2}}\mu_{\t(\C)}$.
We get,
\begin{align}
 & (-1)^{\dim\underline{T}}\cdot D_{\underline{H}(\C)}(\exp(X))=\det(1-\Ad(\exp(X))|_{\mathfrak{h}_{\C}/\t_{\C}})=\det(1-\exp(\ad(X))|_{\mathfrak{h}_{\C}/\t_{\C}})\nonumber \\
= & \prod_{\alpha\in\Sigma(\mathfrak{h}_{\C},\t_{\C})}\left(1-\exp(\alpha(X))\right)=\left(\prod_{\alpha\in\Sigma(\mathfrak{h}_{\C},\t_{\C})}\alpha(X)\right)\prod_{\alpha\in\Sigma(\mathfrak{h}_{\C},\t_{\C})}\frac{1-\exp(\alpha(X))}{\alpha(X)}.\label{eq:comparing group Weyl discriminant to Lie algebra discriminant}
\end{align}
Note that $\left|\frac{1-\exp(\alpha(X))}{\alpha(X)}\right|_{\C}$
is invertible in a small neighborhood of $0\in\t(\C)$. Hence, 
\[
2\lct_{\C}(D_{\underline{H}(\C)}|_{\underline{T}(\C)};e)=\lct_{\C}(\triangle_{\mathfrak{h}_{\C}}^{\t_{\C}};0)=\lct_{\C}(\triangle_{\mathfrak{h}_{\C}}^{\t_{\C}}),
\]
Now for every $t'\in\underline{T}(\C)$, let $S_{\C}=\mathrm{Cent}_{\underline{H}(\C)}(t')$
and let $\mathfrak{s}_{\C}:=\Lie(S_{\C}).$ Note that $\mathfrak{s}_{\C}\supseteq\t_{\C}$
is a reductive subalgebra of $\mathfrak{h}_{\C}$, and that for each
$\alpha\in\Sigma(\mathfrak{h}_{\C},\t_{\C})$, if $\xi_{\alpha}:\underline{T}(\C)\rightarrow\C^{\times}$
is the corresponding character of $\underline{T}(\C)$, then $\xi_{\alpha}(t')=1$
if and only if $\alpha\in\Sigma(\mathfrak{s}_{\C},\t_{\C})$. Now,
by (\ref{eq:comparing group Weyl discriminant to Lie algebra discriminant}),
for $t$ in a small enough ball around $e\in\underline{T}(\C)$, the
function $\left|\frac{D_{\underline{H}(\C)}(tt')}{D_{S_{\C}}(tt')}\right|_{\C}$
is invertible. Moreover, we have
\[
\left|D_{S_{\C}}(tt')\right|_{\C}=\left|\prod_{\alpha\in\Sigma(\mathfrak{s}_{\C},\t_{\C})}\left(1-\xi_{\alpha}(tt')\right)\right|_{\C}=\left|\prod_{\alpha\in\Sigma(\mathfrak{s}_{\C},\t_{\C})}\left(1-\xi_{\alpha}(t)\right)\right|_{\C}=\left|D_{S_{\C}}(t)\right|_{\C},
\]
and therefore we get
\[
2\lct_{\C}(D_{\underline{H}(\C)}|_{\underline{T}(\C)};t')=2\lct_{\C}(D_{S_{\C}}|_{\underline{T}(\C)};t')=2\lct_{\C}(D_{S_{\C}}|_{\underline{T}(\C)};e)=\lct_{\C}(\triangle_{\mathfrak{s}_{\C}}^{\t_{\C}}).
\]
Since $\triangle_{\mathfrak{s}_{\C}}^{\t_{\C}}$ divides $\triangle_{\mathfrak{h}_{\C}}^{\t_{\C}}$,
we have $\lct_{\C}(\triangle_{\mathfrak{s}_{\C}}^{\t_{\C}})\geq\lct_{\C}(\triangle_{\mathfrak{h}_{\C}}^{\t_{\C}})$,
and we deduce:
\[
2\lct_{\C}(D_{\underline{H}(\C)}|_{\underline{T}(\C)})=\lct_{\C}(\triangle_{\mathfrak{h}_{\C}}^{\t_{\C}}).
\]
Since $(\g_{1})_{\C},\dots,(\g_{M})_{\C}$ are the simple constituents
of $[\mathfrak{h}_{\C},\mathfrak{h}_{\C}]$, Corollary \ref{cor:lct of Weil discriminant}
implies that $(\star)$ holds, as required. 
\end{proof}
We can now conclude Theorem \ref{thmC: epsilon of Fourier of orbital integral for GL_n}. 
\begin{proof}[Proof of Theorem \ref{thmC: epsilon of Fourier of orbital integral for GL_n}]
Recall that in $\gl_{n}(F)$ all nilpotent orbits are stable and
Richardson. Hence, by Theorem \ref{thm A: epsilon of Fourier of orbital integral is lct(D(G/M))},
for each $\mathcal{O}\in\mathcal{O}(\mathcal{N}_{\gl_{n}(F)})$, with
polarizing parabolic $\p=\l\oplus\n$, we have $\epsilon_{\star}(\widehat{\xi}_{\mathcal{O}})=2\lct_{F}(D_{\g,\l};0)$.
By Proposition \ref{prop:relating lct to relative lct in split groups},
we have $\epsilon_{\star}(\widehat{\xi}_{\mathcal{O}})=\lct_{\C}(\triangle_{\g_{\C},\l_{\C}}^{\t_{\C}};\triangle_{\l_{\C}}^{\t_{\C}},0)$.
The theorem now follows from Proposition \ref{Prop:lct of relative Weyl discriminant},
Theorem \ref{thm:orbit closure and dominance} and Corollary \ref{cor:dominant Levi means large rlct}.
\end{proof}

\subsection{\label{subsec:A-formula-for epsilon(pi)}A formula for $\epsilon_{\star}(\pi;e)$ }

We now turn to describe $\epsilon_{\star}(\pi;e)$ for $\pi\in\Irr(\GL_{n}(F))$.
\begin{lem}
\label{lem:interaction between different orbits}Let $\mathcal{O}_{1},\dots,\mathcal{O}_{N}\in\mathcal{O}(\mathcal{N}_{\gl_{n}(F)})$
be nilpotent orbits. Suppose that $\epsilon_{\star}(\widehat{\xi}_{\mathcal{O}_{1}})\leq\epsilon_{\star}(\widehat{\xi}_{\mathcal{O}_{i}})$
for all $i$. Then there exists $\delta>0$ such that for every $a\in\C^{N-1}$,
with $\left|a\right|<\delta$, one has:
\[
\epsilon_{\star}(\widehat{\xi}_{\mathcal{O}_{1}}+\sum_{i=2}^{N}a_{i}\widehat{\xi}_{\mathcal{O}_{i}})=\epsilon_{\star}(\widehat{\xi}_{\mathcal{O}_{1}}).
\]
\end{lem}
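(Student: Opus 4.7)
Set $\epsilon := \epsilon_{\star}(\widehat{\xi}_{\mathcal{O}_{1}})$ and $h_a := \widehat{\xi}_{\mathcal{O}_{1}}+\sum_{i=2}^{N}a_{i}\widehat{\xi}_{\mathcal{O}_{i}}$. The plan is to prove the two inequalities $\epsilon_{\star}(h_{a})\geq\epsilon$ and $\epsilon_{\star}(h_{a})\leq\epsilon$ separately. The first is immediate from the triangle inequality for the $L^{1+s}$-norm on a small ball: each $\widehat{\xi}_{\mathcal{O}_{i}}$ is locally $L^{1+s}$ for $s<\epsilon_{\star}(\widehat{\xi}_{\mathcal{O}_{i}})$, and by hypothesis every such exponent is at least $\epsilon$, so $h_{a}$ is locally $L^{1+s}$ for every $s<\epsilon$ and every $a$.

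For the opposite inequality I would show $\epsilon_{\star}(h_{a};0)\leq\epsilon$, which suffices since $\epsilon_{\star}(h_{a})\leq\epsilon_{\star}(h_{a};0)$ by definition. In $\gl_{n}(F)$ every nilpotent orbit is stable Richardson, so Theorem \ref{thm A: epsilon of Fourier of orbital integral is lct(D(G/M))} gives $\epsilon_{\star}(\widehat{\xi}_{\mathcal{O}_{i}})=\epsilon_{\star}(\widehat{\xi}_{\mathcal{O}_{i}};0)$ for every $i$. The first step is a reduction: any $\widehat{\xi}_{\mathcal{O}_{i}}$ with $\epsilon_{\star}(\widehat{\xi}_{\mathcal{O}_{i}})>\epsilon$ is locally $L^{1+s}$ near $0$ for $s$ slightly greater than $\epsilon$, hence by the reverse triangle inequality such terms may be absorbed into $h_{a}$ without altering its local $L^{1+s}$-integrability at $0$; this reduces the problem to the sub-case in which $\epsilon_{\star}(\widehat{\xi}_{\mathcal{O}_{i}})=\epsilon$ for every index $i$.

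In this sub-case, the main tool is the homogeneity $\widehat{\xi}_{\mathcal{O}_{i}}(\rho X)=\left|\rho\right|_{F}^{-d_{i}}\widehat{\xi}_{\mathcal{O}_{i}}(X)$, where $d_{i}:=\dim\mathcal{O}_{i}$. The substitution $X=\rho Y$ gives
\[
\int_{B_{\rho}}|h_{a}(X)|^{1+s}\,dX=\left|\rho\right|_{F}^{\dim\g-(1+s)d_{1}}\int_{B_{1}}\Bigl|\widehat{\xi}_{\mathcal{O}_{1}}(Y)+\sum_{i=2}^{N}a_{i}\left|\rho\right|_{F}^{d_{1}-d_{i}}\widehat{\xi}_{\mathcal{O}_{i}}(Y)\Bigr|^{1+s}\,dY,
\]
turning local $L^{1+s}$-integrability of $h_{a}$ near $0$ into a scale-invariant question on the fixed ball $B_{1}$: terms attached to orbits of smaller dimension acquire additional positive powers of $\left|\rho\right|_{F}$, while those of larger dimension acquire negative ones, so the leading contribution as $\rho\to0$ comes from the orbits of maximal dimension appearing with nonzero coefficient. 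Combining the positivity of each $\widehat{\xi}_{\mathcal{O}_{i}}$ on $\g_{\rss}$ supplied by Proposition \ref{prop:explicit description of the Fourier transform of obital integral } with the linear independence of the invariant distributions $\{\widehat{\xi}_{\mathcal{O}_{i}}\}$ attached to pairwise distinct stable nilpotent orbits, I would conclude that for all $a$ in a small enough neighborhood of zero this leading contribution retains integrability exponent exactly $\epsilon$, and consequently $h_{a}\notin L^{1+s}_{\mathrm{loc}}$ at $0$ for $s>\epsilon$. The main obstacle is this last non-cancellation step: one must rule out accidental cancellation among the leading singular terms of orbits of maximal dimension sharing the same integrability exponent, and this is precisely where positivity on $\g_{\rss}$ together with compactness of $B_{1}$ must supply the required uniform estimate.
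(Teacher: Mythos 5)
Your lower bound $\epsilon_{\star}(h_{a})\geq\epsilon_{\star}(\widehat{\xi}_{\mathcal{O}_{1}})$ and the reduction (via the reverse triangle inequality) to the sub-case in which all the $\epsilon_{\star}(\widehat{\xi}_{\mathcal{O}_{i}})$ are equal are both correct, and the first also appears in the paper. The rest of your argument, however, does not close, and the scaling step is in fact a wrong turn.

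The homogeneity $\widehat{\xi}_{\mathcal{O}}(\rho^{2}X)=\left|\rho\right|_{F}^{-\dim\mathcal{O}}\widehat{\xi}_{\mathcal{O}}(X)$ only controls the \emph{radial} scaling degree of $\widehat{\xi}_{\mathcal{O}}$, which is governed by $\dim\mathcal{O}$. But $\epsilon_{\star}$ is not a radial invariant: it measures how badly $f_{\mathcal{O}}$ blows up along the conical singular support (a hyperplane arrangement inside a Cartan), not how fast it grows radially as one approaches $0$. As the paper emphasizes in Remark \ref{rem:on GK dimension and epsilon}, orbits of the same dimension can have wildly different $\epsilon_{\star}$, and vice versa. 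Consequently, singling out orbits of maximal dimension does not single out the most singular terms, and your reduction ``to the orbits of maximal dimension'' does not localize the problem. Worse, because the singular supports of the $f_{\mathcal{O}_{i}}$ are conical, the finiteness of $\int_{B_{\rho}}|h_{a}|^{1+s}dX$ is the same for every $\rho>0$; the substitution $X=\rho Y$ only rewrites the same integral and provides no new information. (Your scaling step is useful \emph{before} invoking this lemma, which is exactly how it is used in the paper's proof of Theorem \ref{thm D:epsilon of a representation} to shrink the coefficients of the lower-dimensional orbits; it is not part of the proof of the present lemma.)

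The final ``positivity plus compactness of $B_{1}$'' step is a genuine gap, and it cannot be closed as stated. The $f_{\mathcal{O}_{i}}$ are unbounded on every ball, so compactness of $B_{1}$ gives no uniform estimate; and linear independence of the $\widehat{\xi}_{\mathcal{O}_{i}}$ as distributions says nothing about the comparative severity of their local singularities. To make a non-cancellation argument one must first identify \emph{where} and \emph{with what precise order} each $f_{\mathcal{O}_{i}}$ blows up, and compare these orders divisor by divisor. This is what the paper does: after restricting to the split Cartan via the Weyl integration formula, it pulls everything back under the log-principalization $\Lambda$ of the Coxeter arrangement (Theorem \ref{Thm:resolution of hyperplane arrangements}), which simultaneously monomializes every $f_{\mathcal{O}_{i}}\circ\Lambda$. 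There is then a single divisor $E_{W_{0}}$ governing the integrability threshold of $f_{\mathcal{O}_{1}}$, the hypothesis $\epsilon_{\star}(\widehat{\xi}_{\mathcal{O}_{1}})\leq\epsilon_{\star}(\widehat{\xi}_{\mathcal{O}_{i}})$ forces the pole order $c_{i,W_{0}}$ of each $f_{\mathcal{O}_{i}}\circ\Lambda$ along $E_{W_{0}}$ to satisfy $c_{i,W_{0}}\leq c_{W_{0}}$, and \emph{only then} does positivity of the leading coefficients plus a local compactness argument near $E_{W_{0}}$ show that the leading pole of $f_{\mathcal{O}_{1}}$ survives the perturbation for $|a|$ small. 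Without the resolution, your proposal has no well-defined ``leading singular term'' to which positivity and compactness could be applied.
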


\begin{proof}
Each $\epsilon_{\star}(\widehat{\xi}_{\mathcal{O}_{i}})$ is represented
by a function $f_{\mathcal{O}_{i}}$. We obviously have the inequality
\[
\epsilon_{\star}(\widehat{\xi}_{\mathcal{O}_{1}}+\sum_{i=2}^{N}a_{i}\widehat{\xi}_{\mathcal{O}_{i}})\geq\underset{i}{\min}\epsilon_{\star}(\widehat{\xi}_{\mathcal{O}_{i}})=\epsilon_{\star}(\widehat{\xi}_{\mathcal{O}_{1}}).
\]
By Propositions \ref{prop:Weyl integration formula} and \ref{prop:explicit description of the Fourier transform of obital integral }:
\begin{align}
\epsilon_{\star}(\widehat{\xi}_{\mathcal{O}_{1}}) & =\underset{T\subseteq G}{\min}\sup\left\{ s>0:\exists B\subseteq\g\text{ s.t.}\int_{\t}\left|f_{\mathcal{O}_{1}}(X)\right|^{1+s}\left|D_{\g}(X)\right|_{F}\left(\int_{G/T}1_{B}(\Ad(g).X)d\overline{g}\right)dX<\infty\right\} \nonumber \\
 & =\underset{T\subseteq G}{\min}\sup\left\{ s>0:\exists B\subseteq\g\text{ s.t.}\int_{\t}\frac{\left|D_{\l}(X)\right|_{F}^{(1+s)/2}}{\left|D_{\g}(X)\right|_{F}^{s/2}}\left(\left|D_{\g}(X)\right|_{F}^{\frac{1}{2}}\int_{G/T}1_{B}(\Ad(g).X)d\overline{g}\right)dX<\infty\right\} .\label{eq:formula for leading orbit}
\end{align}
By (\ref{eq:descend for split torus}), we see that the minimum in
(\ref{eq:formula for leading orbit}) is obtained for $T=T_{0}$ a
maximal $F$-split torus. Therefore, it is enough to show that:
\begin{align}
 & \sup\left\{ s>0:\exists B\subseteq\t_{0}\text{ s.t.}\int_{B}\left|f_{\mathcal{O}_{1}}(X)+\sum_{i=2}^{N}a_{i}f_{\mathcal{O}_{i}}(X)\right|^{1+s}\left|\triangle_{\g}^{\t_{0}}(X)\right|_{F}dX<\infty\right\} \label{eq:linear combination of orbital integrals}\\
= & \sup\left\{ s>0:\exists B\subseteq\t_{0}\text{ s.t.}\int_{B}\left|f_{\mathcal{O}_{1}}(X)\right|^{1+s}\left|\triangle_{\g}^{\t_{0}}(X)\right|_{F}dX<\infty\right\} .\nonumber 
\end{align}
Note that by Proposition \ref{prop:explicit description of the Fourier transform of obital integral },
since the building blocks of each $f_{\mathcal{O}_{i}}(X)$ are the
Weyl discriminants $\triangle_{\l}^{\t_{0}}$ of suitable Levis $\l$
in $\g$, to analyze the integrals in (\ref{eq:linear combination of orbital integrals}),
we may apply the log-principalization $\Lambda:Y\rightarrow\t_{0}$
of Theorem \ref{Thm:resolution of hyperplane arrangements}. Then,
as in the proof of Proposition \ref{prop:formula for log canonical threshold},
the integrability of both expressions in (\ref{eq:linear combination of orbital integrals})
is determined by the order of vanishing of $f_{\mathcal{O}_{1}}\circ\Lambda,\dots,f_{\mathcal{O}_{N}}\circ\Lambda,\triangle_{\g}^{\t_{0}}\circ\Lambda$
as well as of $\operatorname{Jac}(\Lambda)$ along the divisors $\{E_{W}\}_{W\in L_{\mathrm{dense}}(\mathcal{A}_{\g,\t_{0}})}$
of $\Lambda$. Moreover, there is a single divisor $E_{W_{0}}$ which
controls the integrability of $f_{\mathcal{O}_{1}}\circ\Lambda$;
if $c_{W_{0}}$ (resp.~$d_{W_{0}},k_{W_{0}}$) is the order of vanishing
of $\frac{1}{f_{\mathcal{O}_{1}}\circ\Lambda}$ (resp.~$\triangle_{\g}^{\t_{0}}\circ\Lambda,\operatorname{Jac}(\Lambda)$)
along $E_{W_{0}}$, then
\begin{align}
 & \sup\left\{ s>0:\exists B\subseteq\t_{0}\text{ s.t.}\int_{B}\left|f_{\mathcal{O}_{1}}(X)\right|^{1+s}\left|\triangle_{\g}^{\t_{0}}(X)\right|_{F}dX<\infty\right\} \nonumber \\
= & \sup\left\{ s>0:d_{W_{0}}+k_{W_{0}}-c_{W_{0}}(1+s)>-1\right\} =\frac{d_{W_{0}}+k_{W_{0}}+1}{c_{W_{0}}}-1.\label{eq:epsilon is determined by a single divisor}
\end{align}
Now let $c_{2,W_{0}},\dots,c_{N,W_{0}}$ be the order of vanishing
of $\frac{1}{f_{\mathcal{O}_{i}}\circ\Lambda}$ along $E_{W_{0}}$.
By our assumption that $\epsilon_{\star}(\widehat{\xi}_{\mathcal{O}_{1}})\leq\epsilon_{\star}(\widehat{\xi}_{\mathcal{O}_{i}})$
we must have 
\[
\frac{d_{W_{0}}+k_{W_{0}}+1}{c_{i,W_{0}}}-1\geq\frac{d_{W_{0}}+k_{W_{0}}+1}{c_{W_{0}}}-1,\text{ for all }2\leq i\leq N,
\]
or equivalently, $c_{i,W_{0}}\leq c_{W_{0}}$. But this means that
when restricting to any small neighborhood in $Y$ which intersects
$E_{W_{0}}$, and taking $a\in\C^{N-1}$, with $\left|a\right|\ll1$,
the order of the pole of $f_{\mathcal{O}_{1}}\circ\Lambda$ along
$E_{W_{0}}$ is the same as of $f_{\mathcal{O}_{1}}\circ\Lambda+\sum_{i=2}^{N}a_{i}f_{\mathcal{O}_{i}}\circ\Lambda$.
This verifies Equality (\ref{eq:linear combination of orbital integrals}),
as required.
\end{proof}
\begin{prop}[{\cite{BB82,MW87}, see also \cite[p.227]{GGS17}}]
\label{prop:local character for GLn}Let $F$ be a local field of
characteristic $0$, and let $\pi\in\Irr(\GL_{n}(F))$. Then, in the
setting of Theorem \ref{thm:local character expansion}, all orbits
$\mathcal{O}'\in\mathcal{O}(\mathcal{N}_{\gl_{n}(F)})$ with $c_{\mathcal{O}'}(\pi)\neq0$
are in the closure a single nilpotent orbit $\mathcal{O}_{\max}$.
Similarly, $a_{\mathcal{O}'}(\pi)=0$ for all $\mathcal{O}'\neq\mathcal{O}_{\max}$. 
\end{prop}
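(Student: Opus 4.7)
The plan is to combine two classical inputs: the identification of the maximal orbits occurring in the local character expansion with orbits supporting a generalized Whittaker model (non-Archimedean) or belonging to the associated variety of the annihilator (Archimedean), together with the fact that in type $A$ the resulting set has a unique maximum under the closure order on nilpotent orbits in $\gl_n$. Since the closure order is given by dominance of partitions (Theorem \ref{thm:orbit closure and dominance}), uniqueness of the maximum of $\{\mathcal{O}' : c_{\mathcal{O}'}(\pi)\neq 0\}$ (resp.~$\{\mathcal{O}' : a_{\mathcal{O}'}(\pi)\neq 0\}$) immediately gives the statement of the proposition with $\mathcal{O}_{\max}$ the unique maximal element.

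For the non-Archimedean case, I would invoke Mœglin--Waldspurger \cite{MW87}: for an irreducible admissible representation $\pi$ of a connected reductive $p$-adic group, the orbits $\mathcal{O}$ which are maximal, under closure, in the set $\{\mathcal{O}' : c_{\mathcal{O}'}(\pi)\neq 0\}$ are exactly those that support a generalized Whittaker functional for $\pi$; this set is called the wavefront set $\mathrm{WF}(\pi)$. It then suffices to show that $\mathrm{WF}(\pi)$ is a singleton when $\pi \in \Irr(\GL_n(F))$. This is proved in \cite{MW87} using the Bernstein--Zelevinsky classification: every $\pi \in \Irr(\GL_n(F))$ is a Langlands quotient attached to a multisegment, and the shape of the multisegment determines a single partition $\nu(\pi) \vdash n$ so that $\mathrm{WF}(\pi) = \{\mathcal{O}_{\nu(\pi)}\}$.

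For the Archimedean case, the strategy is to apply Barbasch--Vogan \cite[Theorem 4.1]{BV80}, which identifies the top-dimensional orbits in $\{\mathcal{O}' : a_{\mathcal{O}'}(\pi)\neq 0\}$ with the top-dimensional $G$-orbits in the associated variety $\mathrm{AV}(\mathrm{Ann}(\pi)) \subseteq \mathcal{N}_{\g_{\C}}$, where $\mathrm{Ann}(\pi)$ is the annihilator of $\pi$ in $\mathcal{U}(\g_{\C})$. For $\g = \gl_n$, one then invokes the Borho--Brylinski / Joseph irreducibility theorem for associated varieties of primitive ideals in type $A$, see \cite{BB82}, which shows that $\mathrm{AV}(\mathrm{Ann}(\pi))$ is the closure of a single nilpotent orbit. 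Setting $\mathcal{O}_{\max}$ to be this orbit yields the Archimedean half of the statement; the precise form in which we need it, identifying the orbits appearing in the Barbasch--Vogan expansion with the top-dimensional components of $\mathrm{AV}(\mathrm{Ann}(\pi))$, is the one recorded in \cite[p.~227]{GGS17}.

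The main obstacle is not conceptual but organizational: aligning the various conventions across the references (wavefront set versus wavefront cycle, associated variety of the annihilator versus of the representation, leading coefficients of the Harish-Chandra--Howe expansion versus those of the Barbasch--Vogan asymptotic expansion) and verifying that each of them translates into the concrete statement about $c_{\mathcal{O}}(\pi)$ and $a_{\mathcal{O}}(\pi)$ in Theorem \ref{thm:local character expansion}. Once the dictionary between multisegments, primitive ideals and partitions is in place, the type $A$ irreducibility is essentially combinatorial, and the proposition is obtained by assembling these ingredients.
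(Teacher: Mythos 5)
Your proposal correctly identifies and assembles the ingredients behind this proposition, which the paper itself states by citation without proof: the non-Archimedean half via Mœglin--Waldspurger's wavefront-set characterization of the maximal orbits in the local character expansion together with the fact that the wavefront set of an irreducible representation of $\GL_n(F)$ is a single orbit determined by the associated multisegment/partition, and the Archimedean half via the Barbasch--Vogan identification with the associated variety of the annihilator and the type-$A$ irreducibility of associated varieties of primitive ideals (Joseph, Borho--Brylinski). This is the same route the cited references take, so your proposal and the paper's (implicit) proof coincide. One small point worth making explicit when writing this up: from the fact that the set of closure-maximal elements of $\{\mathcal{O}' : c_{\mathcal{O}'}(\pi)\neq 0\}$ is a singleton $\{\mathcal{O}_{\max}\}$, one concludes every $\mathcal{O}'$ in that set lies in $\overline{\mathcal{O}}_{\max}$ because in a finite poset every element sits below some maximal element, and here there is only one; this is the step that turns ``unique maximal'' into ``maximum,'' which is what the proposition actually asserts.
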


\begin{proof}[Proof of Theorem \ref{thm D:epsilon of a representation}]
Recall that $F$ is non-Archimedean. For each $\mathcal{O}\in\mathcal{O}(\mathcal{N}_{\gl_{n}(F)})$,
let $f_{\mathcal{O}}\in L_{\mathrm{loc}}^{1}(\gl_{n}(F))$ be a function
representing $\widehat{\xi}_{\mathcal{O}}$. Then by Proposition \ref{prop:local character for GLn}
and Theorem \ref{thm:local character expansion}, for a small enough
neighborhood $0\in B\subseteq\g$, we have 
\[
\theta_{\pi}(\exp(X))=c_{\mathcal{O}_{\max}}(\pi)f_{\mathcal{O}_{\max}}(X)+\sum_{\mathcal{O}\neq\mathcal{O}_{\max}}c_{\mathcal{O}}(\pi)\cdot f_{\mathcal{O}}(X).
\]
Note that for every $c\in F$ with $0<\left|c\right|_{F}<1$, the
function $\theta_{\pi}(\exp(X))$ is in $L^{1+\epsilon}(c^{2}B)$
if and only if $\theta_{\pi}(\exp(c^{2}X))$ is in $L^{1+\epsilon}(B)$.
On the other hand, by the homogeneity of $\widehat{\xi}_{\mathcal{O}}$
(\cite[Lemma 17.2]{Kot05}),
\[
\left|c\right|_{F}^{\dim\mathcal{O}_{\max}}\theta_{\pi}(\exp(c^{2}X))=c_{\mathcal{O}_{\max}}(\pi)f_{\mathcal{O}_{\max}}(X)+\sum_{\mathcal{O}\neq\mathcal{O}_{\max}}\left|c\right|_{F}^{\dim\mathcal{O}_{\max}-\dim\mathcal{O}}c_{\mathcal{O}}(\pi)\cdot f_{\mathcal{O}}(X).
\]
Finally, taking $c$ with small enough $\left|c\right|_{F}$, and
using Lemma \ref{lem:interaction between different orbits}, the proposition
follows.
\end{proof}

\section{Upper bound on multiplicities of $K$-types in admissible representations}

In this section we prove Theorems \ref{thmE:upper bounds on multiplicities of K-types}
and \ref{thm F:bound on multilicites in compact homogeneous} and
Corollaries \ref{cor:upper bound for multiplicities in principal series}
and \ref{cor:Fourier coefficients of power word measure}.

We first recall some basic facts on the non-commutative Fourier transform
of absolutely integrable functions on compact groups. 

\subsection{\label{subsec:Hausdorff=002013Young-theorem-for}Hausdorff\textendash Young
theorem for compact groups}

In this subsection we follow \cite[Section 2.3]{App14}. If $H$ is
a compact (second countable) group, set $\widehat{H}:=\Irr(H)$ and
define $\mathcal{M}(\widehat{H}):=\bigcup_{\tau\in\widehat{H}}\mathrm{End}_{\C}(V_{\tau})$,
where $V_{\tau}$ is the representation space of $\tau$. We say that
a map $T:\widehat{H}\rightarrow\mathcal{M}(\widehat{H})$ is \textit{compatible}
if $T(\tau)\in\mathrm{End}_{\C}(V_{\tau})$ for any $\tau\in\widehat{H}$.
We denote the space of compatible mappings by $\mathcal{L}(\widehat{H})$.
Based on the Peter\textendash Weyl Theorem, one can define the \textit{non-commutative
Fourier transform} $\mathcal{F}:L^{1}(H)\rightarrow\mathcal{L}(\hat{H})$,
by
\begin{equation}
\mathcal{F}(f)(\tau)=\int_{H}f(h)\cdot\tau(h^{-1})\mu_{H},\label{eq:Fourier coefficients}
\end{equation}
where $\mu_{H}$ is the normalized Haar measure. For $1\leq p<\infty$,
we define $\mathcal{H}_{p}(\hat{H})$ as the linear span of all $T\in\mathcal{L}(\hat{G})$
for which
\begin{equation}
\left\Vert T\right\Vert _{p}:=\left(\sum_{\pi\in\hat{H}}\dim(\tau)\cdot\left\Vert T(\tau)\right\Vert _{\mathrm{Sch,p}}^{p}\right)^{\frac{1}{p}}<\infty,\label{eq:Lp on Fourier}
\end{equation}
where $\left\Vert T(\tau)\right\Vert _{\mathrm{Sch,p}}:=\left(\tr(\left(T(\tau)T(\tau)^{*}\right)^{p/2})\right)^{\frac{1}{p}}$
is the Schatten $p$-norm. 

The classical Hausdorff\textendash Young theorem in Fourier analysis
says that if $1\leq p\leq2$, and $f\in L^{p}(F^{n})$, then $\mathcal{F}(f)\in L^{q}(F^{n})$
for $\frac{1}{p}+\frac{1}{q}=1$. This theorem has the following group-theoretic
analogue:
\begin{lem}[{\cite[Theorem 2.32]{App14} and \cite[Section 2.14]{Edw72}}]
\label{Lemma: Hausdorff--Young}Let $H$ be a compact group, let
$1\leq p\leq2$ and let $f\in L^{p}(H)$. Then $\mathcal{F}(f)\in\mathcal{H}_{q}(\hat{H})$,
where $\frac{1}{p}+\frac{1}{q}=1$ and $\left\Vert \mathcal{F}(f)\right\Vert _{q}\leq\left\Vert f\right\Vert _{p}$. 
\end{lem}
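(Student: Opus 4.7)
The plan is to deduce the inequality from the classical complex interpolation theorem (Riesz--Thorin), applied between the two endpoints $p=1$ and $p=2$, exactly as in the commutative case.

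First I would dispose of the easy endpoint $p=1$, $q=\infty$: for any $\tau\in\widehat{H}$, since $\tau$ is unitary, we have $\|\tau(h^{-1})\|_{\mathrm{op}}=1$, so applying the triangle inequality in \eqref{eq:Fourier coefficients} yields
\[
\|\mathcal{F}(f)(\tau)\|_{\mathrm{Sch},\infty}=\|\mathcal{F}(f)(\tau)\|_{\mathrm{op}}\leq \int_H |f(h)|\,\mu_H=\|f\|_1,
\]
so that $\|\mathcal{F}(f)\|_\infty\leq\|f\|_1$, recalling that by \eqref{eq:Lp on Fourier}, the norm on $\mathcal{H}_\infty(\widehat{H})$ is the supremum over $\tau\in\widehat{H}$ of the operator norm of $\mathcal{F}(f)(\tau)$.

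Next I would handle the self-dual endpoint $p=q=2$, which is Plancherel's identity and a direct consequence of the Peter--Weyl theorem. The theorem provides an isometric decomposition $L^2(H)\simeq\bigoplus_{\tau\in\widehat{H}}V_\tau\otimes V_\tau^*$, where the $\tau$-isotypic summand is weighted by $\dim\tau$, and the corresponding matrix coefficients of $f$ are precisely the entries of $\mathcal{F}(f)(\tau)$. Thus
\[
\|f\|_2^2=\sum_{\tau\in\widehat{H}}\dim(\tau)\cdot\|\mathcal{F}(f)(\tau)\|_{\mathrm{Sch},2}^2=\|\mathcal{F}(f)\|_2^2,
\]
which is the equality case of the desired inequality at $p=2$.

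Finally, I would interpolate. The spaces $\mathcal{H}_q(\widehat{H})$ are precisely the non-commutative $L^q$-spaces associated with the group von Neumann algebra of $H$ equipped with its canonical trace $T\mapsto\sum_{\tau}\dim(\tau)\cdot\mathrm{tr}(T(\tau))$; in particular they form a complex interpolation scale in $q$, with $\mathcal{H}_2$ self-dual and $\mathcal{H}_\infty$ the $L^\infty$-endpoint. Applying the Riesz--Thorin theorem (in its non-commutative form, see e.g.~the references in \cite{App14}) to the linear map $\mathcal{F}$, which is bounded of norm $\leq 1$ both from $L^1(H)$ to $\mathcal{H}_\infty(\widehat{H})$ and from $L^2(H)$ to $\mathcal{H}_2(\widehat{H})$, yields the intermediate bound $\|\mathcal{F}(f)\|_q\leq\|f\|_p$ for each pair $(p,q)$ with $1\leq p\leq 2$ and $\tfrac1p+\tfrac1q=1$. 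The main point requiring care is the interpolation step: one either invokes the non-commutative Riesz--Thorin theorem for Schatten classes directly, or alternatively verifies it by the standard three lines lemma using the analytic family $T\mapsto T|T|^{z}$ in the target space, and confirms that the weighting by $\dim(\tau)$ in \eqref{eq:Lp on Fourier} is compatible with the trace on the group von Neumann algebra.
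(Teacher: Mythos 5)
The paper does not supply its own proof of this lemma; it is quoted from \cite{App14} and \cite{Edw72}, both of which establish it by precisely the interpolation argument you lay out. Your reconstruction is correct and matches those references: the $L^1(H)\to\mathcal{H}_\infty(\widehat{H})$ bound is immediate from $\left\Vert\tau(h^{-1})\right\Vert_{\mathrm{op}}=1$; the $L^2(H)\to\mathcal{H}_2(\widehat{H})$ isometry is the Plancherel theorem, and the normalization check you flag (that the $\dim\tau$ weight in \eqref{eq:Lp on Fourier} matches the Peter--Weyl normalization of matrix coefficients) is the correct thing to verify; and non-commutative Riesz--Thorin then covers the intermediate cases. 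Your identification of $\mathcal{H}_q(\widehat{H})$ as the non-commutative $L^q$-space of $\prod_{\tau}\mathrm{End}(V_\tau)$ with trace $T\mapsto\sum_\tau\dim(\tau)\cdot\tr(T(\tau))$ is exactly what makes the interpolation step legitimate, and the polar-decomposition analytic family you invoke is the standard substitute for $|f|^{\alpha(z)}\mathrm{sgn}(f)$ in the scalar Riesz--Thorin proof. No gaps.
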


Suppose now that $f$ is a conjugate invariant function. Then each
of the Fourier coefficients in (\ref{eq:Fourier coefficients}) is
a scalar matrix $\mathcal{F}(f)(\tau):=a_{\tau}\cdot I_{V_{\tau}}$,
where $a_{\tau}=\frac{1}{\dim\tau}\int_{H}f(h)\Theta_{\tau}(h^{-1})\mu_{H}$.
In this case, we have an equality of distributions 
\[
f=\sum_{\tau\in\Irr(H)}\dim\tau\cdot a_{\tau}\Theta_{\tau},
\]
and the condition $\mathcal{F}(f)\in\mathcal{H}_{q}(\hat{H})$ is
equivalent to:
\begin{equation}
\left\Vert \mathcal{F}(f)\right\Vert _{q}:=\left(\sum_{\tau\in\Irr(H)}\left(\dim\tau\right)^{2}\left|a_{\tau}\right|^{q}\right)^{\frac{1}{q}}<\infty.\label{eq:L^q norm of Fourier transform of a conjugate invariant function}
\end{equation}

\subsection{Upper bounds on multiplicities of $K$-types}

We assume $F$ is non-Archimedean. Let $\GG$ be a reductive algebraic
$F$-group, $G=\GG(F)$. 
\begin{lem}
\label{lem:restiction to maximal compact}Let $\pi\in\mathrm{Rep}(G)$,
let $K$ be an open compact subgroup of $G$, and let $\pi|_{K}=\sum_{\tau\in\Irr(K)}m_{\tau,\pi}\tau$
be the decomposition of $\pi$ into irreducible representations of
$K$, with $m_{\tau,\pi}:=\dim\mathrm{Hom}_{K}\left(\tau,\pi|_{K}\right)$.
Then we have the following equality of distributions: 
\[
\Theta_{\pi}|_{K}=\sum_{\tau\in\Irr(K)}m_{\tau,\pi}\Theta_{\tau}.
\]
\end{lem}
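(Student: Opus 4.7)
The plan is to verify the asserted equality of distributions by evaluating both sides against an arbitrary test function $f \in \mathcal{S}(K)$; that is, I would show
\[
\mathrm{tr}(\pi(f)) = \sum_{\tau \in \Irr(K)} m_{\tau,\pi} \cdot \mathrm{tr}(\tau(f)),
\]
with only finitely many non-zero terms on the right-hand side (so that the formally infinite sum makes sense as a distribution).

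First, since $K$ is a compact totally disconnected group and $f$ is locally constant with compact support, there exists an open subgroup $K_0 \leq K$ under which $f$ is bi-invariant. Because $K_0$ has finite index in $K$, it has only finitely many $K$-conjugates, and their intersection $K_1 := \bigcap_{k \in K} k K_0 k^{-1}$ is a compact open \emph{normal} subgroup of $K$ for which $f$ is still bi-$K_1$-invariant. For any smooth representation $\sigma$ of $K$, the bi-$K_1$-invariance of $f$ forces $\sigma(f)$ to factor through the space of $K_1$-fixed vectors $V_\sigma^{K_1}$.

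Next, I would use admissibility of $\pi$ (as a $G$-representation, hence as a $K$-representation) to conclude that $V_\pi^{K_1}$ is finite-dimensional, and normality of $K_1$ in $K$ to conclude that it is a $K$-submodule. As a representation of the finite group $K/K_1$, it decomposes as
\[
V_\pi^{K_1} \cong \bigoplus_{\tau \in \Irr(K),\, K_1 \subseteq \ker \tau} m_{\tau,\pi} \cdot V_\tau.
\]
For $\tau \in \Irr(K)$ with $K_1 \not\subseteq \ker\tau$, the subspace $V_\tau^{K_1}$ is $K$-invariant (by normality) and proper, hence zero by irreducibility, so $\tau(f) = 0$ and such $\tau$ contribute nothing. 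Taking traces of the displayed decomposition then gives
\[
\mathrm{tr}(\pi(f)) = \mathrm{tr}\bigl( \pi(f)\vert_{V_\pi^{K_1}} \bigr) = \sum_{K_1 \subseteq \ker\tau} m_{\tau,\pi} \cdot \mathrm{tr}(\tau(f)) = \sum_{\tau \in \Irr(K)} m_{\tau,\pi} \cdot \mathrm{tr}(\tau(f)),
\]
which is exactly the claimed identity tested against $f$.

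I do not anticipate a genuine obstacle here: the argument is essentially semisimplicity for the finite quotient $K/K_1$ combined with admissibility of $\pi$ to secure the finite-dimensionality needed to take traces. The only delicate point is interpreting the \emph{a priori} infinite sum on the right as a distribution, and the passage through the normal subgroup $K_1$ handles this by reducing everything, for each fixed $f$, to a finite sum over $K$-types trivial on $K_1$.
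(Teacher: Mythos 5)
Your proof is correct and follows essentially the same strategy as the paper's: test against $f\in\mathcal{S}(K)$, use bi-invariance of $f$ under a small open compact subgroup to factor $\pi(f)$ through a finite-dimensional space of invariants (finiteness by admissibility), and then take traces block by block in the decomposition of $\pi|_K$. The one cosmetic difference is that you first replace $K_0$ by its normal core $K_1$ so that $V_\pi^{K_1}$ becomes a representation of the finite quotient $K/K_1$; the paper works directly with $K'$ without normalizing, simply observing that $\tr\bigl(\pi(f)|_{\pi^{K'}}\bigr)=\sum_\tau m_{\tau,\pi}\tr\bigl(\tau(f)|_{\tau^{K'}}\bigr)$ from the decomposition $\pi^{K'}=\bigoplus m_{\tau,\pi}\tau^{K'}$. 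Your extra step buys the cleaner statement "$\tau(f)=0$ unless $K_1\subseteq\ker\tau$" via irreducibility and normality, but it is not needed; both routes land on the same finite sum.
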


\begin{proof}
Let $f\in\mathcal{S}(K)$. Then there exists an open compact subgroup
$K'\subseteq K$ such that $f(K'kK')=f(k)$ for all $k\in K$. By
the admissibility of $\pi$ we have $\dim\pi^{K'}<\infty$, and hence
$\Theta_{\pi}(f):=\tr\left(\pi(f)|_{\pi^{K'}}\right)$. Let us denote
by $\Irr_{K'}(K)$ the finite set of irreducible representations of
$K$ which have $K'$-fixed vectors. Then:
\[
\Theta_{\pi}(f)=\tr\left(\pi(f)|_{\pi^{K'}}\right)=\sum_{\tau\in\Irr_{K'}(K)}m_{\tau,\pi}\tr\left(\tau(f)\right)=\sum_{\tau\in\Irr_{K'}(K)}m_{\tau,\pi}\Theta_{\tau}(f)=\sum_{\tau\in\Irr(K)}m_{\tau,\pi}\Theta_{\tau}(f).\qedhere
\]
\end{proof}
We can now apply Lemmas \ref{Lemma: Hausdorff--Young} and \ref{lem:restiction to maximal compact}
to the character of $\pi\in\mathrm{Rep}(G)$ and obtain: 
\begin{cor}[Theorem \ref{thmE:upper bounds on multiplicities of K-types}]
\label{cor:exponential bounds on multiplicities of K-types}Let $\pi\in\mathrm{Rep}(G)$
and let $K\leq G$ be an open compact subgroup. Then for every $\epsilon<\epsilon_{\star}(\pi)$,
there exists $C=C(\epsilon,K,\pi)>0$ such that for every $\tau\in\Irr(K)$:
\[
\dim\mathrm{Hom}_{K}\left(\tau,\pi|_{K}\right)<C\left(\dim\tau\right)^{\frac{1-\epsilon}{1+\epsilon}}.
\]
\end{cor}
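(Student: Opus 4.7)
The plan is to apply the Hausdorff--Young inequality for compact groups (Lemma~\ref{Lemma: Hausdorff--Young}) to the restriction $\theta_{\pi}|_{K}$, using the $K$-type expansion supplied by Lemma~\ref{lem:restiction to maximal compact}.

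The first step is to verify that $\theta_{\pi}|_{K}$ is a genuine $L^{1+\epsilon}$-function on $K$. Since $\pi$ has finite length, its character is a finite sum of Harish-Chandra characters of irreducible subquotients and is thus represented by a locally integrable function $\theta_{\pi}$ on $G$. For every $\epsilon<\epsilon_{\star}(\pi)=\inf_{x\in G}\epsilon_{\star}(\pi;x)$ the definition of the integrability exponent supplies, at each point $x\in K$, an open ball on which $\theta_{\pi}\in L^{1+\epsilon}$; covering the compact set $K$ by finitely many such balls gives $\theta_{\pi}|_{K}\in L^{1+\epsilon}(K)$ with finite norm $C_{0}:=\|\theta_{\pi}|_{K}\|_{L^{1+\epsilon}(K)}$.

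The heart of the argument is to compute the non-commutative Fourier coefficients of $\theta_{\pi}|_{K}$. Since $\theta_{\pi}$ is invariant under $G$-conjugation, its restriction is a class function on $K$, and by Lemma~\ref{lem:restiction to maximal compact} equals $\sum_{\tau}m_{\tau,\pi}\Theta_{\tau}$. A direct application of Schur orthogonality then identifies $\mathcal{F}(\theta_{\pi}|_{K})(\tau)$ with the scalar matrix $(m_{\tau,\pi}/\dim\tau)\cdot I_{V_{\tau}}$. Assuming first $\epsilon\leq 1$, so that $p:=1+\epsilon\leq 2$, I would invoke Lemma~\ref{Lemma: Hausdorff--Young} with the dual exponent $q=(1+\epsilon)/\epsilon$ and apply \eqref{eq:L^q norm of Fourier transform of a conjugate invariant function} to obtain
\[
\sum_{\tau\in\Irr(K)}(\dim\tau)^{2-q}\,m_{\tau,\pi}^{q}\;\leq\;C_{0}^{q}.
\]
Dropping all but a single term and taking $q$-th roots yields $m_{\tau,\pi}\leq C_{0}(\dim\tau)^{(q-2)/q}=C_{0}(\dim\tau)^{(1-\epsilon)/(1+\epsilon)}$, which is the desired bound with $C=C_{0}$.

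For the remaining range $\epsilon\in(1,\epsilon_{\star}(\pi))$, I would apply Hausdorff--Young at the endpoint $p=2$ (Plancherel) to deduce $\sum_{\tau}m_{\tau,\pi}^{2}\leq\|\theta_{\pi}|_{K}\|_{2}^{2}<\infty$. Because the $m_{\tau,\pi}$ are non-negative integers, they are uniformly bounded and vanish outside a finite set of representations of bounded dimension, so enlarging $C$ absorbs these finitely many exceptions and produces the claimed bound with the (now negative) exponent $(1-\epsilon)/(1+\epsilon)$. The main technical point is the Schur-orthogonality computation identifying the non-commutative Fourier transform of the class function $\theta_{\pi}|_{K}$ with the scalar multiplicities $m_{\tau,\pi}/\dim\tau$ on each isotypic block; once this is in place, the conclusion is essentially a clean application of the compact-group Hausdorff--Young theorem.
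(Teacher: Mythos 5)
Your proposal is correct and follows essentially the same route as the paper: restrict $\theta_{\pi}$ to $K$ via Lemma \ref{lem:restiction to maximal compact}, identify the non-commutative Fourier coefficients of the resulting class function with $m_{\tau,\pi}/\dim\tau$, apply the compact Hausdorff--Young inequality (Lemma \ref{Lemma: Hausdorff--Young}) at exponent $p=1+\epsilon$, and read off the bound term by term. Two small places where you are more careful than the paper's write-up: you explicitly verify $\theta_{\pi}|_{K}\in L^{1+\epsilon}(K)$ by a compactness covering argument (the paper treats this as immediate from the definition of $\epsilon_{\star}(\pi)$), and you address the range $\epsilon\in(1,\epsilon_{\star}(\pi))$ separately via Plancherel and the integrality of the $m_{\tau,\pi}$, which is needed because Hausdorff--Young only applies for $p\le 2$; the paper's proof silently restricts to $\epsilon\le 1$. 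Your extraction of the constant is also slightly cleaner — you take $C=\|\theta_{\pi}|_{K}\|_{L^{1+\epsilon}}$ directly from the single-term bound, whereas the paper passes through a ``there exists $M\in\N$'' argument and sets $C=M$, which requires the additional observation that there are only finitely many exceptional $\tau$ of bounded dimension.
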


\begin{proof}
By definition of $\epsilon_{\star}(\pi)$, if $\theta_{\pi}$ represents
$\Theta_{\pi}$, then $\theta_{\pi}\in L^{1+\epsilon}(K)$ for every
$\epsilon<\epsilon_{\star}(\pi)$ and hence $\mathcal{F}(\theta_{\pi})$
$\in\mathcal{H}_{\frac{1+\epsilon}{\epsilon}}(\hat{K}),$ so that:
\begin{equation}
\sum_{\tau\in\mathrm{Irr}(K)}\left(\dim\tau\right)^{2}\cdot(\frac{m_{\tau,\pi}}{\dim\tau})^{1+\frac{1}{\epsilon}}=\sum_{\tau\in\mathrm{Irr}(K)}(\dim\tau)^{1-\frac{1}{\epsilon}}(m_{\tau,\pi})^{1+\frac{1}{\epsilon}}<\widetilde{C},\label{eq:translation of Fourier to bounds on multipliity}
\end{equation}
for some $\widetilde{C}>0$. Hence, there exists $M\in\N$, such that
for every $\tau\in\mathrm{Irr}(K)$ with $\dim\tau>M$, one has $m_{\tau,\pi}<(\dim\tau)^{\frac{1-\epsilon}{1+\epsilon}}$.
Taking $C=M$, implies the corollary. 
\end{proof}
We now specialize to the special case of parabolically induced representations
in $\GL_{n}(F)$. Let $\PP\leq\GL_{n}$ be a standard parabolic, and
let
\[
\pi=\mathrm{Ind}_{\PP(F)}^{\GL_{n}(F)}1=\left\{ f:\GL_{n}(F)\rightarrow\C:f(gp)=\delta_{\PP(F)}^{-1/2}(p)f(g)\text{ for all }p\in\PP(F)\right\} ,
\]
be the (normalized) induction of the trivial character from $\PP(F)$,
where $\delta_{\PP(F)}$ is the modular character of $\PP(F)$.
\begin{lem}
\label{lemm:global character of parabolic induction}We have the following
equality of distributions:
\[
\Theta_{\pi}|_{\GL_{n}(\mathcal{O}_{F})}=\Theta_{\mathrm{Ind}_{\PP(\mathcal{O}_{F})}^{\GL_{n}(\mathcal{O}_{F})}1}=\sum_{\tau\in\Irr(\GL_{n}(\mathcal{O}_{F}))}m_{\tau,\pi}\Theta_{\tau}.
\]
\end{lem}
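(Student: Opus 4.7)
The plan is to derive both equalities by first identifying $\pi|_K$ with $\mathrm{Ind}_{\PP(\mathcal{O}_F)}^{K}1$ as genuine $K$-representations via the Iwasawa decomposition, and then invoking Lemma~\ref{lem:restiction to maximal compact}.

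First, I would record the Iwasawa decomposition $\GL_n(F)=K\cdot\PP(F)$, where $K:=\GL_n(\mathcal{O}_F)$, together with the identification $K\cap\PP(F)=\PP(\mathcal{O}_F)$. Consequently, the inclusion $K\hookrightarrow\GL_n(F)$ induces a $K$-equivariant homeomorphism $\PP(\mathcal{O}_F)\backslash K\xrightarrow{\sim}\PP(F)\backslash\GL_n(F)$ with respect to the natural left $K$-actions.

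Next, I would show that the restriction map $f\mapsto f|_K$ yields a $K$-equivariant isomorphism between the space of $\pi=\mathrm{Ind}_{\PP(F)}^{\GL_n(F)}1$ and the space of $\mathrm{Ind}_{\PP(\mathcal{O}_F)}^{K}1$. The key observation is that the modular character $\delta_{\PP(F)}$ takes values in $\mathbb{R}_{>0}$ and $\PP(\mathcal{O}_F)$ is compact, hence $\delta_{\PP(F)}|_{\PP(\mathcal{O}_F)}\equiv 1$. Therefore, the equivariance condition $f(gp)=\delta_{\PP(F)}^{-1/2}(p)f(g)$ restricts to the condition $f(kp')=f(k)$ for $k\in K$ and $p'\in\PP(\mathcal{O}_F)$. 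Conversely, any such function on $K$ extends uniquely to a function on $\GL_n(F)$ with the desired transformation law via the Iwasawa decomposition, since given $g=kp\in K\cdot\PP(F)$ one sets $f(g):=\delta_{\PP(F)}^{-1/2}(p)f(k)$; well-definedness is a consequence of $\delta_{\PP(F)}|_{\PP(\mathcal{O}_F)}\equiv 1$. The $K$-action on both spaces is the left regular representation, so the identification is automatically $K$-equivariant. Taking characters yields the first equality
\[
\Theta_{\pi}|_{K}=\Theta_{\mathrm{Ind}_{\PP(\mathcal{O}_F)}^{K}1}.
\]

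Finally, for the second equality, I would simply apply Lemma~\ref{lem:restiction to maximal compact} with the open compact subgroup $K=\GL_n(\mathcal{O}_F)$, which directly provides the character decomposition
\[
\Theta_{\pi}|_{K}=\sum_{\tau\in\Irr(K)}m_{\tau,\pi}\,\Theta_{\tau}.
\]
There is no genuine obstacle here: the only point requiring care is ensuring that the Iwasawa identification intertwines the normalized induction structure with the trivially-normalized induction from a compact subgroup, which reduces to the triviality of $\delta_{\PP(F)}$ on $\PP(\mathcal{O}_F)$ noted above.
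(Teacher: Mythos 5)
Your argument is correct and follows essentially the same route as the paper's proof: Iwasawa decomposition $\GL_n(F)=\PP(F)\cdot\GL_n(\mathcal{O}_F)$ with $\PP(F)\cap\GL_n(\mathcal{O}_F)=\PP(\mathcal{O}_F)$, triviality of $\delta_{\PP(F)}$ on $\PP(\mathcal{O}_F)$ to make restriction of functions an isomorphism of $K$-representations, and then Lemma~\ref{lem:restiction to maximal compact} for the character decomposition. The only difference is that you spell out the well-definedness of the inverse extension map, which the paper leaves implicit.
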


\begin{proof}
Note that $\PP(F)\GL_{n}(\mathcal{O}_{F})=\GL_{n}(F)$ so $\GL_{n}(\mathcal{O}_{F})$
acts transitively on the flag variety $\GL_{n}(F)/\PP(F)$. The stabilizer
is $\PP(F)\cap\GL_{n}(\mathcal{O}_{F})=\PP(\mathcal{O}_{F})$. Hence
$\GL_{n}(F)/\PP(F)\simeq\GL_{n}(\mathcal{O}_{F})/\PP(\mathcal{O}_{F})$
as $\GL_{n}(\mathcal{O}_{F})$-spaces. Since $\delta_{\PP(F)}|_{\PP(\mathcal{O}_{F})}=1$,
the map $f\mapsto f|_{\GL_{n}(\mathcal{O}_{F})}$ induces an isomorphism
$\pi|_{\GL_{n}(\mathcal{O}_{F})}\simeq\mathrm{Ind}_{\PP(\mathcal{O}_{F})}^{\GL_{n}(\mathcal{O}_{F})}1$.
The lemma now follows from Lemma \ref{lem:restiction to maximal compact}. 
\end{proof}
Corollary \ref{cor:upper bound for multiplicities in principal series}
is proved at the end of $\mathsection$\ref{subsec:Bounds-on-multiplicities in compact homogeneous}.
Here is a nice consequence of it. 
\begin{example}
\label{exam:analyzing maximal parabolics}Consider the case when $\PP_{\lambda}$
is a maximal parabolic, that is, when $\lambda=(n-m,m)$ for $m\leq n/2$.
Then the conjugate partition is $\nu=1^{n-2m}2^{m}$. We now show
that 
\begin{equation}
\epsilon_{\star}(\mathrm{Ind}_{\PP_{\lambda}(F)}^{\GL_{n}(F)}1)=1.\label{eq:epsilon for maximal parabolics}
\end{equation}
Indeed, for each $k',m\geq1$ we have:
\begin{align}
\binom{2m+k'}{2} & =\frac{(2m+k')(2m+k'-1)}{2}\leq2m^{2}+\frac{k'^{2}}{2}+2mk'-m-\frac{k'}{2}\leq2m^{2}+k'^{2}+2mk'-1\nonumber \\
 & =2\left(\binom{m+1}{2}+\binom{m}{2}\right)+\binom{k'+1}{2}+\binom{k'}{2}+2mk'-1.\label{eq:inequality}
\end{align}
Hence, by Theorem \ref{thmC: epsilon of Fourier of orbital integral for GL_n},
Corollary \ref{cor:upper bound for multiplicities in principal series}
and by (\ref{eq:inequality}), $\epsilon_{\star}(\mathrm{Ind}_{\PP_{\lambda}(F)}^{\GL_{n}(F)}1)$
is equal to:
\begin{align*}
 & \min\left\{ \underset{1\leq k\leq m}{\min}\frac{\left(\sum_{j=1}^{k}2j\right)-1}{\binom{2k}{2}-\sum_{j=1}^{k}2(j-1)},\underset{1\leq k'\leq n-2m}{\min}\frac{m^{2}+m-1+\sum_{j=1}^{k'}(m+j)}{\binom{2m+k'}{2}-2\binom{m}{2}-\sum_{j=1}^{k'}(j+m-1)}\right\} \\
= & \min\left\{ \underset{1\leq k\leq m}{\min}\left(1+\frac{k-1}{k^{2}}\right),\underset{1\leq k'\leq n-2m}{\min}\frac{2\binom{m+1}{2}+\binom{k'+1}{2}+mk'-1}{\binom{2m+k'}{2}-2\binom{m}{2}-\binom{k'}{2}-mk'}\right\} \\
= & \min\left\{ 1,\underset{1\leq k'\leq n-2m}{\min}\frac{2\binom{m+1}{2}+\binom{k'+1}{2}+mk'-1}{\binom{2m+k'}{2}-2\binom{m}{2}-\binom{k'}{2}-mk'}\right\} =1.
\end{align*}
In \cite{KK19}, Kionke and Klopsch introduced the following zeta
function, for each compact $p$-adic group $L$, and a closed subgroup
$H<L$,
\[
\zeta_{\mathrm{Ind}_{H}^{L}1}(s):=\sum_{\tau\in\Irr(L)}m_{\tau,\mathrm{Ind}_{H}^{L}1}(\dim\tau)^{-s}=\sum_{\tau\in\Irr(L)}\dim\tau^{H}\cdot(\dim\tau)^{-s}.
\]
By Lemma (\ref{Lemma: Hausdorff--Young}) and Eq.~(\ref{eq:epsilon for maximal parabolics}),
we deduce, for every $m\leq n/2$ and every $\lambda=(n-m,m)$, that
$\mathcal{F}(\Theta_{\mathrm{Ind}_{\PP_{\lambda}(\mathcal{O}_{F})}^{\GL_{n}(\mathcal{O}_{F})}1})\in\mathcal{H}_{2+s}(\widehat{\GL_{n}(\mathcal{O}_{F})})$
for all $s>0$. In particular, by (\ref{eq:translation of Fourier to bounds on multipliity}):
\[
\zeta_{\mathrm{Ind}_{\PP_{\lambda}(\mathcal{O}_{F})}^{\GL_{n}(\mathcal{O}_{F})}1}(s)=\sum_{\tau\in\Irr\left(\GL_{n}(\mathcal{O}_{F})\right)}m_{\tau,\mathrm{Ind}_{\PP_{\lambda}(\mathcal{O}_{F})}^{\GL_{n}(\mathcal{O}_{F})}1}\left(\dim\tau\right)^{-s}\leq\sum_{\tau\in\Irr\left(\GL_{n}(\mathcal{O}_{F})\right)}\left(\dim\tau\right)^{-s}m_{\tau,\mathrm{Ind}_{\PP_{\lambda}(\mathcal{O}_{F})}^{\GL_{n}(\mathcal{O}_{F})}1}^{2+s}<\infty,
\]
which implies that the abscissa of convergence of $\zeta_{\mathrm{Ind}_{\PP(\mathcal{O}_{F})}^{\GL_{n}(\mathcal{O}_{F})}1}$
is equal to $0$. This agrees with the results of \cite[Section 7.3]{KK19}. 
\end{example}

\subsection{\label{subsec:Bounds-on-multiplicities in compact homogeneous}Bounds
on multiplicities in compact homogeneous spaces}

In this subsection we prove Theorem \ref{thm F:bound on multilicites in compact homogeneous}
and Corollary \ref{cor:upper bound for multiplicities in principal series}.
We recall the notation from $\mathsection$\ref{subsec:Branching-multiplicities-in compact homogeneous};
let $\underline{G}$ be a complex, connected reductive algebraic group,
let $\underline{T}$ be a maximal torus. Let $\underline{L}$ be a
Levi subgroup of $\underline{G}$, containing $\underline{T}$. Let
$G_{\C}=\underline{G}(\C)$, $T_{\C}=\underline{T}(\C)$ and $L_{\C}=\underline{L}(\C)$.
Let $K\leq G_{\C}$ be a maximal compact subgroup in $G_{\C}$, containing
the maximal compact torus $T$ of $T_{\C}$. Recall that $L:=L_{\C}\cap K$
is called a \emph{Levi subgroup} of $K$. We write $\mu_{K},\mu_{L}$
and $\mu_{T}$ for the unique Haar probability measures on $K,L$
and $T$, respectively. 

Let $\tau\in\Irr(K)$. Note that $\tau\simeq\tau^{w}$ for every $w\in W(K,T)$,
and hence $\dim\tau^{L}=\dim\tau^{wLw^{-1}}$ for every $w\in W(K,T)$.
By applying the Weyl integration formula (\cite[Theorem 4.45]{Kna01})
twice, moving from $L$ to $T$, and then from $T$ to $K$, we get:
\begin{align}
\dim\tau^{L} & =\frac{\left|W(L,T)\right|}{\left|W(K,T)\right|}\sum_{w\in W(K,T)/W(L,T)}\dim\tau^{wLw^{-1}}=\frac{\left|W(L,T)\right|}{\left|W(K,T)\right|}\sum_{w\in W(K,T)/W(L,T)}\int_{L}\Theta_{\tau}(wlw^{-1})\mu_{L}\nonumber \\
 & =\frac{1}{\left|W(K,T)\right|}\sum_{w\in W(K,T)/W(L,T)}\int_{T}\Theta_{\tau}(wtw^{-1})\left|D_{L}(t)\right|\mu_{T}\nonumber \\
 & =\frac{1}{\left|W(K,T)\right|}\int_{T}\Theta_{\tau}(t)\sum_{w\in W(K,T)/W(L,T)}\left|D_{L}^{w}(t)\right|\mu_{T}\nonumber \\
 & =\frac{1}{\left|W(K,T)\right|}\int_{T}\Theta_{\tau}(t)\frac{\sum_{w\in W(K,T)/W(L,T)}\left|D_{L}^{w}(t)\right|}{\left|D_{K}(t)\right|}\left|D_{K}(t)\right|\mu_{T}=\int_{K}\Theta_{\tau}(k)\phi_{K,L}(k)\mu_{K},\label{eq:refined formula for multiplicity}
\end{align}
where $\phi_{K,L}$ is a conjugate invariant function on $K$, such
that $\phi_{K,L}|_{T}=\frac{\sum_{w\in W(K,T)/W(L,T)}\left|D_{L}^{w}(t)\right|}{\left|D_{K}(t)\right|}$.
By (\ref{eq:refined formula for multiplicity}), the Fourier coefficient
of $\phi_{K,L}$ at $\tau\in\Irr(K)$ is $\dim\tau^{L}$, so by Frobenius
reciprocity, we have the following equality, as distributions:
\begin{equation}
\phi_{K,L}=\sum_{\tau\in\Irr(K)}\dim\tau^{L}\Theta_{\tau}=\sum_{\tau\in\Irr(K)}m_{\tau,L^{2}(K/L)}\Theta_{\tau}=\Theta_{L^{2}(K/L)}.\label{eq:description of character of induced representation}
\end{equation}

\begin{proof}[Proof of Theorem \ref{thm F:bound on multilicites in compact homogeneous}]
It is enough to consider the case that $K$ is semisimple. Item (2)
follows from Item (1) by Lemma \ref{Lemma: Hausdorff--Young}, as
in the proof of Corollary \ref{cor:exponential bounds on multiplicities of K-types}.
It is left to show Item (1). Note that by Weyl's integration formula:
\[
\int_{K}\left|\phi_{K,L}\right|^{1+\epsilon}\mu_{K}=\frac{1}{\left|W(K,T)\right|}\int_{T}\frac{\left(\sum_{w\in W(K,T)/W(L/T)}\left|D_{L}^{w}\right|\right)^{1+\epsilon}}{\left|D_{K}\right|^{\epsilon}}\mu_{T}
\]
which is bounded if and only if $\int_{T}\frac{\left|D_{L}\right|^{1+\epsilon}}{\left|D_{K}\right|^{\epsilon}}\mu_{T}<\infty$.
Let us first compute $\epsilon_{\star}(\Theta_{L^{2}(K/L)};e)$. Since
$\exp:\t\rightarrow T$ is a diffeomorphism when restricted to a small
neighborhood $B\subseteq\t$ of $0$, we get 
\begin{equation}
\epsilon_{\star}(\Theta_{L^{2}(K/L)};e)=\underset{s>0}{\sup}\left\{ \int_{B}\frac{\left|D_{L}\circ\exp(X)\right|^{1+s}}{\left|D_{K}\circ\exp(X)\right|^{s}}\mu_{\t}<\infty\right\} .\label{eq:formula for epsilon at e}
\end{equation}
 Note that by (\ref{eq:comparing group Weyl discriminant to Lie algebra discriminant}),
both $\frac{\left|D_{K}\circ\exp(X)\right|}{\left|\triangle_{\mathfrak{g}_{\C}}^{\t_{\C}}\right|^{2}}$
and $\frac{\left|D_{L}\circ\exp(X)\right|}{\left|\triangle_{\l_{\C}}^{\t_{\C}}\right|^{2}}$
are invertible in a neighborhood of $0$. Hence, by (\ref{eq:formula for epsilon at e})
and Proposition \ref{prop:formula for log canonical threshold},
\begin{align}
\epsilon_{\star}(\Theta_{L^{2}(K/L)};e) & =\frac{1}{2}\lct\left(\triangle_{\g_{\C},\l_{\C}}^{\t_{\C}};\left(\triangle_{\l_{\C}}^{\t_{\C}}\right)^{2},0\right)=\frac{1}{2}\underset{\substack{\t_{\C}\varsubsetneq\l'\subseteq\g_{\C}\\{}
[\l',\l']\text{ is simple}
}
}{\min}\frac{\mathrm{ss.rk}(\l')+2\left|\Sigma^{+}(\l_{\C},\t)\cap\Sigma^{+}(\l',\t)\right|}{\left|\Sigma^{+}(\l',\t)\backslash\Sigma^{+}(\l_{\C},\t)\right|}\nonumber \\
 & =\frac{1}{2}\underset{\substack{\t_{\C}\varsubsetneq\l'\subseteq\g_{\C}\\
\l'\text{ is a Levi of }\g_{\C}
}
}{\min}\frac{\mathrm{ss.rk}(\l')+2\left|\Sigma^{+}(\l_{\C},\t)\cap\Sigma^{+}(\l',\t)\right|}{\left|\Sigma^{+}(\l',\t)\backslash\Sigma^{+}(\l_{\C},\t)\right|},\label{eq:refined formula for epsilon at e}
\end{align}
where the last inequality follows by applying Lemma \ref{lem:elementary lemma},
with $a_{i}=\mathrm{ss.rk}(\l_{i})+2\left|\Sigma^{+}(\l_{\C},\t)\cap\Sigma^{+}(\l_{i},\t)\right|$
and $b_{i}=\left|\Sigma^{+}(\l_{i},\t)\backslash\Sigma^{+}(\l_{\C},\t)\right|$,
for $i\in[N]$, where $\t\subseteq\l_{1},...,\l_{N}$ are Levi subalgebras
of $\g$, such that $[\l',\l']=\bigoplus_{i=1}^{N}[\l_{i},\l_{i}]$. 

Now, if $e\neq t'\in T\subseteq K$, denote by $S:=\mathrm{Cent}_{K}(t')$,
$S_{\C}:=\mathrm{Cent}_{G_{\C}}(t')$, and similarly $\mathfrak{s}:=\Lie(S)$
and $\mathfrak{s}_{\C}:=\Lie(S_{\C})$. Note that $\mathfrak{s}_{\C}\supseteq\t_{\C}$
is a maximal rank reductive subalgebra of $\g_{\C}$. Set $\Sigma^{+}(\mathfrak{s}_{\C},\t_{\C}):=\Sigma(\mathfrak{s}_{\C},\t_{\C})\cap\Sigma^{+}(\g_{\C},\t_{\C})$
and $\Sigma^{+}(\l_{\C}\cap\mathfrak{s}_{\C},\t_{\C}):=\Sigma(\l_{\C}\cap\mathfrak{s}_{\C},\t_{\C})\cap\Sigma^{+}(\l_{\C},\t_{\C})$.
Note that for each $\alpha\in\Sigma^{+}(\g_{\C},\t_{\C})$, if $\xi_{\alpha}:T\rightarrow\C^{\times}$
is the corresponding character of $T$, then $\xi_{\alpha}(t')=1$
if and only if $\alpha\in\Sigma^{+}(\mathfrak{s}_{\C},\t_{\C})$.
Thus, by (\ref{eq:comparing group Weyl discriminant to Lie algebra discriminant}),
in a small neighborhood of $t'\in T$, the function $\frac{\left|D_{K}(t)\right|}{\left|D_{S}(t)\right|}$
is invertible and similarly $\frac{\left|D_{L}(t)\right|}{\left|D_{S\cap L}(t)\right|}$
is invertible. Finally, observe that $\l_{\C}\cap\mathfrak{s}_{\C}$
is a Levi subalgebra of $\mathfrak{s}_{\C}$ and $L\cap S$ is a Levi
subgroup of $S$. In particular, by (\ref{eq:refined formula for epsilon at e})
we have, 
\[
\epsilon_{\star}(\Theta_{L^{2}(K/L)};t')=\epsilon_{\star}(\Theta_{L^{2}(S/L\cap S)};e)=\frac{1}{2}\underset{\substack{\t_{\C}\varsubsetneq\l''\subseteq\mathfrak{s}_{\C}\\
\l''\text{ is a Levi of }\mathfrak{s}_{\C}
}
}{\min}\frac{\mathrm{ss.rk}(\l'')+2\left|\Sigma^{+}(\l_{\C}\cap\mathfrak{s}_{\C},\t)\cap\Sigma^{+}(\l'',\t)\right|}{\left|\Sigma^{+}(\l'',\t)\backslash\Sigma^{+}(\l_{\C}\cap\mathfrak{s}_{\C},\t)\right|}.
\]
To finish the proof of Item (1), it is left to observe that the set
of all pseudo-Levi subalgebras $\t_{\C}\varsubsetneq\l'''\subseteq\g_{\C}$
coincides with the union of the set of Levi subalgebras $\t_{\C}\varsubsetneq\l''\subseteq\mathfrak{s}_{\C}$
of the pseudo-Levi subalgebras $\mathfrak{s}_{\C}\varsupsetneq\t_{\C}$
of $\g_{\C}$. Indeed, by Borel\textendash de Siebenthal theory \cite{BdS49},
the pseudo-Levi subalgebras $\mathfrak{s}_{\C}\supsetneq\t_{\C}$
of $\g_{\C}$ correspond to removing a subset of vertices in the extended
Dynkin diagram, and then Levi subalgebras $\l''\varsupsetneq\t_{\C}$
of $\mathfrak{s}_{\C}$ correspond to further removing vertices from
the resulting diagram, so by e.g.~\cite[Proposition 2]{Som98}, $\l''$
is a pseudo-Levi subalgebra of $\g_{\C}$. Conversely, by already
taking $\l''=\mathfrak{s}_{\C}$, one obtains all possible pseudo-Levi
subalgebras. This proves the theorem. 
\end{proof}
\begin{proof}[Proof of Corollary \ref{cor:upper bound for multiplicities in principal series}]
By Remark \ref{rem:A-field-dependent construction}, by Theorem \ref{thmC: epsilon of Fourier of orbital integral for GL_n},
and by Proposition \ref{prop:relating lct to relative lct in split groups},
we have
\[
\epsilon_{\star}(\mathrm{Ind}_{\PP(F)}^{\GL_{n}(F)}1;e)=\epsilon_{\star}(\widehat{\xi}_{\mathcal{O}_{\mathrm{st}}})=2\lct_{F}(D_{\gl_{n},\l_{\lambda}})=2\lct_{\C}(D_{\gl_{n},\l_{\lambda}}),
\]
where $\mathcal{O}_{\mathrm{st}}$ is the (stable) Richardson orbit
attached to $\PP(F)$. On the other hand, by \cite[Theorem 3]{vD72},
$\Theta_{\mathrm{Ind}_{\PP(F)}^{\GL_{n}(F)}1}$ is given as an average
of Weyl conjugates of $\left|\frac{D_{\GL_{n}(F)}}{D_{L_{\lambda}(F)}}\right|_{F}^{-\frac{1}{2}}$.
From the discussion in the proof of Theorem \ref{thmB:lower bound on epsilon of reps},
\[
\epsilon_{\star}(\mathrm{Ind}_{\PP(F)}^{\GL_{n}(F)}1)\geq\sup\left\{ s:\int_{B''}\frac{\left|D_{L_{\lambda}(\C)}\right|_{\C}^{\frac{1+s}{2}}}{\left|D_{\GL_{n}(\C)}\right|_{\C}^{\frac{s}{2}}}\mu_{\underline{T}(\C)}<\infty\text{ \,\,\,\,for each ball }B''\subseteq\underline{T}(\C)\right\} .
\]
By a similar argument as in the proof of Theorem \ref{thm F:bound on multilicites in compact homogeneous}
above, the right hand side is equal to $2\lct_{\C}(D_{\gl_{n},\l_{\lambda}})$.
Corollary \ref{cor:upper bound for multiplicities in principal series}
now follows from Corollary \ref{cor:exponential bounds on multiplicities of K-types}
and Lemma \ref{lemm:global character of parabolic induction}. 
\end{proof}

\subsection{Applications in random matrix theory- a proof of Theorem \ref{thmG:intergability of power measure}}

Let $w_{\ell}:\U_{n}\rightarrow\U_{n}$ be the $\ell$-th power map
$X\mapsto X^{\ell}$, for $\ell\in\N$. The distribution of the $\ell$-th
power of a random unitary matrix is given by the pushforward measure
$\tau_{\ell,n}:=(w_{\ell})_{*}\mu_{\U_{n}}$. Recall that: 
\[
\tau_{\ell,n}=\sum_{\rho\in\Irr(\U_{n})}\overline{a_{n,\ell,\rho}}\cdot\Theta_{\tau},\text{\,\,\, where \,\,\,\,}a_{n,\ell,\rho}:=\int_{\U_{n}}\Theta_{\rho}(X^{\ell})\mu_{\U_{n}}.
\]
 Let $H_{n,\ell}:=\U_{\left\lfloor n/\ell\right\rfloor +1}^{j}\times\U_{\left\lfloor n/\ell\right\rfloor }^{\ell-j}$,
where $j:=n\mod\ell$ and set $\widetilde{\ell}:=\min(\ell,n)$. The
following is a consequence of the works of Rains \cite{Rai97,Rai03}: 
\begin{prop}[{\cite[Proposition 7.3]{AGL}}]
\label{prop:Fourier coefficients of a power}For every $n\geq2$,
every $\ell\geq1$ and every $\rho\in\Irr(\U_{n})$, we have: 
\[
a_{n,\ell,\rho}=\int_{\U_{n}}\Theta_{\rho}(X^{\ell})\mu_{\U_{n}}=\dim\rho^{H_{n,\ell}}.
\]
\end{prop}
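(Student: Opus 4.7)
The plan is to reduce the statement to Rains' theorem \cite[Theorem 1.3]{Rai03}, which identifies the spectral distribution of $\mathsf{X}^{\ell}$ with that of a Haar-random element of $H_{n,\ell}$, and then apply Frobenius reciprocity to the resulting integral. The three conceptual steps are: (i) pass from the integral over $\U_n$ to an integral over the subgroup $H_{n,\ell}$, using invariance of the character under conjugation; (ii) interpret that integral as an inner product of characters; and (iii) read off the dimension of the space of $H_{n,\ell}$-invariants.

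For Step (i), I would invoke Rains' theorem in the following form: embed $H_{n,\ell}=\U_{\lfloor n/\ell\rfloor+1}^{j}\times\U_{\lfloor n/\ell\rfloor}^{\ell-j}\hookrightarrow\U_n$ as block-diagonal matrices. Rains' theorem asserts that the pushforward $(w_\ell)_*\mu_{\U_n}$ of the Haar measure under $\mathsf{X}\mapsto\mathsf{X}^\ell$ and the pushforward of $\mu_{H_{n,\ell}}$ under the inclusion $H_{n,\ell}\hookrightarrow\U_n$ agree as measures on the space of $\U_n$-conjugacy classes (equivalently, they induce the same joint distribution on the unordered tuple of eigenvalues). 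Since $\Theta_\rho$ is a class function, this yields
\[
a_{n,\ell,\rho}=\int_{\U_n}\Theta_\rho(\mathsf{X}^\ell)\,\mu_{\U_n}=\int_{H_{n,\ell}}\Theta_\rho(Y)\,\mu_{H_{n,\ell}}(Y).
\]

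For Steps (ii) and (iii), the right-hand integral is precisely $\langle\Theta_\rho|_{H_{n,\ell}},\mathbf{1}_{H_{n,\ell}}\rangle_{L^2(H_{n,\ell})}$, where $\mathbf{1}_{H_{n,\ell}}$ is the trivial character. By Schur orthogonality applied to the decomposition $\rho|_{H_{n,\ell}}=\bigoplus_{\sigma\in\Irr(H_{n,\ell})}m_{\sigma,\rho}\sigma$, this inner product equals $m_{\mathbf{1},\rho}=\dim\Hom_{H_{n,\ell}}(\mathbf{1},\rho|_{H_{n,\ell}})=\dim\rho^{H_{n,\ell}}$, finishing the proof. (Equivalently, one may phrase this via Frobenius reciprocity: $\langle\rho,\mathrm{Ind}_{H_{n,\ell}}^{\U_n}\mathbf{1}\rangle_{\U_n}=\langle\rho|_{H_{n,\ell}},\mathbf{1}\rangle_{H_{n,\ell}}=\dim\rho^{H_{n,\ell}}$.)

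The only non-routine step is (i), and the entire content of the proposition lies in Rains' theorem, which is cited as a black box. The subtlety worth highlighting is that Rains' result is a statement about eigenvalue distributions, not about conjugacy on the group level; however, two unitary matrices with the same multiset of eigenvalues are $\U_n$-conjugate, so the two measures agree when pushed to the space of conjugacy classes, which is all that is needed to integrate the class function $\Theta_\rho$ against either of them. With this observation, steps (ii) and (iii) are purely formal applications of Peter--Weyl / Schur orthogonality for the compact group $H_{n,\ell}$, and the identity $a_{n,\ell,\rho}=\dim\rho^{H_{n,\ell}}$ follows.
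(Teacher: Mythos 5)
Your proof is correct and follows the same line as the argument the paper invokes from \cite[Proposition 7.3]{AGL}: Rains' theorem \cite[Theorem 1.3]{Rai03} reduces the integral over $\U_{n}$ to one over the block-diagonal subgroup $H_{n,\ell}$, and Schur orthogonality (equivalently Frobenius reciprocity) identifies the latter with $\dim\rho^{H_{n,\ell}}$. Your observation that Rains' result concerns eigenvalue distributions rather than group-level pushforwards, and that this suffices because class functions on $\U_{n}$ depend only on spectra, is exactly the right point to flag.
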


We now prove Theorem \ref{thmG:intergability of power measure}. 
\begin{proof}[Proof of Theorem \ref{thmG:intergability of power measure}]
By the discussion in $\mathsection$\ref{subsec:Applications-in-Random},
and in particular by (\ref{eq:pushforward is a character}), we have
$\tau_{\ell,n}=\Theta_{L^{2}(\U_{n}/H_{n,\ell})}$. Since the Levi
subgroup $H_{n,\ell}$ is of the form $H_{n,\ell}:=L_{\C}\cap\U_{n}$,
where $\l_{\C}:=\Lie(L_{\C})=\gl_{\left\lfloor n/\ell\right\rfloor +1}^{j}\times\gl_{\left\lfloor n/\ell\right\rfloor }^{\ell-j}$
is a Levi subalgebra of $\g_{\C}=\gl_{n}$, the theorem now follows
from Corollary \ref{cor:epsilon for Levis in SU_n}.
\end{proof}
Corollary \ref{cor:Fourier coefficients of power word measure} and
Young's convolution inequality imply the following\footnote{See discussion at the end of p.3 of \cite{GHS}}: 
\begin{cor}
\label{cor:Mixing for power word}For every $n\geq2$ and $\ell\geq2$: 
\begin{enumerate}
\item The measure $\tau_{\ell,n}^{*\widetilde{\ell}+1}$ is in $L^{q}(\U_{n})$,
for all $1\leq q<\infty$. 
\item The measure $\tau_{\ell,n}^{*\widetilde{\ell}+2}$ has bounded density. 
\end{enumerate}
\end{cor}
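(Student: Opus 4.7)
The plan is to deduce both items from Theorem \ref{thmG:intergability of power measure} by iterated application of Young's convolution inequality on the compact group $\U_{n}$. By Theorem \ref{thmG:intergability of power measure}, $\tau_{\ell,n}$ is absolutely continuous with density in $L^{p}(\U_{n})$ for every $p \in [1, 1 + 1/\widetilde{\ell})$. Fix such $p$ and write $p = 1 + 1/\widetilde{\ell} - \delta$ with $\delta > 0$ small.

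Young's convolution inequality on $\U_{n}$ states that if $f \in L^{p_{1}}$ and $g \in L^{p_{2}}$ with $1/p_{1} + 1/p_{2} \geq 1$, then $f * g \in L^{r}$ with $1/r = 1/p_{1} + 1/p_{2} - 1$; at the endpoint $1/p_{1} + 1/p_{2} = 1$ one obtains $f*g \in L^{\infty}$. Iterating this $k-1$ times, $\tau_{\ell,n}^{*k} \in L^{r_{k}}$ with $1/r_{k} = k/p - (k-1)$, provided $1/r_{k} \geq 0$.

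For Item (1), taking $k = \widetilde{\ell}+1$ gives $1/r_{\widetilde{\ell}+1} = \widetilde{\ell}\delta/p$, which is positive but tends to $0$ as $\delta \to 0^{+}$. Hence for any prescribed finite $q$ we may choose $\delta$ small so that $r_{\widetilde{\ell}+1} \geq q$; since $\tau_{\ell,n}^{*(\widetilde{\ell}+1)}$ is a single fixed measure, this shows it belongs to $L^{q}(\U_{n})$ for every $1 \leq q < \infty$. For Item (2), write $\tau_{\ell,n}^{*(\widetilde{\ell}+2)} = \tau_{\ell,n}^{*(\widetilde{\ell}+1)} * \tau_{\ell,n}$ and let $p' = p/(p-1)$ be the H\"older conjugate of $p$, which is finite because $p > 1$. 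By Item (1), $\tau_{\ell,n}^{*(\widetilde{\ell}+1)} \in L^{p'}$, and the endpoint case of Young's inequality applied with exponents $p'$ and $p$ yields $\tau_{\ell,n}^{*(\widetilde{\ell}+2)} \in L^{\infty}(\U_{n})$, which is precisely the bounded density statement.

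There is no serious obstacle; the proof is essentially exponent bookkeeping. The only point worth emphasizing is that the number $\widetilde{\ell}+2$ of convolutions required for $L^{\infty}$-regularity is determined precisely by the sharp integrability exponent $\epsilon_{\star}(\tau_{\ell,n}) = 1/\widetilde{\ell}$ supplied by Theorem \ref{thmG:intergability of power measure}; it is the sharpness of that theorem that keeps the count from being larger.
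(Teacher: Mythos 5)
Your proof is correct and takes the approach the paper intends: deduce the result from the sharp integrability exponent $\epsilon_{\star}(\tau_{\ell,n}) = 1/\widetilde{\ell}$ given by Theorem \ref{thmG:intergability of power measure} together with iterated Young's convolution inequality on the compact group $\U_{n}$. The exponent bookkeeping is accurate; in particular the iteration condition at each step $i$ is $1/r_{i+1}\geq 0$, which is automatic once $1/r_{\widetilde{\ell}+1} = \widetilde{\ell}\delta/p > 0$ since the sequence $1/r_i$ is strictly decreasing, and the endpoint step for Item (2) is just Young's (equivalently H\"older's) inequality with conjugate exponents $p'$ and $p$. The only cosmetic difference from the paper's statement is the input cited: the paper points to Corollary \ref{cor:Fourier coefficients of power word measure} (the Fourier-coefficient decay), whereas you use Theorem \ref{thmG:intergability of power measure} directly; the latter is the cleaner input here, since the density-side $L^{p}$ bound feeds directly into Young's inequality without any detour through the Witten zeta function.
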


\begin{rem}
Note that in \cite[Proposition 7.5]{AGL}, we showed that whenever
$n\gg_{\ell}1$, the measure $\tau_{\ell,n}^{*\ell-1}$ does not have
bounded density. This shows that Corollary \ref{cor:Mixing for power word}
is very close to being (and potentially is) tight for large $n$. 
\end{rem}

\appendix

\section{\label{sec:Integrability-exponents-of}Integrability exponent of
Fourier transform of nilpotent orbital integrals in $\protect\gl_{10}$}

\global\long\def\thefigure{\arabic{figure}}%
In the figure below we include calculation of $\epsilon_{\star}(\widehat{\xi}_{\mathcal{O}})$
for each nilpotent orbit in $\gl_{10}$. Each box represents a nilpotent
orbit in $\gl_{10}$, and includes the partition of the orbit (in
black) and the value of $\epsilon_{\star}(\widehat{\xi}_{\mathcal{O}})$
(in blue). An arrow is drawn from Box $A$ to Box $B$ if $\mathcal{O}_{B}\subset\overline{\mathcal{O}}_{A}$.
One can note that this diagram indeed respects the relations given
in Theorem \ref{thmC: epsilon of Fourier of orbital integral for GL_n}(2):
smaller orbits have larger values of $\epsilon_{\star}(\widehat{\xi}_{\mathcal{O}})$.
In addition, as shown in Example \ref{exa:some examples of epsilon},
the value $\epsilon_{\star}(\widehat{\xi}_{\mathcal{O}})$ for all
orbits whose polarizing parabolic is maximal is $1$ (orbits whose
partition only has parts of size $1$ or $2$), and the values of
$\epsilon_{\star}(\widehat{\xi}_{\mathcal{O}})$ for the regular and
subregular orbits are as expected. 
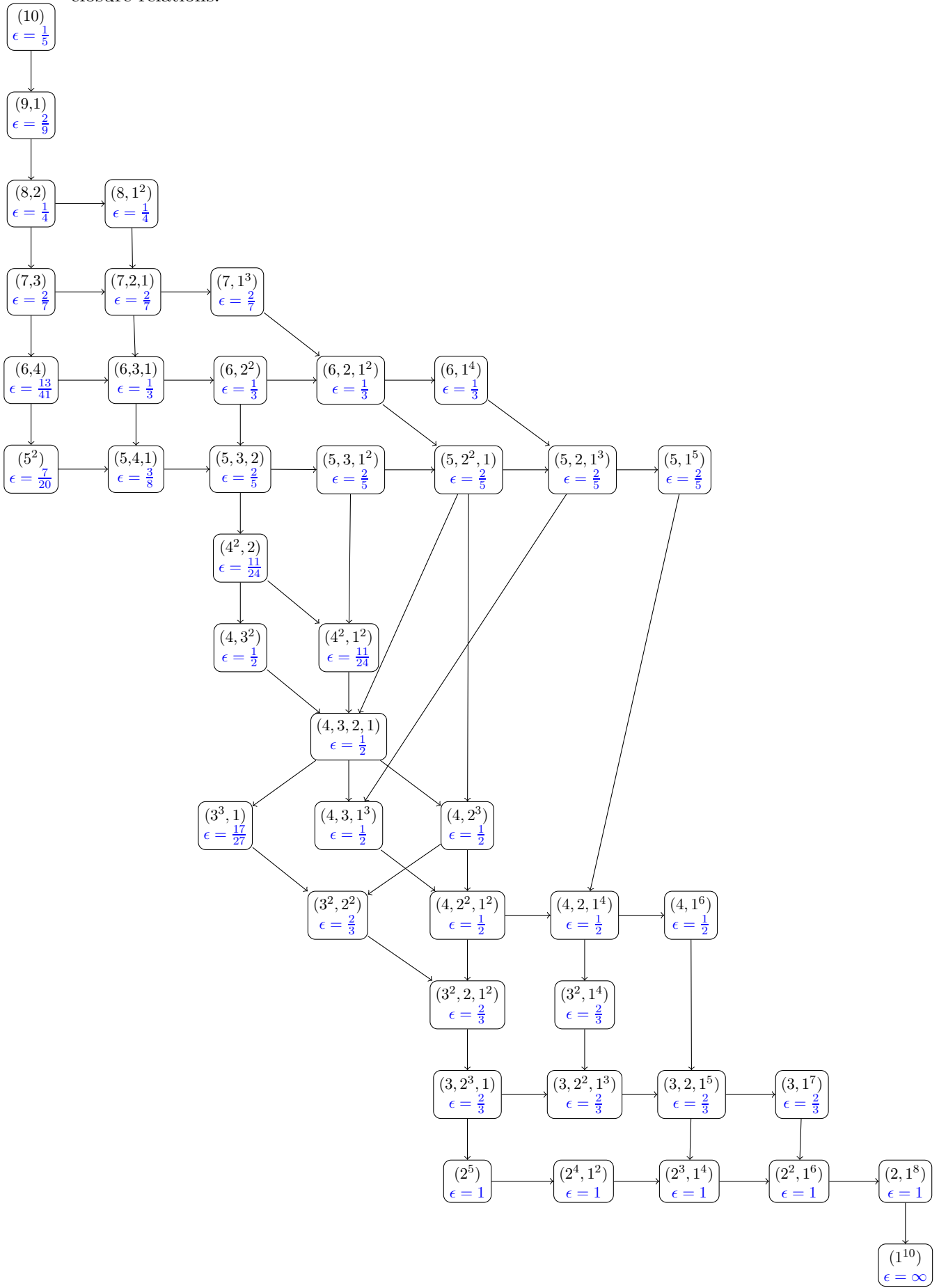
\begin{figure}
\label{fig: epsilon calculation for gl10} \centering
\begin{centering}
\caption{Calculation of $\epsilon_{\star}(\widehat{\xi}_{\mathcal{O}})$ for
the nilpotent orbits in $\protect\gl_{10}$ together with orbit closure
relations.}
\par\end{centering}
\begin{tikzpicture}[
    scale=0.78,
    transform shape,
    node distance=1cm and 1cm,
    main/.style = draw, rectangle, rounded corners, minimum height=2em, minimum width=3em, align=center
]

\node[main] (p10) {(10)\\ \color{blue}{$\epsilon=\frac{1}{5}$}};
\node[main] (p91) [below=of p10] {(9,1)\\ \color{blue}{$\epsilon=\frac{2}{9}$}};
\node[main] (p82) [below=of p91] {(8,2)\\ \color{blue}{$\epsilon=\frac{1}{4}$}};
\node[main] (p811) [right=1.2cm of p82] {$(8,1^2)$ \\ \color{blue}{$\epsilon=\frac{1}{4}$}};
\node[main] (p73) [below=of p82] {(7,3)\\ \color{blue}{$\epsilon=\frac{2}{7}$}};
\node[main] (p721) [right=1.2cm of p73] {(7,2,1)\\ \color{blue}{$\epsilon=\frac{2}{7}$}};
\node[main] (p7111) [right=1.2cm of p721] {$(7,1^3)$\\ \color{blue}{$\epsilon=\frac{2}{7}$}};
\node[main] (p64) [below=of p73] {(6,4)\\ \color{blue}{$\epsilon=\frac{13}{41}$}};
\node[main] (p631) [right=1.2cm of p64] {(6,3,1)\\ \color{blue}{$\epsilon=\frac{1}{3}$}};
\node[main] (p622) [right=1.2cm of p631] {($6,2^2$)\\ \color{blue}{$\epsilon=\frac{1}{3}$}};
\node[main] (p6211) [right=1.2cm of p622] {$(6,2,1^2)$\\ \color{blue}{$\epsilon=\frac{1}{3}$}};
\node[main] (p61111) [right=1.2cm of p6211] {$(6,1^4)$\\ \color{blue}{$\epsilon=\frac{1}{3}$}};
\node[main] (p55) [below=of p64] {($5^2$)\\ \color{blue}{$\epsilon=\frac{7}{20}$}};
\node[main] (p541) [right=1.2cm of p55] {(5,4,1)\\ \color{blue}{$\epsilon=\frac{3}{8}$}};
\node[main] (p532) [below=of p622] {($5,3,2$)\\ \color{blue}{$\epsilon=\frac{2}{5}$}};
\node[main] (p5311) [below=of p6211] {$(5,3,1^2)$\\ \color{blue}{$\epsilon=\frac{2}{5}$}};
\node[main] (p5221) [right=1.2cm of p5311] {$(5,2^2,1)$\\ \color{blue}{$\epsilon=\frac{2}{5}$}};
\node[main] (p52111) [right=1.1cm of p5221] {$(5,2,1^3)$\\ \color{blue}{$\epsilon=\frac{2}{5}$}};
\node[main] (p511111) [right=of p52111] {$(5,1^5)$\\ \color{blue}{$\epsilon=\frac{2}{5}$}};
\node[main] (p442) [below=of p532] {($4^2,2$)\\ \color{blue}{$\epsilon=\frac{11}{24}$}};
\node[main] (p433) [below=of p442] {($4,3^2$)\\ \color{blue}{$\epsilon=\frac{1}{2}$}};
\node[main] (p4411)[right=1.25cm of p433]  {($4^2,1^2$)\\ \color{blue}{$\epsilon=\frac{11}{24}$}};
\node[main] (p4321) [below=of p4411] {($4,3,2,1$)\\ \color{blue}{$\epsilon=\frac{1}{2}$}};
\node[main] (p43111) [below=of p4321] {($4,3,1^3$)\\ \color{blue}{$\epsilon=\frac{1}{2}$}};
\node[main] (p4222) [right=1.4cm of p43111] {($4,2^3$)\\ \color{blue}{$\epsilon=\frac{1}{2}$}};
\node[main] (p42211) [below=of p4222] {($4,2^2,1^2$)\\ \color{blue}{$\epsilon=\frac{1}{2}$}};
\node[main] (p421111) [right=1.1cm of p42211] {($4,2,1^4$)\\ \color{blue}{$\epsilon=\frac{1}{2}$}};
\node[main] (p4111111) [right=1.1cm of p421111] {($4,1^6$)\\ \color{blue}{$\epsilon=\frac{1}{2}$}};
\node[main] (p33211) [below=of p42211] {($3^2,2,1^2$)\\ \color{blue}{$\epsilon=\frac{2}{3}$}};
\node[main] (p3322) [left=1.5cm of p42211] {($3^2,2^2$)\\ \color{blue}{$\epsilon=\frac{2}{3}$}};
\node[main] (p3331) [left=1.5cm of p43111] {($3^3,1$)\\ \color{blue}{$\epsilon=\frac{17}{27}$}};
\node[main] (p32221) [below=of p33211] {($3,2^3,1$)\\ \color{blue}{$\epsilon=\frac{2}{3}$}};
\node[main] (p331111) [below=of p421111] {($3^2,1^4$)\\ \color{blue}{$\epsilon=\frac{2}{3}$}};
\node[main] (p322111) [right=1.1cm of p32221] {($3,2^2,1^3$)\\ \color{blue}{$\epsilon=\frac{2}{3}$}};
\node[main] (p3211111) [right=0.85cm of p322111] {($3,2,1^5$)\\ \color{blue}{$\epsilon=\frac{2}{3}$}};
\node[main] (p31111111) [right=1.2cm of p3211111] {($3,1^7$)\\ \color{blue}{$\epsilon=\frac{2}{3}$}};
\node[main] (p22222) [below=of p32221] {($2^5$)\\ \color{blue}{$\epsilon=1$}};
\node[main] (p222211) [right=1.5cm of p22222] {($2^4,1^2$)\\ \color{blue}{$\epsilon=1$}};
\node[main] (p2221111) [right=1.1cm of p222211] {($2^3,1^4$)\\ \color{blue}{$\epsilon=1$}};
\node[main] (p22111111) [right=1.2cm of p2221111] {($2^2,1^6$)\\ \color{blue}{$\epsilon=1$}};
\node[main] (p211111111) [right=1.2cm of p22111111] {($2,1^8$)\\ \color{blue}{$\epsilon=1$}};
\node[main] (p1111111111) [below=of p211111111] {($1^{10}$)\\ \color{blue}{$\epsilon=\infty$}};

\draw[->] (p10) -- (p91);
\draw[->] (p91) -- (p82);
\draw[->] (p82) -- (p811);
\draw[->] (p82) -- (p73);
\draw[->] (p811) -- (p721);
\draw[->] (p73) -- (p721);
\draw[->] (p721) -- (p7111);
\draw[->] (p73) -- (p64);
\draw[->] (p721) -- (p631);
\draw[->] (p7111) -- (p6211);
\draw[->] (p64) -- (p55);
\draw[->] (p64) -- (p631);
\draw[->] (p631) -- (p622);
\draw[->] (p622) -- (p6211);
\draw[->] (p6211) -- (p61111);
\draw[->] (p631) -- (p541);
\draw[->] (p622) -- (p532);
\draw[->] (p6211) -- (p5221);
\draw[->] (p61111) -- (p52111);
\draw[->] (p55) -- (p541);
\draw[->] (p541) -- (p532);
\draw[->] (p532) -- (p5311);
\draw[->] (p532) -- (p442);
\draw[->] (p5311) -- (p5221);
\draw[->] (p5311) -- (p4411);
\draw[->] (p5221) -- (p4321);
\draw[->] (p5221) -- (p52111);
\draw[->] (p5221) -- (p4222);
\draw[->] (p52111) -- (p511111);
\draw[->] (p52111) -- (p43111);
\draw[->] (p511111) -- (p421111);
\draw[->] (p442) -- (p4411);
\draw[->] (p442) -- (p433);
\draw[->] (p4411) -- (p4321);
\draw[->] (p433) -- (p4321);
\draw[->] (p4321) -- (p4222);
\draw[->] (p4321) -- (p3331);
\draw[->] (p4321) -- (p43111);
\draw[->] (p43111) -- (p42211);
\draw[->] (p4222) -- (p42211);
\draw[->] (p4222) -- (p3322);
\draw[->] (p42211) -- (p421111);
\draw[->] (p42211) -- (p33211);
\draw[->] (p421111) -- (p331111);
\draw[->] (p421111) -- (p4111111);
\draw[->] (p4111111) -- (p3211111);
\draw[->] (p3331) -- (p3322);
\draw[->] (p3322) -- (p33211);
\draw[->] (p33211) -- (p32221);
\draw[->] (p331111) -- (p322111);
\draw[->] (p32221) -- (p322111);
\draw[->] (p32221) -- (p22222);
\draw[->] (p3211111) -- (p2221111);
\draw[->] (p31111111) -- (p22111111);
\draw[->] (p322111) -- (p3211111);
\draw[->] (p3211111) -- (p31111111);
\draw[->] (p22222) -- (p222211);
\draw[->] (p222211) -- (p2221111);
\draw[->] (p2221111) -- (p22111111);
\draw[->] (p22111111) -- (p211111111);
\draw[->] (p211111111) -- (p1111111111);

\end{tikzpicture}
\end{figure}

\newpage{}

\bibliographystyle{alpha}
\bibliography{bibfile}

\newcommand{\etalchar}[1]{$^{#1}$}
\begin{thebibliography}{ABD{\etalchar{+}}65}

\bibitem[AA16]{AA16}
A.~Aizenbud and N.~Avni.
\newblock Representation growth and rational singularities of the moduli space
  of local systems.
\newblock {\em Invent. Math.}, 204(1):245--316, 2016.

\bibitem[ABD{\etalchar{+}}65]{ABDGGRS65}
M.~Artin, J.~E. Bertin, M.~Demazure, P.~Gabriel, A.~Grothendieck, M.~Raynaud,
  and J.-P. Serre.
\newblock {\em Sch\'{e}mas en groupes. {F}asc. 7: {E}xpos\'{e}s 23 \`a 26}.
\newblock Institut des Hautes \'{E}tudes Scientifiques, Paris, 1965.
\newblock Premi\`ere \'{e}dition, S\'{e}minaire de G\'{e}om\'{e}trie
  Alg\'{e}brique de l'Institut des Hautes \'{E}tudes Scientifiques, 1963/64,
  dirig\'{e} par Michel Demazure et Alexander Grothendieck.

\bibitem[AG]{AG}
N.~Avni and I.~Glazer.
\newblock On the {F}ourier coefficients of word maps on unitary groups.
\newblock {\em To appear in {C}ompositio {M}athematica}.

\bibitem[AGL]{AGL}
N.~Avni, I.~Glazer, and M.~Larsen.
\newblock Fourier and small ball estimates for word maps on unitary groups.
\newblock {\em arXiv:2402.11108}.

\bibitem[AGZ10]{AGZ10}
G.~W. Anderson, A.~Guionnet, and O.~Zeitouni.
\newblock {\em An introduction to random matrices}, volume 118 of {\em
  Cambridge Studies in Advanced Mathematics}.
\newblock Cambridge University Press, Cambridge, 2010.

\bibitem[App14]{App14}
D.~Applebaum.
\newblock {\em Probability on compact {L}ie groups}, volume~70 of {\em
  Probability Theory and Stochastic Modelling}.
\newblock Springer, Cham, 2014.
\newblock With a foreword by Herbert Heyer.

\bibitem[Aud04]{Aud04}
M.~Audin.
\newblock {\em Torus actions on symplectic manifolds}, volume~93 of {\em
  Progress in Mathematics}.
\newblock Birkh\"{a}user Verlag, Basel, revised edition, 2004.

\bibitem[Bad]{Bad}
S.~Bader.
\newblock Fourier {C}oefficients of {P}ower {W}ords on {C}ompact {L}ie
  {G}roups.
\newblock {\em in preparation}.

\bibitem[BB82]{BB82}
W.~Borho and J.~Brylinski.
\newblock Differential operators on homogeneous spaces. {I}. {I}rreducibility
  of the associated variety for annihilators of induced modules.
\newblock {\em Invent. Math.}, 69(3):437--476, 1982.

\bibitem[BDS49]{BdS49}
A.~Borel and J.~De~Siebenthal.
\newblock Les sous-groupes ferm\'es de rang maximum des groupes de {L}ie clos.
\newblock {\em Comment. Math. Helv.}, 23:200--221, 1949.

\bibitem[BM89]{BM89}
E.~Bierstone and P.~D. Milman.
\newblock Uniformization of analytic spaces.
\newblock {\em J. Amer. Math. Soc.}, 2(4):801--836, 1989.

\bibitem[BMT11]{BMT11}
N.~Budur, M.~Musta\c{t}\u{a}, and Z.~Teitler.
\newblock The monodromy conjecture for hyperplane arrangements.
\newblock {\em Geom. Dedicata}, 153:131--137, 2011.

\bibitem[Bry73]{Bry73}
T.~Brylawski.
\newblock The lattice of integer partitions.
\newblock {\em Discrete Math.}, 6:201--219, 1973.

\bibitem[BS98]{BS98}
A.~Blass and B.~E. Sagan.
\newblock Characteristic and {E}hrhart polynomials.
\newblock {\em J. Algebraic Combin.}, 7(2):115--126, 1998.

\bibitem[BT65]{BT65}
A.~Borel and J.~Tits.
\newblock Groupes r\'{e}ductifs.
\newblock {\em Inst. Hautes \'{E}tudes Sci. Publ. Math.}, (27):55--150, 1965.

\bibitem[Bud12]{Bud12}
N.~Budur.
\newblock Singularity invariants related to {M}ilnor fibers: survey.
\newblock In {\em Zeta functions in algebra and geometry}, volume 566 of {\em
  Contemp. Math.}, pages 161--187. Amer. Math. Soc., Providence, RI, 2012.

\bibitem[BV80]{BV80}
D.~Barbasch and D.~A. Vogan, Jr.
\newblock The local structure of characters.
\newblock {\em J. Functional Analysis}, 37(1):27--55, 1980.

\bibitem[CGH14a]{CGH14b}
R.~Cluckers, J.~Gordon, and I.~Halupczok.
\newblock Integrability of oscillatory functions on local fields: transfer
  principles.
\newblock {\em Duke Math. J.}, 163(8):1549--1600, 2014.

\bibitem[CGH14b]{CGH14a}
R.~Cluckers, J.~Gordon, and I.~Halupczok.
\newblock Local integrability results in harmonic analysis on reductive groups
  in large positive characteristic.
\newblock {\em Ann. Sci. \'{E}c. Norm. Sup\'{e}r. (4)}, 47(6):1163--1195, 2014.

\bibitem[CGH18]{CGH18}
R.~Cluckers, J.~Gordon, and I.~Halupczok.
\newblock Uniform analysis on local fields and applications to orbital
  integrals.
\newblock {\em Trans. Amer. Math. Soc. Ser. B}, 5:125--166, 2018.

\bibitem[CL08]{CL08}
R.~Cluckers and F.~Loeser.
\newblock Constructible motivic functions and motivic integration.
\newblock {\em Invent. Math.}, 173(1):23--121, 2008.

\bibitem[CL10]{CL10}
R.~Cluckers and F.~Loeser.
\newblock Constructible exponential functions, motivic {F}ourier transform and
  transfer principle.
\newblock {\em Ann. of Math. (2)}, 171(2):1011--1065, 2010.

\bibitem[CM93]{CM93}
D.~H. Collingwood and W.~M. McGovern.
\newblock {\em Nilpotent orbits in semisimple {L}ie algebras}.
\newblock Van Nostrand Reinhold Mathematics Series. Van Nostrand Reinhold Co.,
  New York, 1993.

\bibitem[CM13]{CM13}
R.~Cluckers and D.~J. Miller.
\newblock Lebesgue classes and preparation of real constructible functions.
\newblock {\em J. Funct. Anal.}, 264(7):1599--1642, 2013.

\bibitem[Cra67]{Cra67}
H.~H. Crapo.
\newblock A higher invariant for matroids.
\newblock {\em J. Combinatorial Theory}, 2:406--417, 1967.

\bibitem[DCP95]{DP95}
C.~De~Concini and C.~Procesi.
\newblock Wonderful models of subspace arrangements.
\newblock {\em Selecta Math. (N.S.)}, 1(3):459--494, 1995.

\bibitem[DL95]{DL95}
J.~Denef and F.~Loeser.
\newblock Regular elements and monodromy of discriminants of finite reflection
  groups.
\newblock {\em Indag. Math. (N.S.)}, 6(2):129--143, 1995.

\bibitem[DM22]{DM22}
B.~Dirks and M.~Musta\c{t}\u{a}.
\newblock Upper bounds for roots of {$b$}-functions, following {K}ashiwara and
  {L}ichtin.
\newblock {\em Publ. Res. Inst. Math. Sci.}, 58(4):693--712, 2022.

\bibitem[DS94]{DS94}
P.~Diaconis and M.~Shahshahani.
\newblock On the eigenvalues of random matrices.
\newblock volume 31A, pages 49--62. 1994.
\newblock Studies in applied probability.

\bibitem[DvdD88]{DvdD88}
J.~Denef and L.~van~den Dries.
\newblock {$p$}-adic and real subanalytic sets.
\newblock {\em Ann. of Math. (2)}, 128(1):79--138, 1988.

\bibitem[Edw72]{Edw72}
R.~E. Edwards.
\newblock {\em Integration and harmonic analysis on compact groups}.
\newblock Cambridge Univ. Press, London-New York, 1972.
\newblock London Mathematical Society Lecture Note Series, No. 8.

\bibitem[Fu03]{Fu03}
B.~Fu.
\newblock Symplectic resolutions for nilpotent orbits.
\newblock {\em Invent. Math.}, 151(1):167--186, 2003.

\bibitem[Ger61]{Ger61}
M.~Gerstenhaber.
\newblock Dominance over the classical groups.
\newblock {\em Ann. of Math. (2)}, 74:532--569, 1961.

\bibitem[GGS17]{GGS17}
R.~Gomez, D.~Gourevitch, and S.~Sahi.
\newblock Generalized and degenerate {W}hittaker models.
\newblock {\em Compos. Math.}, 153(2):223--256, 2017.

\bibitem[GH21]{GH21}
I.~Glazer and Y.~I. Hendel.
\newblock On singularity properties of convolutions of algebraic
  morphisms---the general case.
\newblock {\em J. Lond. Math. Soc. (2)}, 103(4):1453--1479, 2021.
\newblock With an appendix by Glazer, Hendel and Gady Kozma.

\bibitem[GH24]{GHb}
I.~Glazer and Y.~I. Hendel.
\newblock On singularity properties of word maps and applications to
  probabilistic {W}aring-type problems.
\newblock {\em Mem. Amer. Math. Soc.}, 299(1497), 2024.

\bibitem[GHS]{GHS}
I.~Glazer, Y.~I. Hendel, and S.~Sodin.
\newblock Integrability of pushforward measures by analytic maps.
\newblock {\em To {A}ppear in {A}lgebraic {G}eometry}.

\bibitem[HC65a]{HC65a}
Harish-Chandra.
\newblock Invariant eigendistributions on a semisimple {L}ie algebra.
\newblock {\em Inst. Hautes \'{E}tudes Sci. Publ. Math.}, (27):5--54, 1965.

\bibitem[HC65b]{HC65b}
Harish-Chandra.
\newblock Invariant eigendistributions on a semisimple {L}ie group.
\newblock {\em Trans. Amer. Math. Soc.}, 119:457--508, 1965.

\bibitem[HC70]{HC70}
Harish-Chandra.
\newblock {\em Harmonic analysis on reductive {$p$}-adic groups}, volume Vol.
  162 of {\em Lecture Notes in Mathematics}.
\newblock Springer-Verlag, Berlin-New York, 1970.
\newblock Notes by G. van Dijk.

\bibitem[HC78]{HC78}
Harish-Chandra.
\newblock Admissible invariant distributions on reductive {$p$}-adic groups.
\newblock In {\em Lie theories and their applications ({P}roc. {A}nn. {S}em.
  {C}anad. {M}ath. {C}ongr., {Q}ueen's {U}niv., {K}ingston, {O}nt., 1977)},
  Queen's Papers in Pure and Appl. Math., No. 48, pages 281--347. Queen's
  Univ., Kingston, Ont., 1978.

\bibitem[HC99]{HC99}
Harish-Chandra.
\newblock {\em Admissible invariant distributions on reductive {$p$}-adic
  groups}, volume~16 of {\em University Lecture Series}.
\newblock American Mathematical Society, Providence, RI, 1999.
\newblock With a preface and notes by Stephen DeBacker and Paul J. Sally, Jr.

\bibitem[Hes76]{Hes76}
W.~Hesselink.
\newblock Singularities in the nilpotent scheme of a classical group.
\newblock {\em Trans. Amer. Math. Soc.}, 222:1--32, 1976.

\bibitem[Hes79]{Hes79}
W.~Hesselink.
\newblock The normality of closures of orbits in a lie algebra.
\newblock {\em Commentarii Mathematici Helvetici}, 54:105--110, 1979.

\bibitem[Hir64]{Hir64}
H.~Hironaka.
\newblock Resolution of singularities of an algebraic variety over a field of
  characteristic zero. {I}, {II}.
\newblock {\em Ann. of Math. (2) (1964), 109--203; ibid. (2)}, 79:205--326,
  1964.

\bibitem[How74]{How74}
R.~Howe.
\newblock The {F}ourier transform and germs of characters (case of {${\rm
  Gl}_{n}$} over a {$p$}-adic field).
\newblock {\em Math. Ann.}, 208:305--322, 1974.

\bibitem[Hum90]{Hum90}
J.~E. Humphreys.
\newblock {\em Reflection groups and {C}oxeter groups}, volume~29 of {\em
  Cambridge Studies in Advanced Mathematics}.
\newblock Cambridge University Press, Cambridge, 1990.

\bibitem[Jan04]{Jan04}
J.~C. Jantzen.
\newblock Nilpotent orbits in representation theory.
\newblock In {\em Lie theory}, volume 228 of {\em Progr. Math.}, pages 1--211.
  Birkh\"{a}user Boston, Boston, MA, 2004.

\bibitem[Joh97]{Joh97}
K.~Johansson.
\newblock On random matrices from the compact classical groups.
\newblock {\em Ann. of Math. (2)}, 145(3):519--545, 1997.

\bibitem[KK19]{KK19}
S.~Kionke and B.~Klopsch.
\newblock Zeta functions associated to admissible representations of compact
  {$p$}-adic {L}ie groups.
\newblock {\em Trans. Amer. Math. Soc.}, 372(11):7677--7733, 2019.

\bibitem[Kna01]{Kna01}
A.~W. Knapp.
\newblock {\em Representation theory of semisimple groups}.
\newblock Princeton Landmarks in Mathematics. Princeton University Press,
  Princeton, NJ, 2001.
\newblock An overview based on examples, Reprint of the 1986 original.

\bibitem[Kol]{Kol}
J.~Koll\'{a}r.
\newblock Which powers of holomorphic functions are integrable?
\newblock {\em arXiv:0805.0756}.

\bibitem[Kot05]{Kot05}
R.~E. Kottwitz.
\newblock Harmonic analysis on reductive {$p$}-adic groups and {L}ie algebras.
\newblock In {\em Harmonic analysis, the trace formula, and {S}himura
  varieties}, volume~4 of {\em Clay Math. Proc.}, pages 393--522. Amer. Math.
  Soc., Providence, RI, 2005.

\bibitem[Mac82]{Mac82}
I.~G. Macdonald.
\newblock Some conjectures for root systems.
\newblock {\em SIAM J. Math. Anal.}, 13(6):988--1007, 1982.

\bibitem[Mac95]{Mac95}
I.~G. Macdonald.
\newblock {\em Symmetric functions and {H}all polynomials}.
\newblock Oxford Mathematical Monographs. The Clarendon Press, Oxford
  University Press, New York, second edition, 1995.
\newblock With contributions by A. Zelevinsky, Oxford Science Publications.

\bibitem[Mec19]{Mec19}
E.~S. Meckes.
\newblock {\em The random matrix theory of the classical compact groups},
  volume 218 of {\em Cambridge Tracts in Mathematics}.
\newblock Cambridge University Press, Cambridge, 2019.

\bibitem[Mor88]{Mor88}
L.~Morris.
\newblock Rational conjugacy classes of unipotent elements and maximal tori,
  and some axioms of {S}halika.
\newblock {\em J. London Math. Soc. (2)}, 38(1):112--124, 1988.

\bibitem[Mus06]{Mus06}
M.~Musta\c{t}\u{a}.
\newblock Multiplier ideals of hyperplane arrangements.
\newblock {\em Trans. Amer. Math. Soc.}, 358(11):5015--5023, 2006.

\bibitem[Mus12]{Mus12}
M.~Musta\c{t}\u{a}.
\newblock I{MPANGA} lecture notes on log canonical thresholds.
\newblock In {\em Contributions to algebraic geometry}, EMS Ser. Congr. Rep.,
  pages 407--442. Eur. Math. Soc., Z\"{u}rich, 2012.
\newblock Notes by Tomasz Szemberg.

\bibitem[MW87]{MW87}
C.~Moeglin and J.-L. Waldspurger.
\newblock Mod\`eles de {W}hittaker d\'{e}g\'{e}n\'{e}r\'{e}s pour des groupes
  {$p$}-adiques.
\newblock {\em Math. Z.}, 196(3):427--452, 1987.

\bibitem[Opd89]{Opd89}
E.~M. Opdam.
\newblock Some applications of hypergeometric shift operators.
\newblock {\em Invent. Math.}, 98(1):1--18, 1989.

\bibitem[Opd93]{Opd93}
E.~M. Opdam.
\newblock Dunkl operators, {B}essel functions and the discriminant of a finite
  {C}oxeter group.
\newblock {\em Compositio Math.}, 85(3):333--373, 1993.

\bibitem[OV90]{OV90}
A.~L. Onishchik and \`E.~B. Vinberg.
\newblock {\em Lie groups and algebraic groups}.
\newblock Springer Series in Soviet Mathematics. Springer-Verlag, Berlin, 1990.
\newblock Translated from the Russian and with a preface by D. A. Leites.

\bibitem[Rai97]{Rai97}
E.~M. Rains.
\newblock High powers of random elements of compact {L}ie groups.
\newblock {\em Probab. Theory Related Fields}, 107(2):219--241, 1997.

\bibitem[Rai03]{Rai03}
E.~M. Rains.
\newblock Images of eigenvalue distributions under power maps.
\newblock {\em Probab. Theory Related Fields}, 125(4):522--538, 2003.

\bibitem[RR72]{Rao72}
R.~Ranga~Rao.
\newblock Orbital integrals in reductive groups.
\newblock {\em Ann. of Math. (2)}, 96:505--510, 1972.

\bibitem[Sai]{Saia}
Morihiko Saito.
\newblock On real log canonical thresholds.
\newblock {\em arXiv:0707.2308}.

\bibitem[Som97]{Som97}
E.~N. Sommers.
\newblock {\em Nilpotent orbits and the affine flag manifold}.
\newblock ProQuest LLC, Ann Arbor, MI, 1997.
\newblock Thesis (Ph.D.)--Massachusetts Institute of Technology.

\bibitem[Som98]{Som98}
E.~Sommers.
\newblock A generalization of the {B}ala-{C}arter theorem for nilpotent orbits.
\newblock {\em Internat. Math. Res. Notices}, (11):539--562, 1998.

\bibitem[Spr98]{Spr98}
T.~A. Springer.
\newblock {\em Linear algebraic groups}, volume~9 of {\em Progress in
  Mathematics}.
\newblock Birkh\"{a}user Boston, Inc., Boston, MA, second edition, 1998.

\bibitem[SS70]{SS70}
T.~A. Springer and R.~Steinberg.
\newblock Conjugacy classes.
\newblock In {\em Seminar on {A}lgebraic {G}roups and {R}elated {F}inite
  {G}roups ({T}he {I}nstitute for {A}dvanced {S}tudy, {P}rinceton, {N}.{J}.,
  1968/69)}, Lecture Notes in Mathematics, Vol. 131, pages 167--266. Springer,
  Berlin, 1970.

\bibitem[Sta07]{Sta07}
R.~P. Stanley.
\newblock An introduction to hyperplane arrangements.
\newblock In {\em Geometric combinatorics}, volume~13 of {\em IAS/Park City
  Math. Ser.}, pages 389--496. Amer. Math. Soc., Providence, RI, 2007.

\bibitem[STV95]{STV95}
V.~Schechtman, H.~Terao, and A.~Varchenko.
\newblock Local systems over complements of hyperplanes and the {K}ac-{K}azhdan
  conditions for singular vectors.
\newblock {\em J. Pure Appl. Algebra}, 100(1-3):93--102, 1995.

\bibitem[Suz]{Suz}
K.~Suzuki.
\newblock Gelfand-{K}irillov dimension of representations of
  {$\mathrm{GL}_{n}$} over a non-archimedean local field.
\newblock {\em arXiv:2208.05139}.

\bibitem[vD72]{vD72}
G.~van Dijk.
\newblock Computation of certain induced characters of {$p$}-adic groups.
\newblock {\em Math. Ann.}, 199:229--240, 1972.

\bibitem[Vig]{Vin}
M-F. Vign\'{e}ras.
\newblock Representations of {$p$}-adic groups over coefficient rings.
\newblock {\em arXiv:2205.02019}.

\bibitem[Vog78]{Vog78}
D.~A. Vogan, Jr.
\newblock Gelfand-{K}irillov dimension for {H}arish-{C}handra modules.
\newblock {\em Invent. Math.}, 48(1):75--98, 1978.

\bibitem[VZnG08]{VZG08}
W.~Veys and W.~A. Z\'{u}\~{n}iga Galindo.
\newblock Zeta functions for analytic mappings, log-principalization of ideals,
  and {N}ewton polyhedra.
\newblock {\em Trans. Amer. Math. Soc.}, 360(4):2205--2227, 2008.

\bibitem[Wey39]{Wey39}
H.~Weyl.
\newblock {\em The {C}lassical {G}roups. {T}heir {I}nvariants and
  {R}epresentations}.
\newblock Princeton University Press, Princeton, N.J., 1939.

\bibitem[Wlo09]{Wlo09}
J.~Wlodarczyk.
\newblock Resolution of singularities of analytic spaces.
\newblock In {\em Proceedings of {G}\"{o}kova {G}eometry-{T}opology
  {C}onference 2008}, pages 31--63. G\"{o}kova Geometry/Topology Conference
  (GGT), G\"{o}kova, 2009.

\bibitem[YS79]{SY79}
T.~Yano and J.~Sekiguchi.
\newblock The microlocal structure of weighted homogeneous polynomials
  associated with {C}oxeter systems. {I}.
\newblock {\em Tokyo J. Math.}, 2(2):193--219, 1979.

\end{thebibliography}

\end{document}